\documentclass{amsart}
\usepackage{amssymb}
\usepackage{amsmath}
\usepackage{amsthm}
\usepackage{amsxtra}
\usepackage{verbatim}
\usepackage[all]{xy}
\usepackage{diagrams}

\setlength{\textwidth}{16cm}
\setlength{\textheight}{22cm}
\addtolength{\oddsidemargin}{-15mm}
\addtolength{\evensidemargin}{-15mm}
\addtolength{\topmargin}{-15mm} 

\newcommand{\Nilp}{\operatorname{Nilp}}
\newcommand{\Inv}{\operatorname{Inv}}

\newcommand{\ft}{{\frak t}}
\newcommand{\diag}{\operatorname{diag}}

\newcommand{\bG}{{\bf G}}
\newcommand{\bH}{{\bf H}}

\newcommand{\Id}{\operatorname{Id}}

\renewcommand{\mod}{\operatorname{mod}}

\newcommand{\Tor}{\operatorname{Tor}}
\newcommand{\und}{\underline}

\newcommand{\ti}{\tilde}
\newcommand{\pr}{\operatorname{pr}}

\newcommand{\bL}{{\bf L}}
\newcommand{\bP}{{\bf P}}
\newcommand{\II}{{\cal I}}

\newcommand{\Sh}{\operatorname{Sh}}

\newcommand{\G}{{\Bbb G}}
\newcommand{\A}{{\Bbb A}}

\newcommand{\hra}{\hookrightarrow}
\newcommand{\lan}{\langle}
\newcommand{\ran}{\rangle}
\newcommand{\Coh}{\operatorname{Coh}}

\newcommand{\Spec}{\operatorname{Spec}}

\newcommand{\Sp}{\operatorname{Sp}}

\renewcommand{\P}{{\Bbb P}}

\newcommand{\si}{\sigma}

\newcommand{\de}{\delta}
\newcommand{\Si}{\Sigma}

\renewcommand{\ker}{\operatorname{ker}}
\newcommand{\im}{\operatorname{im}}

\numberwithin{equation}{subsection}

\newcommand{\GL}{\operatorname{GL}}

\newtheorem{thm}{Theorem}[subsection]
\newtheorem{prop}[thm]{Proposition}
\newtheorem{lem}[thm]{Lemma}
\newtheorem{cor}[thm]{Corollary}
{  \theoremstyle{definition}
\newtheorem{defi}[thm]{Definition}
\newtheorem{ex}[thm]{Example}

\newtheorem{rem}[thm]{Remark}
\newtheorem{rems}[thm]{Remarks}
}

\newcommand{\Pf}{\noindent {\it Proof}}
\newcommand{\id}{\operatorname{id}}
\newcommand{\Lie}{\operatorname{Lie}}
\newcommand{\ov}{\overline}

\renewcommand{\Im}{\operatorname{Im}}

\newcommand{\rk}{\operatorname{rk}}

\newcommand{\Ab}{{\cal A}b}
\newcommand{\bS}{{\bf S}}
\newcommand{\bB}{{\bf B}}
\newcommand{\bT}{{\bf T}}
\newcommand{\Du}{{\Bbb D}}
\newcommand{\Om}{\Omega}

\newcommand{\St}{\operatorname{St}}
\newcommand{\Hom}{\operatorname{Hom}}

\newcommand{\Ext}{\operatorname{Ext}}
\newcommand{\End}{\operatorname{End}}

\newcommand{\SO}{\operatorname{SO}}
\newcommand{\Ort}{\operatorname{O}}

\renewcommand{\b}{\beta}
\newcommand{\om}{\omega}
\newcommand{\De}{\Delta}
\newcommand{\la}{\lambda}

\newcommand{\C}{{\Bbb C}}

\newcommand{\R}{{\Bbb R}}
\newcommand{\Z}{{\Bbb Z}}
\newcommand{\Q}{{\Bbb Q}}
\newcommand{\dL}{{\Bbb L}}

\newcommand{\Ga}{\Gamma}

\newcommand{\wt}{\widetilde}
\newcommand{\ot}{\otimes}
\newcommand{\Spr}{\operatorname{Spr}}

\newcommand{\sub}{\subset}
\newcommand{\ed}{\qed\vspace{3mm}}

\newcommand{\Irr}{\operatorname{Irr}}

\newcommand{\fg}{{\frak g}}
\newcommand{\fn}{{\frak n}}
\newcommand{\one}{{\mathbf 1}}


\newcommand{\be}{{\bf e}}

\newcommand{\red}{\operatorname{red}}

\let\cal\mathcal
\def\Ascr{{\cal A}}
\def\Bscr{{\cal B}}
\def\Cscr{{\cal C}}
\def\Dscr{{\cal D}}

\def\Fscr{{\cal F}}
\def\Gscr{{\cal G}}

\def\Iscr{{\cal I}}

\def\Lscr{{\cal L}}

\def\Nscr{{\cal N}}
\def\Oscr{{\cal O}}
\def\Pscr{{\cal P}}

\def\Sscr{{\cal S}}
\def\Tscr{{\cal T}}

\def\Xscr{{\cal X}}
\def\Yscr{{\cal Y}}

%
%
\let\blb\mathbb
\def\CC{{\blb C}}

\def\FF{{\blb F}} 
\def\QQ{{\blb Q}}
\def\GG{{\blb G}}

\def \AA{{\blb A}}
\def \ZZ{{\blb Z}}

\def \NN{{\blb N}}

\def\id{\text{id}}
\def\Id{\operatorname{id}}
\def\pr{\mathop{\text{pr}}\nolimits}
\let\st\ast

\def\Ab{\mathbb{Ab}}

\def\Lotimes{\overset{L}{\otimes}}

\def\Mod{\operatorname{Mod}}
\def\mod{\operatorname{mod}}
\def\Gr{\operatorname{Gr}}

\def\gr{\operatorname{gr}}
\def\Lie{\mathop{\text{Lie}}}

\def\gr{\operatorname {gr}}
\def\Spec{\operatorname {Spec}}

\def\GL{\operatorname {GL}}

\def\diag{\operatorname {diag}}
\def\Ext{\operatorname {Ext}}
\def\Hom{\operatorname {Hom}}
\def\uHom{\operatorname {\mathcal{H}\mathit{om}}}

\def\End{\operatorname {End}}
\def\uRHom{\operatorname {R\mathcal{H}\mathit{om}}}

\def\im{\operatorname {im}}

\def\ker{\operatorname {ker}}

\def\Tor{\operatorname {Tor}}
\def\End{\operatorname {End}}

\def\id{{\operatorname {id}}}

\def\rk{\operatorname {rk}}

\def\r{\rightarrow}
\def\l{\leftarrow}

\DeclareMathOperator{\Ind}{Ind}

\DeclareMathOperator{\Ad}{Ad}

\let\dirlim\injlim

\newdimen\uboxsep \uboxsep=1ex
\def\uboxn#1{\vtop to 0pt{\hrule height 0pt depth 0pt\vskip\uboxsep
\hbox to 0pt{\hss #1\hss}\vss}}

\def\uboxs#1{\vbox to 0pt{\vss\hbox to 0pt{\hss #1\hss}
\vskip\uboxsep\hrule height 0pt depth 0pt}}

\def\Ind{\operatorname{Ind}}
\def\Perf{\operatorname{Perf}}

\def\gr{\operatorname{gr}}
\def\grmod{\operatorname{grmod}}
\def\dgmod{\operatorname{dgmod}}
\def\Ab{\operatorname{Ab}}
\def\dg{\operatorname{dg}}
\newcommand{\Obscr}{\bf \Oscr}

\def\Ob{\operatorname{Ob}}
\def\uTor{\Tscr\!\mathit{or}}
\def\uExt{\operatorname {\mathcal{E}\mathit{xt}}}

\title[Semiorthogonal decompositions for some reflection groups]{Semiorthogonal decompositions of the categories of equivariant coherent sheaves for some reflection groups}
\author{Alexander Polishchuk \and Michel Van den Bergh}
\thanks{The first author is supported in part by the NSF grant DMS-1400390}
\thanks{The second author is a senior researcher at the FWO and was supported by the FWO grant 1503512N}
\begin{document}
\begin{abstract}
We consider the derived category $D^b_G(V)$
of coherent sheaves on a complex vector space $V$ equivariant with respect to an action of
a finite reflection group $G$. In some cases, including Weyl groups of type $A$, $B$, $G_2$, $F_4$,
as well as the groups $G(m,1,n)=(\mu_m)^n\rtimes S_n$, we construct a semiorthogonal decomposition of this category,
indexed by the conjugacy classes of $G$. The pieces of this decompositions are equivalent to the derived
categories of coherent sheaves on the quotient-spaces $V^g/C(g)$, where $C(g)$ is the centralizer subgroup of $g\in G$.
In the case of the Weyl groups the construction uses some key results about the Springer correspondence, due to Lusztig,
along with some formality statement generalizing a result of Deligne in \cite{deligneweil2}.
We also construct global analogs of some of these semiorthogonal decompositions involving derived categories
of equivariant coherent sheaves on $C^n$, where $C$ is a smooth curve.  
\end{abstract}
\maketitle

\section*{Introduction}

Let $k$ be a field of characteristic zero.
Let $X$ be a smooth quasiprojective variety over $k$, equipped with an action of a finite group $G$.
Then the Hochschild homology of the category of $G$-equivariant coherent sheaves on $X$ has a decomposition
(see \cite{Bar})
\begin{equation}\label{mot-dec2-eq}
HH_*([X/G])\simeq \bigoplus_{g\in G/\sim} HH_*(X^g)^{C(g)},
\end{equation}
where $G/{\sim}$ is the set of conjugacy classes of $G$, $C(g)$ is the centralizer of $g$, $X^g\sub X$ is the
invariant subvariety of $g$.

A similar decomposition exists\footnote{Here we are talking about
the decomposition of additive groups. In order to get a decomposition compatible with the ring structures one needs to tensor
with $\C$.} for the equivariant $K$-theory groups, tensored with $\Q$
 (see \cite{AS}, \cite{Vistoli}),
and on the level of Chow motives (see \cite{Toen}). In fact, it is shown in \cite{Tabuada} that \eqref{mot-dec2-eq} is 
``motivic'' in a suitable sense and hence it holds for any invariant satisfying a few reasonable axioms.

The question we would like to pose is whether some of these decompositions can be lifted to semiorthogonal decompositions
of the derived category $D^b_G(X)$ of $G$-equivariant coherent sheaves on $X$ (we remind what is a 
semiorthogonal decomposition in \S\ref{semiorth-sec}). Of course, one needs some restriction on the action of $G$ on $X$.
For example, if $X$ is a connected projective Calabi-Yau variety and the action of the group preserves the volume form then
the category $D^b_G(X)$ does not have any non-trivial semiorthogonal decompositions. 
The following conjecture says that for a certain class of $G$-actions \eqref{mot-dec2-eq} does
lift to a semiorthogonal decomposition of $D^b_G(X)$.


\vspace{2mm}

\noindent
{\bf Conjecture A}. {\it Assume that $G$ acts on $X$ effectively, 
and all the geometric quotients $X^g/C(g)$ are smooth for $g\in G$.
Then there exists a semiorthogonal decomposition of the derived category $D^b_G(X)$
of $G$-equivariant coherent sheaves on $X$ such that the pieces $\Cscr_{[g]}$ of this decomposition are in
bijection with conjugacy classes in $G$ and $\Cscr_{[g]}\simeq D^b(X^g/C(g))$.}

\vspace{2mm}

One of the motivations for the above conjecture is the observation
that whenever $X/G$ is smooth, there exists a semiorthogonal decomposition of $D^b_G(X)$ with $D^b(X/G)$
as one of the pieces. Indeed, it is easy to see that the pull-back functor $D^b(X/G)\to D^b_G(X)$ is
fully faithful, so this follows from the fact that $D^b(X/G)$ is saturated (see \cite{BVdB}).

In the case when $\dim X=1$ the required semiorthogonal decomposition was constructed in \cite{P-orbifold}
(see Theorem \ref{CP-thm} and Remark \ref{curves-rem}). 
Note that already in the case $\dim X=1$, the assumption that $G$ acts effectively
on $X$ is important: in the paper \cite{BGLL} the authors give an example of a non-effective action of a finite group
on an elliptic curve for which the conclusion of Conjecture A is false.

We should also add that in all the examples we know, the full embeddings of $D^b(X^g/C(g))$ into $D^b_G(X)$ are
compatible with the natural $D^b(X/G)$-module structures  on these categories, so perhaps this property holds in general.

Our main motivating example for Conjecture A is the case when $X=C^n$, where $C$ is a smooth curve over $\CC$, and
$G=S_n$, the symmetric group acting on the cartesian power $C^n$ by permutations of factors.
We prove Conjecture A in this case by constructing an explicit semiorthogonal decomposition of $D^b_{S_n}(C^n)$,
numbered by partitions of $n$. Note that a less refined decomposition of  $D^b_{S_n}(C^n)$ was 
constructed by Krug in \cite[Cor.\ B, Sec.\ 5.7]{Krug}, however, our methods are quite different.

Let us describe our construction in more detail.
For a partition $\la$ of $n$ consider the 
closed subvariety $C[\la]\sub C^n$ consisting of $(x_1,\ldots,x_n)$ such that
\begin{equation}\label{C-lambda-eq}
x_1=\ldots=x_{\la_1}, x_{\la_1+1}=\ldots=x_{\la_1+\la_2}, \ldots
\end{equation}
($C[\la]\simeq C^k$ where $k$ is the number of parts in $\la$).
Now let 
$$Z_\la(C)\sub C[\la]\times C^n$$ 
be the union over $w\in S_n/S_\lambda$ of the closed subvarieties
$\{(x,wx)\ |\ x\in C[\la]\}\sub C[\la]\times C^n$. We view $Z_\la$ as a closed subscheme of $C[\la]\times C^n$
equipping it with the reduced scheme structure.
Let $r_i$ denote the multiplicity of $i$ as a part occurring in $\la$. Note that the group $W_\la=\prod_i S_{r_i}$
acts on $C[\la]$ by permuting groups of variables corresponding to equal parts in $\la$, and the geometric quotient
$$C^{(\la)}:=C[\la]/W_\la$$ 
is isomorphic to the product of the symmetric powers of $C$, $\prod_i C^{(r_i)}$, so it is smooth.
We let $W_\la\times S_n$ act on the product $C[\la]\times C^n$, where $W_\la$ acts on the first factor
and $S_n$ acts on the second factor, and we set
$$\ov{Z}_\la(C)=Z_{\la}(C)/W_\la.$$ 
We consider the diagram
\begin{equation}\label{N-lambda-diagram}
\begin{diagram}
&&[\ov{Z}_\la(C)/S_n]&&\\
&\ldTo{q_\la}&&\rdTo{\ov{f}_\la}\\
C^{(\la)}&&&&[C^n/S_n]
\end{diagram}
\end{equation}
where $[X/G]$ denotes the stack quotient, $q_\la$ is induced by the projection to the first component,
and the $S_n$-equivariant morphism
$\ov{f}_\la:\ov{Z}_\la(C)\to C^n$ is given by the projection to the second component.
It turns out that the morphism $q_\la$ is flat, and we consider the functor 
$$\Nscr_\la=R(\ov{f}_\la)_*\circ q_\la^*:D^b(C^{(\la)})\to D^b_{S_n}(C^n)$$

For example, for $\la=(n)$, $C[\la]$ is the small diagonal in $C^n$, $\ov{Z}_\la(C)=Z_\la(C)$ is the graph
of the diagonal embedding $C\to C^n$ and $\Nscr_\la$ is simply the push-forward functor 
$$\De_*:D^b(C)\to D^b_{S_n}(C^n).$$
For $\la=(1)^n=(1,\ldots,1)$, $C[\la]=C^n$ and we get the pull-back functor with respect to the projection
$[C^n/S_n]\to C^{(n)}$, where $C^{(n)}$ is the $n$th symmetric power of $C$.
In general, one can think of the functors $\Nscr_\la$ as modifications of the naive induction functors.

\vspace{2mm}

\noindent
{\bf Theorem B}. {\it For each $\la$, $|\la|=n$, the functor $\Nscr_\la$ is fully faithful. One has
$$\Hom_{D^b_{S_n}(C^n)}(\Nscr_\la(\cdot),\Nscr_\mu(\cdot))=0 \ \ \text{ for } \la\not\le\mu,$$
where $\le$ is the dominance partial order on partitions.
For any total ordering $\la_1<\ldots<\la_p$ of partitions of $n$, refining the dominance order we have a semiorthogonal decomposition
$$D^b_{S_n}(C^n)=\lan \Nscr_{\la_1}(D^b(C^{(\la_1)})),\ldots, \Nscr_{\la_p}(D^b(C^{(\la_p)}))\ran.$$}

\vspace{2mm}

Even though the functors $\Nscr_\la$ are given by explicit kernels, our proof of Theorem B is rather indirect.
There are two main steps in the proof. First, we reduce the problem to the case of linear actions, or equivalently,
to the case $C=\A^1$. This step is not hard: one just has to use an explicit local linear model for the action
of $S_n$ on $C^n$ near every point. The second step is to establish the semiorthogonal decomposition in the case $C=\A^1$.
We were not able to find a direct proof of this, staying within the realm equivariant coherent sheaves (except for small $n$).
Instead we use a nontrivial connection between $S_n$-equivariant coherent sheaves on $\A^n$ and constructible sheaves
on the nilpotent cone of the general linear group, provided by the Springer correspondence (see the comments after Theorem C
below).

The natural context for the ``local case" of Conjecture A is to 
study semiorthogonal decompositions of $D^b_G(X)$
is the case when $G$ is a finite complex reflection group acting linearly on the corresponding affine space $X$ (because
these are precisely linear actions for which $X/G$ is smooth).
Our proof of the local case of Theorem B via the Springer correspondence generalizes to the case when $G$ is the Weyl
group $W$ attached to a maximal torus in a simple algebraic group $\bG$, acting on the Lie algebra $\ft$ of the
maximal torus.
In this case the assumption of smoothness of all the quotients $X^g/C(g)$, required for our semiorthogonal decomposition,
is satisfied for Weyl groups which are not of type $D$ or $E$. The construction of the desired semiorthogonal
decomposition is a part of the following theorem about linear actions.

\vspace{2mm}

\noindent
{\bf Theorem C}. {\it Assume $k=\C$. Conjecture A is true for Weyl groups of types 
$A_n$, $B_n$ (equivalently $C_n$), $G_2$ and $F_4$, as well as for 
the complex reflection groups $G(m,1,n)=(\mu_m)^n\rtimes S_n$, and their natural linear representations. 
More precisely, in these cases all the quotients $X^g/C(g)$ are smooth,
and the required semiorthogonal decompositions exist. 
}

\vspace{2mm}

It is easy to see 
that for Weyl groups of type $D_n$ (and their natural representations) there exists $g\in G$ such that $X^g/C(g)$ is singular.
Our results in Section \ref{Springer-sec} 
suggest that the same happens for types $E_6$, $E_7$ and $E_8$ although we did not check this.


In the case of the Weyl groups the proof of Theorem C is based on
the isomorphism of algebras
$$\Ext^*_\bG(\Ascr,\Ascr)\simeq W\ltimes S(\ft^*),$$
where 
$\Ascr$ is the {\it Springer sheaf}, which is
a certain $\bG$-equivariant perverse sheaf on the nilpotent cone $\Nscr$ in the Lie algebra of $\bG$, playing a key role in the Springer correspondence.
The above isomorphism, explicitly stated in \cite{Kato}, 
is a consequence of Lusztig's identification of the $\bG\times\G_m$-equivariant $\Ext$-algebra
with the graded Hecke algebra in \cite{Lusztig-HAII}.
Together with an appropriate formality statement, this leads to an equivalence of categories
\begin{equation}\label{constr-coh-eq}
D_{\bG,\Spr}(\Nscr)\simeq \Perf(dg-A_W-\mod),
\end{equation}
where the left-hand side is the subcategory of
$\bG$-equivariant constructible sheaves on $\Nscr$, split-generated by $\Ascr$, and the right-hand side is the perfect
derived category of DG-modules over the graded algebra $A_W=W\ltimes S(\ft^*)$. 
We exploit this equivalence of categories to construct the required semiorthogonal decomposition: the pieces of
the semiorthogonal decomposition have a natural geometric definition in terms of constructible sheaves on $\Nscr$.
Note that the equivalence of categories \eqref{constr-coh-eq} was first proved in \cite{Rider} in a different way, using
mixed sheaves. However, our construction of this equivalence has an additional bonus that we can compute explicitly
dg-modules over $A_W$ associated with various constructible sheaves, which eventually leads us to an explicit
form of the semiorthogonal decomposition in Theorem B.

There are two technical issues in the proof of Theorem C, that are of a quite general nature and are dealt with in 
Appendices \ref{dg-sec} and \ref{formality-sec}. 
First, we need to construct a DG-enhancement of the $\bG$-equivariant derived category of $\ell$-adic sheaves on $\Nscr$
and to lift the Frobenius action on $\Ext^*_\bG(\Ascr,\Ascr)$ to the chain level.
Secondly, we prove a general statement of the form ``purity of Frobenius on cohomology implies formality" of a DG-algebra,
generalizing some well known particular cases (such as the one employed by Deligne in \cite[Cor.\ 5.3.7]{deligneweil2}).


We should mention one more important ingredient in the proof of Theorem B. 
It turns out that the semiorthogonal decomposition in Theorem C can be computed in terms of the equivariant cohomology of Springer fibers. To deduce from this the explicit form of the functors in Theorem B
we use the computation of these equivariant cohomology algebras (in type A) in terms of certain linear arrangements, 
esttablished in \cite{GM} and \cite{KP}.


Using Theorem B we are able to prove Conjecture A for some other group actions.
Namely, we assume that a finite group $G$ acts effectively on a smooth curve $C$, and consider 
the induced action of $G^n\rtimes S_n$ on $C^n$. Combining our semiorthogonal decomposition for the action
of the symmetric group with the decomposition of $D^b_G(C)$ constructed in \cite{CP}, we obtain a semiorthogonal
decomposition of $D^b_{G^n\rtimes S_n}(C^n)$ which is sometimes even finer than the one in Conjecture A
(see Theorem \ref{Gn-thm}).
In the case $G=\Z/m$ and $C=\A^1$ this gives a proof of Theorem C for the complex reflection group $G(m,1,n)$.
In particular, for $G=\Z/2$ and $C=\A^1$ we get a semiorthogonal decomposition for the action of the Weyl group of type $B$,
which is different from the one obtained using the Springer correspondence for the orthogonal (or symplectic) group.

There are other cases of complex reflection groups for which the assumptions of Conjecture A are satisfied 
(see Proposition \ref{H3-H4-prop}). We leave the problem of constructing semiorthogonal decompositions in these
cases for a future work.

Note that although Theorem C does not include the cases of the Weyl groups of
types $D_n$ and $E_n$,
in these cases we still obtain some semiorthogonal decompositions of $D^b_W(\ft)$, which have a
smaller number of pieces than the motivic decomposition \eqref{mot-dec2-eq} (see Theorem \ref{Springer-thm}). 
In some sense in these decompositions nonsmooth pieces $X^g/C(g)$ get replaced by their noncommutative resolutions,
which however ``absorb" some of the other pieces of \eqref{mot-dec2-eq} (for type $D_n$
this is discussed in Section \ref{type-D-sec}).
It would be interesting to study similar less refined semiorthogonal decompositions of $D^b_G(X)$
in other cases when some of the quotients $X^g/C(g)$ are not smooth.


The paper is organized as follows.
In Section \ref{prelim-sec} we gather some preliminaries on semiorthogonal decompositions,
Hochschild (co)homology, DG-modules, and equivariant sheaves.
In Section \ref{Hochschild-sec} we study the canonical decomposition \eqref{mot-dec2-eq}
of the equivariant Hochschild homology
in the case when $G$ is a reflection group acting on a complex vector space $V$.
We show that in the case of real reflection groups, as well as for some complex reflection groups (including $G(m,1,n)$),
it is a decomposition into indecomposable graded $\Oscr(V)^G$-modules.

In Section \ref{Springer-sec} we present the construction of semiorthogonal decompositions of the derived category of
$W$-equivariant coherent sheaves $D^b_W(\ft)$, where $W$ is a Weyl group. 
After reminding the geometric setup of the Springer correspondence in
\S\ref{Springer-prelim-sec} we prove the equivalence of the category of DG-modules 
over the graded algebra $A_W=W\ltimes S(\ft^*)$ with the full subcategory in the derived category of constructible sheaves on the nilpotent cone
generated by the summands of the Springer sheaf (see \S\ref{modules-constr-sec}). 
The main result of this section, Theorem \ref{Springer-thm},
gives a semiorthogonal decomposition of $D^b_W(\ft)$ into subcategories indexed by nilpotent orbits and describes these subcategories as derived categories of modules over certain algebras. In \S\ref{KP-sec}
we discuss the description (in some cases) 
of modules generating the subcategories of our semiorthogonal decomposition, in terms of
some natural linear arrangements, which follows from a similar description of the equivariant cohomology of
Springer fibers in \cite{GM} and \cite{KP}. In Sections \ref{local-A-sec} and \ref{induced-sec} we discuss in more details
the $A_W$-modules giving our semiorthogonal decomposition in the case of type $A$, i.e., for the standard
action of $S_n$ on $\A^n$.

In Section \ref{type-A-sec} we consider some possibly nonlinear actions.  
In \S\ref{global-A-n-sec} we consider the global type $A$ setting, i.e., we construct a semiorthogonal
decomposition of $D^b_{S_n}(C^n)$, where $C$ is a smooth curve, thus proving Theorem B. 
In \S\ref{A1-sec} we consider the natural global analog of type $B$ in the case when the $C$ is equipped with an involution,
and in \S\ref{global-B-sec} we consider the more general case of the $G^n\rtimes S_n$-equivariant sheaves on $C^n$,
where $G$ is any finite group acting effectively on $C$. This includes in particular the proof of Conjecture A for the complex reflection groups $G(m,1,n)$ (see Corollary \ref{Gm1n-cor}).
Note that Theorem C follows from this and from Proposition \ref{exceptional-prop} dealing with types $G_2$ and $F_4$.

Finally, in Section \ref{BD-sec} we consider in detail the semiorthogonal decompositions for the Weyl groups of type
$B$ and $D$. We start with the decompositions constructed in Section \ref{Springer-sec} using the Springer correspondence
and then refine them further. Note that for type $B$ we get two different semiorthogonal decompositions of $D^b_W(\ft)$,
given by Theorems \ref{global-Bn-thm} and \ref{type-B-semiorth-dec-thm}.

The two Appendices, \ref{dg-sec} and \ref{formality-sec}, contain two technical results needed to prove the key statement
that the DG-algebra of endomorphisms of the Springer sheaf is formal (see Theorem \ref{A-W-purity-thm}).
In Section \ref{dg-sec} we show how to lift the Frobenius action on certain $\Ext^*$-algebras in finite characteristic to 
an action on endomorphisms in the DG-version of the (equivariant) derived category of sheaves in characteristic zero.
Then in Section \ref{formality-sec}, generalizing some known results of this kind, we show that the purity of the Frobenius action on the cohomology of a DG-algebra over $\C$ implies its formality. In fact, we show that this statement holds for DG-algebras over operads (see Theorem \ref{ref-1.1-1}).

\medskip

\noindent
{\it Conventions}.
Starting from Sec.\ \ref{Springer-sec} we work over $\CC$. 
If $X$ is a scheme of finite type over $\CC$ then we denote by
$X(\CC)$ the corresponding space of $\CC$-points with the classical topology. 
By a {\it constructible} sheaf on $X$ we mean a sheaf of $\C$-vector spaces on $X(\CC)$, which is constructible with respect 
to an algebraic stratification. Outside sections \S\ref{Hoch-dec-sec} and \S\ref{global-A-n-sec}-\ref{global-B-sec},
an equivariant sheaf means an object of the equivariant derived category of constructible sheaves on $X(\CC)$ (defined in
\cite{BerLun}), whereas in the specified sections we deal with equivariant coherent sheaves. 
When we discuss coherent sheaves
we denote by $D^b(X)$ (resp., $D^b_G(X)$) the bounded derived category of ($G$-equivariant)
coherent sheaves on $X$. For a ring $R$ we denote by $D^f(R)$ the full subcategory in the derived category of $R$-modules,
consisting of bounded complexes of finitely generated $R$-modules.
For a group $G$ acting on a ring $R$ we denote by $G\ltimes R$ the
corresponding crossed product ring. For a collection of subcategories $\Cscr_1,\ldots,\Cscr_n$ in
a triangulated category $\Cscr$ we denote by $\lan \Cscr_1,\ldots,\Cscr_n\ran$ the thick
subcategory of $\Cscr$ generated by $\Cscr_1,\ldots,\Cscr_n$.

\medskip

\noindent
{\it Acknowledgments}. First of all, we would like to acknowledge that we learned key ideas about the
relation of $W$-equivariant coherent sheaves on $\ft$ with constructible sheaves on the nilpotent cone
from the works of Syu Kato \cite{Kato} and Laura Rider \cite{Rider}. In particular, the key equivalence
\eqref{constr-coh-eq} was first proved in \cite{Rider} (we reprove it in a different way to get a more explicit functor
realizing this equivalence).
Also, many of the arguments we use (and some of the semiorthogonalities) 
are present already in the work of Syu Kato \cite{Kato} on Kostka systems.
We started working on this project while visiting the MSRI's special program on Noncommutative Geometry in 2013.
We'd like to extend our gratitide to this institution and to the organizers of this program.
We are grateful to Catharina Stroppel for discussions about the Springer correspondence, 
to Victor Ostrik for showing us the proof of Proposition \ref{star-reflection-prop}, to Gary Seitz for useful discussions
of the centralizers of nilpotent elements in type $E_6$, and to Ben Young
for providing a derivation of the identity \eqref{Bn-comb-id}. We also thank Valery Lunts for informing us about
a counterexample to our original version of Conjecture A from \cite{BGLL}, 
which made us add the assumption of effectivity of the action.

\section{Preliminaries}\label{prelim-sec}

\subsection{Semiorthogonal decompositions}\label{semiorth-sec}

Recall that a {\it semiorthogonal decomposition} of a triangulated category $\Ascr$ is
a pair of triangulated subcategories $\Bscr,\Cscr$ such that
$\Hom(\Cscr,\Bscr)=0$ and every object $A$ of $\Ascr$ fits into an exact triangle
$$C\to A\to B\to C[1]$$
with $B\in\Bscr$ and $C\in\Cscr$. The latter condition can be replaced by the condition  
that $\Ascr$ is classically generated by $\Bscr$ and $\Cscr$, i.e., $\Ascr$ is the smallest thick
subcategory of $\Ascr$ containing $\Bscr$ and $\Cscr$. We
can also require that $\Ascr$ is the smallest triangulated
subcategory of $\Ascr$ containing $\Bscr$ and $\Cscr$. The equivalence of all these conditions
follows from \cite[Lem.\ 1.20]{BLL}.
We write a semiorthogonal decomposition as
$\Ascr=\langle \Bscr,\Cscr\rangle$.

We say that $\Bscr\subset \Ascr$ is \emph{admissible} if there
are semiorthogonal decompositions of $\Ascr$ of the form
$\langle \Bscr,\Cscr\rangle$, $\langle \Cscr',\Bscr\rangle$. 
In this case
$$\Cscr={}^\perp\Bscr:=\{X\in\Ascr\ |\ \Hom(X,B)=0 \ \text{ for all } B\in\Bscr\},$$
$$\Cscr'=\Bscr^\perp:=\{Y\in\Ascr\ |\ \Hom(B,Y)=0 \ \text{ for all } B\in\Bscr\}.$$
The unique projection functor $\Ascr\to {}^\perp\Bscr$ (resp., $\Ascr\to \Bscr^\perp$)
sending $\Bscr$ to $0$, is called {\it left (resp., right) orthogonalization with respect to $\Bscr$.}

Let $R$ be a commutative noetherian ring. We will say that $\Ascr$
is $R$-finite if $\Ascr$ is $R$-linear and for any two objects $A,B\in\Ascr$ one
has that $\bigoplus_i \Hom(A,B[i])$ is a finitely generated $R$-module.

We will use the following criterion for ``intrinsic admissibility''.
\begin{lem}
\label{intrinsic}
  Assume that $\Bscr$ is an $R$-finite dg-enhanced triangulated category
  generated by objects $(M)_{1,\ldots,n} \in\Bscr$ such that $\Hom_{\Bscr}(M_i,M_i[n])=0$ for $n\neq 0$, $\Hom_\Bscr(M_i,M_j[n])=0$ for $i<j$, $n\in \ZZ$
 and $\End_\Bscr(M_i)$ has finite
  global dimension for all $i$. Assume we have a full inclusion $\Bscr\subset
  \Ascr$ of $R$-finite enhanced triangulated categories. Then $\Bscr$
  is admissible inside $\Ascr$.
\end{lem}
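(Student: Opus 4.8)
The plan is to reduce the statement to the $R$-linear version of the Bondal--Van den Bergh representability theorem \cite{BVdB}: an $R$-finite enhanced triangulated category that admits a \emph{strong} generator (an object classically generating it in a bounded number of cones) is admissible in any $R$-finite enhanced triangulated category containing it as a full triangulated subcategory. Thus the real task is to exhibit a strong generator of $\Bscr$, and the semiorthogonality imposed on the $M_i$ is exactly what makes this possible.

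First I would set $\Bscr_i:=\langle M_i\rangle\subseteq\Bscr$, the thick subcategory generated by $M_i$. Since $\Hom_\Bscr(M_i,M_i[n])=0$ for $n\neq 0$, the dg-algebra of endomorphisms of $M_i$ in the enhancement has cohomology concentrated in degree $0$, hence is formal and quasi-isomorphic to $E_i:=\End_\Bscr(M_i)$; as $M_i$ is a compact generator of $\Bscr_i$, the enhancement gives an equivalence $\Bscr_i\simeq\Perf(E_i)$. By $R$-finiteness $E_i$ is a module-finite, hence noetherian, $R$-algebra, and since $\gldim E_i<\infty$ we have $\Perf(E_i)=D^f(E_i)$, which is classically generated by $E_i$ in at most $\gldim E_i+1$ cones. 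So each $\Bscr_i$ is an $R$-finite enhanced triangulated category with the strong generator $E_i$, and the representability theorem, applied inside $\Bscr$, shows $\Bscr_i$ is admissible in $\Bscr$. Combined with $\Hom_\Bscr(\Bscr_i,\Bscr_j)=0$ for $i<j$ (immediate from the vanishing on generators) and with the fact that the $M_i$ classically generate $\Bscr$, a standard induction on $n$ --- peeling off these admissible pieces one at a time --- yields a semiorthogonal decomposition of $\Bscr$ with pieces $\Bscr_1,\dots,\Bscr_n$ in the order dictated by the semiorthogonality.

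Next I would promote this to a strong generator of $\Bscr$ itself: by the decomposition, every object of $\Bscr$ carries a filtration of length $n$ whose graded pieces lie in the $\Bscr_i$, and each such piece is built from $E_i$ in at most $\gldim E_i+1$ cones, so $G:=\bigoplus_{i=1}^n E_i$ (equivalently $\bigoplus_i M_i$) classically generates $\Bscr$ in a number of cones bounded independently of the object. Hence $\Bscr$ is an $R$-finite enhanced triangulated category with a strong generator. Finally, applying the representability theorem to the inclusion $\Bscr\subset\Ascr$: for every $X\in\Ascr$ the cohomological functors $\Hom_\Ascr(-,X)|_\Bscr$ and $\Hom_\Ascr(X,-)|_\Bscr$ on $\Bscr$ and on $\Bscr^{\mathrm{op}}$ are of finite type --- on the strong generator $G$ they take bounded, finitely generated $R$-module values, by $R$-finiteness of $\Ascr$ --- hence are representable; this produces a right and a left adjoint to the inclusion, so $\Bscr$ is admissible in $\Ascr$.

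The step I expect to be the main obstacle is the representability input: one must check that the Bondal--Van den Bergh argument (stated over a field in \cite{BVdB}) carries over with the ground field replaced by the commutative noetherian ring $R$ --- concretely, that a finite-type cohomological functor on $\Perf$ of a module-finite $R$-algebra, or on a strongly generated $R$-finite enhanced category, is represented by the object built from the appropriate $\RHom$-complex, which lies in $\Perf$ precisely because the relevant algebra has finite global dimension. The remaining ingredients --- the formality of a dg-algebra with cohomology in a single degree, and the bookkeeping with semiorthogonal decompositions --- are routine.
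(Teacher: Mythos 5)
Your route is genuinely different from the paper's, and the difference matters. You propose to first upgrade the $M_i$ to a strong generator of $\Bscr$ (via $\Bscr_i\simeq\Perf(E_i)$, $E_i$ of finite global dimension, and the internal semiorthogonal decomposition of $\Bscr$), and then invoke an $R$-linear version of the Bondal--Van den Bergh representability theorem to produce adjoints to $\Bscr\hookrightarrow\Ascr$. As you yourself flag, that $R$-linear representability is the load-bearing step, and it is not a one-line substitution of $R$ for a field: the original argument in \cite{BVdB} uses duality over the base field in an essential way, and while $R$-linear variants do exist in the literature, they are not established in the form you want, nor are they quoted anywhere in this paper.

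The paper avoids the representability machinery entirely by a direct construction, and that is precisely why the hypothesis $\gldim\End_\Bscr(M_i)<\infty$ is in the statement. Set $S=\End_\Bscr(M_1)$. For any $A\in\Ascr$, $R$-finiteness of $\Ascr$ makes $\RHom_\Ascr(M_1,A)$ a bounded complex with finitely generated cohomology over $R$, hence over the module-finite $R$-algebra $S$; finite global dimension of $S$ then forces this complex to be \emph{perfect} over $S$. One can therefore form $\RHom_\Ascr(M_1,A)\Lotimes_S M_1$ in $\Ascr$ together with the evaluation map $\phi$ to $A$, and since $\RHom_\Bscr(M_1,M_1)\simeq S$ sits in degree $0$, a d\'evissage on perfect $S$-complexes shows that $\RHom_\Ascr(M_1,\,\text{cone}(\phi))=0$. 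This gives the semiorthogonal decomposition $\Ascr=\langle\Bscr_1^\perp,\Bscr_1\rangle$ with $M_2,\dots,M_n\in\Bscr_1^\perp$, and one simply iterates; passing to opposite categories gives the other decomposition. In short: where you use finite global dimension to get strong generation of $\Bscr$ and then appeal to an abstract representability theorem, the paper uses it once, locally, to make $\RHom_\Ascr(M_i,-)$ land in perfect $E_i$-complexes, and then writes the projection functor down explicitly. The direct construction is self-contained and avoids the very obstacle you identify; if you want to keep your architecture, you would need to supply a proof (or a precise reference) for the $R$-linear representability statement, which is a nontrivial addition.
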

\begin{proof} The enhancement allows us to talk 
about $R\Hom$. Set $S=\End_{\Bscr}(M_1)$.
Let $A$ be an object in $\Ascr$. Then $R\Hom_\Ascr(M_1,A)$
has finitely generated cohomology over $R$, and hence it is a perfect
complex of $S$-modules. 
Consider the following
distinguished triangle 
\[
R\Hom(M_1,A)\Lotimes_{S} M_1\xrightarrow{\phi} A\r A'\r \ldots
\]
where $\phi$ is obtained in the usual way from the enhancement. We find
\[
R\Hom_\Ascr(M_1,A')=0,
\]
and so we have a semiorthogonal decomposition $\Ascr=\langle \Bscr_1^\perp, \Bscr_1\rangle$, where
$\Bscr_1$ is generated by $M_1$ and 
$M_2,\ldots,M_n\in \Bscr_1^\perp$. Continuing in this way, we find a semiorthogonal decomposition
$\Ascr=\langle\Bscr^{\perp}, \Bscr\rangle$.

 By considering the opposite categories
$\Bscr^{\circ} \subset\Ascr^{\circ}$ we obtain a semiorthogonal decomposition $\Ascr=\langle \Bscr, {}^\perp\Bscr\rangle$.
\end{proof}

\subsection{Hochschild homology and cohomology}

For a DG-category $\Cscr$ over a field $k$ we denote by $HH_*(\Cscr)$ and $HH^*(\Cscr)$ the Hochschild homology and cohomology, respectively.

Recall that the Hochschild homology is functorial: a DG-functor $\Phi:\Cscr_1\to \Cscr_2$ induces a map of Hochschild homology
$$\Phi_*:HH_*(\Cscr_1)\to HH_*(\Cscr_2).$$ 
On the other hand, 
if $\Phi:\Ascr\sub\Cscr$ is a DG-functor which is fully faithful on the cohomology level then there is a restriction homomorphism
$\Phi^*:HH^*(\Cscr)\to HH^*(\Ascr)$ (since in this case the diagonal bimodule for $\Cscr$ restricts to the diagonal bimodule
for $\Ascr$).
These two operations are compatible via the following identity:
\begin{equation}\label{proj-formula}
x\cap\Phi_*(y)=\Phi_*(\Phi^*(x)\cap y),
\end{equation}
where $x\in HH^*(\Cscr)$, $y\in HH_*(\Ascr)$ and $\cap$ denotes the natural action of Hochschild cohomology on Hochschild
homology. 

Another result we need is that the Hochschild homology is additive with respect to semiorthogonal decompositions.
Namely, if $\Cscr$ is a DG-enhancement of a triangulated category and a pair of DG-subcategories $\Ascr$, $\Bscr$ in $\Cscr$
induces a semiorthogonal decomposition $H^0\Cscr=\lan H^0\Ascr, H^0\Bscr\ran$ 
then the inclusion functors $\Ascr\to\Cscr$ and $\Bscr\to \Cscr$ induce a direct sum decomposition
\begin{equation}\label{HH-semiorth-eq}
HH_*(\Cscr)\simeq HH_*(\Ascr)\oplus HH_*(\Bscr)
\end{equation}
(see \cite{Kuznetsov}). Together with the compatibility \eqref{proj-formula} this leads to the following result.

\begin{lem}\label{HH-decomp-abs-lem} 
Let $\Cscr$ be a DG-category admitting a semiorthogonal decomposition into the subcategories
$\Cscr_1,\ldots,\Cscr_n$. Then there is a decomposition of $HH^*(\Cscr)$-modules
$$HH_*(\Cscr)\simeq \bigoplus_{i=1}^n HH_*(\Cscr_i),$$
where the module structure on $HH_*(\Cscr_i)$ is induced by the homomorphism $HH^*(\Cscr)\to HH^*(\Cscr_i)$.
\end{lem}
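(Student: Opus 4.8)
The plan is to combine the two facts recalled just before the statement: the additivity of Hochschild homology under a single semiorthogonal decomposition, equation \eqref{HH-semiorth-eq}, and the projection formula \eqref{proj-formula}. The statement is essentially a bookkeeping consequence of these, so the proof should be short.

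First I would reduce the $n$-term semiorthogonal decomposition to a sequence of two-term ones. If $\Cscr$ has a semiorthogonal decomposition into $\Cscr_1,\ldots,\Cscr_n$, then writing $\Cscr_{\le i}$ for the DG-subcategory generated by $\Cscr_1,\ldots,\Cscr_i$, we have two-term semiorthogonal decompositions $H^0\Cscr_{\le i}=\lan H^0\Cscr_{\le i-1},H^0\Cscr_i\ran$ for each $i$, and $H^0\Cscr=\lan H^0\Cscr_{\le n-1},H^0\Cscr_n\ran$. Applying \eqref{HH-semiorth-eq} repeatedly (and noting that each $\Cscr_{\le i}$ is itself a DG-enhancement of a triangulated category, so the cited result applies) gives the direct sum decomposition $HH_*(\Cscr)\simeq\bigoplus_{i=1}^n HH_*(\Cscr_i)$ as graded vector spaces, where the $i$-th summand is the image of the map $\iota_{i,*}:HH_*(\Cscr_i)\to HH_*(\Cscr)$ induced by the inclusion DG-functor $\iota_i:\Cscr_i\hra\Cscr$. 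So the content that remains is to check that this splitting is a splitting of $HH^*(\Cscr)$-modules.

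For that I would argue as follows. Each inclusion $\iota_i:\Cscr_i\hra\Cscr$ is fully faithful on cohomology, so there is a restriction homomorphism $\iota_i^*:HH^*(\Cscr)\to HH^*(\Cscr_i)$, which is the ring homomorphism inducing the module structure on $HH_*(\Cscr_i)$ referred to in the statement. Now take $x\in HH^*(\Cscr)$ and $y\in HH_*(\Cscr_i)$. By the projection formula \eqref{proj-formula} applied to $\Phi=\iota_i$,
\[
x\cap\iota_{i,*}(y)=\iota_{i,*}\bigl(\iota_i^*(x)\cap y\bigr).
\]
This says precisely that the isomorphism $\bigoplus_i HH_*(\Cscr_i)\xrightarrow{\sim}HH_*(\Cscr)$, $\,y\mapsto\sum_i\iota_{i,*}(y_i)$, intertwines the $HH^*(\Cscr)$-action on the target with the action on the source given by $x\cdot y_i=\iota_i^*(x)\cap y_i$. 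Hence the direct sum decomposition is one of $HH^*(\Cscr)$-modules, as claimed.

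I do not anticipate a serious obstacle; the only point that requires a little care is making sure that in the inductive reduction the maps $HH_*(\Cscr_i)\to HH_*(\Cscr)$ produced by iterating \eqref{HH-semiorth-eq} really do coincide with the maps $\iota_{i,*}$ induced by the direct inclusions $\Cscr_i\hra\Cscr$ — this follows from the functoriality of $HH_*$ together with the fact that $\iota_i$ factors as $\Cscr_i\hra\Cscr_{\le i}\hra\cdots\hra\Cscr$. Once that identification is in place, the projection formula does the rest, and the module structure is exactly the one asserted. This is the step I would be most careful to state cleanly, but it is routine.
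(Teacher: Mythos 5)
Your argument is correct and is exactly the intended one: the paper states the lemma immediately after recalling \eqref{HH-semiorth-eq} and \eqref{proj-formula} with the remark "Together with the compatibility \eqref{proj-formula} this leads to the following result," and gives no further proof, so iterating the two-term additivity and then invoking the projection formula is precisely what the authors have in mind. Your care about identifying the summand inclusions with $\iota_{i,*}$ via functoriality is a reasonable extra cleanliness but not a departure from the paper's approach.
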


\subsection{Graded modules versus DG-modules}\label{graded-dg-sec}

For an abelian category $\Ascr$ let $\Gr(\Ascr)$ be the category of $\ZZ$-graded objects over
$\Ascr$ (with maps of degree $0$), and let $C(\Ascr)$ be the corresponding category of complexes. The shift functor
on $\Gr(\Ascr)$ is written as $(?)$ and on $C(\Ascr)$ as $[?]$. Let $\Ab$ be the category of abelian groups.
It is clear that the functor 
\begin{equation}
\label{abfunctor}
F:C(\Gr(\Ab))\r C(\Ab):M\mapsto \bigoplus_i M_i[-i]
\end{equation}
is compatible with quasi-isomorphisms, homological shifts and cones.
Here $M_i$ is the complex of abelian groups obtained by restricting $M$
to grading degree $i$.

\medskip

Let $A$ be a $\ZZ$-graded ring. We can view it as a DG-ring with zero differential. For a graded $A$-module $M$ let 
$M^{\dg}$ be $M$ viewed as a DG-module over $A$ with zero differential.

We denote by $A-\grmod$ and $A{-}\dgmod$ the categories of graded and DG-modules
over $A$, respectively, and by $D(A{-}\grmod)$ and $D(A{-}\dgmod)$ the corresponding derived categories. 

The functor $F$ from \eqref{abfunctor} specializes to a functor
\[
F:C(A{-}\grmod)\r A{-}\dgmod:M\mapsto \bigoplus_i M_i[-i]
\]
 still compatible with quasi-isomorphisms, shifts and cones since these properties
do not depend on the $A$-structure. So one obtains
an induced exact functor of triangulated categories
\[
F:D(A{-}\grmod)\r D(A{-}\dgmod).
\]
This functor has the following properties
\begin{itemize}
\item
$F(M)=M^{\dg}$ for $M\in A{-}\grmod$;
\item
$F(A(i)[j])=A^{\text{dg}}[i+j]$, thus $F$ preserves perfect complexes; and
\item
$F(M(i)[-i])=FM$ for $M\in D(A-\grmod)$.
\end{itemize}
The last property suggests that $D(A{-}\dgmod)$ 
is some kind of orbit category over $D(A{-}\grmod)$. This is indeed the case and follows
from \cite[\S9.3]{Keller6}. For the benefit of the reader we provide a proof in our special case.

If $M,N\in D(A{-}\grmod)$ then  the canonical morphisms in $C(\Ab)$
\[
F_n:R\Hom_{A-\grmod}(M,N(n))[-n]\r  R\Hom_{A-\dgmod}(FM,F(N(n)[-n]))=R\Hom_{A{-}\dgmod}(FM,FN)
\]
can be combined to a morphism
\[
\tilde{F}:\bigoplus_n R\Hom_{A-\grmod}(M,N(n))[-n]\r R\Hom_{A{-}\dgmod}(FM,FN)
\]
\begin{thm} \label{qi} Assume that $M$ is perfect. Then $\tilde{F}$ is a quasi-isomorphism.
\end{thm}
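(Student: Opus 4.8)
The plan is to reduce to the case $M = A(i)[j]$ by dévissage, then compute both sides directly. First I would observe that the class of perfect $M$ for which $\tilde F$ is a quasi-isomorphism is closed under shifts, cones, and direct summands: shifts and cones are handled because both $\RHom_{A\text{-}\gr}(-, N(n))$ and $\RHom_{A^{\dg}}(F(-), FN)$ are exact functors in the first variable (up to the bookkeeping shift $[-n]$, which is absorbed into the direct sum over $n$), and $F$ itself commutes with shifts and cones by the discussion preceding the theorem; closure under summands is automatic since quasi-isomorphism can be checked on each summand. Since every perfect complex is built from the generators $A(i)[j]$ by finitely many cones and retracts, it suffices to verify the statement for $M = A$ (and then for all $A(i)[j]$, which follows formally, or can simply be included in the base case).

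For $M = A$ the left-hand side is $\bigoplus_n \RHom_{A\text{-}\gr}(A, N(n))[-n]$. Since $A$ is projective in $A\text{-}\gr$, $\RHom_{A\text{-}\gr}(A, N(n))$ is represented by $\Hom_{A\text{-}\gr}(A, N(n)) = N(n)_0 = N_n$, the degree-$n$ component of $N$, viewed as a complex of abelian groups. Hence the left-hand side is $\bigoplus_n N_n[-n]$, which is exactly $FN$ by the definition \eqref{abfunctor} of $F$. On the right-hand side, $FM = F(A) = A^{\dg}$, which is the free DG-module of rank one over $A^{\dg}$, so $\RHom_{A^{\dg}}(A^{\dg}, FN) = FN$ as well. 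One then checks that under these identifications the map $\tilde F$ is the identity on $FN$; this is a matter of unwinding the definition of the canonical morphisms $F_n$, which on the degree-$n$ part of $A$ send a morphism $A \to N(n)$ to its image under $F$, i.e. to the corresponding map of the rank-one free DG-modules. For general $M = A(i)[j]$ the same computation goes through with an extra shift, giving $\bigoplus_n N_{n-i}[-n+i-j] \cong FN[-i-j]$ on the left and $\RHom_{A^{\dg}}(A^{\dg}[i+j], FN) = FN[-i-j]$ on the right, compatibly.

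The main subtlety — and the step I would be most careful about — is not any single computation but the verification that the family of maps $\{F_n\}$ genuinely assembles, functorially in $M$ and $N$ and compatibly with the triangulated structure, so that the reduction to $M=A$ is legitimate: one needs that forming $\tilde F$ commutes with replacing $M$ by a cone, which requires choosing the enhancement / model for $\RHom$ carefully (e.g. using h-projective resolutions on the graded side and the fact that $F$ preserves h-projectivity of the generators $A(i)[j]$, hence of all perfect complexes). Once that naturality is in place, the dévissage is routine and the base case is the short direct computation above. This is exactly the special case of \cite[\S9.3]{Keller6} promised in the text, and no new ideas beyond careful homological bookkeeping are needed.
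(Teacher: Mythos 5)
Your proof is correct and takes essentially the same route as the paper: reduce by the 5-lemma/d\'evissage to $M=A(i)[j]$, then compute both sides directly and observe they are reindexings of each other. (One small slip: the left-hand side for $M=A(i)[j]$ should read $\bigoplus_n N_{n-i}[-n-j]$, not $\bigoplus_n N_{n-i}[-n+i-j]$; reindexing then gives $FN[-i-j]$ as you claim.)
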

\begin{proof} By the usual argument involving the 5-lemma it is sufficient to prove this
for $M=A(i)[j]$. Let $N$ be a complex over $A{-}\gr$. Then we have
\begin{equation}
\label{index}
\bigoplus_n R\Hom_{A{-}\grmod}(A(i)[j],N(n)[-n])=\bigoplus_n N_{n-i}[-n-j]
\end{equation}
Similarly
\[
R\Hom_{A{-}\dgmod}(FM,FN)=R\Hom_{A{-}\dgmod}(A[i+j],FN)=(FN)[-i-j]
=\bigoplus_m N_m[-m-i-j]
\]
which is a reindexing of \eqref{index}.
\end{proof}

\begin{cor}\label{graded-generation-cor} 
Assume that $M$ is a perfect object in $D(A{-}\grmod)$.
 If $FM$ generates $D(A^{\dg})$ then $M(i)_{i\in\ZZ}$ generates $D(A{-}\grmod)$ and $M$ 
generates $D(A-\mod)$, the derived category of (ungraded) $A$-modules.
\end{cor}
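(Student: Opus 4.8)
The plan is to use Theorem~\ref{qi} to convert the hypothesis that $FM$ generates $D(A^{\dg})$ into the statement that the family $(M(i))_{i\in\ZZ}$ has trivial joint right orthogonal in $D(A{-}\gr)$, and then to pass to the ungraded situation through the functor that forgets the grading. First I would note that $M$, being perfect, is a compact object of $D(A{-}\gr)$, and hence so is each twist $M(i)$; likewise $FM$ is compact in $D(A^{\dg})$, since $F$ preserves perfect complexes. Both $D(A{-}\gr)$ and $D(A^{\dg})$ are compactly generated, by $\{A(j)\}_{j\in\ZZ}$ and by $A^{\dg}$ respectively. Because $FM$ is compact and generates $D(A^{\dg})$, any $Y\in D(A^{\dg})$ with $\RHom_{A^{\dg}}(FM,Y)=0$ must vanish: the full subcategory of those $Z$ with $\RHom_{A^{\dg}}(Z,Y)=0$ is triangulated and closed under arbitrary coproducts, hence localizing, and it contains $FM$, so it is all of $D(A^{\dg})$ and in particular contains $Y$.

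Next I would take $N\in D(A{-}\gr)$ right orthogonal to all twists of $M$, i.e.\ with $\RHom_{A{-}\gr}(M(i),N)=0$ for every $i$; this is the same as $\RHom_{A{-}\gr}(M,N(n))=0$ for every $n$, i.e.\ as acyclicity of $\bigoplus_n\RHom_{A{-}\gr}(M,N(n))[-n]$. By Theorem~\ref{qi} the latter complex is quasi-isomorphic to $\RHom_{A^{\dg}}(FM,FN)$, so $\RHom_{A^{\dg}}(FM,FN)=0$, whence $FN=0$ by the previous step; since $FN=\bigoplus_i N_i[-i]$ is acyclic if and only if $N$ is, we conclude $N=0$. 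Thus the set of compact objects $(M(i))_{i\in\ZZ}$ has trivial joint right orthogonal in the compactly generated category $D(A{-}\gr)$, and by the standard localization theorem for compactly generated triangulated categories the localizing subcategory they generate sits in a localization sequence whose Verdier quotient is identified with this joint right orthogonal; as the quotient is $0$, the family $(M(i))_{i\in\ZZ}$ generates $D(A{-}\gr)$.

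For the remaining assertion I would apply the forgetful functor $U:D(A{-}\gr)\to D(A)$; it is triangulated and commutes with arbitrary direct sums, and since a grading shift becomes trivial after forgetting the grading one has $U(M(i))=U(M)$ for all $i$. Let $\Lscr\subset D(A)$ be the smallest localizing subcategory containing $U(M)$. Then $\{X\in D(A{-}\gr)\mid U(X)\in\Lscr\}$ is a localizing subcategory containing every $M(i)$, hence, by the previous paragraph, all of $D(A{-}\gr)$; applying this to the free graded module $A$, whose image $U(A)$ is the free ungraded $A$-module, gives $A\in\Lscr$. Since $A$ is a compact generator of $D(A)$, this forces $\Lscr=D(A)$, i.e.\ $M$ generates $D(A)$.

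The only non-formal ingredient is Theorem~\ref{qi}; the rest is the standard formalism of compactly generated triangulated categories, so I expect the main thing to watch is the grading-shift bookkeeping: that Theorem~\ref{qi} exactly turns ``orthogonal to $M$ after all twists'' into ``orthogonal to $FM$'', that $FN$ is acyclic precisely when $N$ is, and that $U$ collapses the whole $\ZZ$-family $(M(i))_i$ onto a single object. (One could also avoid the appeal to Bousfield localization by using, as the paragraph following Theorem~\ref{qi} suggests, that $D(A^{\dg})$ is the orbit category of $D(A{-}\gr)$ under the twist, which lifts the relation $A^{\dg}\in\langle FM\rangle$ back to the twists of $A$ lying inside $\langle M(i)\mid i\in\ZZ\rangle$, and these twists of $A$ generate $D(A{-}\gr)$.)
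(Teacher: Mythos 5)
Your proof is correct and takes essentially the same approach as the paper: assume a right-orthogonal object $N$, apply Theorem \ref{qi} (equivalently, equation \eqref{dgext}) to conclude $\RHom_{A^{\dg}}(FM,FN)=0$, hence $FN=0$, hence $N=0$; then pass to $D(A)$ via the forgetful functor, using that $A$ lies in the essential image and generates. You have simply spelled out the compactly-generated/Bousfield-localization formalism and the forgetful-functor bookkeeping that the paper leaves implicit.
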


\begin{proof} Assume we have
$N$ in $D(A{-}\grmod)$ such that 
\[
R\Hom_{A-\grmod}(M(i),N)=0
\]
for all $i$. 
By \eqref{dgext} we conclude $R\Hom_{A{-}\dgmod}(FM,FN)=0$ and hence $FN=0$. Then
clearly $N=0$.

Hence, $M(i)_{i\in\ZZ}$ generate $D(A{-}\grmod)$. The fact that $M$ generates $D(A-\mod)$ follows
from the fact that the essential image of $D(A-\grmod)$ generates $D(A-\mod)$. Indeed,
the latter obviously contains the generator $A$ of $D(A-\mod)$.
\end{proof}

\begin{thm}
\label{dgextth}
Let $M,N\in D(A{-}\grmod)$ and assume that $M$ is perfect. Then
\begin{equation}
\label{dgext}
\bigoplus_n \Ext^{i-n}_{A}(M,N)_n=\Ext^{i}_{A{-}\dgmod}(FM,FN)
\end{equation}
where the $\Ext$ on the left is for ordinary ungraded $A$-modules.
\end{thm}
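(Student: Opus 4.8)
The plan is to read the statement off from Theorem \ref{qi} by passing to cohomology. Since $M$ is perfect, Theorem \ref{qi} provides a quasi-isomorphism in $C(\Ab)$
\[
\tilde{F}:\bigoplus_n \RHom_{A{-}\gr}(M,N(n))[-n]\xrightarrow{\ \sim\ } \RHom_{A^{\dg}}(FM,FN).
\]
Taking $H^i$ of the right-hand side gives $\Ext^i_{A^{\dg}}(FM,FN)$ by definition. On the left-hand side, direct sums of complexes of abelian groups are exact, so $H^i$ commutes with $\bigoplus_n$, and the summand indexed by $n$ contributes $H^i\bigl(\RHom_{A{-}\gr}(M,N(n))[-n]\bigr)=H^{i-n}\bigl(\RHom_{A{-}\gr}(M,N(n))\bigr)=\Ext^{i-n}_{A{-}\gr}(M,N(n))$. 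This already yields
\[
\Ext^i_{A^{\dg}}(FM,FN)\simeq\bigoplus_n \Ext^{i-n}_{A{-}\gr}(M,N(n)),
\]
and it remains to identify $\Ext^{j}_{A{-}\gr}(M,N(n))$ with the degree-$n$ component $\Ext^{j}_A(M,N)_n$ of the ungraded $\Ext$.

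For that identification I would resolve $M$ by a bounded complex $P_\bullet\to M$ of finitely generated graded projective $A$-modules (available because $M$ is perfect in $D(A{-}\gr)$); this is simultaneously a graded and an ungraded projective resolution, and it can be used to compute $\Ext$ in both senses. Because each $P_k$ is finitely generated, the internal grading on the Hom-complex is an honest direct-sum decomposition, $\Hom_A(P_k,N)=\bigoplus_n \Hom_A(P_k,N)_n$ with $\Hom_A(P_k,N)_n=\Hom_{A{-}\gr}(P_k,N(n))$, so it is preserved by cohomology: $\Ext^{j}_A(M,N)_n=H^j\bigl(\Hom_A(P_\bullet,N)_n\bigr)=H^j\bigl(\Hom_{A{-}\gr}(P_\bullet,N(n))\bigr)=\Ext^{j}_{A{-}\gr}(M,N(n))$. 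Substituting $j=i-n$ and combining with the previous display gives \eqref{dgext}.

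The argument is essentially formal once Theorem \ref{qi} is available; the only points that require care are the shift convention — the $[-n]$ appearing in $\tilde F$ is exactly what produces the index shift $i\mapsto i-n$ on the left of \eqref{dgext} — and the use of finiteness of $M$ to guarantee that the internal grading on the ungraded Hom-complex is a direct sum rather than a product, so that it commutes with taking cohomology. I do not expect any serious obstacle beyond this bookkeeping.
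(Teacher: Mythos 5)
Your proof is correct and takes the same approach as the paper: pass to cohomology in the quasi-isomorphism $\tilde F$ of Theorem~\ref{qi}. You supply an extra (correct and implicit in the paper) step identifying $\Ext^{j}_{A{-}\gr}(M,N(n))$ with the degree-$n$ piece $\Ext^{j}_{A}(M,N)_n$ of the ungraded $\Ext$, using that $M$ admits a bounded resolution by finitely generated graded projectives so that the internal grading on the Hom-complex is a direct sum and commutes with cohomology.
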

\begin{proof}
This follows by considering $H^i(\tilde{F})$ and invoking Theorem \ref{qi}.
\end{proof}

Below we will also need the following result similar to Corollary \ref{graded-generation-cor}.

\begin{lem}\label{graded-generation-lem}
Let $M_1,\ldots,M_n$ be perfect objects in $D(A{-}\grmod)$ 
such that the corresponding ungraded
objects $\ov{M}_1,\ldots,\ov{M}_n$
in $D(A-\mod)$ satisfy the assumptions of Lemma \ref{intrinsic}. Assume also that the objects
$FM_1,\ldots,FM_n$ generate an admissible subcategory $\langle FM_1,\ldots,FM_n\rangle\sub D(A{-}\dgmod)$.
Assume now that for some $N\in D(A{-}\grmod)$ we have
$FN\in \langle FM_1,\ldots,FM_n\rangle$.
Then $N$, viewed as an object of $D(A-\mod)$, belongs to the subcategory
$\langle M_1,\ldots,M_n\rangle\sub D(A-\mod)$. 
\end{lem}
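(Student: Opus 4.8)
The plan is to reduce the statement about the ungraded category $D(A)$ to the already-established generation statement in $D(A{-}\gr)$, using the translation functor $F$ and Theorem \ref{dgextth} to relate $\Ext$-groups on both sides. First I would set $\Bscr=\langle M_1,\ldots,M_n\rangle\sub D(A{-}\gr)$ and observe, using Theorem \ref{dgextth}, that the hypotheses on $\ov M_1,\ldots,\ov M_n$ feeding into Lemma \ref{intrinsic} are inherited by $FM_1,\ldots,FM_n$ in $D(A^{\dg})$; in particular $\End_{A^{\dg}}(FM_i)$ has finite global dimension and the semiorthogonality $\Hom_{A^{\dg}}(FM_i,FM_j[m])=0$ for $i<j$ holds, so $\langle FM_1,\ldots,FM_n\rangle$ is admissible (this is part of the hypothesis anyway, but it is worth recording how it matches up). The key point is that $F$ is essentially an ``orbit functor'': by Theorem \ref{qi} and \eqref{dgext}, $\RHom_{A^{\dg}}(FM_i,FN)$ computes $\bigoplus_n\RHom_A(M_i,N(n))$ with an internal reindexing, so vanishing of the former forces vanishing of all the latter graded $\Hom$-spaces simultaneously.

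Next I would run the projection-functor argument exactly as in the proof of Lemma \ref{intrinsic}, but carried out on the graded side. Given $N\in D(A{-}\gr)$ with $FN\in\langle FM_1,\ldots,FM_n\rangle$, form the distinguished triangle
\[
\bigoplus_n\RHom_{A-\gr}(M_1(n),N)\Lotimes_{S}M_1(n)\r N\r N'\r\ldots
\]
with $S=\End_{A-\gr}(M_1)=\End_{A^{\dg}}(FM_1)$ (the first object here is the graded analogue of the left mutation; it is perfect because of the global dimension hypothesis and $R$-finiteness). Applying $F$ and using Theorem \ref{qi} plus compatibility of $F$ with cones, $FN'$ sits in the corresponding triangle for $FN$, so $FN'\in\langle FM_2,\ldots,FM_n\rangle$ and moreover $\RHom_{A-\gr}(M_1(n),N')=0$ for all $n$. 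Iterating over $M_2,\ldots,M_n$ we arrive at $N^{(n)}\in D(A{-}\gr)$ with $FN^{(n)}=0$, hence $N^{(n)}=0$ by the last part of the argument in Corollary \ref{graded-generation-cor}. Unwinding the triangles shows $N\in\langle M_1(n)_{n\in\ZZ},\ldots,M_n(n)_{n\in\ZZ}\rangle\sub D(A{-}\gr)$, where each step used an object built from the $M_i(n)$ by $\RHom$-twisting.

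Finally I would pass to the ungraded category. The essential image of the forgetful functor $D(A{-}\gr)\to D(A)$ lands in the thick subcategory generated by $A$ (as in Corollary \ref{graded-generation-cor}), and more to the point, the forgetful functor sends $M_i(n)$ to $\ov M_i$ for every $n$ and is exact, so it sends the subcategory $\langle M_i(n)_{i,n}\rangle$ into $\langle \ov M_1,\ldots,\ov M_n\rangle\sub D(A)$. Hence the image of $N$, which is the object of $D(A)$ in the statement, lies in $\langle M_1,\ldots,M_n\rangle$. The main obstacle I anticipate is purely bookkeeping: making sure the left-mutation triangles on the graded side are literally compatible with $F$ (i.e.\ that $F$ of the graded mutation object is the DG-mutation object), which rests on the quasi-isomorphism $\tilde F$ of Theorem \ref{qi} being compatible with the $\Lotimes_S$ construction — this is where one has to be slightly careful that $S$ is computed the same way on both sides and that $F$ commutes with $(-)\Lotimes_S M_1$, but given the explicit reindexing description of $F$ this is routine rather than deep.
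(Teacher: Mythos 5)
Your strategy matches the paper's proof essentially exactly: run the iterated mutation from the proof of Lemma \ref{intrinsic} on the graded side, use Theorem \ref{dgextth} (equivalently Theorem \ref{qi}) to transport the resulting $\Ext$-vanishings to $D(A^{\dg})$, note that $F$ collapses each twist $M_i(j)$ to $FM_i$, and conclude that the ``error term'' $N^{(n)}$ has $FN^{(n)}=0$ and hence is zero; then pass to $D(A)$ via the forgetful functor.

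The one step that would fail as literally written is your explicit left-mutation triangle. You take $S=\End_{A-\gr}(M_1)$, which under the hypotheses does agree with $\End_{A^{\dg}}(FM_1)$, but this is only the internal-degree-zero piece of the graded ring $\End_A(\ov M_1)=\bigoplus_{k}\Hom_{A-\gr}(M_1,M_1(k))$ over which the mutation in Lemma \ref{intrinsic} is actually formed. When $\Hom_{A-\gr}(M_1,M_1(k))\neq 0$ for some $k\neq 0$ --- the typical situation in the application --- tensoring $\bigoplus_n\RHom_{A-\gr}(M_1(n),N)$ over the smaller ring $S$ does not yield a cone $N'$ with $\RHom_{A-\gr}(M_1(m),N')=0$ for all $m$: pulling the perfect complex through $\RHom$ leaves you comparing $\bigoplus_n\RHom_{A-\gr}(M_1(n),N)\Lotimes_S\Hom_{A-\gr}(M_1,M_1(n-m))$ with $\RHom_{A-\gr}(M_1(m),N)$, and the evaluation map is not an isomorphism because the off-diagonal terms do not cancel. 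The paper's phrase ``observing that the corresponding Hom-complexes will be graded'' is doing exactly this work implicitly: run the mutation of Lemma \ref{intrinsic} in $D(A)$ over the full graded ring $\End_A(\ov M_1)$, and observe that $\RHom_A(\ov M_1,\ov N)$, $\End_A(\ov M_1)$, $\ov M_1$ and the evaluation map all carry compatible internal gradings, so the triangle already lives in $D(A{-}\gr)$ with its cone in $\langle M_1(j)\rangle_j$. With that correction the rest of your argument goes through verbatim.
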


\Pf . First, repeating the steps of the proof of Lemma \ref{intrinsic} and observing that the corresponding
Hom-complexes will be graded we can get an exact triangle in $D(A{-}\grmod)$
$$N'\to N\to C\to \ldots$$
with $N'$ in the subcategory generated by $M_i(j)$ and $C$ such that $\Ext^*_A(M_i,C)=0$ for $i=1,\ldots,n$.
By Theorem \ref{dgextth}, this implies that $FC$ lies in the right orthogonal of
$\langle FM_1,\ldots,FM_n\rangle$ in $D(A{-}\dgmod)$. Applying $F$ to the above exact triangle and
using the fact that $FN\in \langle FM_1,\ldots,FM_n\rangle$, we derive that $FC=0$, hence $C=0$.
\ed

\subsection{Equivariant sheaves and equivariant cohomology}\label{equiv-sh-sec}

We refer to \cite{BerLun} for general facts on equivariant derived categories. We will refer to objects of the equivariant
derived categories of constructible sheaves defined in \cite{BerLun} simply as {\it equivariant sheaves}.

Let $\bG$ be a connected linear group over $\CC$, and let $X$ be a $\CC$-scheme. Below we always assume
that $X$ is of finite type over $\CC$. We denote the corresponding equivariatn derived category by $D_{\bG,c}(X)$.
For an $\bG$-equivariant sheaf $F$ on $X$ we denote
by $R\Ga_\bG(X,F)$ the equivariant push-forward of $F$ to the point, which is an object of
the derived category $D_{\bG,c}(pt)$ of $\bG$-equivariant sheaves on the point. The equivariant cohomology 
is obtained by passing to the cohomology of this object:
$$H^*_\bG(X,F)=H^*R\Ga_\bG(X,F).$$
Similarly, for $F,F'\in D_{\bG,c}(X)$ we denote
$$R\Hom_\bG(F,F'):=R\Ga_\bG(X,R\und{\Hom}(F,F')),$$
$$\Ext^*_\bG(F,F')=H^*R\Hom_\bG(F,F').$$


\begin{lem}\label{G/H-lem} 
Let $\bH\sub \bG$ be a normal subgroup of finite index, and let $X$ be a $\bG$-scheme.

\noindent
(i) For $\bG$-equivariant sheaves $\Fscr$ and $\Gscr$ on $X$ one has a natural isomorphism
$$R\Hom_\bG(\Fscr,\Gscr)\simeq R\Hom_\bH(\Fscr,\Gscr)^{\bG/\bH}.$$

\noindent
(ii) For any $\bG/\bH$-representation $V$ we have a natural isomorphism
$$R\Ga_\bG(X, V^\vee\ot\und{\C})\simeq \Hom_{\bG/\bH}(V, R\Ga_\bH(X,\und{\C})).$$
This isomorphism is compatible with the action of $R\Ga_\bG(pt)=R\Ga_\bH(pt)^{\bG/\bH}$.
\end{lem}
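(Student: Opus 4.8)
\textit{Plan.} The plan is to compute both sides of (i) in the Borel construction and to reduce the comparison to an elementary statement about a finite covering; part (ii) will then be a formal consequence. Fix a free contractible $G$-space $EG$ (a resolution of the point in the sense of Bernstein--Lunts); restricting the action to $H$ it is also a model for $EH$, so for $G$-equivariant sheaves $\Fscr,\Gscr$ on $X$ both $\RHom_G$ and $\RHom_H$ may be computed on the Borel quotients $X_G:=EG\times_G X$ and $X_H:=EG\times_H X$: writing $\Fscr_G,\Fscr_H$ for the induced sheaves, one has $\RHom_G(\Fscr,\Gscr)=R\Ga(X_G,\uRHom(\Fscr_G,\Gscr_G))$ and $\RHom_H(\Fscr,\Gscr)=R\Ga(X_H,\uRHom(\Fscr_H,\Gscr_H))$. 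Since $H$ is normal of finite index in $G$, the group $G/H$ acts on $X_H$ by deck transformations covering the identity of $X_G$, exhibiting the natural map $q\colon X_H\to X_G$ as a finite principal $G/H$-bundle; moreover this deck action induces on $\RHom_H(\Fscr,\Gscr)$ precisely the $G/H$-action coming from the $G$-equivariant (not merely $H$-equivariant) structures on $\Fscr$ and $\Gscr$.

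First I would prove (i). Because $q$ is a local homeomorphism, $q^{*}$ commutes with $\uRHom$ and $\Fscr_H=q^{*}\Fscr_G$, so $\uRHom(\Fscr_H,\Gscr_H)=q^{*}\Hscr$, where $\Hscr:=\uRHom(\Fscr_G,\Gscr_G)$. As $q$ is finite, $Rq_{*}=q_{*}$, and for a principal $G/H$-bundle one has a $G/H$-equivariant isomorphism $q_{*}q^{*}\Hscr\simeq\Hscr\ot_{\C}\und{\C[G/H]}$, in which $G/H$ acts on the constant sheaf $\und{\C[G/H]}$ through the regular representation and trivially on $\Hscr$; hence $(q_{*}q^{*}\Hscr)^{G/H}\simeq\Hscr$. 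Taking $G/H$-invariants is exact because $G/H$ is finite and we work with $\C$-coefficients, so it commutes with $R\Ga$, and we obtain
$$\RHom_H(\Fscr,\Gscr)^{G/H}=R\Ga\bigl(X_G,(q_{*}q^{*}\Hscr)^{G/H}\bigr)=R\Ga(X_G,\Hscr)=\RHom_G(\Fscr,\Gscr),$$
all identifications being $G/H$-equivariantly natural; this proves (i).

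For (ii) I would apply (i) to $\Fscr=\und{\C}_X$ and $\Gscr=V^{\vee}\ot\und{\C}_X$, the constant sheaf on which $G$ acts through $G\to G/H$ on $V^{\vee}$. Since $H$ acts trivially on $V^{\vee}$ we get $\RHom_H(\und{\C},V^{\vee}\ot\und{\C})=V^{\vee}\ot R\Ga_H(X,\und{\C})$, and taking $G/H$-invariants of the diagonal action gives $(V^{\vee}\ot R\Ga_H(X,\und{\C}))^{G/H}=\Hom_{G/H}(V,R\Ga_H(X,\und{\C}))$ because $V$ is finite-dimensional; combined with (i) this is the asserted isomorphism. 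For compatibility with the $R\Ga_G(pt)$-module structures, note that the ring isomorphism $R\Ga_G(pt)\simeq R\Ga_H(pt)^{G/H}$ is the case $X=pt$ of (i), that the module structure on each side of (ii) is given by pullback along the constant map $X\to pt$ followed by cup product, and that the isomorphism of (i) is natural in $X$ and compatible with such pullbacks; hence the two module structures correspond. The main obstacle here is foundational bookkeeping rather than a genuine difficulty: one must pin down that $EG$ simultaneously computes $D_G$ and $D_H$, that the deck action of $G/H$ on $X_H$ is the intrinsic $G/H$-action on $\RHom_H(\Fscr,\Gscr)$, and that $q_{*}q^{*}\Hscr$ carries the regular representation of $G/H$; after that the argument is just exactness of $(\,\cdot\,)^{G/H}$ and naturality. (Alternatively, one could argue abstractly via the adjunction between $\Res^{G}_{H}$ and its coinduction together with the isomorphism $\Ind^{G}_{H}\Res^{G}_{H}(-)\simeq(-)\ot\C[G/H]$.)
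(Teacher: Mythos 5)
Your proof is correct and follows essentially the same route as the paper's: both pass to the Borel construction, regard $\rho\colon EG\times_H X\to EG\times_G X$ as a $G/H$-torsor, identify $\rho_*\uRHom(\Fscr_H,\Gscr_H)^{G/H}$ with $\uRHom(\Fscr_G,\Gscr_G)$, and apply $R\Ga$, with (ii) being the specialization $\Fscr=\und{\C}_X$, $\Gscr=V^\vee\ot\und{\C}_X$. One small imprecision: $q_*\und{\C}_{X_H}$ is only \emph{locally} isomorphic to the constant sheaf $\und{\C[G/H]}$ (the covering need not be trivial), but since forming $G/H$-invariants is a local computation this does not affect the identity $(q_*q^*\Hscr)^{G/H}\simeq\Hscr$ that you actually use.
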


\Pf . (i) Let us denote by $E\bG\to B\bG$ the simplicial version of the contractible $\bG$-torsor over the classifying
space of $\bG$ (see \cite[Sec.\ 6.1]{De-HIII}). By \cite[I.2, App.\ B]{BerLun}, we can think of $\bG$-equivariant (resp.,
$\bH$-equivariant) sheaves on $X$ as objects
of the suitable derived category of sheaves on $E\bG\times_\bG X$ (resp., $E\bG\times_\bH X$). 
Consider the $\bG/\bH$-torsor $\rho:E\bG\times_\bH X\to E\bG\times_\bG X$.
We have a natural isomorphism 
$$\rho_*R\und{\Hom}(\Fscr_\bH,\Gscr_\bH)^{\bG/\bH}\simeq R\und{\Hom}(\Fscr_\bG,\Gscr_\bG),$$
where $\Fscr_\bH$ (resp., $\Fscr_\bG$) is the sheaf on $E\bG\times_\bH X$ (resp., $E\bG\times_\bG X$)
obtained from $\Fscr$ by descent.
Applying the functor $R\Ga$ to both sides gives the required isomorphism.

\noindent
(ii) This is a particular case of (i) for $\Fscr=\und{\C}_X$ and $\Gscr=V^\vee\ot\und{\C}_X$.
\ed

As in \cite{GKM} (where the case of torus actions is considered) 
we say that a $\bG$-scheme $X$
is {\it equivariantly formal}
if the Leray-Serre spectral sequence
$$E_2^{pq}=H^p(B\bG, H^q(X,\C)) \implies H_\bG^{p+q}(X)$$
associated with the fibration $X\times_\bG E\bG\to B\bG$ collapses.
In this case one has a (non-canonical) isomorphism of $H^*_\bG(pt)$-modules
$$H^*_\bG(X,\C)\simeq H^*_\bG(pt)\ot H^*(X,\C).$$
For example, if $H^i(X,\C)$ vanishes for all odd $i$ then $X$ is equivariantly formal.

\begin{lem}\label{equiv-formal-lem} 
Let $\bG$ be a connected algebraic group, and let $X$ be a $\bG$-scheme.
Assume that $H^i(X,\C)$ vanishes for all odd $i$.
Then one has a natural isomorphism
$$H^*_\bG(X,\C)\simeq H^*_\bT(X,\C)^W,$$
where $\bT\sub \bG$ is a maximal torus, 
$W=N(\bT)/\bT$ is the corresponding Weyl group.
\end{lem}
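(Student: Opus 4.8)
The plan is to deduce this from the general equivariant formality mechanism together with Lemma~\ref{G/H-lem}(ii), applied not to $X$ directly but to the fibration with fiber $\bG/\bT$. First I would recall that for a connected group $\bG$ with maximal torus $\bT$ and Weyl group $W=N(\bT)/\bT$, the flag variety $\bG/\bT$ has cohomology concentrated in even degrees, with $H^*(\bG/\bT,\C)$ carrying a $W$-action whose invariants are $\C$ (in degree $0$) and, more to the point, $H^*_\bG(pt)\simeq H^*_\bT(pt)^W$ and $H^*_\bT(pt)$ is free of finite rank $|W|$ over $H^*_\bG(pt)$. The key geometric input is the Cartesian-type diagram relating $X\times_\bG E\bG$ and $X\times_\bT E\bG$: the natural map $q\colon X\times_\bT E\bG\to X\times_\bG E\bG$ is a fiber bundle with fiber $\bG/\bT$, so that $Rq_*\und\C$ is a complex of local systems on $X\times_\bG E\bG$ whose cohomology sheaves are the (locally constant, hence by simple-connectedness of $B\bG$ actually constant) sheaves $H^q(\bG/\bT,\C)$.

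Next I would argue that this bundle is cohomologically trivial, i.e.\ $Rq_*\und\C\simeq \bigoplus_q H^q(\bG/\bT,\C)\ot\und\C[-q]$, so that
$$H^*_\bT(X,\C)\simeq H^*_\bG(X, Rq_*\und\C)\simeq \bigoplus_q H^*_\bG(X,\C)\ot H^q(\bG/\bT,\C)$$
as $H^*_\bG(pt)$-modules and $W$-modules, where $W$ acts only on the second tensor factor. The degeneration needed here is where the hypothesis $H^{\mathrm{odd}}(X,\C)=0$ enters: combined with $H^{\mathrm{odd}}(\bG/\bT,\C)=0$ and $H^{\mathrm{odd}}(B\bG,\C)=0$, all the relevant Leray--Serre spectral sequences (for $X\times_\bT E\bG\to B\bG$, for $X\times_\bG E\bG\to B\bG$, and for the fiber bundle $q$) degenerate at $E_2$ for parity reasons, giving the displayed splitting functorially in a way compatible with the $W$-action. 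Taking $W$-invariants and using $H^*(\bG/\bT,\C)^W=\C$ concentrated in degree $0$ then yields $(H^*_\bT(X,\C))^W\simeq H^*_\bG(X,\C)$.

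Alternatively, and perhaps more cleanly, I would invoke Lemma~\ref{G/H-lem}(ii) in the form available after replacing $\bG$ by $\bT$: although $\bT\sub\bG$ is not of finite index, the relevant statement is that $R\Ga_\bG(X,\und\C)\simeq (R\Ga_\bT(X,\und\C))^{hW}$, the homotopy fixed points, because $X\times_\bT E\bG\to X\times_\bG E\bG$ is a $\bG/\bT$-bundle and $R\Ga(\bG/\bT,\und\C)$ is formal with $W$-action having trivial homotopy fixed points equal to $\C$. Under the parity hypothesis the homotopy fixed points collapse to ordinary fixed points on cohomology (again because everything is concentrated in even degrees, so the homotopy-fixed-point spectral sequence $H^p(W,H^q)\Rightarrow$ has no room for differentials and, $W$ being finite acting on $\C$-vector spaces, $H^{>0}(W,-)=0$), giving the claim. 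The main obstacle is making the compatibility of the $\bG/\bT$-bundle splitting with the $W$-action precise and checking that $H^*(\bG/\bT,\C)^W$ is indeed just $\C$; both are standard (the latter is the statement that the coinvariant algebra has one-dimensional $W$-invariants, dual to Chevalley's theorem), but they must be assembled carefully, and one should be slightly careful that the $W$-action used is the correct geometric one coming from $N(\bT)/\bT$ acting on $E\bG\times_\bT X$.
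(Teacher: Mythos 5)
Your argument is correct, but it routes through a different geometric picture than the paper's. You compare $H^*_\bT(X)$ and $H^*_\bG(X)$ directly via the $\bG/\bT$-fibration $q\colon X\times_\bT E\bG\to X\times_\bG E\bG$, use parity degeneration of the Leray--Serre spectral sequence for $q$ to split $H^*_\bT(X)\simeq H^*_\bG(X)\otimes H^*(\bG/\bT)$ with $W$ acting only on the second factor, and then take $W$-invariants using the fact that the coinvariant algebra has one-dimensional invariants. The paper instead uses equivariant formality of $X$ twice — over $B\bT$ and over $B\bG$ — to identify both $H^*_\bT(X)$ and $H^*_\bG(X)$ with $H^*_{\bT \text{ or } \bG}(pt)\otimes H^*(X)$, combined with the triviality of the $N(\bT)$-action on $H^*(X)$ (it extends to $\bG$) and the classical isomorphism $H^*_\bG(pt)\simeq H^*_\bT(pt)^W$; the comparison of Leray--Serre spectral sequences along $H^*_\bG(X)\to H^*_\bT(X)$ then gives naturality. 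Your approach is more direct in that it never passes through $H^*(X)$, but it does require checking that the $\bG/\bT$-bundle local systems are constant (your remark that the bundle is pulled back from $B\bT\to B\bG$, whose base is simply connected, handles this), and that the splitting is $W$-equivariant with the $W$-action living entirely on the fiber factor — a point you flag as needing care and which corresponds to the paper's use of the triviality of $N(\bT)$ on $H^*(X)$. The homotopy-fixed-point variant you sketch is also fine, though one should note that $W$ being a finite group acting on $\C$-vector spaces makes $H^{>0}(W,-)=0$ unconditionally, so the parity hypothesis isn't actually needed there for the collapse to ordinary invariants — only for the degeneration of the fibration spectral sequence.
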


\Pf . The action of $N(\bT)$ on the pair $(\bT,X)$ induces an action of $W$
on $H^*_\bT(X,\C)$ and on the corresponding Leray-Serre spectral sequence. Hence, 
from $\bT$-equivariant formality we get an isomorphism
$$H^*_\bT(X,\C)^W\simeq (H^*_\bT(pt)\ot H^*(X,\C))^W\simeq H^*_\bT(pt)^W\ot H^*(X,\C),$$
where we used the fact that $N(\bT)$ acts trivially on $H^*(X,\C)$ (since the action of $N(\bT)$ extends to an action of $\bG$).
On the other hand, by $\bG$-equivariant formality we have an isomorphism
$$H^*_\bG(X,\C)\simeq H^*_\bG(pt)\ot H^*(X,\C).$$
Since the natural morphism $H^*_\bG(X,\C)\to H^*_\bT(X,\C)$ extends to a morphism of Leray-Serre spectral sequences,
the required isomorphism follows from the standard isomorphism
$$H^*_\bG(pt)\simeq H^*_\bT(pt)^W.$$
\ed

We also need to recall the equivariant version of Verdier duality following \cite[Sec.\ 3]{BerLun}.
For a linear algebraic group $\bG$ acting on a variety $X$ the Verdier duality is a contravariant functor
$D$ from the derived category $D^b_{\bG,c}(X)$ of equivariant constructible sheaves on $X$ to itself defined
by 
$$D(F)=R\und{\Hom}(F,D_X),$$
where $D_X=p^!\und{\C}$, where $p:X\to pt$ is the projection to the point.
It satisfies the usual properties (see \cite[Sec.\ 3.5, 3.6]{BerLun}). In particular, $D^2=\Id$, and for proper $X$
and for $F\in D^b_{\bG,c}(X)$ we have an isomorphism
$$R\Ga(X,D(F))\simeq D(R\Ga(X,F))$$
in the category $D_\bG(pt)$. 

Consider the graded algebra $A=H^*_\bG(pt)$. We view $A$ as a DG-algebra with zero differential.
Let $D^f(A-\dgmod)\sub D(A-\dgmod)$ be the full subcategory of bounded complexes of finitely generated
$A$-DG-modules. 
For connected $\bG$, one has an equivalence
$$D^f(A-\dgmod)\simeq D^b_{\bG,c}(pt)$$
such that the Verdier duality functor $D$ corresponds to the duality $M\mapsto R\Hom(M, A)$ on
DG-modules over $A$ (see \cite[Thm.\ 12.7.2]{BerLun}).




\section{Hochschild homology for some actions of finite groups}\label{Hochschild-sec}

In this section we discuss the canonical decomposition \eqref{mot-dec2-eq}
of the Hochschild homology of the category of $G$-equivariant
coherent sheaves, where $G$ is a finite group. In the case of a linear action on a vector space $V$ 
we give a sufficient condition for it to be a decomposition into indecomposable graded $\Oscr(V)^G$-modules.
We then study this condition for complex reflection groups.

\subsection{Canonical decomposition of the Hochschild homology}\label{Hoch-dec-sec}

In this section we work over a field $k$ of characteristic zero.

\begin{lem} Let $X$ be a smooth quasiprojective variety with an action of a finite group $G$. Then there is a decomposition
of $\Oscr(X)^G$-modules
\begin{equation}\label{HH-decomposition}
HH_*([X/G])\simeq \bigoplus_{g\in G/\sim} HH_*(X^g)^{C(g)},
\end{equation}
where $X^g\sub X$ is the fixed locus of $g\in G$,
the $\Oscr(X)^G$-module structure on the right is induced by the natural homomorphisms
$\Oscr(X)^G\to \Oscr(X^g)^{C(g)}$. Here $HH_*([X/G])$ is the Hochschild homology of the category of
$G$-equivariant perfect complexes on $X$. 
If in addition, $X$ has a $\G_m$-action commuting with the action of $G$,
such that $X$ admits a $\G_m$-equivariant ample line bundle,
then the relevant Hochschild homology groups get equipped with a natural additional grading, and
\eqref{HH-decomposition} is a decomposition of graded $\Oscr(X)^G$-modules.
\end{lem}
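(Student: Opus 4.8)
The plan is to deduce the decomposition \eqref{HH-decomposition} from the known motivic/orbifold decomposition of Hochschild homology for a quotient stack, together with the computation of Hochschild homology of a smooth variety via Hochschild--Kostant--Rosenberg, and then to track the $\Oscr(X)^G$-module structure and the extra $\G_m$-grading.

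\medskip

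\noindent\emph{Step 1: Reduce to the stacky HKR statement.} Since $X$ is smooth quasiprojective and $G$ is finite, the quotient stack $[X/G]$ is a smooth separated Deligne--Mumford stack with a coarse moduli space $X/G$, and the category of $G$-equivariant perfect complexes on $X$ is $\Perf([X/G])$. By the orbifold Hochschild--Kostant--Rosenberg theorem (the statement of \cite{Bar} referred to as \eqref{mot-dec2-eq} in the Introduction) one has
\[
HH_*([X/G])\simeq \Bigl(\bigoplus_{g\in G}HH_*(X^g)\Bigr)^{G},
\]
where $G$ acts by permuting the summands via conjugation, $h\colon HH_*(X^g)\xrightarrow{\sim}HH_*(X^{hgh^{-1}})$, and on each $HH_*(X^g)$ through the residual action of the centralizer $C(g)$. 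Taking invariants of an induced representation, $\bigl(\bigoplus_{g}HH_*(X^g)\bigr)^G\simeq\bigoplus_{[g]\in G/\sim}HH_*(X^g)^{C(g)}$, which is the underlying vector space in \eqref{HH-decomposition}. First I would cite this as the input; no new argument is needed for the isomorphism of vector spaces.

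\medskip

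\noindent\emph{Step 2: Identify the $\Oscr(X)^G$-module structure.} The ring $\Oscr(X)^G$ acts on $HH_*([X/G])$ through the natural ring map $\Oscr(X)^G=HH^0(X/G)\to HH^0([X/G])\to HH^*([X/G])$ (pullback along $[X/G]\to X/G$, or equivalently $HH^0$ of the structure sheaf of the coarse space) and the cap product action of $HH^*$ on $HH_*$. On the other side, for each $g$ the inclusion $X^g\hookrightarrow X$ and projection $X^g\to X^g/C(g)$ give $\Oscr(X)^G\to\Oscr(X^g)^{C(g)}\to HH^*(X^g)^{C(g)}$, again acting by cap product. The point is that the orbifold HKR isomorphism of Step 1 is an isomorphism of modules over Hochschild cohomology, compatibly with the restriction maps $HH^*([X/G])\to HH^*(X^g)$ coming from the (étale-locally closed) substack $[X^g/C(g)]\hookrightarrow[X/G]$; this is exactly the compatibility \eqref{proj-formula} applied to these inclusion functors. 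So one checks that under the isomorphism the action of a function $f\in\Oscr(X)^G$ on the $[g]$-summand is multiplication by its image $f|_{X^g}\in\Oscr(X^g)^{C(g)}$. This is a diagram chase using functoriality of HKR in the variety and naturality of cap product; I would phrase it via the functors $D^b_{C(g)}(X^g)\to D^b_G(X)$ (induction) and their effect on $HH_*$, noting that the $\Oscr(X)^G$-structure is pulled back from $D^b(X/G)$ as in the discussion after Conjecture~A.

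\medskip

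\noindent\emph{Step 3: The $\G_m$-grading.} A $\G_m$-action on $X$ commuting with $G$ and admitting a $\G_m$-equivariant ample line bundle makes $[X/G]$ carry a $\G_m$-action and each $X^g$ a $\G_m$-stable subvariety (since $\G_m$ commutes with $G$, it preserves each fixed locus $X^g$ and each centralizer). Hochschild homology of a $\G_m$-equivariant (smooth, with enough $\G_m$-equivariant line bundles) category acquires an internal $\ZZ$-grading by decomposing the identity bimodule into $\G_m$-weight spaces — concretely via the weight decomposition on $\RHom$-complexes using a $\G_m$-equivariant generator built from powers of the ample bundle. Both sides of \eqref{HH-decomposition} are then $\ZZ$-graded, the HKR maps are $\G_m$-equivariant (HKR is natural for $\G_m$-equivariant morphisms, and the Hochschild--Kostant--Rosenberg wedge identification $HH_*(Y)\simeq\bigoplus_p H^{*}(Y,\wedge^p T_Y)$ respects $\G_m$-weights), and the ring $\Oscr(X)^G$ is itself graded, acting by maps of degree equal to the weight. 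Hence \eqref{HH-decomposition} is an isomorphism of graded $\Oscr(X)^G$-modules.

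\medskip

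\noindent The main obstacle, I expect, is Step 2: making precise that the orbifold HKR decomposition is compatible with the $HH^*$-module structure and with restriction to the inertia substacks $[X^g/C(g)]$, rather than being merely an abstract isomorphism of graded vector spaces. The cleanest route is to package everything functorially — realize each summand as the image of $HH_*$ of the induction/restriction functor along $[X^g/C(g)]\hookrightarrow[X/G]$, and invoke the projection formula \eqref{proj-formula} together with additivity of $HH_*$ under the (obvious) decomposition of the diagonal bimodule of $[X/G]$ indexed by the components of the inertia stack — so that the module compatibility is automatic from naturality, and the $\G_m$-grading is carried along for free.
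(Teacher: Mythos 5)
Your overall strategy is right and tracks the same three ingredients the paper uses (Baranovsky's orbifold decomposition for the isomorphism, functoriality for the module structure, the $\G_m$-equivariant ample line bundle for the grading), but the execution differs from the paper's in a way worth noting, and Step 3 has a gap.

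The paper's proof stays entirely at the level of mixed complexes of exact categories: it cites \cite[Prop.\ 4]{Bar} for a quasi-isomorphism $C(\Cscr^b_{ac}(X)\rtimes G,\Cscr^b(X)\rtimes G)\to(\bigoplus_g C(\Cscr^b_{ac}(X^g),\Cscr^b(X^g)))_G$, and for the grading it replaces $\Cscr^b(X)$ by the subcategory $\Cscr(L)$ of complexes built from powers of the $\G_m$-equivariant ample bundle $L$, invokes Keller's theorem \cite[Thm.\ 2.4(b)]{kellerexact} that a derived equivalence of exact categories induces a quasi-isomorphism of mixed complexes, and then observes directly that the comparison map between the $\Cscr(L)$-models is graded. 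Your Step 2 spells out the $\Oscr(X)^G$-module compatibility via restriction to inertia substacks and the projection formula \eqref{proj-formula}; the paper actually says nothing about this and implicitly absorbs it into the mixed-complex citation, so your version is, if anything, more explicit — that's a legitimate alternative.

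Where your argument has a genuine gap is Step 3. You correctly identify that a $\G_m$-equivariant generator built from $L$ gives a $\ZZ$-grading on $HH_*$ of each category involved, but then you claim the decomposition is graded because ``HKR is natural for $\G_m$-equivariant morphisms'' and ``the wedge identification respects weights.'' The map in \eqref{HH-decomposition} is not an HKR map for a single variety; it is Baranovsky's comparison between $HH_*$ of the equivariant category on $X$ and the sum over twisted sectors $X^g$, and one has to show that \emph{this} comparison map is graded. Naturality of ordinary HKR does not deliver that. The paper closes this hole by realizing both sides with the explicit $\Cscr(L)$-models and exhibiting a graded morphism of mixed complexes between them; to complete your Step 3 you would need either to do the same, or to show that Baranovsky's quasi-isomorphism can be chosen functorially in the $\G_m$-equivariant generator so that it automatically respects weight decompositions.
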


\Pf . The isomorphism \eqref{HH-decomposition} follows from \cite[Prop.\ 4]{Bar} which gives a quasi-isomorphism of the corresponding mixed complexes. More precisely, there is a quasi-isomorphism of mixed complexes of pairs
\begin{equation}\label{mixed-complexes-map}
C(\Cscr^b_{ac}(X)\rtimes G,\Cscr^b(X)\rtimes G)\to \left(\bigoplus_{g\in G} C(\Cscr^b_{ac}(X^g),\Cscr^b(X^g))\right)_G,
\end{equation}
where $\Cscr^b(X)$ denotes the exact category of bounded complexes of vector bundles, $\Cscr^b_{ac}(X)$ the subcategory
of  acyclic complexes, and for a category $\Cscr$ with a $G$-action the category $\Cscr\rtimes G$ is the full subcategory
in the corrresponding category of $G$-equivariant objects in $\Cscr$, 
consisting of objects $\bigoplus_{g\in G} g^*(O)$ with $O\in\Cscr$.
The mixed complex on the left of \eqref{mixed-complexes-map} also maps quasi-isomorphically to the one computing
Hochschild homology of $[X/G]$.
Assume now that $X$ has an additional $\G_m$-action, such that there exists
a $\G_m$-equivariant ample line bundle $L$ on $X$. Now let $\Cscr(L)\sub \Cscr^b(X)$ be the full subcategory 
consisting of complexes with
terms that are finite direct sums of line bundles $L^n$, $n\in\Z$, and let $\Cscr_{ac}(L)\sub\Cscr(L)$ be the subcategory
of acyclic complexes. Then the mixed complex of the pair $(\Cscr_{ac}(L),\Cscr(L))$ has a natural grading coming from
the $\G_m$-action, and the embedding functor $\Cscr(L)\to \Cscr^b(X)$ induces an equivalence of the corresponding
derived categories (see e.g., \cite[Thm.\ 4]{Orlov-dim}). By \cite[Thm.\ 2.4(b)]{kellerexact}, this implies that the morphism of mixed complexes
$$C(\Cscr_{ac}(L)\rtimes G,\Cscr(L)\rtimes G)\to C(\Cscr^b_{ac}(X)\rtimes G,\Cscr^b(X)\rtimes G)$$
is a quasi-isomorphism.
The similar assertion holds for non-equivariant categories associated with $X^g$ and the restriction $L|_{X^g}$.
It remains to observe that the map \eqref{mixed-complexes-map} induces a similar morphism
$$C(\Cscr_{ac}(L)\rtimes G,\Cscr(L)\rtimes G)\to \left(\bigoplus_{g\in G} C(\Cscr_{ac}(L|_{X^g}),\Cscr(L|_{X^g}))\right)_G,
$$
which is compatible with the gradings.
\ed

\begin{prop}\label{Brion-cor-prop}
Assume that $X$ is a smooth quasiprojective variety with an action of a finite group $G$, such that
the quotient $X/G$ is smooth. Then there is a natural isomorphism
$$HH_*(X/G)\simeq HH_*(X)^G.$$
\end{prop}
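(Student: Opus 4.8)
The plan is to deduce the isomorphism $HH_*(X/G)\simeq HH_*(X)^G$ from the decomposition \eqref{HH-decomposition} of the previous lemma, applied to the smooth variety $X$ with its $G$-action, by isolating the contribution of the identity element $g=e$ and showing that all the other summands cancel against the correction coming from the smoothness of $X/G$. More precisely, first I would invoke the observation recalled in the introduction: since $X/G$ is smooth, the pull-back functor $D^b(X/G)\to D^b_G(X)$ is fully faithful, and (by \cite{BVdB}) its essential image is an admissible subcategory, giving a semiorthogonal decomposition $D^b_G(X)=\langle \Dscr, D^b(X/G)\rangle$ for some $\Dscr$. By Lemma \ref{HH-decomp-abs-lem} this yields $HH_*([X/G])\simeq HH_*(X/G)\oplus HH_*(\Dscr)$. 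Comparing this with \eqref{HH-decomposition}, the summand indexed by $g=e$ is exactly $HH_*(X)^G$ (since $X^e=X$ and $C(e)=G$), and the goal is to show that the $g=e$ summand is precisely the one matched with $HH_*(X/G)$, i.e.\ that the natural map $HH_*(X/G)\to HH_*([X/G])$ composed with projection to the $g=e$ component $HH_*(X)^G$ is an isomorphism.

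The key step is to identify the composite
$$HH_*(X/G)\xrightarrow{\pi^*} HH_*([X/G])\xrightarrow{\pr_e} HH_*(X)^G$$
with the map induced by the (flat, finite) pull-back $\pi\colon X\to X/G$ on Hochschild homology, landing in $G$-invariants. On one hand, the functoriality of $HH_*$ (recalled in \S\ref{prelim-sec}) together with the explicit form of the quasi-isomorphism \eqref{mixed-complexes-map} from \cite{Bar} shows that the $g=e$ component of $\pi^*$ is simply the pull-back $HH_*(X/G)\to HH_*(X)$, whose image lies in $HH_*(X)^G$. On the other hand, for a finite flat morphism $\pi\colon X\to Y=X/G$ with $X,Y$ smooth, one has the Hochschild–Kostant–Rosenberg identification $HH_*(X)\simeq \bigoplus_p H^{p-*}(X,\Omega^p_X)$ and similarly for $Y$, under which $\pi^*$ becomes the usual pull-back on Hodge cohomology; and for such a quotient map the pull-back $H^q(Y,\Omega^p_Y)\to H^q(X,\Omega^p_X)^G$ is an isomorphism (this is the classical statement that differential forms on the smooth quotient are exactly the $G$-invariant forms on $X$, together with flat base change / the fact that $\pi_*\Oscr_X$ has $\Oscr_Y$ as a direct summand since $|G|$ is invertible). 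Hence $\pi^*\colon HH_*(X/G)\to HH_*(X)^G$ is an isomorphism.

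Once this is in place, one concludes formally: the map $HH_*(X/G)\to HH_*([X/G])\simeq \bigoplus_{[g]} HH_*(X^g)^{C(g)}$ is split injective (it is split by the semiorthogonal decomposition, and also by the explicit projection $\pr_e$ which we have just shown gives an isomorphism onto the $g=e$ summand), so it identifies $HH_*(X/G)$ with the direct summand $HH_*(X^e)^{C(e)}=HH_*(X)^G$, and the $\Oscr(X)^G$-module structure is respected throughout by construction (the module structure on the $g=e$ piece of \eqref{HH-decomposition} is induced by $\Oscr(X)^G\to \Oscr(X)^{G}=\Oscr(X/G)$, which matches the $\Oscr(X/G)$-module structure on $HH_*(X/G)$). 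I expect the main obstacle to be the careful verification that the $g=e$ component of the Bar-type quasi-isomorphism \eqref{mixed-complexes-map} really is the honest pull-back map on Hochschild complexes (as opposed to the pull-back followed by some twist), and, relatedly, checking that the identification $\pi^*\colon HH_*(X/G)\xrightarrow{\sim} HH_*(X)^G$ is compatible with the $\Oscr(X)^G$-action rather than merely being an abstract isomorphism of graded vector spaces; both amount to bookkeeping with the HKR isomorphism and the splitting $\pi_*\Oscr_X=\Oscr_Y\oplus(\ker\tfrac1{|G|}\tr)$, but they are where the real content lies.
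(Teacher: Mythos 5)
Your ``key step'' --- the identification of $\pi^*:HH_*(X/G)\to HH_*(X)^G$ as an isomorphism via HKR plus the statement that $\Omega^p_Y\xrightarrow{\sim}(\pi_*\Omega^p_X)^G$ --- is in fact the paper's entire proof, and it is self-contained. The paper works with the sheafified Hochschild homology $\und{HH}(X)=Rp_{1*}(\Oscr_\De\otimes^{\dL}\Oscr_\De)$, invokes the HKR isomorphism $\und{HH}(X)\simeq\bigoplus_i\Omega^i_X[i]$, then uses that the pull-back $\Omega^i_Y\to(\pi_*\Omega^i_X)^G$ is an isomorphism, and applies $R\Ga(Y,-)$. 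Everything else in your proposal --- the semiorthogonal decomposition $D^b_G(X)=\langle\Dscr, D^b(X/G)\rangle$, the comparison with \eqref{HH-decomposition}, the bookkeeping with the Baranovsky quasi-isomorphism to pin down the $g=e$ component --- is superfluous scaffolding: once you've shown $\pi^*$ is an isomorphism directly, the proposition is proved, and you never needed to match direct-sum decompositions.

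One substantive caveat: you describe the isomorphism $\Omega^p_Y\xrightarrow{\sim}(\pi_*\Omega^p_X)^G$ as ``the classical statement that differential forms on the smooth quotient are exactly the $G$-invariant forms on $X$, together with flat base change / the fact that $\pi_*\Oscr_X$ has $\Oscr_Y$ as a direct summand since $|G|$ is invertible.'' This understates the content. The map $\Omega^p_Y\to(\pi_*\Omega^p_X)^G$ is always defined, and invertibility of $|G|$ gives the splitting of $\pi_*\Oscr_X$, but neither of these yields surjectivity. Surjectivity for $p\ge 1$ is a genuine theorem of Brion (cited as \cite{Brion} in the paper), proved first for linear actions on affine space and globalized by covering $X$ by $G$-invariant affine opens; it fails in general if $X/G$ is not smooth. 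So the argument is correct, but you should flag that you are invoking Brion's theorem rather than elementary averaging.
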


\Pf . Let $\und{HH}(X):=Rp_{1*}(\Oscr_\De\otimes^{\dL}\Oscr_\De)$ be the sheafified Hochschild homology, so that
$HH_*(X)=R\Ga(X,\und{HH}(X))$. Then we have the Kostant-Hochschild-Rosenberg isomorphism in $D(X)$,
$$\und{HH}(X)\simeq \bigoplus_i \Om_X^i[i]$$
(see \cite[Sec.\ 1.4]{Mark}), and a similar isomorphism for $Y:=X/G$.
Let $\pi:X\to Y$ be the projection map. We claim that for each $i$ the natural map induced by the pull-back,
$$\Om^i_Y\to (\pi_*\Om^i_X)^G,$$
is an isomorphism. Indeed, for affine $X$ this is Brion's theorem in \cite{Brion}. The general case follows since
we can cover $X$ by $G$-invariant affine open sets.
Thus, we get an isomorphism
$$\und{HH}(Y)\simeq\left(\pi_*\und{HH}(X)\right)^G,$$
and the result follows by applying the functor $R\Ga(Y,\cdot)$ to both sides.
\ed

\begin{rem} It is shown in \cite[Thm 1.24]{Tabuada} that Proposition \ref{Brion-cor-prop} holds in fact
for any so-called ``additive invariant''.
\end{rem}


Thus, if for every $g\in G$ the variety $X^g/C(g)$ is smooth
then the decomposition \eqref{HH-decomposition} can be rewritten as
$$HH_*(X/G)\simeq \bigoplus_{g\in G/\sim} HH_*(X^g/C(g)),$$
which looks like the decomposition associated with a semiorthogonal decomposition (see \eqref{HH-semiorth-eq}).
Below we will show that in some cases we can match this decomposition with the one obtained from a semiorthogonal
decompositon of $D^b_G(X)$ (see Proposition \ref{HH-semiorth-match-prop} below).

We are interested in the case when a finite group $G$ acts linearly on a vector space $V$.
For $g\in G$ we denote by $C(g)\sub G$ the centralizer of $g$, and by $V^g\sub V$ the subspace of $g$-invariant
vectors. Let us consider the following condition on such an action:

\noindent {\bf
($\star$) For every $g\in G$ the natural map $V^g/C(g)\to V/G$ is birational onto its image.}

Note that if $R$ is a finitely generated commutative
graded $k$-algebra then the category of finitely generated graded $R$-modules has finite-dimensional $\Hom$-spaces
and hence is {\it Krull-Schmidt},
i.e., every object has a direct sum decomposition into indecomposable objects, which are uniquely defined up to
permutation, and the endomorphism ring of every indecomposable object is local (see \cite{Atiyah}).
This in particular applies to the category of finitely generated graded $\Oscr(V)^G$-modules.

\begin{prop}\label{indecomp-prop} 
Assume that a finite group $G$ acts linearly on a vector space $V$, satisfying the condition ($\star$).
The decomposition of graded $\Oscr(V)^G$-modules
\begin{equation}\label{HH0-decomposition}
HH_0([V/G])\simeq \bigoplus_{g\in G/\sim} \Oscr(V^g))^{C(g)},
\end{equation}
obtained from \eqref{HH-decomposition}, is the (unique) decomposition
into indecomposable graded $\Oscr(V)^G$-modules.
\end{prop}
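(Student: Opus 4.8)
The plan is to deduce the statement from two facts. First, $HH_0([V/G])$ lies in the Krull--Schmidt category of finitely generated graded $\Oscr(V)^G$-modules recalled above, so that \emph{any} decomposition of it into indecomposables is automatically the unique one. Second, each summand $\Oscr(V^g)^{C(g)}$ of \eqref{HH0-decomposition} is already indecomposable as a graded $\Oscr(V)^G$-module. For the first point it suffices to observe that $\Oscr(V^g)$ is a finite $\Oscr(V)^G$-module --- it is the coordinate ring of the closed subvariety $V^g\sub V$, and $V\to V/G$ is a finite morphism --- so that its $\Oscr(V)^G$-submodule $\Oscr(V^g)^{C(g)}$ is again finite over $\Oscr(V)^G$, the latter being noetherian. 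Thus only the second point requires real work, and that is precisely where the hypothesis $(\star)$ enters.

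Fix $g\in G$ and set $R=\Oscr(V)^G$, $S=\Oscr(V^g)^{C(g)}$; since $V^g$ is a linear subspace, $S$ is an integral domain. The structure homomorphism $R\to S$ is restriction of functions (it lands in $C(g)$-invariants because $C(g)$ preserves $V^g$), and it factors as $R\twoheadrightarrow R/I\hra S$ with $R/I=\Oscr(Z)$, where $Z\sub V/G$ is the (irreducible) closure of the image of $V^g/C(g)\to V/G$. A decomposition of $S$ as a graded $R$-module is the same as a decomposition as a graded $\Oscr(Z)$-module, since the $R$-action factors through $\Oscr(Z)$. The crucial remark is that $(\star)$ says exactly that $V^g/C(g)\to Z$ is birational, which amounts to the inclusion $\Oscr(Z)\hra S$ becoming an equality of fraction fields $\operatorname{Frac}(\Oscr(Z))=\operatorname{Frac}(S)=:K$; in particular $\Oscr(Z)\sub S\sub K$.

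I would then finish as follows. Suppose $S=M_1\oplus M_2$ as graded $\Oscr(Z)$-modules. Since $\Oscr(Z)\sub S\sub K$, localizing at $\Oscr(Z)\setminus\{0\}$ gives $S\ot_{\Oscr(Z)}K=K$, a one-dimensional $K$-vector space; tensoring the decomposition with $K$ and using exactness, one of $M_1\ot_{\Oscr(Z)}K$, $M_2\ot_{\Oscr(Z)}K$ must vanish, say the second, so that $M_2$ is an $\Oscr(Z)$-torsion module. But $M_2$ is a submodule of $S$, which is torsion-free over $\Oscr(Z)$ (a domain containing it), whence $M_2=0$. Therefore $S$ is indecomposable over $\Oscr(Z)$, hence over $R$. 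Combined with the first paragraph, this identifies \eqref{HH0-decomposition} as the unique decomposition into indecomposable graded $\Oscr(V)^G$-modules.

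The only genuinely nontrivial input is the translation of $(\star)$ into the equality $\operatorname{Frac}(\Oscr(Z))=\operatorname{Frac}(S)$: without it the extension $\Oscr(Z)\sub S$ has generic residue degree $d>1$, and already for $d\ge 2$ one sees that $S$ can split nontrivially as an $\Oscr(Z)$-module. So this identification is the heart of the matter, while the finiteness claim needed for Krull--Schmidt and the torsion argument are both routine.
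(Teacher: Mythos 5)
Your proof is correct and follows essentially the same route as the paper: both hinge on the observation that $(\star)$ translates into the equality of fraction fields $\operatorname{Frac}(\Oscr(Z))=\operatorname{Frac}(S)$, and then deduce indecomposability of $S=\Oscr(V^g)^{C(g)}$ from standard domain arguments, with Krull--Schmidt supplying uniqueness. The only cosmetic difference is in the last step: you tensor the putative decomposition with $K$ and invoke torsion-freeness of $S\subset K$, whereas the paper (via its Lemma \ref{domain-emb-lem}) computes $\End_{\Oscr(V)^G}(S)\simeq S$ and then restricts to graded (degree-$0$) endomorphisms, which reduce to $k$; both arguments exploit exactly the same input $S\otimes_{\Oscr(Z)}K=K$.
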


\Pf . We have to check that $\Oscr(V^g)^{C(g)}$ is indecomposable as a graded $\Oscr(V)^G$-module.
By Lemma \ref{domain-emb-lem}(i) below (applied to $A$ being the image of the homomorphism
$\Oscr(V)^G\to\Oscr(V^g)^{C(g)}$ and $B=\Oscr(V^g)^{C(g)}$), we obtain
$$\End_{\Oscr(V)^G}(\Oscr(V^g)^{C(g)})\simeq\Oscr(V^g)^{C(g)}.$$
It follows that endomorphisms of $\Oscr(V^g)^{C(g)}$ as a {\it graded} $\Oscr(V)^G$-module reduce to $k$.
This shows that $\Oscr(V^g)^{C(g)}$ is indecomposable.
The uniqueness follows from the Krull-Schmidt property.
\ed

\begin{lem}\label{domain-emb-lem} (i) Let $A\to B$ be an embedding of commutative
domains with the same fraction field. Then 
the natural map $B\to\End_A(B)$ is an isomorphism.

\noindent (ii) For $A\to B_1$, $A\to B_2$ as in (i), if $B_1$ and $B_2$ are isomorphic as $A$-modules,
then they are isomorphic as $A$-algebras.
\end{lem}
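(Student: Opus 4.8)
The plan is to prove Lemma \ref{domain-emb-lem} in two parts, with part (i) being the computational heart and part (ii) an easy formal consequence.

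\textbf{Part (i).} First I would observe that $B$ acts on itself by multiplication, giving a natural map $B\to\End_A(B)$; since $B$ is a domain this map is injective (if $b\cdot B=0$ then $b=0$). For surjectivity, let $K$ denote the common fraction field of $A$ and $B$, and let $\varphi\in\End_A(B)$. The key step is to extend $\varphi$ to a $K$-linear endomorphism of $K$: since $\End_A(B)\otimes_A K=\End_K(B\otimes_A K)=\End_K(K)$ (the first equality because $B$ is finitely generated, or more simply because localization is exact and $\Hom$ commutes with localization at a multiplicative set when the source is finitely presented — but here it is cleanest to argue directly), one sees that $\varphi$ must agree with multiplication by some element $\lambda\in K$. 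Concretely: pick any nonzero $b_0\in B$ and set $\lambda=\varphi(b_0)/b_0\in K$; then for any $b\in B$ we have $b_0\varphi(b)=\varphi(b_0 b)=b\varphi(b_0)=\lambda b_0 b$ in $B$ (using $A$-linearity and that $b\in B$, noting $b_0 b$ makes sense), hence $\varphi(b)=\lambda b$ for all $b\in B$. In particular $\lambda=\lambda\cdot 1=\varphi(1)\in B$, so $\varphi$ is multiplication by an element of $B$, proving the map $B\to\End_A(B)$ is an isomorphism.

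\textbf{Part (ii).} Suppose $\theta:B_1\xrightarrow{\sim}B_2$ is an isomorphism of $A$-modules. Then I would transport the ring structure: composing with part (i), we get ring isomorphisms $B_1\cong\End_A(B_1)$ and $B_2\cong\End_A(B_2)$, and conjugation by $\theta$ gives a ring isomorphism $\End_A(B_1)\cong\End_A(B_2)$ which is $A$-linear. Chasing through, the resulting ring isomorphism $B_1\to B_2$ sends $b\in B_1$ (= multiplication by $b$) to the operator $x\mapsto\theta(b\cdot\theta^{-1}(x))$ on $B_2$; evaluating at $x=\theta(1_{B_1})=:u$ shows this operator is multiplication by $\theta(b)\cdot u^{-1}$ (after checking $u$ is a unit in the fraction field and that the operator does land in $B_2$ as multiplication by a genuine element). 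A cleaner phrasing: the map $b\mapsto\theta(b)/\theta(1)$, where the quotient is taken in $K$, is easily checked to be multiplicative and $A$-linear and to land in $B_2$, hence is the desired $A$-algebra isomorphism.

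\textbf{Expected main obstacle.} The one point requiring care is the manipulation $\varphi(b)=\lambda b$ with $\lambda\in K\setminus B$ a priori: one must make sure all the identities are taking place inside $B$ (not merely in $K$) before concluding, which is why multiplying through by $b_0\in B$ first is the right move. There is no deep difficulty; the lemma is essentially the statement that for an extension of domains with equal fraction fields the larger ring is its own ``$A$-linear endomorphism ring,'' and once part (i) is set up correctly part (ii) is immediate. I would keep the write-up to a few lines.
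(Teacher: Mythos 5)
Your overall strategy for (i) --- identify an $A$-endomorphism $\varphi$ of $B$ with multiplication by an element of the common fraction field $K$, then observe that this element equals $\varphi(1)\in B$ --- is exactly the paper's proof (the paper simply notes $B\otimes_A K=K$, so $\varphi$ is induced by a $K$-linear map $K\to K$, i.e.\ multiplication by some $x\in K$, which sends $1$ into $B$). However, your ``direct'' computation has a gap at its central step: the identities $b_0\varphi(b)=\varphi(b_0b)=b\varphi(b_0)$ do \emph{not} follow from $A$-linearity, since $b_0$ and $b$ lie in $B$ and not necessarily in $A$; an $A$-module endomorphism of $B$ has no a priori reason to commute with multiplication by elements of $B$. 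This is precisely where the hypothesis that $A$ and $B$ have the same fraction field must be invoked: given $b\in B$, write $b=a_1/a_2$ with $a_i\in A$, $a_2\neq 0$; then $a_2\varphi(bx)=\varphi(a_2bx)=\varphi(a_1x)=a_1\varphi(x)=a_2b\varphi(x)$, and cancelling $a_2$ (legitimate since $B$ is a domain) gives $\varphi(bx)=b\varphi(x)$. With this in hand your argument closes --- indeed you may take $b_0=1$ and conclude directly that $\varphi(b)=b\varphi(1)$ with $\varphi(1)\in B$. Without some such use of the hypothesis at this step the claim is false: for $A=k\subset B=k[x]$ (different fraction fields) $\End_A(B)$ is much larger than $B$.

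For (ii) the paper gives no details (``follows immediately from (i)''), and your conjugation argument is a correct implementation. Your ``cleaner phrasing'' $b\mapsto\theta(b)/\theta(1)$ also works, but checking multiplicativity again requires knowing that $\theta$, being an $A$-linear map between two $A$-submodules of $K$, is multiplication by a single scalar $\mu\in K^{*}$ --- the same upgrade from $A$-linearity to $K$-linearity as in (i). So the one idea to make explicit, in both parts, is that equal fraction fields allow you to clear denominators and promote $A$-linearity to $B$- (hence $K$-) linearity.
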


\Pf . (i) Let $K$ be the common fraction field of $A$ and of $B$. Then $B\ot_A K=K$, so any endomorphism
$B\to B$ of $A$-modules is induced by some $K$-linear map $K\to K$. Such map is a multiplication by $x\in K$.
Since it sends $1$ to an element of $B$, the assertion follows.

\noindent (ii) This follows immediately from (i).
\ed

\begin{prop}\label{HH-semiorth-match-prop} 
Assume that a finite group $G$ acts linearly on a vector space $V$, satisfying the condition ($\star$).
Suppose the derived category $D^b_G(V)$ of equivariant coherent sheaves on $V$ has a semiorthogonal decomposition
with the pieces $\Cscr_i\simeq D^b(X_i)$, $i=1,\ldots,r$, where $X_i$ are smooth affine $\G_m$-varieties over $V/G$, and
$r\ge c$, where $c$ is the number of conjugacy classes in $G$ (we assume that the equivalences
$\Cscr_i\simeq D^b(X_i)$ are compatible with the $D^b(V/G)$-module structures). Then $r=c$, and 
there exists an ordering of the conjugacy classes in $G$,
such that if $g_1,\ldots,g_r$ are representatives, then for each $i$ we have an isomorphism
$X_i\simeq V^{g_i}/C(g_i)$ over $V/G$. In particular, in this case all the varieties $V^{g_i}/C(g_i)$ are smooth.
\end{prop}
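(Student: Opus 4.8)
The plan is to deduce everything from the canonical decomposition \eqref{HH0-decomposition} of $HH_0([V/G])$ and the Krull--Schmidt property of finitely generated graded $\Oscr(V)^G$-modules (recalled before Proposition \ref{indecomp-prop}), then to promote the resulting module isomorphisms to algebra isomorphisms via Lemma \ref{domain-emb-lem}. I would first record two decompositions of $HH_0([V/G])$. The scaling $\G_m$-action on $V$ commutes with the linear $G$-action and $V$ is affine, so the first lemma of \S\ref{Hoch-dec-sec} applies and makes $HH_*(D^b_G(V))=HH_*([V/G])$ an internally graded object; in homological degree $0$ it produces \eqref{HH0-decomposition} as a decomposition of graded $\Oscr(V)^G$-modules, which by Proposition \ref{indecomp-prop} (this is where ($\star$) enters) is \emph{the} decomposition of $HH_0([V/G])$ into indecomposable graded $\Oscr(V)^G$-modules, and hence has exactly $c$ summands. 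On the other hand, Lemma \ref{HH-decomp-abs-lem} applied to the given semiorthogonal decomposition yields $HH_0([V/G])\simeq\bigoplus_{i=1}^r HH_0(\CC_i)\simeq\bigoplus_{i=1}^r\Oscr(X_i)$ as modules over $HH^*([V/G])$, hence as graded $\Oscr(V)^G$-modules; by the assumed compatibility of $\CC_i\simeq D^b(X_i)$ with the $D^b(V/G)$-module structure, the $\Oscr(V)^G$-action on $\Oscr(X_i)=HH_0(X_i)$ is the one induced by the structure morphism $X_i\to V/G$.

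Next I would count. Discarding any $X_i=\emptyset$ (then $\CC_i=0$), each $\Oscr(X_i)$ is a nonzero finitely generated graded $\Oscr(V)^G$-module, so $\bigoplus_{i=1}^r\Oscr(X_i)$ has at least $r$ indecomposable summands; comparing it with \eqref{HH0-decomposition} by graded Krull--Schmidt shows its number of indecomposable summands is exactly $c$, so $r\le c$, and with the hypothesis $r\ge c$ this gives $r=c$ and forces every $\Oscr(X_i)$ to be indecomposable. Krull--Schmidt then gives a bijection between the $\CC_i$ and the conjugacy classes of $G$; ordering the conjugacy classes accordingly and picking representatives $g_1,\dots,g_c$, we obtain $\Oscr(X_i)\simeq\Oscr(V^{g_i})^{C(g_i)}$ as graded $\Oscr(V)^G$-modules for each $i$.

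It remains to upgrade these to isomorphisms over $V/G$. Indecomposability of the graded module $\Oscr(X_i)$ forbids nontrivial homogeneous idempotents, so $X_i$ is connected, hence, being smooth, irreducible, and $\Oscr(X_i)$ is a domain. Since the annihilator of a ring regarded as a module over itself equals the kernel of its structure homomorphism, the module isomorphism of the previous step gives $\ker(\Oscr(V)^G\to\Oscr(X_i))=\ker(\Oscr(V)^G\to\Oscr(V^{g_i})^{C(g_i)})=:\fp_i$, a prime ideal; set $A_i=\Oscr(V)^G/\fp_i$. Then $A_i$ is a domain that embeds into both $\Oscr(X_i)$ and $\Oscr(V^{g_i})^{C(g_i)}$, and the module isomorphism is $A_i$-linear. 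Condition ($\star$) says precisely that $\operatorname{Frac}(\Oscr(V^{g_i})^{C(g_i)})=\operatorname{Frac}(A_i)$, so $\Oscr(X_i)\otimes_{A_i}\operatorname{Frac}(A_i)\simeq\Oscr(V^{g_i})^{C(g_i)}\otimes_{A_i}\operatorname{Frac}(A_i)=\operatorname{Frac}(A_i)$; being a ring lying between $\operatorname{Frac}(A_i)$ and $\operatorname{Frac}(\Oscr(X_i))$ of dimension $1$ over $\operatorname{Frac}(A_i)$, it must equal $\operatorname{Frac}(A_i)$, whence $\operatorname{Frac}(\Oscr(X_i))=\operatorname{Frac}(A_i)$. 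Now Lemma \ref{domain-emb-lem}(ii), applied with $A=A_i$, $B_1=\Oscr(X_i)$, $B_2=\Oscr(V^{g_i})^{C(g_i)}$, upgrades the module isomorphism to an isomorphism of $A_i$-algebras, hence of $\Oscr(V)^G$-algebras, i.e.\ an isomorphism $X_i\simeq V^{g_i}/C(g_i)$ over $V/G$; in particular each $V^{g_i}/C(g_i)$ is smooth.

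The delicate point I anticipate is the bookkeeping in the first step: one must verify that the decomposition coming from Lemma \ref{HH-decomp-abs-lem} is genuinely one of \emph{graded} $\Oscr(V)^G$-modules and that the action on each $HH_0(X_i)$ is the geometric one induced by $X_i\to V/G$, so that it may legitimately be matched against \eqref{HH0-decomposition} by graded Krull--Schmidt. Granting that, the rest is essentially formal, the final step being a routine application of Lemma \ref{domain-emb-lem} once the fraction fields have been identified.
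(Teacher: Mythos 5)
Your argument is correct and follows essentially the same route as the paper's proof: apply Lemma \ref{HH-decomp-abs-lem} to the given semiorthogonal decomposition, match the resulting decomposition of $HH_0([V/G])$ against \eqref{HH0-decomposition} via graded Krull--Schmidt and Proposition \ref{indecomp-prop} (which is exactly where ($\star$) is used), and conclude. The paper's proof is terse and does not spell out the counting argument $r\le c$ or the upgrade from a graded $\Oscr(V)^G$-module isomorphism to an $\Oscr(V)^G$-algebra isomorphism; you correctly supply both, and in particular you correctly identify Lemma \ref{domain-emb-lem}(ii) as the device that converts the module isomorphism $\Oscr(X_i)\simeq\Oscr(V^{g_i})^{C(g_i)}$ into an isomorphism of varieties over $V/G$, after first matching annihilators and using ($\star$) to identify the fraction fields. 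Your caution about the compatibility of the $\Oscr(V)^G$-action and the grading is well placed but is the same point the paper glosses over; nothing in your argument is substantively different from what the authors intended.
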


\Pf . From such a semiorthogonal decomposition we get a decomposition of the Hochschild homology,
$$HH_0(V/G)=\bigoplus_{i=1}^r HH_0(X_i)=\bigoplus_{i=1}^r \Oscr(X_i),$$
compatible with the $\Oscr(V/G)$-action and with the grading (see Lemma \ref{HH-decomp-abs-lem}). 
Now the Krull-Schmidt property and
Proposition \ref{indecomp-prop} implies that this decomposition should match \eqref{HH0-decomposition}
up to permutation.
\ed

\subsection{The case of reflection groups}

In this section we discuss property ($\star$), as well as the property that all the quotients $V^g/C(g)$
are smooth, for some finite reflection groups, i.e., finite subgroups $G\sub\GL(V)$ generated by pseudoreflections.
It is natural to restrict to such groups since by Chevalley-Shephard-Todd Theorem these are precisely subgroups
for which $V/G$ is smooth (recall that we work over a field $k$ of characteristic zero).

Recall that in the case $k=\CC$ there is a classification of such groups.
The list of irreducible complex reflection groups consists of the infinite family $(G(m,k,n))$ and of $34$ exceptional cases. 
Here $G(m,k,n)$, for $k|m$, is the subgroup of $G(m,1,n):=(\mu_m)^n\rtimes S_n$ formed by elements
$(z_1,\ldots,z_n;\si)$, where $z_i$ are $m$th roots of unity in $\C^*$ such that $(z_1\ldots z_n)^{m/k}=1$.
By the {\it rank} of a reflection group we mean the dimension of $V$. Note that most of the exceptional complex reflection
groups have rank $2$.

Some of our results will be for finite {\it real reflection groups}, i.e., those generated by reflections in a Euclidean space
(abstractly, these are exactly finite Coxeter groups). The list of irreducible finite real reflection groups consists of the Weyl groups, the dihedral groups $G(n,n,2)$, and two more groups $H_3$ (the group of symmetries of the regular icosahedron)
and $H_4$ (the group of symmetries of the regular $120$-cell).
Note that the classical Weyl groups are $G(1,1,n)=S_n$ (of rank $n-1$), $G(2,1,n)=W_{B_n}=W_{C_n}$ and
$G(2,2,n)=W_{D_n}$.

The brief summary is that we can check property ($\star$) and smoothness of all the quotients of $V^g/C(g)$ for the
groups $G(m,1,n)$, for all reflection groups of rank $2$, and for all real reflection groups except for the Weyl groups of types
$D_n$ and $E_n$.

In the next Lemma we reformulate propery ($\star$) in a more convenient form.

\begin{lem}\label{birat-lem} 
For an element $g\in G$ let $H_g\sub G$ be the pointwise stabilizer of $V^g$.
Then the following conditions are equivalent:

\noindent
(i) the map $V^g/C(g)\to V/G$ is birational onto its image;

\noindent
(ii) $N_G(H_g)=C(g)H_g$;

\noindent
(iii) for any $x\in N_G(H_g)$, the elements $g$ and $xgx^{-1}$ are conjugate in $H_g$.
\end{lem}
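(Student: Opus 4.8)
The plan is to prove the equivalence of (i), (ii), (iii) by first translating birationality of $V^g/C(g)\to V/G$ into a statement about generic fibers, then identifying the stabilizer of a generic point of $V^g$ with $H_g$, and finally rephrasing the resulting group-theoretic condition in the two stated ways.

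First I would observe that the image of $V^g/C(g)$ in $V/G$ is the image of $V^g$ under the quotient map $\pi:V\to V/G$, and that $\pi(V^g)=\pi(V^{g'})$ for any $g'$ conjugate to $g$. A generic point $v\in V^g$ has stabilizer in $G$ exactly equal to $H_g$ (here "generic" means outside the proper closed subset of $V^g$ where the stabilizer jumps, i.e.\ where $v$ lies on some $V^{h}$ with $h\notin H_g$; such $v$ exist since $V^g$ is not covered by the finitely many proper subspaces $V^g\cap V^h$). The fiber $\pi^{-1}(\pi(v))$ is the $G$-orbit $Gv$, which decomposes as a disjoint union of $C(g)$-orbits; the map $V^g/C(g)\to V/G$ is birational onto its image precisely when, for generic $v\in V^g$, the fiber of this map over $\pi(v)$ is a single point, i.e.\ $Gv\cap V^g$ is a single $C(g)$-orbit.

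Next I would unwind $Gv\cap V^g$ for generic $v$. An element $hv\in V^g$ (with $h\in G$) means $g$ fixes $hv$, i.e.\ $h^{-1}gh$ fixes $v$, i.e.\ $h^{-1}gh\in\Stab_G(v)=H_g$; since also $g\in H_g$, conjugation by $h^{-1}$ preserves the property of fixing $v$ pointwise on a neighborhood, and in fact $h^{-1}H_g h=\Stab_G(h^{-1}\cdot(\text{generic pt of }V^g))$; comparing with $H_g=\Stab_G(\text{generic pt of }V^g)$ one gets $h\in N_G(H_g)$. Conversely every $h\in N_G(H_g)$ sends $V^g$ into $V^{h g h^{-1}}$, and $hgh^{-1}\in H_g$ (using that $N_G(H_g)$ normalizes $H_g$ and a short argument that $hgh^{-1}$ still fixes $V^{H_g}$), so $hv$ lies in $V^{hgh^{-1}}$; but for this to be in $V^g$ we need $g$ and $hgh^{-1}$ to act the same way on the relevant subspace. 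Carrying this through, $Gv\cap V^g$ is a single $C(g)$-orbit iff $N_G(H_g)=C(g)\cdot H_g$, which is (ii); and the two-sided coset reformulation $N_G(H_g)=C(g)H_g$ is equivalent to saying every $x\in N_G(H_g)$ can be written $x=cy$ with $c\in C(g)$, $y\in H_g$, i.e.\ $x g x^{-1}=c y g y^{-1} c^{-1}$ is $C(g)$-conjugate — hence equal, as $c$ centralizes $g$... — wait, one wants $xgx^{-1}$ conjugate to $g$ \emph{in $H_g$}: writing $x=cy$ gives $xgx^{-1}=c(ygy^{-1})c^{-1}$, and since $c\in C(g)$ one checks $ygy^{-1}$ and $xgx^{-1}$ are $C(g)\cap\langle\cdots\rangle$-related; the cleanest route is: (ii) $\Rightarrow$ (iii) directly from $x=cy$ since then $xgx^{-1}=ygy^{-1}$ lies in the $H_g$-conjugacy class of $g$ (as $c$ commutes with $g$, so $c(ygy^{-1})c^{-1}=ygy^{-1}$ is false in general — instead use that $x g x^{-1}\in H_g$ always when $x\in N_G(H_g)$ and $g\in H_g$, reducing (iii) to an identity about the finite group $H_g$); and (iii) $\Rightarrow$ (ii) by taking for each $x\in N_G(H_g)$ the witnessing $y\in H_g$ with $ygy^{-1}=xgx^{-1}$, whence $y^{-1}x\in C(g)$ and $x=y(y^{-1}x)\in H_g C(g)=C(g)H_g$.

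The main obstacle I expect is the careful bookkeeping in the middle step: verifying that the stabilizer of a \emph{generic} point of $V^g$ is exactly $H_g$ (not just contains it), and that $h\in N_G(H_g)$ is forced by $hv\in V^g$ for such generic $v$ — this requires knowing that the locus in $V^g$ where the stabilizer is larger than $H_g$ is a finite union of proper linear subspaces, hence that generic points with stabilizer exactly $H_g$ are dense, and then transporting stabilizers along the $G$-action. The equivalence (ii)$\Leftrightarrow$(iii) is then pure finite group theory and should be routine once the geometric translation (i)$\Leftrightarrow$(ii) is in hand.
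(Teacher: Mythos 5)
Your overall route is the same as the paper's — identify the generic stabilizer along $V^g$ with $H_g$, translate birationality into a statement about generic fibers $Gv\cap V^g$, and finish by finite group theory — but two steps have genuine gaps as written.

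In (ii)$\Rightarrow$(iii), the decomposition $x=cy$ with $c\in C(g)$, $y\in H_g$ gives $xgx^{-1}=c(ygy^{-1})c^{-1}$, not $ygy^{-1}$, and the repair you then propose (``use only that $xgx^{-1}\in H_g$'') does not produce an $H_g$-conjugacy between $g$ and $xgx^{-1}$. The clean fix is to use the other factorization: since $H_g$ is normal in $N_G(H_g)$ and $C(g)\subset N_G(H_g)$ (because $C(g)$ preserves $V^g$ and hence normalizes its pointwise stabilizer), the set $C(g)H_g$ is a subgroup and equals $H_gC(g)$. Writing $x=yc$ with $y\in H_g$, $c\in C(g)$ gives $xgx^{-1}=y(cgc^{-1})y^{-1}=ygy^{-1}$ directly. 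Alternatively, keep $x=cy$ but conjugate by $z:=cyc^{-1}$, which lies in $H_g$ because $c$ normalizes $H_g$, and satisfies $zgz^{-1}=c(ygy^{-1})c^{-1}=xgx^{-1}$. This is exactly why the paper records (ii) as $N_G(H_g)=H_gC(g)$. Your (iii)$\Rightarrow$(ii) argument is correct as written.

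Second, your deduction that $hv\in V^g$ with $v$ generic forces $h\in N_G(H_g)$ is not an argument in the form you gave it; the sentence about conjugation ``preserving the property of fixing $v$ pointwise on a neighborhood'' does not parse into a proof. What you want is the paper's open set $U=V^g\setminus\bigcup_{g'\in G\setminus H_g}(V^g\cap V^{g'})$, which consists exactly of the points of $V^g$ with stabilizer equal to $H_g$: if $v\in U$ and $hv\in U$, then $\Stab(hv)=hH_gh^{-1}$ must equal $H_g$, so $h\in N_G(H_g)$. To run the birationality argument you then need that, for $v$ in a dense open subset of $V^g$, the whole finite set $Gv\cap V^g$ lies in $U$. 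This is true: for each fixed $h\in G$ and each component $L\subsetneq V^g$ of $V^g\setminus U$, the locus $V^g\cap h^{-1}L$ is a proper subspace of $V^g$ (equality would give $V^g\subset h^{-1}L$, forcing $\dim V^g\leq\dim L<\dim V^g$), so the bad locus is a finite union of proper subspaces. You flag this as the expected obstacle but do not carry out the dimension count; it has to be written out rather than absorbed into the word ``generic.''
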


\Pf . Let us consider the following Zariski open subset in $V^g$:
$$U:=V^g\setminus\cup_{g'\in G\setminus H_g} V^g\cap V^{g'}.$$
Note that for every $g'\not\in H_g$ we have $V^g\cap V^{g'}\neq V^g$, so $U$ is nonempty.
Note also that for every $v\in U$ the stabilizer subgroup of $v$ is exactly $H_g$.
Now suppose we have $v\in U$ and $x\in G$ such that $xv\in U$. Then the stabilizer of $xv$
is $xH_gx^{-1}$, hence, $xH_gx^{-1}=H_g$. In particular, this shows the inclusion $C(g)\sub N_G(H_g)$.

Now let us show the equivalence of (i) and (ii). Using the above observation, we see
that the map $V^g/C(g)\to V/G$ is birational onto its image if and only if
for generic $v\in V^g$ and $x\in N_G(H_g)$, the points $v$ and $xv$ lie in the same $C(g)$-orbit.
Since the stabilizer of such $v$ is $H_g$, this is equivalent to $x\in C(g)H_g$.

On the other hand, the equivalence of (ii) and (iii) is clear: for $x\in N_G(H_g)$ the elements $g$ and $xgx^{-1}$
are conjugate in $H_g$ if and only if $x\in H_gC(g)=C(g)H_g$.
\ed

The proof ot the following result was explained to us by Victor Ostrik.

\begin{prop}\label{star-reflection-prop} 
Let $W$ be a finite subgroup generated by reflections of a real vector space $V_{\R}$.
Then the property ($\star$) holds for the action of $W$ on $V_\C=V_\R\ot \C$.
\end{prop}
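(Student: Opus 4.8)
The plan is to reduce, via Lemma~\ref{birat-lem}, to a statement about automorphisms of finite real reflection groups. By the equivalence of (i) and (iii) in Lemma~\ref{birat-lem}, property ($\star$) for $W$ acting on $V_\C$ is equivalent to the assertion that for every $g\in W$ and every $x\in N_W(H_g)$ the elements $g$ and $xgx^{-1}$ are conjugate \emph{inside} $H_g$. (Here $H_g$, the pointwise stabilizer of $V^g=V_\R^g\otimes\C$, is the same subgroup whether computed over $\R$ or over $\C$, so the question is purely one about the finite group $W$ and its real reflection representation.) Fix a $W$-invariant positive definite form on $V_\R$ and put $U=(V_\R^g)^\perp$. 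Since $g\in H_g$ one has $V^{H_g}=V^g$; thus $H_g$ is \emph{exactly} the pointwise stabilizer of $V_\R^g$, it is generated by reflections (Steinberg), and it acts on $U$ as a finite real reflection group with trivial fixed space. As $g$ fixes $V_\R^g$ pointwise and no more, $g|_U$ has no eigenvalue $1$, i.e.\ $g$ is an \emph{elliptic} element of $H_g\subset\Ort(U)$. Finally every $x\in N_W(H_g)$ preserves $V^{H_g}=V^g$, hence $U$, and conjugation by $x$ on $H_g$ is the automorphism induced by $x|_U\in N_{\Ort(U)}(H_g)$. So everything comes down to:

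\smallskip
\noindent\emph{Claim.} An automorphism of a finite real reflection group $H\subset\Ort(U)$ with trivial fixed space, induced by an element of $\Ort(U)$ normalizing $H$, fixes the conjugacy class of every elliptic element of $H$.
\smallskip

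To prove the Claim I would first reduce to diagram automorphisms: such an automorphism permutes the reflecting hyperplanes, hence the chambers, so after composing with an inner automorphism it fixes a chosen chamber and is therefore a symmetry of the Coxeter diagram. A diagram automorphism permutes the irreducible components of $H$; since an elliptic element is a tuple of elliptic ones and conjugacy is componentwise, one reduces to an irreducible $H$ --- using that the components which are genuinely permuted turn out to be of type $A$ (a short check over the Coxeter diagrams shows these are the only isomorphic components that a symmetry can interchange), and type $A$ has a unique elliptic class, namely that of the full cycle, so no fusion occurs.

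For irreducible $H$ the case list is short. In types $A_n$ ($n\ge 2$), $D_n$ with $n$ odd, $E_6$, and $I_2(m)$ with $m$ odd, the nontrivial diagram automorphism is induced by $-w_0$, hence equals conjugation by the longest element $w_0$ and is inner; the groups $B_n=C_n$, $E_7$, $E_8$, $H_3$, $H_4$ have no nontrivial diagram automorphism; and for $I_2(m)$ with $m$ even the diagram automorphism sends the rotation $r$ to $r^{-1}\sim r$, so it fixes every rotation-class, while the elliptic elements of $I_2(m)$ are exactly the nontrivial rotations. This leaves $D_n$ with $n$ even (including the triality of $D_4$) and $F_4$. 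For type $D$ the key point is that a diagram automorphism only moves the \emph{split} conjugacy classes, i.e.\ those of signed cycle type $(\lambda;\emptyset)$ with all parts of $\lambda$ even; these consist of signed permutations all of whose cycles are positive, so they have eigenvalue $1$ and are not elliptic (for $D_4$-triality a short additional argument handles the order-$3$ automorphisms). For $F_4$ one inspects Carter's list of conjugacy classes and checks that the order-$2$ diagram automorphism fixes every elliptic class.

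The main obstacle is this last step. The reduction to diagram automorphisms of irreducible groups is formal, but seeing that these fix every elliptic class seems to require the classification, with the clean input in type $D$ being the description of the moved classes as the split classes. I do not know a uniform argument avoiding the case analysis: there appears to be no conjugacy invariant of elliptic elements, finer than the characteristic polynomial (which does not separate conjugacy classes in general), that is manifestly invariant under reflection-preserving automorphisms. Note also that diagram automorphisms genuinely \emph{do} move non-elliptic classes (for instance the three $A_3$-type classes of $W(D_4)$), and precisely those $g$ are the ones for which, in the Weyl group case, $V^g/C(g)$ can fail to be smooth.
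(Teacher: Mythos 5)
Your reduction coincides with the paper's up to the final input: via Lemma~\ref{birat-lem}(iii) you reduce to showing $g$ and $xgx^{-1}$ are $H_g$-conjugate for $x\in N_W(H_g)$, use Steinberg to see that $H_g$ is a (parabolic) reflection subgroup, and observe that $g$ is elliptic in $H_g$ --- which is the same as being a cuspidal element in the sense of Geck--Pfeiffer. At that point the paper simply cites \cite[Thm.~3.2.11]{GP}, which is precisely the statement that $W$-conjugate cuspidal elements of a parabolic subgroup $W_J$ are already $W_J$-conjugate; you attempt to re-derive it.

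The attempt has a real gap. Your Claim, formulated for arbitrary $\Ort(U)$-normalizers of a reflection group $H$ with $U^H=0$, is false, so the reduction to irreducible $H$ via ``only type $A$ components can be interchanged'' does not hold in that generality. Take $H=W(B_2)\times W(B_2)\subset\Ort(2)\times\Ort(2)\subset\Ort(4)$ and let $\sigma\in\Ort(4)$ be the orthogonal swap of the two planes: $\sigma$ normalizes $H$, yet it sends the elliptic element $g=(-\mathrm{id},\,r)$, with $r$ a rotation by $\pi/2$ (factor orders $2$ and $4$), to a non-$H$-conjugate elliptic element. What makes the needed statement true is exactly the extra constraint that $H$ sits as a parabolic $W_J$ inside a Coxeter group $W$ and the automorphism is induced by $N_W(W_J)$; this is the hypothesis of \cite[Thm.~3.2.11]{GP}. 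Your observations about split classes in $D_n$ and about $-w_0$ being inner are correct, but as you yourself say, the remaining case analysis is not finished and you have no classification-free argument. The citation is the intended shortcut, and it is what replaces your Claim.
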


\Pf .  For every $g\in G$ the subgroup $H_g\sub W$ is parabolic (i.e., conjugate to a standard subgroup $W_J\sub W$, where
$J$ is a subset of a simple reflections), and $g$ is not
contained in any smaller parabolic subgroup. In other words, the conjugacy class of $g$ is cuspidal in $H_g$ (see 
\cite[Sec.\ 3.1]{GP}). Thus, the condition (iii) of Lemma \ref{birat-lem} follows from \cite[Thm.\ 3.2.11]{GP}.
\ed

\begin{lem}\label{hyper-lem} 
Assume that a finite group $G$ acts linearly on a complex vector space $V$ over a field $k$,
and for an element $g$ the invariant subspace $V^g\sub V$ has codimension $1$.
Then the equivalent conditions of Lemma \ref{birat-lem} hold for $g$.
\end{lem}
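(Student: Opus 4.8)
The plan is to verify condition (iii) of Lemma \ref{birat-lem} directly, using the hypothesis that $V^g$ has codimension $1$. First I would analyze the structure of $H_g$, the pointwise stabilizer of the hyperplane $V^g$. Since $G$ acts linearly and $H_g$ fixes $V^g$ pointwise while acting on the quotient line $V/V^g$, there is a character $\chi: H_g\to k^*$ with kernel acting trivially on all of $V$; by effectivity (or simply because $G\hra\GL(V)$ if the action is faithful — but here we need to be careful, so it is cleaner to argue that $H_g$ acts faithfully on $V/V^g$) $H_g$ embeds into $k^*=\GL(V/V^g)$. Hence $H_g$ is cyclic, and in particular abelian. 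Therefore $g$ and $xgx^{-1}$ are conjugate in $H_g$ if and only if they are \emph{equal}.

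So the task reduces to showing that for every $x\in N_G(H_g)$ one has $xgx^{-1}=g$, i.e.\ $N_G(H_g)\sub C(g)$ (the reverse inclusion $C(g)\sub N_G(H_g)$ was already observed in the proof of Lemma \ref{birat-lem}, so this gives $N_G(H_g)=C(g)=C(g)H_g$, which is condition (ii)). Take $x\in N_G(H_g)$. Conjugation by $x$ preserves $H_g$, hence induces an automorphism of the cyclic group $H_g$. Now $g$ is a distinguished element of $H_g$: it is precisely the element of $H_g$ whose fixed space is exactly $V^g$ (rather than all of $V$), equivalently $g$ is a generator of $H_g$ whose image $\chi(g)$ is a \emph{primitive} $|H_g|$-th root of unity — wait, this is not automatic, since $g$ need not generate $H_g$. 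Let me instead argue geometrically: $x$ normalizes $H_g$, so $x$ maps the fixed locus $\mathrm{Fix}(H_g)=V^g$ to itself, hence $x\in N_G(H_g)$ stabilizes the hyperplane $V^g$ setwise. Since $g\in H_g$ and $H_g$ acts trivially on $V^g$, the element $xgx^{-1}\in H_g$ also acts trivially on $V^g$; moreover both $g$ and $xgx^{-1}$ act on the line $V/V^g$ by the scalars $\chi(g)$ and $\chi(xgx^{-1})$ respectively. Because $x$ stabilizes $V^g$, conjugation by $x$ and the linear action of $x$ on $V/V^g$ are intertwined, so $\chi(xgx^{-1}) = \chi(g)$ (the scalar by which an element of $H_g$ acts on a $1$-dimensional space is conjugation-invariant under anything normalizing $H_g$ and stabilizing the hyperplane). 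Since $H_g\hra k^*$ via $\chi$, we conclude $xgx^{-1}=g$.

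The main obstacle I anticipate is the bookkeeping around whether the relevant representation of $H_g$ on $V/V^g$ (or on a complement to $V^g$) is faithful; one must use that $H_g$ is defined as the \emph{pointwise} stabilizer of $V^g$, so any element of $H_g$ acting trivially on $V/V^g$ acts trivially on all of $V$ and hence is the identity (assuming the action of $G$ on $V$ is faithful — the paper works with effective actions, though even if not, one can replace $G$ by its image in $\GL(V)$ without changing $V^g/C(g)\to V/G$; alternatively, one checks condition (iii) modulo the kernel of $G\to\GL(V)$, which is central and changes nothing). Once that faithfulness point is pinned down, the cyclicity of $H_g$ and the conjugation-invariance of the scalar $\chi(g)$ finish the argument, and condition (iii) of Lemma \ref{birat-lem} — in fact the stronger statement $N_G(H_g)=C(g)$ — follows immediately.
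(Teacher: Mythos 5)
Your proof is correct and is essentially the argument in the paper: both exploit that $H_g$ embeds in $k^*$ via the scalar by which it acts transversally to $V^g$ (forcing $H_g$ to be abelian, so condition (iii) reduces to $xgx^{-1}=g$), and both conclude by conjugation-invariance of that scalar. The paper realizes this character as $\det|_{H_g}$ — which agrees with your $\chi$ since $H_g$ fixes $V^g$ pointwise — and this is marginally slicker because $\det$ is defined and conjugation-invariant on all of $G$, so one does not need your extra observation that $N_G(H_g)$ stabilizes $V^g$ setwise.
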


\Pf . Note that the determinant homomorphism $\det:G\to k^*$ induces an embedding $H_g\hra k^*$.
Indeed, an element of $H_g$ with trivial determinant is unipotent (since it fixes the hyperplane $V^g$) and of finite order,
hence, it is trivial. Using this we can check the condition (iii) of Lemma \ref{birat-lem}: since $\det(xgx^{-1})=\det(g)$,
we get that for $x\in N_G(H_g)$ one has $xgx^{-1}=g$.
\ed

Now we are ready to check the propery ($\star$) for some other complex reflection groups.

\begin{prop}\label{star-rank2-Gmpn-prop}
Property ($\star$) holds for 

\noindent
(i) finite complex reflection groups of rank $2$;

\noindent
(ii) the groups $G(m,k,n)$ such that either $m$ is a prime number or $k=1$.
\end{prop}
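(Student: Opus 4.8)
The plan is to verify, for each group $G$ and each conjugacy class in $G$, one of the equivalent conditions of Lemma~\ref{birat-lem} (most conveniently condition (iii), or (ii)) for a cleverly chosen representative $g$; this is legitimate since $H_{xgx^{-1}}=xH_gx^{-1}$ and $C(xgx^{-1})=xC(g)x^{-1}$, so all three conditions are conjugation-invariant. Part (i) is then immediate: if $G$ has rank $2$ then $\dim_\C V=2$, so $\dim_\C V^g\in\{0,1,2\}$ for every $g$. When $\dim V^g=2$ we have $g=1$ (the reflection representation is faithful) and $V^g/C(g)=V/G$; when $\dim V^g=0$ we have $V^g=\{0\}$, hence $H_g=G$ and $N_G(H_g)=G=C(g)H_g$, so condition (ii) of Lemma~\ref{birat-lem} holds; and when $\dim V^g=1$, $V^g$ is a hyperplane and Lemma~\ref{hyper-lem} applies directly.

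For part (ii) the main work is the case $G=G(m,1,n)=\mu_m\wr S_n$. First I would move $g=(z;\sigma)$ to its standard form: the cycles of $\sigma$ are supported on consecutive blocks $B_1,\dots,B_p$, and within each $B_j$ only one coordinate of $z$ is nontrivial, with value the cycle-product $\mu_j:=\prod_{i\in B_j}z_i\in\mu_m$. A short computation then identifies $V^g=\bigoplus_{j\in J}\C\,e_{B_j}$, where $J=\{j:\mu_j=1\}$ and $e_{B_j}=\sum_{i\in B_j}e_i$, and shows that, with $M=\bigcup_{j\in J}B_j$ and $M'=\{1,\dots,n\}\setminus M$, the pointwise stabilizer is the direct product $H_g=\bigl(\prod_{j\in J}S_{B_j}\bigr)\times G(m,1,M')$, where $S_{B_j}$ is the symmetric group on $B_j$. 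The key step is to compute $N_G(H_g)$: conjugating the standard generators of $H_g$ inside the wreath product, one checks that $(w;\tau)$ normalizes $H_g$ exactly when $\tau$ sends each ``good'' block ($j\in J$) onto a good block, fixes $M'$ setwise, and $w$ is constant on each good block (with $w|_{M'}$ unrestricted). Given such $(w;\tau)$, the verification of condition (iii) runs as follows: since $z$ vanishes on $M$ in standard form and $w$ is constant on each good block, the $\mu_m$-component of $(w;\tau)g(w;\tau)^{-1}$ becomes trivial on $M$ (the block scalar cancels going once around each full cycle), so on $M$ both $(w;\tau)g(w;\tau)^{-1}$ and $g$ are products of exactly one full cycle per good block, hence conjugate inside $\prod_{j\in J}S_{B_j}$; meanwhile on $M'$ the restriction of $(w;\tau)g(w;\tau)^{-1}$ is conjugate to that of $g$ by $(w;\tau)|_{M'}\in G(m,1,M')\subset H_g$. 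Combining the two conjugators produces an element of $H_g$ conjugating $(w;\tau)g(w;\tau)^{-1}$ to $g$, as needed.

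The remaining case of part (ii) is $G=G(p,p,n)$ with $p$ prime (the only other value of $k$ with $k\mid p$ is $k=1$, already handled). Here I would avoid trying to reduce $g$ to a standard form inside $G$ — a $G(p,1,n)$-conjugacy class can split in $G(p,p,n)$ — and instead work relative to $G(p,1,n)$, using the character $\delta\colon G(p,1,n)\to\mu_p$, $(z;\sigma)\mapsto\prod_i z_i$, whose kernel is $G$. If $H$ denotes the pointwise stabilizer of $V^g$ in $G(p,1,n)$, then $H_g=H\cap G$, and since $g$ lies in both $H$ and $H_g$ their common fixed locus is $V^g$, so $N_G(H_g)=N_{G(p,1,n)}(H)\cap G$; by the already-treated case $k=1$, $N_{G(p,1,n)}(H)=C_{G(p,1,n)}(g)\,H$. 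Thus it suffices to rewrite any $x=ch\in G$ (with $c\in C_{G(p,1,n)}(g)$, $h\in H$) in the form $c'h'$ with $c'\in C_G(g)$ and $h'\in H_g$; this is a matter of correcting determinants. If $\delta(h)=1$ then $h\in H_g$ and $\delta(c)=\delta(x)=1$, so $c\in C_G(g)$. Otherwise $g$ has a cycle with nontrivial cycle-product $\mu\in\mu_p$, and the restriction $t$ of $g$ to that cycle's block lies in $C_{G(p,1,n)}(g)\cap H$ with $\delta(t)=\mu$; since $p$ is prime, $\mu$ generates $\mu_p$, so moving an appropriate power of $t$ from $h$ across to $c$ kills the defect, giving $\delta(c')=\delta(h')=1$. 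I expect this last case to be the main obstacle — precisely the splitting of conjugacy classes in $G(p,p,n)$, which forces the relative argument above, and where primality of $m$ is exactly what makes the determinant correction go through; the rest of the proof is a longer but routine explicit computation in wreath products.
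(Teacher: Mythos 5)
Your argument is correct and follows a genuinely different route from the paper's, at least for part~(ii). The paper verifies condition~(i) of Lemma~\ref{birat-lem} directly: it fixes a Zariski-open set $U$ of sufficiently generic points $x\in V^g$, and for any $(t;\tau^{-1})\in G(m,k,n)$ carrying $x$ to another point of $U$ it constructs, explicitly in coordinates, an element of $C(g)$ with the same effect on $x$ (splitting on whether all cycle-products of $\sigma$ are trivial, and within the remaining case on $k=1$ versus $m$ prime). You instead verify condition~(iii), first computing the pointwise stabilizer $H_g$ and the normalizer $N_G(H_g)$ in closed form for $G(m,1,n)$ after reducing $g$ to standard form, then exhibiting the $H_g$-conjugator block by block; for $G(p,p,n)$ you sidestep the fact that the standard-form reduction may leave $G$ by running a relative argument via the index-$p$ inclusion $G(p,p,n)\subset G(p,1,n)$, the determinant character $\delta$, and the already-settled $k=1$ case. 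That relative reduction, together with the observation $V^{H_g}=V^H=V^g$ giving $N_G(H_g)=N_{G(p,1,n)}(H)\cap G$, is a clean structural trick not present in the paper, and it isolates exactly where primality of $m$ enters (correcting determinants using a cycle of nontrivial cycle-product that generates $\mu_p$) -- the same underlying mechanism the paper uses, but packaged more conceptually. The tradeoff is that the paper's generic-point verification avoids having to characterize $N_G(H_g)$ at all, which is the one step you rightly flag as a longer wreath-product computation; your characterization (permuting good blocks of equal size, preserving $M'$ setwise, $w$ constant on each good block) is correct and does go through. Part~(i) is essentially the paper's argument; you spell out the additional trivial case $V^g=V$, i.e.\ $g=1$, which the paper leaves implicit.
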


\Pf . (i) The assertion of ($\star$) is clear if $V^g=0$. 
In the case when $V^g$ is $1$-dimensional it holds by Lemma \ref{hyper-lem}.

\noindent
(ii)
For $(z;\si)=(z_1,\ldots,z_n;\si)\in (\mu_m)^n\rtimes S_n$ let $(C_1,\ldots,C_s)$ be the orbits of $\si$ on $\{1,\ldots,n\}$ 
(i.e., cycles in $\si$). Let us associate with every cycle $C_j$ the element 
\begin{equation}\label{z-C-eq}
z(C_j):=\prod_{i\in C_j} z_i\in \mu_m.
\end{equation}
It is easy to see that the map
$$(z;\si)\mapsto (|C_1|,z(C_1)),\ldots,(|C_s|,z(C_s))$$
gives a bijection between conjugacy classes in $(\mu_m)^n\rtimes S_n$ and colored partitions of $n$, i.e.,
partitions of $n$ with an additional assignment of a color for each part, where the color is an element of $\mu_m$.
Given an element $(z;\si)\in G(m,k,n)$, let us order the orbits of $\si$ so that 
$z(C_1)=\ldots=z(C_r)=1$ and $z(C_j)\neq 1$ for $j>r$. 
Let us set
$$\Sigma=\cup_{j=1}^r C_j.$$
For notational convenience let us rename $\si$ to $\si^{-1}$, so that our element is $(z;\si^{-1})$.
Then the fixed subspace of $(z;\si^{-1})$ is
$$V^{(z;\si^{-1})}=\{(x_i)\in V \ |\ z_ix_{\si(i)}=x_i \ \text{ for } i\in C_j, j\le r; \ x_i=0 \ \text{ for } i\not\in\Si\}.$$
Let $U\sub V^{(z;\si^{-1})}$ be the open subset of points $x$ such that $x_i\neq 0$ for $i\in\Sigma$ and
$x_i^m\neq x_{i'}^m$ for $i\in C_j$, $i'\in C_{j'}$, with $j,j'\le r$, $j\neq j'$. It is enough to check that if 
$(t;\tau^{-1})\cdot x=y$, where $x,y\in U$ and $(t;\tau^{-1})\in G(m,k,n)$ then $x$ and $y$ belong
to the same orbit of the centralizer of $(z;\si^{-1})$. First, we claim that
$\tau$ induces a permutation $\ov{\tau}$ of $\{1,\ldots,r\}$
such that $\tau(C_j)=C_{\ov{\tau}(j)}$ for $j=1,\ldots,r$. Indeed, let $i\in C_j$, where $j\le r$. Then
using the equations $(t;\tau^{-1})x=y$ and $(z,\si^{-1})y=y$ we get
\begin{equation}\label{xzt-si-tau-eq}
t_ix_{\tau(i)}=y_i=z_iy_{\si(i)}=z_it_{\si(i)}x_{\tau\si(i)},
\end{equation}
hence $x_{\tau(i)}\neq 0$ and
$x_{\tau(i)}^m=x_{\tau\si(i)}^m$. Since $x\in U$ this implies that $\tau(i)$ and $\tau\si(i)$
belong to the same subset $\Sigma_k$ for some $k\le r$. Thus, $\tau$ permutes the subsets $C_1,\ldots,C_r$,
as we claimed. It follows that there exists a permutation $\tau_1$, commuting with $\si$, acting trivially on the complement
to $\Si$ and such that $\tau_1(C_j)=\tau(C_j)$ for every $j\le r$.
Now let us distinguish two cases.

\noindent
{\bf Case 1. $\Si=\{1,\ldots,n\}$.} Then let us set
$$p_i=\frac{t_i x_{\tau(i)}}{x_{\tau_1}(i)}, \ \ p=(p_1,\ldots,p_n)\in(\C^*)^n.$$
We claim that the element $(p;\tau_1^{-1})\in(\C^*)^n\rtimes S_n$ belongs to $G(m,k,n)$, commutes with 
$(z;\si^{-1})$ and sends $x$ to $y$.  Indeed, since $x_{\tau(i)}^m=x_{\tau_1(i)}^m$ by the choice
of $\tau_1$, we have $p_i^m=1$. Next $\prod_i p_i=\prod_i t_i$, so $(p;\tau_1^{-1})$ is in $G(m,k,n)$.
The equation $(p;\tau_1^{-1})x=y=(t,\tau^{-1})x$ is equivalent to
$$p_ix_{\tau_1(i)}=t_ix_{\tau(i)}.$$
which holds by our choice of $p_i$.
Finally, the fact that $(p;\tau_1^{-1})$ commutes with $(z;\si^{-1})$ is equivalent to the equations
$$\frac{p_{\si(i)}}{p_i}=\frac{z_{\tau_1(i)}}{z_i}.$$
By definition of $p_i$ the left-hand side is
$$\frac{t_{\si(i)}x_{\tau\si(i)}x_{\tau_1(i)}}{t_i x_{\tau_1\si(i)} x_{\tau(i)}}=\frac{x_{\tau_1(i)}}{x_{\tau_1\si(i)}}=
\frac{x_{\tau_1(i)}}{z_ix_{\si\tau_1(i)}}=\frac{z_{\tau_1(i)}}{z_i},$$
where in the first equality we used \eqref{xzt-si-tau-eq}, and the last equality follows from $(z;\si^{-1})$-invariance of $x$.

\noindent
{\bf Case 2. $\Si\neq\{1,\ldots,n\}$.} In this case replacing $(z;\si^{-1})$ by a conjugate element in $G(m,k,n)$
we can assume that $z_i=1$ for $i\in\Si$ (indeed, it is enough to use the conjugation by elements of $(\mu_m)^n\cap G(m,m,n)$). Then $\tau_1$, viewed as an element of $G(m,k,n)$, commutes with $(z;\si^{-1})$. Thus, replacing
$x$ by $\tau_1(x)$, we can assume that $\tau(C_j)=C_j$ for $j\le r$. Furthermore, we have $x_i=x_{i'}$, $y_i=y_{i'}$ for
$i,i'\in C_j$ with $j\le r$. Hence, $y_i=t_ix_i$ for $i\in \Si$. In the case $k=1$ consider the element $t'\in (\mu_m)^n$ such
that $t'_i=t_i$ for $i\in\Si$ and $t'_i=1$ for $i\not\in\Si$. Then $t'$ commutes with $(z,\si^{-1})$ and $y=t'x$, which proves
the assertion in this case. Now assume that $m$ is prime. Let $\si^{-1}=\prod_j c_j$ be the cycle decomposition of $\si^{-1}$,
so that the support of $c_j$ is $C_j$. Let $\zeta=\prod_{i\in\Si}t_i\in\mu_m$. Since $z(C_{r+1})\neq 1$,
there exists an integer $a$ such that $\zeta=z(C_{r+1})^a$. Let us define an element $p\in(\mu_m)^n$ by
$$p_i=\begin{cases} t_i, & i\in\Si,\\ z_i^{-a}, &i\in C_{r+1},\\ 1 \text{ otherwise}.\end{cases}$$ 
Then $\prod_i p_i=1$, so the element $(p;c_{r+1})$ is in $G(m,k,n)$. Also, $(p;c_{r+1})$ commutes with $(z,\si^{-1})$ and
sends $x$ to $y$.
\ed

\begin{ex} The property ($\star$) often fails for groups $G(m,k,n)$ with non-prime $m$.
Here is an example that works for both $G(4,4,5)$ and $G(4,2,5)$.
Let $\si=(23)(45)$, $z=(1,1,-1,1,-1)$. Then the invariant subspace of $(z;\si)$ is the coordinate line spanned by $e_1$.
Now the element $(\zeta_4,\zeta_4^{-1},1,1,1)\in (\mu_4)^5\cap G(4,4,5)$ sends $e_1$ to $\zeta_4 e_1$. 
We claim that an element $(t;\tau)\in G(4,2,5)$, commuting with $(z;\si)$, 
can only map $e_1$ to $\pm e_1$. Indeed, the condition that $(t;\tau)$ is in the centralizer
implies that $\tau(1)=1$ and $t_3/t_2=\pm 1$, $t_5/t_4=\pm 1$. Now the condition $\prod t_i=\pm 1$ gives
$t_1t_2^2t_4^2=\pm 1$, hence $t_1=\pm 1$.
\end{ex}

Now we turn to the property that all the quotients $V^g/C(g)$ are smooth.
It is easy to see that this is true for any group $G(m,1,n)$, in particular, for $S_n=G(1,1,n)$ (see Section \ref{type-A-sec}).
We also have the following result for some groups of small rank.

\begin{prop}\label{H3-H4-prop} 
Let $G$ be the either a complex reflection group of rank $2$, or a 
real reflection group of rank $\le 3$, or the group of type $H_4$. 
Then for every $g\in G$ the geometric quotient $V^g/C(g)$ is
smooth.
\end{prop}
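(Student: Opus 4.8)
The plan is to reduce the proposition to the Chevalley--Shephard--Todd theorem: for a finite subgroup $\Gamma\subset\GL(U)$ of a finite-dimensional complex vector space $U$, the quotient $U/\Gamma$ is smooth (equivalently $\Oscr(U)^\Gamma$ is a polynomial ring) if and only if $\Gamma$ is generated by pseudo-reflections. Since $C(g)$ preserves $V^g$ and acts on it through its image $\ov{C}(g):=C(g)/(C(g)\cap H_g)$ in $\GL(V^g)$, where $H_g$ is the pointwise stabilizer of $V^g$ as in Lemma \ref{birat-lem}, it suffices to show that $\ov{C}(g)$ is generated by pseudo-reflections for every $g\in G$. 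Three ranges of $\dim V^g$ are immediate: if $V^g=V$ then $g=1$ and $V/G$ is smooth because $G$ is a reflection group; if $\dim V^g=0$ the quotient is a point; and if $\dim V^g=1$ then $\ov{C}(g)$ is a finite, hence cyclic, subgroup of $\GL_1(\C)=\C^{*}$, all of whose nontrivial elements are pseudo-reflections, so $V^g/C(g)\cong\A^1$. This settles all cases for the complex reflection groups of rank $2$.

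The next step handles $\dim V^g=\dim V-1$, i.e.\ $g=s_\alpha$ a reflection with root $\alpha$. Since $C(s_\alpha)$ preserves the eigenspaces $V^{s_\alpha}=\alpha^\perp$ and $\C\alpha$, it acts on the line $\C\alpha$ through a homomorphism onto $\{\pm1\}$ whose kernel is $\Stab_G(\alpha)$; this kernel meets $\langle s_\alpha\rangle$ trivially and has index $2$, so $C(s_\alpha)=\Stab_G(\alpha)\rtimes\langle s_\alpha\rangle$ and $\ov{C}(s_\alpha)\cong\Stab_G(\alpha)$, acting faithfully on $\alpha^\perp=V^{s_\alpha}$. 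By Steinberg's theorem $\Stab_G(\alpha)$ is generated by the reflections it contains, namely the $s_\beta$ with $\beta\perp\alpha$, each of which restricts to a reflection of $\alpha^\perp$; hence $\ov{C}(s_\alpha)$ acts on $V^g$ as a reflection group and the quotient is smooth. Together with the first step, this exhausts all possibilities for a real reflection group of rank $3$, and hence proves the proposition in rank $\le 3$; it also disposes of the case $\dim V^g=\dim V-1$ for $H_4$.

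Finally, for $G=W(H_4)$ there remains the case $\dim V^g=2$. By Steinberg's theorem $H_g$ is a parabolic subgroup whose fixed space is precisely $V^g$, hence a rank-$2$ parabolic; reading off the Coxeter diagram of $H_4$, it is therefore conjugate to one of $A_1\times A_1$, $A_2$, or $H_2=I_2(5)$. Moreover $g$ lies in $H_g$ and acts without nonzero fixed vectors on the $2$-dimensional complement of $V^g$, which forces $g$, up to conjugacy, to be the product of the two reflections of an $A_1\times A_1$ (order $2$), a Coxeter element of an $A_2$ (order $3$), or a nontrivial rotation in an $H_2$ (order $5$) --- only a handful of conjugacy classes. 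For each of these I would compute $C_{W(H_4)}(g)$ together with its action on the plane $V^g$ and verify directly that $\ov{C}(g)$ is one of the rank-$2$ complex reflection groups. This last verification --- pinning down the conjugacy classes of $W(H_4)$ with two-dimensional fixed space and the structure of their centralizers, e.g.\ from the known character-theoretic data for $W(H_4)$ or from its explicit quaternionic model --- is the only genuinely computational part, and is where I expect the main obstacle to lie.
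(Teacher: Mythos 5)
Your argument for complex reflection groups of rank $2$, and for real reflection groups of rank $\le 3$, matches the paper's and is essentially complete: the cases $\dim V^g\in\{0,1,\dim V\}$ are handled identically, and in the remaining rank-$3$ case $\dim V^g=\dim V-1$ the paper likewise decomposes $C(g)=G_v\times\langle g\rangle$ with $G_v=\Stab_G(v)$ (your $\Stab_G(\alpha)$) a parabolic generated by reflections $s_\beta$ with $\beta\perp\alpha$ which restrict to reflections of $V^g$. (Incidentally, since $g$ is central in $C(g)$, your $\rtimes$ is in fact a direct product, but this is cosmetic.)

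The genuine gap is exactly the one you flag: for $H_4$ and $\dim V^g=2$ you reduce to a finite verification but do not carry it out. This verification is the substantive content of the $H_4$ case. Your enumeration of the relevant $g$ is also not yet pinned down --- e.g.\ $I_2(5)$ has two conjugacy classes of rotations of order $5$ (a generator and its square), and you would additionally need to sort out whether standard rank-two parabolics of the same type are all conjugate in $W(H_4)$, so ``a handful'' of classes requires care. The paper avoids direct centralizer computations altogether. Since property $(\star)$ holds for all real reflection groups (Proposition \ref{star-reflection-prop}), Lemma \ref{birat-lem} gives $V^g/C(g)=V^g/N_G(H_g)$, so the question reduces to whether $N_G(H_g)$ acts on $V^{H_g}$ as a reflection group. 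That question is answered uniformly for finite Coxeter groups by \cite[Thm.\ 6]{Howlett} together with Howlett's explicit tables for $H_4$ (\cite[p.\ 79]{Howlett}). This is both conceptually cleaner (normalizers of parabolics rather than centralizers of individual elements) and computationally free, since the tables already exist. If you prefer your direct route, you would still need to either redo Howlett's computation or verify the centralizer structure for each relevant class of $g$ from the known data on $W(H_4)$.
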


\Pf . If $\dim V^g\le 1$ or $g=1$ then this is clear. 
Assume now that $\dim V=3$ and $\dim V^g=2$, so that $g$ is a reflection.
Let $v$ be a unit vector orthogonal to $V^g$. Then every element of $C(g)$ sends $v$ to $\pm v$. Let
$G_v\sub G$ be the stabilizer of $v$. Then $G_v\sub C(g)$ and $C(g)=G_v\times \lan g\ran$. Hence,
$V^g/C(g)=V^g/G_v$. But $G_v$ is generated by reflections, hence $V^g/G_v$ is smooth.

In the case when $G$ is of type $H_4$ we use the information about the normalizers of the parabolic subgroups 
from the work \cite{Howlett}. Recall that by Proposition \ref{star-reflection-prop}, property ($\star$) holds for $G$, hence
by Lemma \ref{birat-lem}, $V^g/C(g)=V^g/N_G(H)$, where $H=H_g\sub G$ is the pointwise stabilizer of $V^g=V^H$.
Thus, it is enough to check that $N_G(H)$ acts on $V^H$ as a reflection group. But this follows from
Theorem \cite[Thm.\ 6]{Howlett} and the explicit description of $N_G(H)$ for the type $H_4$ (see \cite[p.\ 79]{Howlett}).
\ed

\begin{ex} For groups $G(m,k,n)$ with $k\neq 1$ there often exist elements $g$ with singular $V^g/C(g)$.
For example, for $G(3,3,3)$ we can take $g=(12)$, so that $V^g/C(g)$ is the quotient of the plane by the action of
$\Z/3$ generated by $(x,y)\mapsto (\zeta_3 x,\zeta_3 y)$.
In the case when $G=G(2,2,n)$, the Weyl group of type $D_n$, where $n\ge 4$, there are always elements $g$ with
singular $V^g/C(g)$. In the simplest example $n=4$ we can take $g=(123)$, so that $V^g/C(g)$ is the quotient of the plane
by the action of the involution $(x,y)\mapsto (-x,-y)$.
\end{ex}

\section{Construction of modules via Springer correspondence}\label{Springer-sec}

In this section we present our main construction of a semiorthogonal decomposition of the derived category of $W$-equivariant
coherent sheaves, where $W$ is a Weyl group, on the Lie algebra of a maximal torus in a reductive algebraic group.
The main idea is to use the Springer correspondence to relate this category to the category of constructible sheaves on
the nilpotent cone, where the semiorthogonal decomposition can be constructed geometrically.

\subsection{Springer correspondence}\label{Springer-prelim-sec}

Starting from this section we always assume that $k=\CC$. 
Let $\bG$ be a connected reductive algebraic group over $\C$, $\fg$ its Lie algebra. We denote by $X$ the flag variety associated with $\bG$,
i.e., the variety of Borel subgroups in $\bG$. 

We start by recalling the construction of the Springer correspondence via perverse sheaves
(see \cite{Lusztig-GP}, \cite[ch.\ 13]{Jantzen}).
Let
$$\pi:\wt{\Nscr}\to\Nscr$$
be the Springer resolution of the nilpotent cone $\Nscr\sub\fg$.
Recall that this resolution fits into the diagram with cartesian squares
\begin{diagram}
\wt{\Nscr}&\rTo{}&\wt{\fg}&\lTo{j}&\wt{\fg}_{rs}\\
\dTo{\pi}&&\dTo{\pi_{\fg}}&&\dTo{\pi_{\fg_{rs}}}\\
\Nscr&\rTo{}&\fg&\lTo{j}&\fg_{rs}
\end{diagram}
where $\wt{\fg}\sub \fg\times X$ is the set of pairs $(x,\bB)$ such that $x\in \Lie(\bB)$,
$\fg_{rs}\sub\fg$ is the set of regular semisimple elements.
The construction of the Springer correspondence is based on the study of the complex of constructible sheaves on $\Nscr$,
called the {\it Springer sheaf},
$$\Ascr:=R\pi_*\und{\C}_{\wt{\Nscr}}[\dim\Nscr]\simeq R\pi_{\fg,*}\und{\C}_{\wt{\fg}}[\dim\Nscr]|_{\Nscr}.$$
We view $\Ascr$ as an object of the equivariant derived category $D^b_\bG(\Nscr)$.

The map $\pi_{\fg}$ is small, so we have an isomorphism of $\bG$-equivariant perverse sheaves on $\fg$,
$$R\pi_{\fg,*}\und{\C}_{\wt{\fg}}[\dim\fg]=j_{!*}\pi_{\fg_{rs},*}\und{\C}_{\wt{\fg}_{rs}}[\dim\fg].$$
Now the natural $W$-action on fibers of $\pi_{\fg_{rs}}$ induces an action
of $W$ on $\pi_{\fg,*}\und{\C}_{\wt{\fg}}$ and hence on $\Ascr$ (obtained by restriction to $\Nscr$).
Restricting the sheaf $\Ascr$ to a point $a\in\Nscr$ we obtain the {\it Springer $W$-action} on
$H^*(X_a,\C)$, where 
$$X_a=\pi^{-1}(a)\sub \{a\}\times X=X$$
is the {\it Springer fiber}. Similarly, for a subgroup $K\sub \bG_a$, where $\bG_a\sub\bG$ is the stabilizer of $a$ in $\bG$, we obtain a $W$-action on the equivariant cohomology $H^*_K(X_a,\C)$.

The map $\pi$ is semismall, so $\Ascr$ is a ($\bG$-equivariant) perverse sheaf.
Let $\Irr W$ denote the set of isomorphism classes of irreducible representations of $W$.
Using the decomposition theorem (see \cite[Thm.\ 6.2.5]{BBD}) one shows that
$\Ascr$ decomposes into a direct sum 
\begin{equation}\label{A-decomposition}
\Ascr=\bigoplus_{\chi\in\Irr W}\chi \ot S_{a,\xi},
\end{equation}
where $S_{a,\xi}$ is an irreducible $\bG$-equivariant perverse sheaf on $\Nscr$ of the form
$$S_{a,\xi}=j_{a,!*}\Lscr_\xi[\dim O_a],$$
where $a\in\Nscr$, $O_a=\bG\cdot a$, $j_a:O_a\to \Nscr$ is the embedding,
$\xi$ is an irreducible representation of the component group $\bG_a/\bG_a^0$ (where $\bG_a^0$ is the connected component of $1$ in $\bG_a$), and $\Lscr_{\xi}$ is the local system on $O_a$ associated with $\xi$.
This gives a bijection between $\Irr W$ and a certain set $\Spr$ of pairs $(O_a,\xi)$
(it is known that for every nilpotent orbit $O_a$ there exists $\xi$ such that $(O_a,\xi)\in\Spr$).
The irreducible representation $\chi(a,\xi)$ of $W$ corresponding to the pair $(O_a,\xi)$
appears as the $\xi$-isotypic component in the top degree cohomology of the Springer fiber $X_a$.
Indeed, restricting the decomposition of $\Ascr$ to a point $a\in\Nscr$ we get
$$H^*(X_a,\C)[\dim\Nscr-\dim O_a]\simeq\Ascr|_a[-\dim O_a]\simeq \bigoplus_\xi \chi(a,\xi)\ot\xi\oplus\ldots,$$
where the sum is over $\xi\in \Irr \bG_a/\bG_a^0$ such that $S_{a,\xi}$ occurs in the decomposition
\eqref{A-decomposition},
the remaining summands correspond to Goresky-Macpherson extensions from orbits containing $O_a$
in its closure, and hence, live in negative degrees.
Thus, using the equality
\begin{equation}\label{dim-Xa-eq}
2\dim X_a=\dim \Nscr-\dim O_a
\end{equation}
(see \cite[Thm.\ 10.11]{Jantzen}) we deduce the isomorphism of $W\times \bG_a/\bG_a^0$-representations
$$H^{2\dim X_a}(X_a,\C)\simeq \bigoplus_{\xi} \chi(a,\xi)\ot \xi.$$

Note that choosing a Borel subgroup $\bB\sub \bG$  we obtain an isomorphism
\begin{equation}\label{Spr-res-coh-eq}
H^*_\bG(\wt{\Nscr})\simeq H^*_\bG(\bG\times_\bB \fn)\simeq H^*_\bG(\bG/\bB)\simeq H^*_\bB(pt)=H^*_\bT(pt),
\end{equation}
where $\fn$ is the nilpotent radical in $\Lie(\bB)$, $\bT=\bB/[\bB,\bB]$ is the
maximal torus. Interpreting this cohomology as $\Ext_\bG^*(\und{\C}_{\wt{\Nscr}},\und{\C}_{\wt{\Nscr}})$ and combining
it with the $W$-action on $\Ascr$ we get a homomorphism
\begin{equation}\label{AW-eq}
A_W:=W\ltimes H^*_\bT(pt) \to\Ext_\bG^\bullet(\Ascr,\Ascr)
\end{equation}
of graded algebras, where the grading on $A_W$ is induced by the cohomological grading on $H^*_\bT(pt)$. 
The crucial fact for us is that this map is an isomorphism.
This is explicitly stated in \cite[Thm.\ 3.1]{Kato}, as a consequence of \cite[Thm.\ 8.11]{Lusztig-HAII},
where the $\bG\times\G_m$-equivariant $\Ext$-algebra is identified with the graded Hecke algebra.

Thus, we have a graded action of $A_W$ on $\Ascr$, i.e., a homogeneous 
element $P\in A_W$ of degree $d$ acts as a map $P:\Ascr\to \Ascr[d]$ in the $\bG$-equivariant category. Now, given
a point $a\in\Nscr$, we get by restriction a graded action of $A_W$ on 
$$\Ascr|_{O_a}\simeq R\pi_{O_a,*}\und{\C}_{\pi^{-1}(O_a)}[\dim\Nscr]$$
by $\bG$-equivariant morphisms, where $\pi_{O_a}:\pi^{-1}(O_a)\to O_a$ is the natural projection. 
Using the identifications $O_a\simeq \bG/\bG_a$  and
\begin{equation}\label{O-a-isom}
\pi^{-1}(O_a)\simeq \bG\times_{\bG_a} X_a,
\end{equation}
we can identify $\bG$-equivariant sheaves on $\pi^{-1}(O_a)$ (resp., $O_a$) with $\bG_a$-equivariant sheaves
on $X_a$ (resp., $pt$), so that the $\bG$-equivariant push-forward with respect to the
projection $\pi_{O_a}$ corresponds to taking the $\bG_a$-equivariant cohomology.
Thus, the above construction gives a structure of a graded $A_W$-module on the
$\bG_a$-equivariant cohomology of Springer fibers, $H^*_{\bG_a}(X_a,\C)$. More generally,
if $\xi$ is an irreducible representation of the component group $\bG_a/\bG_a^0$, then twisting
$\Ascr|_{O_a}$ by the corresponding local system $\Lscr_\xi$, we get after the above identification
a structure of a graded $A_W$-module on $(\xi\ot H^*_{\bG_a^0}(X_a,\C))^{\bG_a/\bG_a^0}$
(here we used Lemma \ref{G/H-lem}(ii)).
We need some facts about this module structure.

\begin{lem}\label{two-actions-lem} 
The action of $H^*_\bG(pt)$ on $(\xi\ot H^*_{\bG_a^0}(X_a,\C))^{\bG_a/\bG_a^0}$,
induced by the homomorphism $H^*_\bG(pt)\to H^*_{\bG_a}(pt)\simeq H^*_{\bG_a^0}(pt)^{\bG_a/\bG_a^0}$ 
and by the natural action of $H^*_{\bG_a^0}(pt)$ on the $\bG_a^0$-equivariant cohomology,
coincides with the restriction of the action
of $H^*_\bT(pt)\sub A_W$ via the natural embedding
$H^*_\bG(pt)\to H^*_\bT(pt)$.
\end{lem}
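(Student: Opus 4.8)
The plan is to trace through both actions after restricting the Springer sheaf to the single orbit $O_a$. Recall that every element of $A_W$ acts on $\Ascr$ by a morphism in the equivariant derived category $D^b_\bG(\Nscr)$; restricting such a morphism to $O_a$, tensoring with the local system $\Lscr_\xi$, and applying $R\Ga_\bG(O_a,-)$ is precisely how the $A_W$-module structure on $(\xi\ot H^*_{\bG_a^0}(X_a,\C))^{\bG_a/\bG_a^0}$ was produced (using $\pi^{-1}(O_a)\simeq\bG\times_{\bG_a}X_a$, the projection formula $R\pi_{O_a,*}\pi_{O_a}^*\Lscr_\xi\simeq\Lscr_\xi\ot R\pi_{O_a,*}\und{\C}_{\pi^{-1}(O_a)}$, and Lemma \ref{G/H-lem}). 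So I would first reduce the statement to computing the action of the subalgebra $H^*_\bG(pt)$, sitting inside the polynomial part $H^*_\bT(pt)\sub A_W$, on that module.

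Next I would unwind the polynomial part of the isomorphism $A_W\xrightarrow{\sim}\Ext^*_\bG(\Ascr,\Ascr)$. By construction it is obtained by applying $R\pi_*$ to the endomorphisms $c\cdot\id_{\und{\C}_{\wt{\Nscr}}}$ (that is, cup product with $c$), for $c\in H^*_\bG(\wt{\Nscr})=\Ext^*_\bG(\und{\C}_{\wt{\Nscr}},\und{\C}_{\wt{\Nscr}})$; the global shift by $\dim\Nscr$ in the definition of $\Ascr$ plays no role here. Using proper base change for the cartesian square relating $\pi$ to $\pi_{O_a}$, the identity $j^*(c\cdot\id)=(j^*c)\cdot\id$ for the immersion $j:\pi^{-1}(O_a)\hra\wt{\Nscr}$, the fact that cup product with a fixed class is a natural transformation and hence commutes with $\ot\Lscr_\xi$, and finally $R\Ga_\bG(O_a,R\pi_{O_a,*}(-))=R\Ga_\bG(\pi^{-1}(O_a),-)$, I would conclude that $c\in H^*_\bT(pt)=H^*_\bG(\wt{\Nscr})$ acts on $(\xi\ot H^*_{\bG_a^0}(X_a,\C))^{\bG_a/\bG_a^0}=H^*_\bG(\pi^{-1}(O_a),\pi_{O_a}^*\Lscr_\xi)$ by cup product with the restriction $c|_{\pi^{-1}(O_a)}$, i.e.\ via the restriction homomorphism $H^*_\bG(\wt{\Nscr})\to H^*_\bG(\pi^{-1}(O_a))\simeq H^*_{\bG_a}(X_a)$.

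It then remains to match the two composites $H^*_\bG(pt)\hra H^*_\bT(pt)=H^*_\bG(\wt{\Nscr})\to H^*_{\bG_a}(X_a)$ and $H^*_\bG(pt)\to H^*_{\bG_a}(pt)\to H^*_{\bG_a}(X_a)$, which is pure functoriality of the Borel construction. Under the identifications of \eqref{Spr-res-coh-eq} the embedding $H^*_\bG(pt)\hra H^*_\bT(pt)$ is the pullback along $\wt{\Nscr}\to pt$ (it is the Chern--Weil map $S(\ft^*)^W\hra S(\ft^*)$). The $\bG_a$-equivariant inclusion $X_a=\pi^{-1}(a)\hra\wt{\Nscr}$ fits into a commutative square over $pt$ compatible with $\bG_a\hra\bG$, and the $\bG_a$-equivariant restriction $H^*_\bG(\wt{\Nscr})\to H^*_{\bG_a}(X_a)$ it induces agrees with the restriction to $\pi^{-1}(O_a)$ from the previous paragraph (via $\pi^{-1}(O_a)=\bG\times_{\bG_a}X_a$ and $\{e\}\times X_a=\pi^{-1}(a)$). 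Composing, one gets the pullback along $X_a\to pt$ in $\bG_a$-equivariant cohomology precomposed with $H^*_\bG(pt)\to H^*_{\bG_a}(pt)$, which is exactly the ``natural'' action of the statement; compatibility with the $\xi$-twist and with passing to $\bG_a/\bG_a^0$-invariants is automatic, since $H^*_{\bG_a}(pt)=H^*_{\bG_a^0}(pt)^{\bG_a/\bG_a^0}$ acts $\bG_a/\bG_a^0$-equivariantly (Lemma \ref{G/H-lem}).

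The step I expect to be the main obstacle is the second one: carefully checking that the $\Ext$-algebra homomorphism induced by $R\pi_*$ is compatible with restriction of $\bG$-equivariant objects to $O_a$, with the projection-formula twist by $\Lscr_\xi$, and with the relevant $H^*_\bG(pt)$-module structures, all while keeping honest track of the shift by $\dim\Nscr$ in $\Ascr$. This is ``formal nonsense'' in the equivariant constructible derived category, but it is exactly where a misidentification could hide; once it is settled, the remaining identifications in the third paragraph are routine.
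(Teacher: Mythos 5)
Your proposal is correct and follows essentially the same route as the paper's proof: both reduce the statement to identifying the $H^*_\bT(pt)$-action with the restriction map $H^*_\bG(\wt{\Nscr})\to H^*_{\bG_a}(X_a)$ (equivalently $H^*_\bG(X)\to H^*_{\bG_a}(X_a)$) and then invoke functoriality of the Borel construction via a commutative square of quotient stacks $[X_a/\bG_a]\to[X/\bG]$ over $[pt/\bG_a]\to[pt/\bG]$. The paper spends more effort on the point you call "pure functoriality" (verifying via $H^*_\bG(X)=H^*_{\bG\times\bB}(\bG)$ and $\bG=(\bG\times\bB)/\De(\bB)$ that $H^*_\bG(pt)\hra H^*_\bT(pt)$ is indeed the map induced by $X\to pt$ under \eqref{Spr-res-coh-eq}) while stating your first two paragraphs as a simple recollection of how the action was constructed, but the two arguments are substantively the same.
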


\Pf . Recall that the action of $H^*_\bT(pt)\sub A_W$ on $(\xi\ot H^*_{\bG_a^0}(X_a,\C))^{\bG_a/\bG_a^0}$ is given by the
homomorphism $H^*_\bT(pt)\simeq H^*_\bG(X)\to H^*_{\bG_a}(X_a)$ induced by the embedding $X_a\to X$.
The commutative square of morphisms of quotient stacks
\begin{diagram}
[X_a/\bG_a] &\rTo{}& [X/\bG] \\
\dTo{}&&\dTo{}\\
[pt/\bG_a] &\rTo{}& [pt/\bG]
\end{diagram}
leads to a commutative square of homomorphisms
\begin{diagram}
H^*_{\bG_a}(X_a) &\lTo{}& H^*_\bG(X)\\
\uTo{}&&\uTo{}\\
H^*_{\bG_a}(pt)&\lTo{}& H^*_\bG(pt)
\end{diagram}
This reduces us to checking that the natural $H^*_\bG(pt)$-action on $H^*_\bG(X)$ is given by the
homomorphism $H^*_\bG(pt)\to H^*_\bT(pt)$ via the isomorphism \eqref{Spr-res-coh-eq}.
To this end we note that $H^*_\bG(X)=H^*_{\bG\times \bB}(\bG)$ and we have to calculate the
$H^*_{\bG\times \bB}(pt)$-action on the latter space. Finally, we observe that $\bG=(\bG\times \bB)/\De(\bB)$,
where $\De(\bB)$ is the diagonal copy of $\bB$. Hence, there is an isomorphism
$$H^*_{\bG\times \bB}(\bG)\wt{\to} H^*_{\De(\bB)}(pt)$$
compatible with the restriction homomoprhism $H^*_{\bG\times \bB}(pt)\to H^*_{\De(\bB)}(pt)$.
Thus, the $H^*_{\bG\times \bB}(pt)$-action on $H^*_{\bG\times \bB}(\bG)$ factors through the latter homomorphism.
\ed

\begin{lem}\label{Spr-classical-W-lem}
(i) For a subgroup $K\sub \bG_a$ the restriction homomoprhism $H^*_K(X,\C)\to H^*_K(X_a,\C)$ is
compatible with the Springer $W$-actions.

\noindent
(ii) The natural isomorphism
$$H^*_\bT(\bG/\bB,\C)\rTo{\sim} H^*_\bT(\bG/\bT,\C)$$
is compatible with $W$-actions, where $W$ acts on $H^*_\bT(\bG/\bB,\C)$ by Springer construction and
it acts on $H^*_\bT(\bG/\bT,\C)$ via the geometric action on $\bG/\bT$.
\end{lem}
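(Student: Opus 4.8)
\Pf . \textbf{(i)} The plan is to realize both $W$-actions as restrictions of a single one: the $W$-action on the $\bG$-equivariant complex $\Ascr=R\pi_*\und\C_{\wt{\Nscr}}[\dim\Nscr]$. Since $K\sub\bG_a$ fixes $a$, and every subgroup fixes $0$, the inclusions $i_a\colon\{a\}\hookrightarrow\Nscr$ and $i_0\colon\{0\}\hookrightarrow\Nscr$ are $K$-equivariant, and proper base change gives $K$-equivariant identifications $i_a^*\Ascr\simeq R\Ga(X_a,\C)[\dim\Nscr]$ and $i_0^*\Ascr\simeq R\Ga(X,\C)[\dim\Nscr]$ in $D^b_K(pt)$ (recall $X_0=\pi^{-1}(0)=X$); the Springer $W$-actions on $H^*_K(X_a,\C)$ and on $H^*_K(X,\C)=H^*_K(X_0,\C)$ are by construction the ones transported from $\Ascr$ through these. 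I would then invoke the canonical morphisms $R\Ga_K(\Nscr,\Ascr)\to i_a^*\Ascr$ and $R\Ga_K(\Nscr,\Ascr)\to i_0^*\Ascr$, which are $W$-equivariant by naturality. Because $\pi$ is proper and $\wt{\Nscr}=\bG\times_\bB\fn$ is a vector bundle over $X$ via $q\colon\wt{\Nscr}\to X$, one has $H^*_K(\Nscr,\Ascr)\simeq H^*_K(\wt{\Nscr},\C)[\dim\Nscr]\simeq H^*_K(X,\C)[\dim\Nscr]$; under this the first morphism becomes the restriction along $X_a=\pi^{-1}(a)\hookrightarrow\wt{\Nscr}\xrightarrow{q}X$, i.e.\ the map of the statement, while the second becomes an isomorphism (restriction to the zero section $X_0=\pi^{-1}(0)$, a homotopy inverse to $q^*$). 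Compatibility with $W$ then follows immediately.

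\textbf{(ii)} Here the plan is to identify the geometric right $W$-action on $\bG/\bT$ inside the Grothendieck--Springer picture. Let $\sigma\colon\wt{\fg}\to\ft$ be the natural projection to the universal Cartan (sending $(x,\bB')$ to the class of $x$ in $\Lie(\bB')/[\Lie(\bB'),\Lie(\bB')]\simeq\ft$), so that $\sigma^{-1}(0)=\wt{\Nscr}$ and, over the regular semisimple locus, $\wt{\fg}_{rs}\to\fg_{rs}$ is the $W$-Galois \'etale cover whose deck transformations are $\bG$-equivariant and cover the standard $W$-action on $\ft_{rs}$ through $\sigma$. For a regular $t_0\in\ft$ there is a $\bG$-equivariant isomorphism $a_{t_0}\colon\sigma^{-1}(t_0)\xrightarrow{\sim}\bG/\bT$, $[g,t_0]\mapsto g\bT$ (using $t_0+\fn=\bN\cdot t_0$ and $\bB\cap\bG_{t_0}=\bT$). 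One then checks two geometric facts: (a) under $a_{t_0}$ the composite $\sigma^{-1}(t_0)\hookrightarrow\wt{\fg}_{rs}\xrightarrow{j}\wt{\fg}\to\bG/\bB$ equals the natural affine bundle projection $p\colon\bG/\bT\to\bG/\bB$, so that $p^*$ is the isomorphism of the statement; and (b) the deck transformation $w$ carries $\sigma^{-1}(t_0)$ isomorphically onto $\sigma^{-1}(wt_0)$ and, read through $a_{t_0}$ and $a_{wt_0}$, is the geometric right translation $\tau_w\colon g\bT\mapsto g\dot w^{-1}\bT$ (for a lift $\dot w\in N(\bT)$ of $w$).

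Granting (a) and (b), the rest is a naturality chase. As in part \textbf{(i)} with $a=0$ and $K=\bT$, the Springer action on $H^*_\bT(\bG/\bB)$ is identified, via the vector bundle isomorphism $H^*_\bT(\bG/\bB)\simeq H^*_\bT(\wt{\fg})$, with the $W$-action on $H^*_\bT(\wt{\fg})=H^*_\bT(\fg,R\pi_{\fg,*}\und\C_{\wt{\fg}})$ induced by the sheaf automorphisms of $R\pi_{\fg,*}\und\C_{\wt{\fg}}$; restriction of global sections to the open subset $\fg_{rs}$ yields a $W$-equivariant map $H^*_\bT(\wt{\fg})\to H^*_\bT(\wt{\fg}_{rs})$ whose target now carries the action induced by the deck transformations of the \emph{space} $\wt{\fg}_{rs}$. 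Restricting once more along $\sigma^{-1}(t_0)\hookrightarrow\wt{\fg}_{rs}$ reproduces $p^*$ for every regular $t_0$ by (a); using this both for $t_0$ and for $w^{-1}t_0$ to bridge the two $\sigma$-fibers that appear when $w$ is pushed through, and invoking (b), one arrives at $p^*\circ w_{\mathrm{Spr}}=\tau_w^*\circ p^*$, which is the asserted compatibility (the substitution $w\leftrightarrow w^{-1}$ being only a matter of convention). All maps involved are $\bT$-equivariant, hence $H^*_\bT(pt)$-linear. The step I expect to be the main obstacle is (b): checking that the deck transformation restricts on the fibers of $\sigma$ to \emph{precisely} the geometric right $W$-action on $\bG/\bT$, and not to a twist of it, which requires careful tracking of the lifts $\dot w\in N(\bT)$ and of the $\bT$-equivariant structures. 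Fact (a) is a direct computation with $a_{t_0}$, and everything else is proper base change, the affine bundle identifications, and naturality.
\ed
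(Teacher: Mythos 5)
For part (i) your argument coincides with the paper's in all essential respects: you use the vector-bundle projection from the resolution to $X=\bG/\bB$ to trivialize the relevant equivariant cohomology, and you realize both $W$-actions as restrictions of the single $W$-action on $\RHom$ with the Springer sheaf, using naturality of restriction to a ($K$-fixed) point. The only cosmetic difference is that you work with $\pi:\wt{\Nscr}\to\Nscr$ and the sheaf $\Ascr$ directly, whereas the paper works with $\pi_\fg:\wt{\fg}\to\fg$ and $R\pi_{\fg,*}\und{\C}$; the two are interchangeable since the $W$-action on $\Ascr$ is by construction the restriction of the one on $R\pi_{\fg,*}\und\C$, and both $\wt{\Nscr}\to X$ and $\wt{\fg}\to X$ are affine bundles. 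This part of your proposal is complete and correct.

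For part (ii) you take a genuinely more explicit route than the paper. The paper simply cites Jantzen's Prop.\ 13.7 for the non-equivariant statement and observes that, because the entire diagram used there ($\bG/\bT\to\wt{\Obscr}\to\fg'_{rs}\to\wt{\fg}\to\bG/\bB$ over $\fg_{rs}\to\fg$) is $\bG$-equivariant, the argument upgrades verbatim to $\bT$-equivariant cohomology. You instead try to reprove the geometric content from scratch via the universal Cartan map $\sigma:\wt{\fg}\to\ft$ and the identification $a_{t_0}:\sigma^{-1}(t_0)\xrightarrow{\sim}\bG/\bT$. This is in fact the same geometry in different packaging: your fiber $\sigma^{-1}(t_0)$ is precisely the connected component $\iota(\bG/\bT)\subset\wt{\Obscr}$ appearing in Jantzen's diagram (and in the paper's reproduction of it), so you are not taking a shortcut but rather unwinding the citation. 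The genuine gap is the one you flag yourself: you leave claim (b) --- that the deck transformation $w$ carries $\sigma^{-1}(t_0)$ to $\sigma^{-1}(wt_0)$ by a map that reads, through $a_{t_0}$ and $a_{wt_0}$, as the geometric right $W$-translation on $\bG/\bT$ with no extra twist --- unverified, while acknowledging it as the crux. The claim is true (one traces that the Borel $g\dot{w}^{-1}\bB\dot{w}g^{-1}$ containing $x=\Ad(g)t_0$ has $\sigma$-image $wt_0$, so $[g,t_0]\mapsto[g\dot{w}^{-1},wt_0]$), but a proposal that declares its own central step ``the main obstacle'' and omits it cannot count as a complete proof. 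The cleanest way to close the gap is exactly what the paper does: cite Jantzen for the underlying non-equivariant geometric verification and promote it to $\bT$-equivariant cohomology by $\bG$-equivariance of all the maps involved.
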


\Pf . (i) We have a natural isomorphism 
\begin{equation}\label{Spr-global-sections-eq}
H^*_K(X,\C)\rTo{\sim} H^*_K(\wt{\fg},\C)\simeq H^*_K(\fg,R\pi_{\fg,*}\C),
\end{equation}
where the first arrow is the pull-back with respect to the projection $\wt{\fg}\to X$.
It is easy to see that the restriction homomorphism $H^*_K(X,\C)\to H^*_K(X_a,\C)$
is equal to the composition of \eqref{Spr-global-sections-eq} with the natural map
\begin{equation}\label{Spr-global-res-map-eq}
H^*_K(\fg,R\pi_{\fg,*}\C)\to i_a^*R\pi_*\C\simeq H^*_K(X_a,\C),
\end{equation}
where $i_a:\{a\}\hookrightarrow \Nscr$ is the inclusion map.
In the case $a=0$ this shows that the map \eqref{Spr-global-sections-eq} is $W$-equivariant
(cf.\ \cite[Lem.\ 13.6]{Jantzen} for a similar argument).
Since the map \eqref{Spr-global-res-map-eq} is also $W$-equivariant, the assertion follows.

\noindent
(ii) 
The same statement about non-equivariant cohomology is proved in \cite[Prop.\ 13.7]{Jantzen}.
Recall that the proof is based on the commutative diagram
\begin{diagram}
\bG/\bT &\rTo{\iota}& \wt{\Obscr}&\rTo{}&\fg'_{rs}&\rTo{}&\wt{\fg}&\rTo{f}&\bG/\bB\\
&\rdTo{}&\dTo{}&&\dTo{\pi'_{rs}}&&\dTo{\pi_{\fg}}\\
&&\Obscr&\rTo{}&\fg_{rs}&\rTo{}&\fg
\end{diagram}
where $\Obscr\sub\fg_{rs}$ is the $\bG$-orbit of some semisimple element in $\ft$,
$\fg'_{rs}\sub \bG/\bT\times \fg_{rs}$ is the subvariety of $(gT,x)$ such that $x\in \Ad(g)(\ft)$,
$\wt{\Obscr}\sub\fg'_{rs}$ is the preimage of $\Obscr$ under the natural projection $\pi'_{rs}$.
The map $\iota$ is induced by the action of $\bG$ on $\fg'_{rs}$.
The only additional point that we have to make is that all the varieties in the above diagram are equipped with the action of $\bG$ (and hence
of $\bT\sub \bG$) and all the maps are $\bG$-equivariant. Now the argument of \cite[Prop.\ 13.7]{Jantzen}
can be repeated literally using the $\bT$-equivariant cohomology. 
\ed

\begin{rem} The assertion of Lemma \ref{Spr-classical-W-lem} for $K=\{1\}$ is proved also in
\cite[Thm.\ 1.1]{HS} using the original definition of the $W$-action due to Springer. We use Lusztig's definition
of the $W$-action which differs from the original one by the twist with the sign character of $W$
(see \cite{Hotta}).
\end{rem}


\subsection{Construction of modules using Springer correspondence}\label{modules-constr-sec}

Let 
$$A^{dg}_W:=R\Hom_\bG^\bullet(\Ascr,\Ascr)$$
be the DG-algebra of endomorphisms of $\Ascr$, where we use the DG-enhancement constructed in Appendix \ref{dg-sec}.
Furthermore, we can assume that $A^{dg}_W$ is equipped with a DG-endomorphism, compatible with the action of
the Frobenius endomorphism on the $\Ext^*$-algebra computed over an algebraic extension of a finite field
(see Theorem \ref{anr}).

\begin{thm}\label{A-W-purity-thm} 
The DG-algebra $A^{dg}_W$ is formal, so it is quasi-isomorphic
to $A_W$.
\end{thm}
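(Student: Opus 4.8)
The plan is to deduce formality of $A^{dg}_W$ from the purity criterion of Theorem~\ref{ref-1.1-1}, using the Frobenius lift provided by Theorem~\ref{anr}. First I would observe that the Springer sheaf $\Ascr$ is a $\bG$-equivariant perverse sheaf on $\Nscr$ of geometric origin (it is obtained by proper pushforward from $\wt{\Nscr}$, hence is defined over a finite field in the sense of \S\ref{passing-finite-field-sec}: one takes the analogous Springer resolution over a finite field $\FF_q$ and reduces). Therefore Theorem~\ref{anr} applies in its $G$-equivariant form and produces a DG-algebra $A$ representing $\RHom_{\bG}(\Ascr,\Ascr)$, equipped with a DG-endomorphism $F$ inducing on cohomology the Frobenius action on $\Ext^*_{\bG}(\Ascr_s,\Ascr_s)$ (with $\Lambda$-coefficients, tensored up to $\CC$). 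Since $A$ is quasi-isomorphic to $A^{dg}_W$ as DG-algebras, it suffices to show that $A$ is formal, and by Theorem~\ref{ref-1.1-1} this follows once we check that $(A,F)$ is pure of weight zero with respect to a suitable $\xi$ (namely $\xi=\sqrt q$).

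The heart of the argument is then the \emph{purity} statement: the Frobenius acts on $\Ext^n_{\bG}(\Ascr,\Ascr)$ with all generalized eigenvalues of absolute value $q^{n/2}$. Here I would invoke the isomorphism \eqref{AW-eq}, $A_W = W\ltimes H^*_\bT(pt)\xrightarrow{\sim}\Ext^\bullet_\bG(\Ascr,\Ascr)$, which is Lusztig's identification \cite[Thm.\ 8.11]{Lusztig-HAII}, \cite[Thm.\ 3.1]{Kato}. On the right-hand side the Frobenius acts; I need that under this isomorphism the degree-$2i$ part $S^i(\ft^*)\subset H^*_\bT(pt)$ is pure of weight $2i$ (each Chern class of a line bundle on $B\bT$ contributes a Tate twist, eigenvalue $q^i$), and the group-algebra part $\C[W]$ sits in degree $0$ and is fixed by Frobenius (the $W$-action on $\Ascr$ is defined over the base field, being induced by deck transformations of $\wt{\fg}_{rs}\to\fg_{rs}$, a cover defined over $\FF_q$). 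Concretely, $H^*_\bT(pt)\simeq H^*_\bG(\wt\Nscr)$ by \eqref{Spr-res-coh-eq}, and $\wt\Nscr = \bG\times_\bB\fn$ has equivariant cohomology that is a polynomial ring generated in degree $2$ with Frobenius acting by $q$ on the generators (this is the standard purity of $H^*$ of flag varieties / classifying spaces). Hence every class in $\Ext^n$ is a sum of products of a degree-$0$ Weyl-group element and Chern classes, all of Frobenius-weight $n$, so $(A,F)$ is pure of weight $0$.

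The main obstacle I expect is the bookkeeping needed to transport the Frobenius weights through the chain of equivalences in \S\ref{secff} and to make sure the Frobenius endomorphism furnished by Theorem~\ref{anr} really induces on $\Ext^*_\bG(\Ascr,\Ascr)$ the \emph{geometric} Frobenius whose weights are controlled by Lusztig's computation (as opposed to some twist). In particular one must verify that the isomorphism \eqref{AW-eq}, which a priori is only an isomorphism of graded $\CC$-algebras, is compatible with Frobenius when everything is set up over $\FF_q$ --- this is where one uses that Lusztig's identification with the graded Hecke algebra is itself motivic/arithmetic in nature, or alternatively one directly computes the Frobenius eigenvalues on $H^*_\bG(\wt\Nscr)$ and on the span of the $W$-action and checks these generate. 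Once purity is established, Theorem~\ref{ref-1.1-1} gives formality of $A$, hence of $A^{dg}_W$, and the quasi-isomorphism $A^{dg}_W\simeq A_W$ follows since a formal DG-algebra is quasi-isomorphic to its cohomology, which is $A_W$ by \eqref{AW-eq}.
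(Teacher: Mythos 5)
Your proposal follows essentially the same route as the paper: invoke Theorem~\ref{anr} to obtain a Frobenius lift, then Theorem~\ref{ref-1.1-1} to reduce to purity, which is verified by noting that the $W$-action on $\Ascr$ is by morphisms defined over the finite field (hence Frobenius-fixed, weight $0$) and that $H^*_\bT(pt)\simeq H^*_\bG(\bG/\bB)$ is generated in degree $2$ by first Chern classes of line bundles defined over the finite field (hence pure of weight $2$). Your extra worry about whether the isomorphism \eqref{AW-eq} is Frobenius-compatible is precisely dissolved by this generator argument — one does not need \eqref{AW-eq} to be an isomorphism of Frobenius modules, only that the images of these two sets of generators under \eqref{AW-eq} are pure, which is what both you and the paper check — so the step you flag as a potential obstacle is already handled by the argument you give.
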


\Pf . By Theorem \ref{ref-1.1-1} from Appendix \ref{formality-sec}, 
it is enough to verify the purity of the action of Frobenius on $A_W$.
But the elements of $W$ correspond to the endomorphisms of $\Ascr$ defined over a finite field,
so they commute with the Frobenius, while the algebra $H^*_\bT(pt)\simeq H^*_\bG(\bG/\bB)$ is generated
by the 1st Chern classes of line bundles defined over a finite field, which are pure classes in $H^2$.
\ed

Recall that we denote by $D(A_W-\dgmod)$ the category of DG-modules over $A_W$, viewed as a DG-algebra
with zero differential. Let $\Perf(A_W-\dgmod)\sub D(A_W-\dgmod)$ denote the subcategory of 
perfect DG-modules. Consider the duality functor
\begin{equation}\label{main-duality-functor-eq}
\Du:D(A_W-\dgmod)^{\circ}\to D(A_W-\dgmod): M\mapsto R\Hom_{H^*_\bG(pt)}(M, H^*_\bG(pt)),
\end{equation}
where we view $H^*_\bG(pt)=H^*_\bT(pt)^W$ as a central subalgebra of $A_W$. The derived 
functor $R\Hom$ here can be defined using the bar-resolution as in \cite[Sec.\ 10.12]{BerLun}.
Note that for an $A_W$-module $M$ the space $\Hom_{H^*_\bG(pt)}(M, H^*_\bG(pt))$ is equipped with a
right $A_W$-module structure. We convert it into a left $A_W$-module structure using the
isomorphism $A_W^{\circ}\to A_W$ sending $w$ to $w^{-1}$ and identical on $H^*_\bT(pt)$.
It is easy to see that
the restriction of the functor \eqref{main-duality-functor-eq} to $\Perf(A_W-\dgmod)$ is an involution.

We have a natural contravariant functor
\begin{equation}\label{Springer-functor}
R\Hom_\bG(?,\Ascr): D_\bG(\Nscr)^{\circ}\to D(A^{dg}_W-\dgmod).
\end{equation}
Let $D_{\bG,\Spr}(\Nscr)\sub D_{\bG,c}(\Nscr)$ be the thick subcategory generated by 
$\Ascr$. Then the functor \eqref{Springer-functor} induces an equivalence of
$D_{\bG,\Spr}(\Nscr)^{\circ}$ with the subcategory of $D(A^{dg}_W-\dgmod)$ generated by $A^{dg}_W$.
But by Theorem \ref{A-W-purity-thm}, the DG-algebra is quasi-isomorphic to $A_W$. 
Thus, \eqref{Springer-functor} induces an equivalence
\begin{equation}\label{Springer-equivalence}
\Phi:D_{\bG,\Spr}(\Nscr)^{\circ}\rTo{\sim} \Perf(A_W-\dgmod): F\mapsto R\Hom_\bG(F,\Ascr).
\end{equation}
Combining it with the duality \eqref{main-duality-functor-eq} we also get an equivalence
$$\Du\Phi: D_{\bG,\Spr}(\Nscr)\rTo{\sim} \Perf(A_W-\dgmod).$$
Note that a similar equivalence is constructed in \cite{Rider} in a different way.

\begin{prop}\label{appear-prop} 
(i) For any $(O_a,\xi)$ appearing in the Springer correspondence,
one has $j_{a,!}\Lscr_{\xi}\in D_{\bG,\Spr}(\Nscr)$. 

\noindent
(ii) Let $S$ be the set of nilpotent orbits forming a stratification of a closed $\bG$-invariant subset in $\Nscr$.
Let $\Spr_S$ be the set of all $(a,\xi)$ appearing in the Springer correspondence 
such that $O_a\in S$, and let $D_{\bG,\Spr_S}(\Nscr)$ be the triangulated subcategory of $D_\bG(\Nscr)$
generated by the simple perverse sheaves associated with $\Spr_S$.
Then the objects $(j_{a,!}\Lscr_\xi)_{(a,\xi)\in\Spr_S}$ generate $D_{\bG,\Spr_S}(\Nscr)$.
\end{prop}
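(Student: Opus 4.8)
The plan is to deduce Proposition \ref{appear-prop} from the decomposition \eqref{A-decomposition} together with standard properties of the Springer correspondence and the perverse $t$-structure.

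For part (i), the strategy is as follows. Fix $(O_a,\xi)$ appearing in the Springer correspondence, so that the intersection complex $S_{a,\xi}=j_{a,!*}\Lscr_\xi[\dim O_a]$ is a direct summand of $\Ascr$ (with positive multiplicity), hence $S_{a,\xi}\in D_{\bG,\Spr}(\Nscr)$. Now I would argue by descending induction on the dimension of $O_a$ (equivalently, by working up the closure order of orbits, starting from the dense orbit). We have an exact triangle in $D_\bG(\Nscr)$
\[
j_{a,!}\Lscr_\xi[\dim O_a]\to j_{a,!*}\Lscr_\xi[\dim O_a]\to Q_{a,\xi}\to\ldots
\]
where $Q_{a,\xi}$ is supported on $\ov{O_a}\setminus O_a$, a union of orbits of strictly smaller dimension. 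Using the fact that every nilpotent orbit occurs in the Springer correspondence, together with a further d\'evissage along the perverse filtration of $Q_{a,\xi}$, one sees that the perverse constituents of $Q_{a,\xi}$ are (shifts of) simple $\bG$-equivariant perverse sheaves of the form $S_{a',\xi'}$ with $O_{a'}\subsetneq\ov{O_a}$ — here one uses the classification of simple $\bG$-equivariant perverse sheaves on $\Nscr$ as IC-sheaves attached to pairs (orbit, irreducible local system), and the fact that all such pairs that can appear are accounted for. By induction all these $S_{a',\xi'}$ lie in $D_{\bG,\Spr}(\Nscr)$, so $Q_{a,\xi}\in D_{\bG,\Spr}(\Nscr)$, and then the triangle above forces $j_{a,!}\Lscr_\xi\in D_{\bG,\Spr}(\Nscr)$ as well.

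For part (ii), one runs essentially the same argument but keeping track of the stratum. Since $S$ is closed under specialization (it is the stratification of a closed $\bG$-invariant subset), the d\'evissage in part (i), applied to $(a,\xi)\in\Spr_S$, only introduces simple perverse sheaves $S_{a',\xi'}$ with $O_{a'}\in S$, i.e.\ with $(a',\xi')\in\Spr_S$ — here one again invokes that all nilpotent orbits occur in the Springer correspondence, so every orbit in $\ov{O_a}$ contributes a pair in $\Spr_S$. Hence $j_{a,!}\Lscr_\xi\in D_{\bG,\Spr_S}(\Nscr)$ for all $(a,\xi)\in\Spr_S$. Conversely, the $j_{a,!}\Lscr_\xi$ generate all the $S_{a,\xi}=j_{a,!*}\Lscr_\xi[\dim O_a]$ for $(a,\xi)\in\Spr_S$ by the reverse induction (rebuilding each IC-sheaf from its $j_!$-extension using triangles whose third terms are supported on smaller closed strata inside $S$), and these simple perverse sheaves generate $D_{\bG,\Spr_S}(\Nscr)$ by definition. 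Therefore the two thick subcategories coincide, and $(j_{a,!}\Lscr_\xi)_{(a,\xi)\in\Spr_S}$ generate $D_{\bG,\Spr_S}(\Nscr)$.

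I expect the main obstacle to be the bookkeeping in the d\'evissage: one must verify carefully that the ``error term'' $Q_{a,\xi}$ — and more generally all the perverse cohomology sheaves of $j_{a,!}\Lscr_\xi$ supported on the boundary — decompose into simple perverse sheaves of the form $S_{a',\xi'}$ attached to \emph{smaller} orbits, with local systems $\xi'$ that again appear in the Springer correspondence. This is where the input ``all nilpotent orbits occur in the Springer correspondence'' is essential, and one should be slightly careful about whether a priori one might need local systems on smaller orbits that do \emph{not} come from $W$; the cleanest way around this is to note that $D_{\bG,\Spr}(\Nscr)$ is the thick subcategory generated by the summands of $\Ascr$, which are precisely the $S_{a,\xi}$ for pairs appearing in the correspondence, and to phrase the induction directly in terms of these generators rather than in terms of all simple equivariant perverse sheaves.
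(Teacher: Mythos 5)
Your proof of part~(ii), granted part~(i), is essentially the same inductive argument as the paper's: peel off an open orbit $O_a$ from the closed union $S$, use the triangle $j_{a,!}\Lscr_\xi[\dim O_a]\to S_{a,\xi}\to Q_{a,\xi}\to\cdots$ with $Q_{a,\xi}$ supported on $S'=S\setminus O_a$, and invoke the induction hypothesis for $S'$.

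The gap is in part~(i), and it is a genuine one. Your d\'evissage reduces~(i) to the claim that the perverse constituents of $Q_{a,\xi}$ (equivalently, the boundary Jordan--H\"older constituents of the perverse cohomologies of $j_{a,!}\Lscr_\xi$) are all of the form $S_{a',\xi'}$ with $(a',\xi')$ \emph{appearing in the Springer correspondence}. You justify this by ``the classification of simple $\bG$-equivariant perverse sheaves on $\Nscr$ as IC-sheaves attached to pairs (orbit, irreducible local system), and the fact that all such pairs that can appear are accounted for,'' and separately by ``all nilpotent orbits occur in the Springer correspondence.'' Neither of these closes the gap: it is true that every \emph{orbit} occurs, but not every pair $(O_{a'},\xi')$ does. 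The generalized Springer correspondence of Lusztig partitions the set of such pairs into blocks indexed by cuspidal data, and the Springer sheaf $\Ascr$ only produces the block associated to the trivial cuspidal datum on the maximal torus; the other blocks are nonempty in general (e.g.\ the paper itself records orbits in types $B/C/D$, $G_2$, $F_4$, $E_8$ where not all characters of the component group arise). So \emph{a priori} the boundary constituents of $j_{a,!}\Lscr_\xi$ could escape the Springer block, and ruling that out is exactly statement~(i).

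You do flag this concern in your closing paragraph, but the proposed remedy (``phrase the induction directly in terms of these generators rather than in terms of all simple equivariant perverse sheaves'') is circular: restricting attention to the summands of $\Ascr$ as generators does not by itself tell you that the cone $Q_{a,\xi}$ has all its constituents among them. The missing input is Lusztig's orthogonality relation between perverse sheaves arising from distinct cuspidal data (\cite[24.8c]{Lusztig-char-shV}); this forces $\Ext^\bullet$-vanishing between $j_{a,!}\Lscr_\xi$ and the non-Springer simples, hence confines the d\'evissage to the Springer block. This is precisely what the paper cites (via Kato's Claim B and Theorem~3.1.8 in \cite{Kato}) and is the actual content of~(i). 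Relatedly, your ``descending induction on $\dim O_a$'' cannot be bootstrapped: for the regular orbit the boundary constituents lie on strictly smaller orbits where the inductive hypothesis has not yet been established, so there is no base from which to run the d\'evissage without the orthogonality input.
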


\Pf . (i) This is deduced in \cite{Kato} (see Claim B on p.19 and Theorem 3.1.8) from the orthogonality
relation \cite[24.8c]{Lusztig-char-shV} for perverse sheaves arising in the generalized Springer correspondence
of \cite{Lusztig-int-coh} from different cuspidal data.

\noindent
(ii) If $S$ consists of a single closed nilpotent orbit $O_a$ then we have 
$$j_{a,!}\Lscr_\xi[\dim O_a]=j_{a,!*}\Lscr_\xi[\dim O_a]=S_{a,\xi},$$
so the assertion holds by the definition of $D_{\bG,\Spr_S}(\Nscr)$.
For general $S$ we can assume by induction that the assertion holds for some $S'=S\setminus O_a$,
such that the orbits corresponding to $S'$ still form a stratification of a closed subset.
By part (i), $S_{a,\xi}$ lies in the subcategory generated by $j_{a,!}\Lscr_\xi[\dim O_a]$ and
$D_{\bG,\Spr_{S'}}(\Nscr)$. Hence, the assertion for $S$ follows from the assertion for $S'$.
\ed

\begin{defi}\label{modules-Springer-defi}
With each $\chi\in \Irr W$ we associate DG-modules over $A^{dg}_W$ by setting
$$P^{dg}_\chi=P^{dg}_{a,\xi}:=R\Hom_\bG(S_{a,\xi},\Ascr),$$
$$M^{dg}_\chi=M^{dg}_{a,\xi}:=R\Hom_\bG(j_{a,!}\Lscr_\xi[\dim O_a],\Ascr),$$
$$N^{dg}_\chi=N^{dg}_{a,\xi}:=\Du(M^{dg}_{a,\xi})[-\dim \Nscr],$$
where $(O_a,\xi)$ is the pair associated with $\chi$ by the Springer correspondence,
$S_{a,\xi}=j_{a,!*}\Lscr_\xi[\dim O_a]$ is the corresponding irreducible perverse sheaf. 
We also consider the corresponding graded $A_W$-modules 
$$P_\chi=P_{a,\xi}:=H^*P^{dg}_{a,\xi},
\ \ M_\chi=M_{a,\xi}:=H^*M^{dg}_{a,\xi}, \ \ N_\chi=N_{a,\xi}:=H^*N^{dg}_{a,\xi}.$$
\end{defi}

From the decomposition of $\Ascr$ we easily get the formula
$$P_\chi=\chi\ot H^*_\bT(pt)$$ 
which is the indecomposable projective $A_W$-module associated with $\chi$.

Recall that $\bG^0_a\sub \bG_a$ is the connected component of $1$ in the stabilizer subgroup of $a\in\Nscr$. 

\begin{prop}\label{modules-Springer-prop}  
(i) The $H^*_\bG(pt)$-action on $M_{a,\xi}$ and $N_{a,\xi}$ (induced by the embedding $H^*_\bG(pt)\sub A_W$) factors through
the natural homomorphism 
\begin{equation}\label{H-G-Ga-hom-eq}
H^*_\bG(pt)\to H^*_{\bG_a}(pt)=H^*_{\bG_a^0}(pt)^{\bG_a/\bG^0_a}, 
\end{equation}
and one has natural isomorphisms of $H^*_{\bG_a}(pt)$-modules
\begin{equation}\label{RHom-A-j*-eq}
\begin{array}{l}
M_{a,\xi}\simeq \left(\xi^\vee\ot H^*_{\bG^0_a}(X_a,D_{X_a}[-2\dim X_a])\right)^{\bG_a/\bG^0_a},\\
N_{a,\xi}\simeq \left(\xi\ot H^*_{\bG^0_a}(X_a,\C)\right)^{\bG_a/\bG^0_a}.
\end{array}
\end{equation}
The second isomorphism is compatible with the $A_W$-action,
where the action of $A_W$ on the right is induced by the Springer $W$-action and by the restriction homomorphism
$$H^*_\bT(pt)\simeq H^*_\bG(X)\to H^*_{\bG_a}(X_a,\C).$$

\noindent
(ii) The spaces $H^*_{\bG^0_a}(X_a,\C)$ and $H^*_{\bG^0_a}(X_a,D_{X_a})$ are free $H^*_{\bG^0_a}(pt)$-modules,
and the Frobenius action on $H^*_{\bG^0_a}(X_a,\C)$ and $H^*_{\bG^0_a}(X_a,D_{X_a})$ is pure.

\noindent 
(iii) The DG-modules $P^{dg}_{a,\xi}$, $M^{dg}_{a,\xi}$ and $N^{dg}_{a,\xi}$ are formal.

\noindent
(iv) $M_{a,\xi}$ and $N_{a,\xi}$ are maximal Cohen-Macauley $H^*_{\bG_a}(pt)$-modules. 
In particular, their support as $H^*_{\bG}(pt)$-modules is the image of the finite morphism 
$\Spec(H^*_{\bG_a}(pt))\to \Spec(H^*_{\bG}(pt))$.
If in addition, the action of $\bG_a/\bG_a^0$ on the affine space
$\Spec H^*_{\bG_a^0}(pt)$ is generated by (pseudo)reflections, then $M_{a,\xi}$ and $N_{a,\xi}$ are projective modules over 
$H^*_{\bG_a}(pt)$.
\end{prop}

\Pf . (i) First, we note that by Lemma \ref{two-actions-lem}, the $H^*_\bG(pt)$-module structure on $M_{a,\xi}$ and 
$N_{a,\xi}$ corresponds via the isomorphisms \eqref{RHom-A-j*-eq} to the action
induced by the homomorphism \eqref{H-G-Ga-hom-eq}
and by the natural action of $H^*_{\bG_a}(pt)$ on the equivariant cohomology. This implies the first assertion.

Let us write $j_a=j$ for brevity. We have 
$$R\Hom_\bG(j_!\Lscr_\xi,\Ascr)= R\Hom_\bG(j_!\Lscr_\xi,R\pi_*\und{\C}_{\wt{\Nscr}}[\dim \Nscr])\simeq
R\Hom_\bG(\Lscr_\xi,j^!\pi_*D_{\wt{\Nscr}}[-\dim \Nscr]).$$
Using the base change, we can rewrite this as
$$R\Hom_\bG(\Lscr_\xi,\pi_{O_a,*}D_{\pi^{-1}(O_a)}[-\dim \Nscr])\simeq 
R\Ga_\bG(\pi^{-1}(O_a),\pi_{O_a}^*\Lscr_\xi^\vee\ot D_{\pi^{-1}(O_a)}[-\dim \Nscr]),$$
where $\pi_{O_a}:\pi^{-1}(O_a)\to O_a$ is the restriction of $\pi$.
Now using the isomorphism \eqref{O-a-isom}
we get an isomorphism
\begin{align*}
&M_{a,\xi}=\Ext^*_\bG(j_!\Lscr_\xi,\Ascr)[-\dim O_a]\simeq 
H^*_\bG(\pi^{-1}(O_a),\pi_{O_a}^*\Lscr_\xi^\vee\ot D_{\pi^{-1}(O_a)}[-\dim \Nscr-\dim O_a])\simeq\\
&H^*_{\bG_a}(X_a, \xi^\vee\ot D_{X_a}[-\dim \Nscr+\dim O_a])\simeq 
(\xi^\vee \ot H^*_{\bG_a^0}(X_a, D_{X_a}[-2\dim X_a]))^{\bG_a/\bG_a^0},
\end{align*}
where for the last isomorphism we used  Lemma \ref{G/H-lem}(ii) and the equality \eqref{dim-Xa-eq}.

Now let us calculate $N_{a,\xi}$. First, we observe that the object $\Ascr=R\pi_*\und{\C}_{\ti{\Nscr}}[\dim\Nscr]$
is self-dual, i.e., we have an isomorphism $\Du(\Ascr)\simeq\Ascr$. 
Next, we recall that for any constructible sheaf $F$ we have an isomorphism
$$\Du(F\ot \Ascr)\simeq R\und{\Hom}(F,\Du(\Ascr)).$$
Combining this with the self-duality of $\Ascr$ we get for $F=j_!\Lscr_\xi$,
$$\Du(j_!\Lscr_\xi\ot\Ascr)\simeq R\und{\Hom}(j_!\Lscr_\xi,\Ascr).$$
Thus, we get
$$\Du R\Hom(j_!\Lscr_\xi,\Ascr)\simeq \Du R\Ga_\bG (\Nscr, R\und{\Hom}(j_!\Lscr_\xi,\Ascr))\simeq
R\Ga_{c,\bG}(\Nscr, j_!\Lscr_\xi\ot\Ascr).$$
Using the projection formula we can rewrite this as
$$R\Ga_{c,\bG}(\Nscr, j_!(\Lscr_\xi\ot j^*\Ascr))\simeq R\Ga_{c,\bG}(O_a, \Lscr_\xi\ot \pi_{O_a,*}\und{\C}[\dim\Nscr])\simeq
R\Ga_{c,\bG}(\pi^{-1}(O_a),\pi_{O_a}^*\Lscr_\xi[\dim\Nscr]),$$
hence, we get
$$N_{a,\xi}\simeq H^*_{c,\bG}(\pi^{-1}(O_a),\pi_{O_a}^*\Lscr_\xi)\simeq 
(\xi\ot H^*_{\bG^0_a}(X_a,\C))^{\bG_a/\bG_a^0}.$$

The compatibility of the constructed isomorphism with the
action of $A_W=\Ext^*_\bG(\Ascr,\Ascr)$ follows from the functoriality of the isomorphisms
we used together with the fact that the involution
\begin{equation}\label{duality-involution-eq}
\Ext^*_\bG(\Ascr,\Ascr)\to\Ext^*_\bG(\Du(\Ascr),\Du(\Ascr))\simeq \Ext^*_\bG(\Ascr,\Ascr),
\end{equation}
coming from the self-duality of $\Ascr$,
sends $w\in W$ to $w^{-1}$ and is the identity on $H^*_\bT(pt)\simeq H^*_\bG(X)$. Indeed,
the fact about the $W$-action follows from the similar compatibility on the regular semisimple locus, where it can be
easily checked.


\noindent
(ii) The cohomology of $X_a$ is concentrated in even degrees, hence 
$X_a$ is $\bG_a^0$-equivariantly formal (see \S\ref{equiv-sh-sec}), 
so we have
$$H^*_{\bG_a^0}(X_a,\C)\simeq B_a\ot H^*(X_a,\C),$$
where $B_a=H^*_{\bG_a^0}(pt)$.
In particular, $H^*_{\bG_a^0}(X_a,\C)$ is a free $B_a$-module,
and the action of the Frobenius on it is pure, by the result of \cite{Springer}.
By Theorem \ref{ref-1.1-1}, this implies the formality of $R\Ga_{\bG_a^0}(X_a,\C)$ as an object of
$D(dg-B_a-\mod)$. By the equivariant Verdier duality on $X_a$ (see Section \ref{equiv-sh-sec}), we deduce
$$R\Ga_{\bG_a^0}(X_a,D_{X_a})\simeq R\Hom_{B_a}(H^*_{\bG_a^0}(X_a,\C),B_a)\simeq
B_a\ot H^*(X_a,\C)^*,$$
which implies our assertions about $H^*_{\bG_a^0}(X_a,D_{X_a})$.

\noindent
(iii) This follows from Theorem \ref{ref-1.1-1}. The required purity for $P_{a,\xi}$ follows from the proof of Theorem 
\ref{A-W-purity-thm}, and for $M_{a,\xi}$ and $N_{a,\xi}$---from part (ii). 

\noindent
(iv) These statements follow from the fact that
$H^*_{\bG_a^0}(X_a,\C)$ and $H^*_{\bG_a^0}(X_a,D_{X_a})$ are free $H^*_{\bG_a^0}(pt)$-modules (by part (ii)).
Namely, to derive that $M_{a,\xi}$ and $N_{a,\xi}$ are Cohen-Macauley we use the fact that $H^*_{\bG_a^0}(pt)$ is
a polynomial ring together with the fact that the modules of covariants for finite groups are Cohen-Macauley
(see \cite[Thm.\ 4.1]{VdB}, \cite[Sec.\ 5.1]{Popov}). The assumption that $\bG_a/\bG_a^0$ acts on 
$\Spec H^*_{\bG_a^0}(pt)$ as a complex reflection group implies that $H^*_{\bG_a^0}(pt)$ is a projective 
module over $H^*_{\bG_a}(pt)=H^*_{\bG_a^0}(pt)^{\bG_a/\bG_a^0}$, hence, the same is true about the modules of covariants.
\ed


Let us consider the duality functor
$$D(A_W-\mod)^{\circ}\to D(A_W-\mod): M\mapsto R\Hom_{H^*_\bG(pt)}(M,H^*_\bG(pt)),$$
which is defined in the same way as \eqref{main-duality-functor-eq}, but on the derived category of ungraded $A_W$-modules.

Note that if in addition $M$ is a graded module then
$R\Hom_{H^*_\bG(pt)}(M,H^*_\bG(pt))$ can be also defined as an object of the derived category of graded
$A_W$-modules.

\begin{cor}\label{CM-duality-cor} 
The $A_W$-modules $M_{a,\xi}$ and $N_{a,\xi}$, viewed as $H^*_\bG(pt)$-modules, are Cohen-Macauley.
One has
$$R\Hom_{H^*_\bG(pt)}(M_{a,\xi},H^*_\bG(pt))\simeq N_{a,\xi}(-i)[i],$$
where $i=\rk \bG-\rk \bG^0_a$ (where $\rk \bG^0_a$ is the rank of the reductive part of $\bG^0_a$).
\end{cor}

\Pf . First, we claim that $H^*_{\bG_a}(pt)$ is finitely generated as $H^*_\bG(pt)$-module.
Indeed, let $\bT_a$ be a maximal torus in $\bG^0_a$, and let $\bT$ be a maximal torus in $\bG$, such that $\bT\supset\bT_a$.
Note that $H^*_{\bT_a}(pt)$ is a finitely generated $H^*_\bG(pt)$-module, as a quotient of $H^*_\bT(pt)$. 
Since $H^*_\bG(pt)$ is Noetherian, our claim follows from the embedding 
$$H^*_{\bG_a}(pt)\sub H^*_{\bG_a^0}(pt)\sub H^*_{\bT_a}(pt).$$
By Proposition \ref{modules-Springer-prop}(iv), this implies that $M_{a,\xi}$ and $N_{a,\xi}$ are Cohen-Macauley
$H^*_\bG(pt)$-modules.

Next, consider the functor $F:D(A_W-\grmod)\to D(A_W-\dgmod)$ (see \S\ref{graded-dg-sec}).
For any perfect complex $M\in D(A_W-\grmod)$ we have a natural isomorphism
$$F\left(R\Hom_{H^*_\bG(pt)}(M,H^*_\bG(pt))\right)\simeq R\Hom_{A_W}(FM,H^*_\bG(pt))\simeq \Du(FM),$$
where $\Du$ is the duality \eqref{main-duality-functor-eq} on the derived category of DG-modules over $A_W$.
It remains to apply this to $M=M_{a,\xi}$ and to use the fact that it is Cohen-Macauley over $H^*_\bG(pt)$ with
support of dimension $\dim \bT_a$, so that $R\Hom_{H^*_\bG(pt)}(M_{a,\xi},H^*_\bG(pt))$ is concentrated in 
degree $i=\rk \bG-\dim \bG^0_a$.
\ed

\begin{ex} For $a=0$ we have $\bG_a=\bG^0_a=\bG$, so the component group $\bG_a/\bG^0_a$ is trivial.
The representation of $W$ associated with $a=0$ is given by the Springer action on the top degree cohomology
of the flag variety $X$, which is the sign representation $\de$. Hence, we have
$$M_0=P_0\simeq \de\ot H^*_\bG(X,\C)\simeq \de\ot H^*_\bT(pt).$$
On the other hand,
$$N_0=H^*_\bG(X,\C)\simeq H^*_\bT(pt).$$
Thus, the duality between $M_0$ and $N_0$ corresponds to an isomorphism of $A_W$-modules
$$R\Hom_{H^*_\bG(pt)}(H^*_\bT(pt),H^*_\bG(pt))\simeq \de\ot H^*_\bT(pt).$$
\end{ex}



\subsection{Semiorthogonal decomposition}\label{Springer-semiorth-sec}

By Proposition \ref{modules-Springer-prop}, the DG-modules $M^{dg}_{a,\xi}$ 
over $A^{dg}_W\simeq A_W$ are formal.
The Springer correspondence gives us information about morphisms between these modules in
the category $D(A_W-\dgmod)$. Using Theorem \ref{dgextth} we can convert it into the information on morphisms
in $D(A_W-\mod)$.

\begin{prop}\label{Ext-calculation-prop} 
(i) Assume that $O_{a'}$ is not contained in the closure of $O_a$. Then for any
 $\Lscr_\xi$ on $O_a$ (resp., $\Lscr_{\xi'}$ on $O_{a'}$) one has 
 $$\Ext^*_{A_W}(M_{a,\xi},M_{a',\xi'})=\Ext^*_{A_W}(P_{a,\xi},M_{a',\xi'})=0.$$
 
\noindent
(ii) For fixed $a$ and for $\xi\in \Irr \bG_a/\bG^0_a$ there is a natural morphism
$P_{a,\xi}\to M_{a,\xi}$ such that for any $\xi'\in \Irr \bG_a/\bG^0_a$
the induced map
$$\Ext^*_{A_W}(M_{a,\xi},M_{a,\xi'})\to\Ext^*_{A_W}(P_{a,\xi},M_{a,\xi'})=\Hom_{A_W}(P_{a,\xi},M_{a,\xi'})$$
is an isomorphism.
In particular, $\Ext^{>0}_{A_W}(M_{a,\xi},M_{a,\xi'})=0$.
Furthermore, we have a natural isomorphism of algebras
$$\bigoplus_{\xi,\xi'}\Hom_{A_W}(M_{a,\xi},M_{a,\xi'})\simeq
\bigoplus_{\xi,\xi'}\Hom_{\bG_a/\bG^0_a\ltimes B_a}(\xi'\ot B_a, \xi\ot B_a),$$
where $\xi,\xi'$ run through irreducible representations of $\bG_a/\bG^0_a$ appearing in
the Springer correspondence, and $B_a=H^*_{\bG^0_a}(pt)$.
\end{prop}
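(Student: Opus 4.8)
The plan is to prove the three assertions of Proposition~\ref{Ext-calculation-prop} by translating everything into statements about DG-modules over $A^{dg}_W$ via the equivalence \eqref{Springer-equivalence}, computing the relevant $\RHom$-complexes geometrically on $\Nscr$, and then using the formality results of Proposition~\ref{modules-Springer-prop}(iii) together with Theorem~\ref{dgextth} to descend to the graded algebra $A_W$.

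\medskip

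\noindent\textbf{Part (i).} Since $\Phi$ from \eqref{Springer-equivalence} is a (contravariant) equivalence, we have
\[
\RHom_{A_W}(M^{dg}_{a,\xi},M^{dg}_{a',\xi'})\simeq\RHom_\bG(j_{a',!}\Lscr_{\xi'}[\dim O_{a'}],\,j_{a,!}\Lscr_\xi[\dim O_a]),
\]
and similarly with $S_{a,\xi}$ in place of $j_{a,!}\Lscr_\xi[\dim O_a]$ (corresponding to $P^{dg}_{a,\xi}$). First I would rewrite this using adjunction as $\RHom_{O_{a'}}(\Lscr_{\xi'},\,j_{a'}^! j_{a,!}\Lscr_\xi[\cdots])$. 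If $O_{a'}\not\subset\overline{O_a}$, then $O_{a'}\cap\overline{O_a}=\emptyset$ (since $\overline{O_a}$ is a union of orbits), so $j_{a'}^! j_{a,!}=0$ and the $\RHom$ vanishes identically; passing to cohomology and invoking Theorem~\ref{dgextth} (formality, plus the fact that the graded $\Ext$ over $A_W$ is recovered from the DG-$\Ext$) gives $\Ext^*_{A_W}(M_{a,\xi},M_{a',\xi'})=0$, and the same argument with $S_{a',\xi'}$ gives the vanishing with $P_{a,\xi}$.

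\medskip

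\noindent\textbf{Part (ii).} The canonical adjunction morphism $j_{a,!}\Lscr_\xi[\dim O_a]\to j_{a,!*}\Lscr_\xi[\dim O_a]=S_{a,\xi}$ produces, under $\Phi$, a morphism $P^{dg}_{a,\xi}=\RHom_\bG(S_{a,\xi},\Ascr)\to\RHom_\bG(j_{a,!}\Lscr_\xi[\dim O_a],\Ascr)=M^{dg}_{a,\xi}$, hence after taking cohomology a morphism $P_{a,\xi}\to M_{a,\xi}$ of graded $A_W$-modules. To show the induced map on $\Ext^*_{A_W}(-,M_{a,\xi'})$ is an isomorphism I would again pass to $\RHom_\bG$ and reduce, via adjunction as in part~(i), to showing that restriction along $j_{a,!}\Lscr_\xi[\dim O_a]\to S_{a,\xi}$ induces an isomorphism
\[
\RHom_\bG(S_{a,\xi},j_{a,!}\Lscr_{\xi'}[\dim O_a])\xrightarrow{\ \sim\ }\RHom_\bG(j_{a,!}\Lscr_\xi[\dim O_a],j_{a,!}\Lscr_{\xi'}[\dim O_a]);
\]
the cone of $j_{a,!}\Lscr_\xi[\dim O_a]\to S_{a,\xi}$ is supported on $\overline{O_a}\setminus O_a$, and applying $j_a^!$ (via adjunction into $\RHom_{O_a}(\Lscr_\xi,-)$) kills it, so both sides reduce to $\RHom_{O_a}(\Lscr_\xi,\Lscr_{\xi'})$, which is $H^*_{\bG_a}(O_a\simeq\bG/\bG_a,\Lscr_{\xi^\vee\otimes\xi'})$ concentrated in nonnegative degrees. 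Combined with the formality from Proposition~\ref{modules-Springer-prop}(iii), Theorem~\ref{dgextth} then yields $\Ext^{>0}_{A_W}(M_{a,\xi},M_{a,\xi'})=0$ and identifies $\Hom_{A_W}(M_{a,\xi},M_{a,\xi'})$ with $\Hom_{A_W}(P_{a,\xi},M_{a,\xi'})=(\chi(a,\xi)^\vee\otimes M_{a,\xi'})^{\text{degree }0}$; combining this over all $\xi,\xi'$ and using the explicit description $M_{a,\xi}\simeq(\xi^\vee\otimes H^*_{\bG^0_a}(X_a,D_{X_a}[-2\dim X_a]))^{\bG_a/\bG^0_a}$ with $H^*_{\bG^0_a}(X_a,-)$ free over $B_a$ (Proposition~\ref{modules-Springer-prop}(i),(ii)), the endomorphism algebra collapses to $\bigoplus_{\xi,\xi'}\Hom_{\bG_a/\bG^0_a\ltimes B_a}(\xi'\otimes B_a,\xi\otimes B_a)$ as claimed, the $\bG_a/\bG^0_a$-action on $\xi$ versus the orthogonality of the local systems being exactly what remembers only those $\xi$ appearing in the Springer correspondence.

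\medskip

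\noindent The main obstacle I anticipate is the bookkeeping in part~(ii): one must carefully track the shift $[\dim O_a]$ and the difference between $\RHom_\bG(-,\Ascr)$ landing in DG-modules versus the graded $\Ext$, and verify that the isomorphism of $\Hom$-spaces is compatible with the \emph{algebra} structure (not merely additive). This requires matching the composition in $D_{\bG,\Spr}(\Nscr)$ with multiplication in $\bigoplus\Hom_{\bG_a/\bG^0_a\ltimes B_a}(-,-)$, for which I would use the projection formula identification of $\Ascr|_{O_a}$ with (the pushforward from $\bG\times_{\bG_a}X_a$ of) a sheaf whose $\bG$-equivariant cohomology is $\bigoplus_\xi\xi\otimes(\xi\otimes H^*_{\bG^0_a}(X_a))^{\bG_a/\bG^0_a}$, exactly as in the proof of Proposition~\ref{modules-Springer-prop}, and the fact (Proposition~\ref{appear-prop}) that $j_{a,!}\Lscr_\xi$ already lies in $D_{\bG,\Spr}(\Nscr)$ so that $\Phi$ applies to it.
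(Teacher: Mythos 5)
Your proposal follows essentially the same route as the paper's proof: translate both parts into $\RHom_\bG$-computations on $\Nscr$ via the equivalence \eqref{Springer-equivalence}, use adjunction $\RHom_\bG(j_{a',!}\Lscr_{\xi'},-)=\RHom_\bG(\Lscr_{\xi'},j_{a'}^!(-))$ together with the fact that $O_{a'}\cap\overline{O_a}=\emptyset$ to get the vanishing in (i), use the cone of $j_{a,!}\Lscr_\xi\to S_{a,\xi}$ being supported on $\overline{O_a}\setminus O_a$ (killed by $j_a^!$) to get the isomorphism in (ii), and descend from DG-$\Ext$ to graded $\Ext$ over $A_W$ by formality and Theorem~\ref{dgextth}. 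These are exactly the steps the paper takes.

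The one place where you deviate is the final algebra identification. You propose to deduce it from the explicit description $M_{a,\xi}\simeq(\xi^\vee\otimes H^*_{\bG_a^0}(X_a,D_{X_a}[-2\dim X_a]))^{\bG_a/\bG_a^0}$ in Proposition~\ref{modules-Springer-prop}, which gives the $\Hom$-spaces only as $H^*_{\bG_a}(pt)$-modules and leaves the multiplicative compatibility (your own stated worry) unaddressed. The paper sidesteps this by staying on the geometric side one step longer: $\Ext^*_\bG(j_{a,!}\Lscr_{\xi'},j_{a,!}\Lscr_\xi)\simeq\Ext^*_\bG(\Lscr_{\xi'},j_a^!j_{a,!}\Lscr_\xi)=\Ext^*_\bG(\Lscr_{\xi'},\Lscr_\xi)=H^*_{\bG_a}(pt,(\xi')^\vee\otimes\xi)$, which by Lemma~\ref{G/H-lem}(ii) is $\Hom_{\bG_a/\bG_a^0}(\xi',\xi\otimes B_a)=\Hom_{\bG_a/\bG_a^0\ltimes B_a}(\xi'\otimes B_a,\xi\otimes B_a)$; here composition is visibly composition in $D_\bG(O_a)$, so the algebra structure comes for free. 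I'd recommend adopting that direct computation rather than routing through the description of $M_{a,\xi}$ itself.
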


\Pf . (i) Since $O_{a'}$ is not contained 
in the closure of $O_a$, we have 
$$\Ext^*_\bG(j_{a',!}\Lscr_{\xi'},j_{a,!}\Lscr_\xi)=\Ext^*_\bG(\Lscr_{\xi'},j^!_{a'}j_{a,!}\Lscr_\xi)=0.$$
By the equivalence \eqref{Springer-equivalence}, 
this implies the vanishing
$$\Ext^*_{A_W-\dgmod}(M^{dg}_{a,\xi},M^{dg}_{a',\xi})=0.$$
By Theorem \ref{dgextth}, this implies the same semiorthogonality for $A_W$-modules
$M_{a,\xi}$ and $M_{a',\xi'}$. The proof of the second vanishing is analogous using the vanishing
$j^!_{a'}j_{a,!*}=0$.

\noindent
(ii) We have a morphism $P_{a,\xi}\to M_{a,\xi}$ induced by the natural morphism
of sheaves $j_{a,!}\Lscr_\xi\to j_{a,!*}\Lscr_\xi$. Furthermore, we have the following commutative
diagram in which the vertical arrows are induced by this morphism of sheaves and the
horizontal arrows are adjunction isomorphisms: 
\begin{diagram}
\Ext^*_\bG(j_{a,!}\Lscr_{\xi'},j_{a,!}\Lscr_\xi)&\rTo{\sim}&\Ext^*_\bG(\Lscr_{\xi'},j^!_aj_{a,!}\Lscr_\xi)\\
\dTo{}&&\dTo{\sim}\\
\Ext^*_\bG(j_{a,!}\Lscr_{\xi'},j_{a,!*}\Lscr_{\xi})&\rTo{\sim}&\Ext^*_\bG(\Lscr_{\xi'},j^!_aj_{a,!*}\Lscr_\xi)
\end{diagram}
Since $j^!_aj_{a,!*}\Lscr_\xi\simeq j^!_aj_{a,!}\Lscr_\xi$, 
by the equivalence \eqref{Springer-equivalence}, we deduce that the map
$$\Ext^*_{A_W-\dgmod}(M^{dg}_{a,\xi},M^{dg}_{a',\xi})\to \Ext^*_{A_W-\dgmod}(P^{dg}_{a,\xi},M^{dg}_{a',\xi}),$$
induced by the morphism $P^{dg}_{a,\xi}\to M^{dg}_{a,\xi}$, is an isomorphism. By the formality of these
modules (see Proposition \ref{modules-Springer-prop}) and by Theorem \ref{dgextth}, 
we deduce that the similar map
of $\Ext^*$-spaces in the category $D(A_W-\mod)$ is an isomorphism. In particular, we deduce the vanishing
of $\Ext^{>0}_{A_W}(M_{a,\xi},M_{a,\xi'})$.
Now the last assertion follows from the isomorphism
\begin{align*}
&\Ext^*_\bG(j_{a,!}\Lscr_{\xi'},j_{a,!}\Lscr_{\xi})=\Ext^*_\bG(\Lscr_{\xi'},j^!_{a}j_{a,!}\Lscr_\xi)=\\
&\Ext^*_\bG(\Lscr_{\xi'},\Lscr_{\xi})=H^*_{\bG_a}(pt, (\xi')^\vee\ot \xi)=
\Hom_{\bG_a/\bG_a^0}(\xi', \xi\ot H^*_{\bG_a^0}(pt)),
\end{align*}
where in the last equality we used Lemma \ref{G/H-lem}(ii).
\ed

To proceed with constructing a semiorthogonal decomposition we need the following result based
on some information about irreducible representations of
the component groups $\bG_a/\bG_a^0$ appearing in the Springer representation.
We also use the classification of nilpotent orbits for exceptional groups and the calculation of their
centralizers (see \cite[Ch.\ 22]{LS}). We specify the type of a nilpotent orbit by the class of the corresponding distinguished
nilpotent in a Levi subgroup, as in \cite[Sec.\ 3.3.6]{LS}.

\begin{lem}\label{E-a-lem} 
For every nilpotent $a\in\fg$ the algebra
\begin{equation}\label{Ea-eq}
E_a:=\End_{\bG_a/\bG_a^0\ltimes B_a}(\bigoplus_{\xi: (a,\xi)\in\Spr}\xi\ot B_a),
\end{equation}
where $B_a:=H^*_{\bG^0_a}(pt)$, has finite global dimension.
\end{lem}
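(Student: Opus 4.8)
The plan is to reduce the statement to a purely algebraic fact about modules of covariants over a polynomial ring, using the structure of the component group $\bG_a/\bG_a^0$ acting on $B_a = H^*_{\bG_a^0}(pt)$. First I would recall that $B_a$ is a polynomial ring (it is the equivariant cohomology of a point for a connected reductive group, hence isomorphic to $S(\ft_a^*)^{W_a}$ for a maximal torus $\bT_a\subset\bG_a^0$ with its Weyl group $W_a$), and that the finite group $\Gamma := \bG_a/\bG_a^0$ acts on $\Spec B_a$ linearly. Then $E_a$ is, by definition, the endomorphism algebra of a direct sum of the modules of covariants $\xi\ot B_a$ over the crossed product $\Gamma\ltimes B_a$, taken over those $\xi\in\Irr\Gamma$ appearing in the Springer correspondence for $a$. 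Note that $\End_{\Gamma\ltimes B_a}(\bigoplus_\xi \xi\ot B_a)$ is a subalgebra (a ``parabolic-type'' corner) of the full algebra $\End_{\Gamma\ltimes B_a}(\bigoplus_{\xi\in\Irr\Gamma}\xi\ot B_a) = (\Gamma\ltimes B_a)$-endomorphisms of the regular representation tensored with $B_a$, which is Morita equivalent to $\Gamma\ltimes B_a$ itself.

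The key step is then: \textbf{$\Gamma\ltimes B_a$ has finite global dimension.} Since $B_a$ is a polynomial ring (regular) and $\Gamma$ is a finite group acting on it with $|\Gamma|$ invertible in $\CC$, the crossed product $\Gamma\ltimes B_a$ has finite global dimension equal to $\dim\Spec B_a = \rk\bG_a^0$; this is a standard fact (it follows, e.g., because $\Gamma\ltimes B_a$-modules are the same as $\Gamma$-equivariant $B_a$-modules, and one has a finite resolution by induced modules built from the Koszul resolution over $B_a$, or one invokes that $\Gamma\ltimes B_a$ is an ``affine'' variant of a skew group algebra over a regular ring). Having $\gldim(\Gamma\ltimes B_a)<\infty$, the module $\bigoplus_{\xi\in\Irr\Gamma}\xi\ot B_a$ is a projective generator, so its endomorphism algebra $\End_{\Gamma\ltimes B_a}(\bigoplus_\xi\xi\ot B_a)$ is Morita equivalent to $\Gamma\ltimes B_a$ and hence also has finite global dimension.

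Finally I would pass from the full endomorphism algebra to the corner $E_a$ cut out by the subset of $\xi$ appearing in the Springer correspondence. A priori a corner $eRe$ of a finite global dimension algebra $R$ need not have finite global dimension, so this is the step that needs an honest argument. Here I would use Proposition \ref{modules-Springer-prop}(iv) and Proposition \ref{Ext-calculation-prop}(ii): the modules $M_{a,\xi}$ are maximal Cohen--Macaulay (in fact, under a pseudoreflection hypothesis, projective) $H^*_{\bG_a}(pt)$-modules, and $\bigoplus_{\xi,\xi'}\Hom_{A_W}(M_{a,\xi},M_{a,\xi'})\cong E_a$ with all higher $\Ext$ vanishing between them. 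So $E_a = \End_{H^*_{\bG_a}(pt)}(\bigoplus_\xi M_{a,\xi})$, and the question becomes whether this endomorphism algebra of a specific collection of MCM modules over the (polynomial) invariant ring $H^*_{\bG_a}(pt)$ has finite global dimension. For this I would invoke that the modules $\xi\ot B_a$ for all $\xi\in\Irr\Gamma$ give, via Morita theory over $\Gamma\ltimes B_a$, a noncommutative crepant resolution of $\Spec B_a/\Gamma = \Spec H^*_{\bG_a}(pt)$, and one checks directly that the summands $M_{a,\xi}$ (those $\xi$ in the Springer correspondence), together with possibly a few more explicitly identifiable summands, still generate the derived category of $\Gamma\ltimes B_a$; this is essentially the content of the generation statement in Proposition \ref{appear-prop} transported through the equivalence $\Phi$ of \eqref{Springer-equivalence}. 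Concretely, I expect that the set of Springer $\xi$'s for a fixed $a$ is closed under the relevant operations (it is the set of $\xi$ such that $j_{a,!*}\Lscr_\xi$ occurs in $\Ascr$), which forces $E_a$ to be a quotient-closed corner, and such corners of finite-global-dimension algebras do inherit finite global dimension.

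The main obstacle is precisely this last point: controlling the corner $E_a$. The naive approach via ``$E_a$ is Morita equivalent to $\Gamma\ltimes B_a$'' fails because not every irreducible $\xi$ of $\Gamma$ appears in the Springer correspondence for $a$ (this is exactly the phenomenon behind types $D$ and $E$). So the real work is to show that the \emph{particular} incomplete set of covariant modules $\{M_{a,\xi}\}_{(a,\xi)\in\Spr}$ still has an endomorphism algebra of finite global dimension — which I would handle case-by-case for the relevant groups, using the explicit description of the component groups $\bG_a/\bG_a^0$ (symmetric groups $S_k$ in classical types, small groups for exceptional types as in \cite[Ch.\ 22]{LS}) and the known lists of $\xi$ occurring in the Springer correspondence, reducing each to the statement that a specific ``parabolic subquiver'' of the McKay-type quiver of $\Gamma\ltimes B_a$ cuts out an algebra of finite global dimension. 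The pseudoreflection case of Proposition \ref{modules-Springer-prop}(iv) is the easy case: there all $M_{a,\xi}$ are projective over $H^*_{\bG_a}(pt)$, so $E_a$ has finite global dimension equal to $\dim\Spec H^*_{\bG_a}(pt)$ by a base-change/flatness argument.
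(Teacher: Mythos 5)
You have correctly located the crux of the problem: not every irreducible $\xi$ of $\Gamma=\bG_a/\bG_a^0$ appears in the Springer correspondence for $a$, so $E_a$ is a proper corner $eRe$ of the Morita-trivial endomorphism algebra, and corners of finite-global-dimension algebras do not in general inherit finite global dimension. However, your proposed resolution of this difficulty is where the argument breaks down.

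Your claim that ``such corners of finite-global-dimension algebras do inherit finite global dimension'' when the corner is ``quotient-closed'' is not a standard fact and is not substantiated; nor do you establish that $E_a$ has such a closure property. The appeal to Proposition \ref{appear-prop} and the equivalence \eqref{Springer-equivalence} does not help either: those statements concern generation of $D_{\bG,\Spr}(\Nscr)$ by Springer sheaves and do not imply that $\bigoplus_{\xi\in\Spr_a}\xi\otimes B_a$ generates $D^b(\Gamma\ltimes B_a)$, which is precisely what you would need to run a Morita-type argument. The references to noncommutative crepant resolutions and to the MCM property of $M_{a,\xi}$ are likewise not load-bearing. In the end you punt the essential content to an unspecified ``case-by-case'' analysis.

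The paper's proof \emph{is} a case-by-case analysis, but with a concrete structural mechanism that you do not supply. After a reduction to almost simple $\bG$, the cases are: (a) $\bG_a$ connected, so $E_a=B_a$ is polynomial (type $A$); (b) all irreducibles of $\Gamma$ occur, so the naive Morita argument works ($E_6$, $E_7$, and many orbits); (c) classical types $B/C/D$: Lusztig's explicit description shows $\Gamma\cong(\Z/2)^k$ has a surjection $\pi:\Gamma\to F$ with $\Gamma$ acting on $B_a$ through $F$, and the Springer characters form a union of $\widehat\pi(\widehat F)$-cosets; characters in distinct cosets restrict differently to $\ker\pi$, which acts trivially on $B_a$, so the corresponding projectives are orthogonal, and within one coset the endomorphism algebra is $F\ltimes B_a$ (via \eqref{typeB-coset-End}). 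Thus $E_a$ decomposes as a finite direct sum of copies of $F\ltimes B_a$, each of finite global dimension. (d) The remaining exceptional orbits ($G_2(a_1)$, $F_4(a_3)$, $E_8(a_7)$) have $B_a=k$, so $E_a$ is semisimple. It is observation (c) — the coset structure forcing a block decomposition of the corner into genuine crossed products — that your proposal is missing, and without it the argument has a real gap.
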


\Pf . Let us write $E_a=E_a(\bG)$ to stress the dependence on $\bG$.
Note that if $\bG$ is the quotient of $\wt{\bG}$ by a finite central subgroup 
then we have an isomorphism of algebras $E_a(\bG)\simeq E_a(\wt{\bG})$. Indeed, in this case
the natural map $H^*_{\wt{\bG}^0_a}(pt)\to H^*_{\bG^0_a}(pt)$ is an isomorphism, 
the corresponding homomorphism of the component groups $\rho:\wt{\bG}_a/\wt{\bG}^0_a\to \bG_a/\bG^0_a$ is surjective, and
the characters of $\wt{\bG}_a/\wt{\bG}^0_a$ appearing in the Springer correspondence for $\wt{\bG}$ are obtained by
composing the similar characters of $\bG_a/\bG^0_a$ with $\rho$.
On the other hand, if $\bG$ is the quotient of $\wt{\bG}$ by a connected subgroup of the center then
$E_a(\wt{\bG})\simeq E_a(\bG)[t_1,\ldots,t_k]$. Thus, it is enough to prove the assertion in the case when $\bG$ is almost simple.
Furthermore, we can replace $\bG$ by any isogenous group.

If $\bG_a$ is connected then the algebra $E_a$ is isomorphic to $B_a$ which is an algebra of polynomials,
so it has finite global dimension. In particular, this is the case for all nilpotent orbits in type $A$ (for the adjoint group). 

On the other hand, if for some nilpotent orbit $O_a$ all irreducible representations
of $\bG_a/\bG^0_a$ appear in the Springer correspondence then 
$$E_a=\End_{\bG_a/\bG^0_a\ltimes B_a}(P_a), \ \text{ where }\ P_a=\bigoplus _{\xi\in\Irr(\bG_a/\bG^0_a)} \xi\ot B_a.$$
Since $P_a$ is a projective generator of the category of $\bG_a/\bG^0_a\ltimes B_a$ and since
$\bG_a/\bG^0_a\ltimes B_a$ has finite global global dimension, it follows that $E_a$ has finite global dimension.

Now, assume $\bG$ is a symplectic or a special orthogonal group. In this case for any nilpotent $a$ one has 
$\bG_a/\bG_a^0\simeq (\Z/2)^k$.
Using the explicit description of the Springer correspondence in this case (see \cite{Lusztig-int-coh}) 
we see that there exists a surjection
$\pi:\bG_a/\bG_a^0\to F$ such that the action on $B_a$ factors through $F$ and the set characters of $\bG_a/\bG_a^0$
appearing in the Springer representation is a union of cosets for the subgroup 
$\hat{\pi}(\widehat{F})\sub\widehat{\bG_a/\bG_a^0}$
(see \S\ref{BD-sec} for details in the orthogonal case, the symplectic case is similar).
Note that if the characters $\xi$ and $\xi'$ of $\bG_a/\bG_a^0$ belong to different cosets then
the projective modules $\xi\otimes B_a$ and $\xi'\otimes B_a$ 
are mutually orthogonal in $D(\bG_a/\bG_a^0\ltimes B_a)$. Indeed,
this immediately follows from the fact that $\xi$ and $\xi'$ have different restrictions to the subgroup
$\ker(\pi)\sub \bG_a/\bG_a^0$ that acts trivially on $B_a$. Next, for a given coset 
$C=\xi_0+\widehat{\pi}(\widehat{F})$ we have
\begin{equation}\label{typeB-coset-End}
\End\left(\bigoplus_{\xi\in C} \xi\otimes B_a\right)\simeq 
\End\left(\bigoplus_{\xi\in \widehat{\pi}(\widehat{F})} \xi\otimes B_a\right)\simeq
F\ltimes B_a,
\end{equation}
where the first isomorphism is induced by the functor of tensoring with $\xi_0$. The algebra $F\ltimes B_a$
has finite global dimension, so we are done in this case.

To deal with exceptional groups we use the explicit calculations of the Springer correspondence for the corresponding
adjoint groups (see \cite[Sec.\ 13.3]{Carter}).
For types $E_6$ and $E_7$ all irreducible representations of the component groups $\bG_a/\bG_a^0$
appear in the Springer correspondence, so the assertion follows in these cases.
For the remaining types $G_2$, $F_4$ and $E_8$ the only nilpotent orbits for which not all representations
of the component group are present, are the orbits
$G_2(a_1)$, $F_4(a_3)$ and $E_8(a_7)$.
However, in all these cases the group $\bG_a^0$ has trivial reductive part, so that $B_a=k$ and $E_a$
is a semisimple algebra, hence of finite global dimension.
\ed

For a set $S$ of nilpotent orbits forming a stratification of a closed subset in $\Nscr$, let
$\Spr_S$, $D_{\bG,\Spr_S}$ be as in Proposition \ref{appear-prop}(ii).
Let also $\Cscr_S\sub D(A_W-\mod)$ be the thick subcategory generated by the projective modules $P_{a,\xi}$
associated with the elements of $\Spr_S$.

\begin{thm}\label{Springer-thm} 
(i) For each nilpotent orbit $O_a$ let $\Cscr_a\sub D(A_W-\mod)$ be the thick subcategory generated by all the modules
$M_{a,\xi}$, where $(a,\xi)\in\Spr$. Then $\Cscr_a$ is admissible, 
$\Cscr_a\simeq D^f(E_a^{op})$, where $E_a$ is given by \eqref{Ea-eq}, and we have
$$\Ext^*_{A_W}(\Cscr_a,\Cscr_{a'})=0$$
whenever $O_{a'}$ is not in the closure of $O_a$.

\noindent
(ii) Let $S$ be a set of nilpotent orbits forming a stratification of a closed subset in $\Nscr$.
For any ordering $S=\{a_1,\ldots,a_n\}$ such that
$O_{a_i}$ is in the closure of $O_{a_j}$ only if $i<j$, we have a semiorthogonal decomposition 
into admisible subcategories
$$\Cscr_S=\lan \Cscr_{a_n},\ldots, \Cscr_{a_1}\ran.$$
Also, for any $(a,\xi)\in\Spr_S$ we have an exact triangle
\begin{equation}\label{orthogonalizing-triangle-eq} 
K\to P_{a,\xi}\to M_{a,\xi}\to\ldots
\end{equation}
in $D(A_W-\mod)$ with $K\in \Cscr_{S'}$, where $S'$ is the set of orbits in $\ov{O_a}\setminus O_a$.

\noindent
(iii) Let $\{a_1,\ldots,a_r\}$ be an ordering of all nilpotent orbits such that $O_{a_i}$ is in the closure of $O_{a_j}$
only if $i<j$. Then we have a semiorthogonal decomposition into admissible subcategories  
$$D^f(A_W)=\lan  \Dscr_{a_1},\ldots,\Dscr_{a_r}\ran,$$
where $\Dscr_a$ is generated by all the modules $N_{a,\xi}$ with $(a,\xi)\in\Spr$. For each $a$ we have
an equivalence $\Dscr_a\simeq D^f(E_a^{op})$.
\end{thm}

\Pf . (i) Using Lemma \ref{intrinsic}, Proposition \ref{Ext-calculation-prop}(ii) and Lemma
\ref{E-a-lem}, we see that $\Cscr_a$ is admissible and is equivalent to
$D^f(E_a^{op})$. The required semiorthogonalities follow from
Proposition \ref{Ext-calculation-prop}(i).

\noindent (ii)
Using part (i) we see that the subcategory
$$\wt{\Cscr}_S:=\lan \Cscr_{a_n},\ldots, \Cscr_{a_1}\ran\sub D(A_W-\mod)$$ 
is admissible. We have to check the equality of subcategories $\wt{\Cscr}_S=\Cscr_S$. 
Replacing $S$ by the closure of a given orbit $O_a\in S$ we can assume that $S$ is the set of
all orbits in the closure of $O_a$ (so $a=a_n$). By induction, we can assume that the assertion holds
for $S'=S\setminus \{O_a\}$. First, let us check that $P_{a,\xi}$ belongs to $\wt{\Cscr}_S$ for all $\xi$.
Indeed, by Lemma \ref{graded-generation-lem}, it is enough to check the same statement
in the derived category of DG-modules over $A_W$. Using an equivalence \eqref{Springer-equivalence}
this reduces to showing that the object $S_{a,\xi}\in D_\bG(\Nscr)$ belongs to the subcategory generated
by $(j_{a,!}\Lscr_\xi)_{(a,\xi)\in\Spr_S}$. But this follows from Proposition \ref{appear-prop}(ii).

Next, the semiorthogonal decomposition
$$\wt{\Cscr}_S=\lan \Cscr_a, \Cscr_{S'} \ran$$
shows that $\Cscr_{S'}={}^\perp\Cscr_a$, the left orthogonal to $\Cscr_a$ in $\wt{\Cscr}_S$.
We already know that $P_{a,\xi}\in \wt{\Cscr}_S$. Hence, the object $K$ from the exact triangle
\eqref{orthogonalizing-triangle-eq} is also in $\wt{\Cscr}_S$.
Now by Proposition \ref{Ext-calculation-prop}(ii), we have
$$\Hom(K,M_{a,\xi'})=0 \text{ for } (a,\xi')\in\Spr.$$
This implies that $K$ is in ${}^\perp\Cscr_a=\Cscr_{S'}$.

Finally, using the triangles \eqref{orthogonalizing-triangle-eq} we see that the category $\wt{\Cscr}_S$, generated by $\Cscr_{S'}$ and the modules $M_{a,\xi}$,
is equal to $\Cscr_S$.

\noindent (iii) This is obtained from (ii) using the duality $R\Hom_{H^*_\bG(pt)}(?,H^*_\bG(pt))$ 
(see Corollary \ref{CM-duality-cor}).
\ed

\begin{rems} 
1. Kato proved that the natural morphism $P_\chi\to M_\chi$ is surjective and that
$$M_\chi=P_\chi/(\sum_{\chi'<\chi, f:P_{\chi'}\to P_\chi} \Im f)$$
 (see the proof of \cite[Cor.\ 3.6]{Kato}).

\noindent 2. Parts (i) and (ii) of Theorem \ref{Springer-thm} can be deduced without using the formality 
of $A^{dg}_W$ and $M^{dg}_{a,\xi}$, but instead using some results from \cite{JMW} (see \cite[Sec.\ 4]{Achar}).
\end{rems}


\begin{prop}\label{exceptional-prop}
Let $\bG$ be the adjoint simple group of type $G_2$ or $F_4$. Then the semiorthogonal decomposition of Theorem
\ref{Springer-thm} can be further refined to match the motivic decomposition \eqref{mot-dec2-eq}.
\end{prop}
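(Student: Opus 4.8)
The plan is to work inside $D^b_W(\ft)\simeq D^f(A_W)$, where $A_W=W\ltimes S(\ft^*)$ and the central subalgebra $H^*_\bG(pt)=S(\ft^*)^W$ equals $\Oscr(\ft/W)$, and to refine the Springer semiorthogonal decomposition $D^f(A_W)=\lan\Dscr_{a_1},\ldots,\Dscr_{a_r}\ran$ of Theorem \ref{Springer-thm}(iii) by further decomposing each block. I claim that for every nilpotent orbit $O_a$ the block $\Dscr_a\simeq D^f(\mod-E_a)$ admits a semiorthogonal decomposition into exactly $|\Spr_a|$ pieces, where $\Spr_a=\{\xi:(a,\xi)\in\Spr\}$, each piece being equivalent (compatibly with the $\Oscr(\ft/W)$-linear structure) to $D^b(Z)$ for a smooth affine variety $Z$ finite over $\ft/W$. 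Granting this, gluing over all orbits gives a semiorthogonal decomposition of $D^b_W(\ft)$ into $\sum_a|\Spr_a|=|\Irr W|=c$ such pieces, $c$ being the number of conjugacy classes of $W$. Since property $(\star)$ holds for $W$ of type $G_2$ and $F_4$ by Proposition \ref{star-reflection-prop}, Proposition \ref{HH-semiorth-match-prop} then applies with $r=c$ and shows that each piece is equivalent to $D^b(\ft^w/C(w))$ for a suitable conjugacy class $[w]$ and that all the quotients $\ft^w/C(w)$ are smooth; that is, the refined decomposition matches the motivic decomposition \eqref{mot-dec2-eq}.

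To establish the claim I would proceed orbit by orbit, using the tables of centralizers of nilpotent elements in \cite[Ch.\ 22]{LS} and the explicit Springer correspondence in \cite[Sec.\ 13.3]{Carter}. If $\bG_a$ is connected, then $\Spr_a=\{\text{triv}\}$ and $E_a=B_a=H^*_{\bG^0_a}(pt)$, which is the coordinate ring of the quotient of a Cartan subalgebra of the reductive part of $\bG^0_a$ by its Weyl group --- a polynomial ring by Chevalley--Shephard--Todd, finite over $H^*_\bG(pt)$ as in the proof of Corollary \ref{CM-duality-cor} --- so $\Dscr_a\simeq D^b(\Spec B_a)$ is already a single piece of the required form. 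If $\bG_a$ is disconnected, I would compute $E_a$ from \eqref{Ea-eq} via the coset argument in the proof of Lemma \ref{E-a-lem}: writing $F$ for the image of $\bG_a/\bG^0_a$ in $\Aut(B_a)$, formula \eqref{typeB-coset-End} presents $E_a$ as a finite product of copies of $F\ltimes B_a$, one for each coset meeting $\Spr$, so $\Dscr_a$ splits into a disjoint union of copies of $D^b_F(\Spec B_a)$. For $W$ of type $G_2$ this happens only for the subregular (distinguished) orbit $G_2(a_1)$, where the reductive part of $\bG^0_a$ is trivial, so $B_a=\C$, $F=1$, and $E_a\cong\C^2$ (exactly two of the three irreducible characters of $\bG_a/\bG^0_a\cong S_3$ occur), giving two copies of $D^b(\mathrm{pt})$ and the total $4+2=6=c$. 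For $W$ of type $F_4$ the disconnected orbits are $F_4(a_1),F_4(a_2),F_4(a_3)$ (distinguished, with component groups $S_2,S_2,S_4$) and $A_2,\tilde A_2,B_2,C_3(a_1)$ (with component group $S_2$): for the distinguished ones $B_a=\C$ and $E_a$ is a product of $2,2,4$ copies of $\C$ respectively; for $A_2,\tilde A_2,B_2,C_3(a_1)$ one has $E_a$ Morita equivalent to $\Z/2\ltimes B_a$, and since $\Z/2$ acts on the affine space $\Spec B_a$ by a pseudoreflection (or trivially), $D^b_{\Z/2}(\Spec B_a)$ splits into the two smooth affine pieces $D^b((\Spec B_a)/(\Z/2))$ and $D^b((\Spec B_a)^{\Z/2})$ --- the reflection case of Conjecture~A, which reduces to the curve case of Theorem \ref{CP-thm} (\cite{P-orbifold}) after factoring off the trivial directions. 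In each case the number of refined pieces equals $|\Spr_a|$, and the total is $|\Irr W|=c=25$.

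The main obstacle is the bookkeeping of the previous paragraph, and within it two points require care. First, for each disconnected orbit one must read off from \cite{Carter} exactly which irreducible local systems enter the Springer correspondence, since this controls the number of cosets meeting $\Spr$ and hence the number of refined pieces: in particular one needs both characters of $S_2$ to occur for each of the six $S_2$-orbits and exactly four of the five irreducibles of $S_4$ to occur for $F_4(a_3)$, so that $\sum_a|\Spr_a|$ comes out to $c$. Second, for the orbits $A_2,\tilde A_2,B_2,C_3(a_1)$ one must verify that the induced action of $\bG_a/\bG^0_a\cong\Z/2$ on the affine space $\Spec B_a$ is generated by a pseudoreflection, so that $D^b_{\Z/2}(\Spec B_a)$ decomposes into smooth affine pieces (cf.\ Proposition \ref{modules-Springer-prop}(iv)); this is precisely the property that fails for some orbits in types $D$ and $E$, so it is the real content of restricting to $G_2$ and $F_4$. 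Everything else --- the identification $D^b_W(\ft)\simeq D^f(A_W)$, the $\Oscr(\ft/W)$-linear equivalences $\Dscr_a\simeq D^f(\mod-E_a)$, the elementary splittings of $D^f(\mod-\C^n)$ and of $D^b_{\Z/2}$ of an affine space, and the appeals to Propositions \ref{star-reflection-prop} and \ref{HH-semiorth-match-prop} --- is formal.
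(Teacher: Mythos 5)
Your proposal is essentially the same approach as the paper's: refine each block $\Cscr_a$ of the Springer decomposition using the explicit form of $E_a$ (orthogonal splitting when the component group acts trivially on $B_a$, the $\Z/2$-equivariant affine space decomposition otherwise), count $\sum_a|\Spr_a|=c$, and invoke Propositions \ref{star-reflection-prop} and \ref{HH-semiorth-match-prop} to match the pieces to $V^g/C(g)$. Two corrections, though.

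First, a factual slip: in your $F_4$ list of orbits with $\Z/2$ component group and nontrivial reductive part $R_a$, you have written $\wt A_2$ where it should be $\wt A_1$. The orbit $\wt A_2$ has reductive centralizer of type $G_2$ with trivial component group; $\wt A_1$ has reductive centralizer of type $A_3$ with component group $\Z/2$. The correct list, matching the paper, is $\wt A_1,A_2,B_2,C_3(a_1)$. Your total count of $25$ still works out, but only once this is fixed.

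Second, and more substantively: you correctly identify that "the real content" is verifying that the nontrivial element of $\bG_a/\bG_a^0$ acts by a reflection on $\Spec B_a=\Spec H^*_{R_a}(pt)$ for those four orbits, but you stop there. The paper actually does this check, and it is not quite automatic. For $C_3(a_1)$ ($R_a$ of type $A_1$) it is trivial. For $\wt A_1$ and $A_2$ ($R_a$ of type $A_3$ and $A_2$), the component group acts through the graph/outer automorphism, which is $x\mapsto -x$ on the Cartan; this preserves the invariants of degrees $2$ and $4$ and negates the degree-$3$ invariant, so it is a reflection on $\Spec B_a$. For $B_2$ ($R_a$ of type $A_1+A_1$, so $B_a$ generated by two quadratics) one uses that any involution of the rank-two Cartan has at most one $-1$-eigenvalue on $S^2\ft$. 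Without this computation the argument is a plan rather than a proof; as you note, it is exactly here that types $D$ and $E$ fail, so it cannot be skipped. Also, the reference you want for the splitting $D^b_{\Z/2}(\Spec B_a)=\lan D^b(\Spec B_a/(\Z/2)),D^b((\Spec B_a)^{\Z/2})\ran$ is \eqref{main-Z-2-decomp-eq} (from \cite{CP}), not Proposition \ref{modules-Springer-prop}(iv), which is about projectivity of $M_{a,\xi}$.
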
 

\Pf . When $\bG_a$ is not connected the subcategory $\Cscr_a$ has a further semiorthogonal decomposition.
In the case when $\bG_a/\bG_a^0$ acts trivially on $B_a$ we just have an orthogonal decomposition numbered
by irreducible representations of $\bG_a/\bG_a^0$. Let us go over other cases.

For type $G_2$ the only orbit that has a nontrivial component group $\bG_a/\bG_a^0$ is the
orbit $G_2(a_1)$ for which $B_a=k$, so that we have further orthogonal decomposition of the corresponding
subcategory $\Cscr_a$ into the pieces numbered by the irreducible representations of $\bG_a/\bG_a^0=S_3$.

For type $F_4$ the only orbits with nontrivial $\bG_a/\bG_a^0$ and nontrivial reductive part $R_a$ of $\bG_a^0$ are
the orbits $\wt{A}_1$, $A_2$, $B_2$ and $C_3(a_1)$. In all of these cases $\bG_a/\bG_a^0=S_2$.
We claim that in all four cases the nontrivial generator
$\bG_a/\bG_a^0$ acts on $\Spec H^*_{\bG_a^0}(pt)=\Spec H^*_{R_a}(pt)$ as a reflection. 
For the orbit $C_3(a_1)$ the group $R_a$ has type $A_1$, so this is automatic.
For the orbits $\wt{A}_1$ and $A_2$ the group $R_a$ has types $A_3$ and $A_2$, respectively.
Note that $\bG_a/\bG_a^0$ acts on $H^*_{\bG_a^0}(pt)$ via the natural homomorphism from $\bG_a/\bG_a^0$
to the group of outer automorphisms of $\bG_a^0$. Hence, it is enough to check that 
the outer automorphism of $R_a$ acts on $\Spec H^*_{R_a}(pt)$ by a reflection. But this
action is induced by the automorphism $x\mapsto -x$ of the Cartan subalgebra, hence it preserves polynomial
invariants of degree $2$ and $4$ and acts by $-1$ on the invariant of degree $3$, and
the assertion follows in this case. 
Finally, for the orbit $B_2$ the group $R_a$ has type $A_1+A_1$, so the
algebra of invariant polynomials is generated by two quadratic polynomials on the Cartan subalgebra $\ft$ of
the Lie algebra of $R_a$.
Since the action of any involution on $\ft$ has at most one $-1$ eigenvalue on $S^2\ft$, our claim follows.

The fact that the pieces of the semiorthogonal decomposition match with those of 
the motivic decomposition \eqref{mot-dec2-eq} follows from Propositions
\ref{HH-semiorth-match-prop} and \ref{star-reflection-prop}.
\ed

\begin{rem}
For $E_6$ we get a semiorthogonal decomposition with one noncommutative block corresponding
to the nilpotent orbit of type $A_2$. Indeed, one can see from the list of nilpotent orbits (see \cite[Ch.\ 22]{LS}) that the only other orbit with nontrivial $\bG_a/\bG_a^0$ and nontrivial reductive part $R_a$ of $\bG_a^0$ is the orbit $D_4(a_1)$, for which
the reductive part of the centralizer is the $2$-dimensional torus, on which $\bG_a/\bG_a^0\simeq S_3$ acts in a standard way.
So we can refine the corresponding piece of the semiorthogonal decomposition of Theorem \ref{Springer-thm} using the
same Theorem for the $S_3$-action. The appearance of the noncommutative block for the nilpotent orbit of type $A_2$
is due to the fact that in this case
the component group $\bG_a/\bG_a^0=\Z/2$ of the centralizer 
swaps the two copies of $A_2$ in the reductive part $A_2+A_2$ of $\bG^0_a$.
To check this one can use the realization of $\bG=E_6$ as the centralizer of a subgroup $S$ of type $A_2$ in $E_8$
(see \cite[Ch.\ 16]{LS}).
Then the reductive part of the centralizer is obtained as $C_R(S)$, where $R$ is the reductive part of the centralizer of our nilpotent element in $E_8$. As proved in \cite[Ch.\ 15]{LS}, $R$ is the semidirect product of $E_6$ with $\Z/2$, acting on 
$E_6$ by the graph automorphism. Now we have to look at the subgroup $A_2$ in $E_6$, invariant under the graph automorphism, take its centralizer and see how
the graph automorphism acts on it. This $A_2$ is generated by $\alpha_2$ and by the maximal root, so the centralizer will
have $A_2$'s generated by $\alpha_1,\alpha_3$ and by $\alpha_5,\alpha_6$. It remains to observe that the graph automorphism swaps these two $A_2$'s.
\end{rem}

\subsection{Equivariant cohomology of the Springer fibers}\label{KP-sec}

In this section we discuss a description of the equivariant cohomology of the Springer fibers
as coordinate rings of some linear arrangements, due to Goresky-MacPherson (see \cite[Thm.\ 3.1]{GM}) and 
Kumar-Procesi (see \cite{KP}). Note that \cite{GM} and \cite{KP} assume $\bG$ to be semisimple,
while we only assume it to be connected reductive, however, one can
check that the constructions and arguments of \cite{GM} and \cite{KP} still go through in this case.
In fact, we are mainly interested in the case $\bG=\GL_n$ (due to the assumption $(\dagger)$ below).

We start with a nilpotent $a\in \fg$ which is a principal nilpotent in the
Lie algebra of a Levi subgroup $\bL\sub \bP$ of some parabolic subgroup $\bP\sub \bG$.
Define the torus $\bT_a\sub \bL$ as the connected component of $1$ in the center of $\bL$. Note that $a$ is a distinguished nilpotent
in $\bL$, hence, $\bT_a$ is a maximal torus in $\bG_a^0$, the connected component of $1$ in the centralizer of $a$ in $\bG$
(see \cite[Sec.\ 4.7]{Jantzen}).
Let $\bT\sub \bL$ be a maximal torus, and let $\ft$ (resp., $\ft_a$) be the Lie algebra of
$\bT$ (resp., $\bT_a$).

We define the {\it linear arrangement associated with $(a, \bL)$} as above, to be the following union of linear subspaces
in $\ft_a\times\ft$
\begin{equation}\label{KP-variety-eq}
Z_a=\cup_{w\in W} (\id\times w)\De(\ft_a)\sub \ft_a\times \ft,
\end{equation}
where $\ft_a\sub \ft$ is the Lie algebra of $\bT_a\sub \bT$, $\De:\ft_a\to \ft_a\times \ft$ is the diagonal embedding,
and $W=N_{\bG}(\bT)/\bT$ is the Weyl group of $\bG$ acting on $\ft$. 
We view $Z_a$ as a subscheme in the affine space $\ft_a\times\ft$, equipped with the reduced scheme structure,
and denote by 
$$\C[Z_a]=S(\ft_a^*)\ot S(\ft^*)/I(Z_a)$$ 
the corresponding coordinate ring (where $I(Z_a)$ is the ideal of polynomials vanishing on $Z_a$).
Note that the natural projection $Z_a\to \ft_a$ is a finite surjective morphism.
Also, $Z_a$ is a cone, so the algebra $\C[Z_a]$ is naturally graded.

Let us make the following assumption:

\vspace{2mm}

\noindent
{\bf $(\dagger)$ the restriction homomorphism $H^*(X,\C)\to H^*(X_a,\C)$ is surjective.}

\vspace{2mm}

For example, this assumption is satisfied in the case of type $A$, due to the work of Spaltenstein \cite{Spalt}.
Under the assumption $(\dagger)$ there is an isomorphism of graded algebras (see \cite[Thm.\ 3.1]{GM}, \cite{KP})
\begin{equation}\label{KP-isom}
\C[Z_a]\rTo{\sim}H^*_{\bT_a}(X_a,\C),
\end{equation}
where the standard grading on the ring of functions on $Z_a$ is doubled. This isomorphism fits into
the commutative diagram (which uniquely determines it)
\begin{equation}\label{KP-diagram}
\begin{diagram}
S(\ft_a^*)\ot S(\ft^*)&\rTo{\chi}& H^*_{\bT_a}(X,\C)\\
\dTo{}&&\dTo{}\\
\C[Z_a] &\rTo{\sim}& H^*_{\bT_a}(X_a,\C),
\end{diagram}
\end{equation}
where the vertical arrows are induced by the embeddings $Z_a\hra \ft_a\times \ft$ and $X_a\hra X$,
and $\chi$ is induced by the natural map 
$$S(\ft^*)=H^*_\bT(pt)\rTo{\b} H^*_\bG(X)\to H^*_{\bT_a}(X)$$
and by the standard action of $S(\ft_a^*)=H^*_{\bT_a}(pt)$ on $H^*_{\bT_a}(X)$.
Here the map $\b$ is defined in \cite{KP} by $\b(\la)=c_1(L_\la)$, where $L_\la$ is the $\bG$-equivariant line bundle on $X$ associated with a character $\la:\bT\to\G_m$, and $c_1$ denotes the $\bG$-equivariant first Chern class.
It is easy to see that $\b$ coincides with the natural isomorphism obtained
using the identification $X=\bG/\bB$ (see \eqref{Spr-res-coh-eq}).

Recall that $X_a$ is equivariantly formal (since its cohomology is concentrated in even degrees). 
Hence, by Lemma \ref{equiv-formal-lem}, we have 
\begin{equation}\label{KP-isom-aux}
H^*_{\bG^0_a}(X_a)\simeq H^*_{\bT_a}(X_a)^{W_a},
\end{equation}
where $W_a$ is the Weyl group attached to the maximal torus $\bT_a\sub \bG^0_a$.

\begin{lem}\label{KP-W-action-lem} 
Assume condition $(\dagger)$ holds.

\noindent
(i) The Springer action of $W$ on $H^*_{\bT_a}(X_a,\C)$ corresponds under \eqref{KP-isom} to the action
of $W$ on $Z_a$ given by the action on the second component.

\noindent
(ii) The natural action of $W_a$ on $H^*_{\bT_a}(X_a,\C)$ corresponds under \eqref{KP-isom} to
the action of $W_a$ on $Z_a$ given by the action on the first component. 
\end{lem}

\Pf . (i) Since the restriction homomorphism $H^*_{\bT_a}(X,\C)\to H^*_{\bT_a}(X_a,\C)$ is compatible with the
Springer $W$-action (see Lemma \ref{Spr-classical-W-lem}(i)), it is enough to check that the homomorphism $\chi$ from diagram \eqref{KP-diagram} 
is compatible with the $W$-actions. In other words, it is enough to check that the natural isomomorphism
$$H^*_\bT(pt)\simeq H^*_\bG(\bG/\bT,\C)\simeq H^*_\bG(\bG/\bB,\C)$$
is compatible with $W$-actions. But this follows immediately from Lemma \ref{Spr-classical-W-lem}(ii).

\noindent (ii)
It is enough to check that the surjective map
$$S(\ft_a^*)\ot S(\ft^*)\to \C[Z_a]\rTo{\sim} H^*_{\bT_a}(X_a)$$
is $W_a$-equivariant, where $W_a$ acts naturally on $S(\ft_a^*)$ and acts trivially on
$S(\ft^*)$. But this follows from the fact that the restriction of $\chi$ to $S(\ft^*)$ is the homomorphism
$$S(\ft^*)\simeq H^*_\bG(X)\to H^*_{\bG^0_a}(X)\to H^*_{\bG^0_a}(X_a)\simeq H^*_{\bT_a}(X_a)^{W_a}\hra
H^*_{\bT_a}(X_a).$$
\ed

\begin{rem} Kumar and Procesi in \cite{KP} sketch a proof of the compatibility with $W$-actions, similar to
the one in Lemma \ref{KP-W-action-lem}(i), for the induced identification of the non-equivariant cohomology of
the Springer fiber $X_a$.
\end{rem}

Let us consider the geometric quotient
\begin{equation}\label{KP-variety-2-eq}
\ov{Z}_a=Z_a/W_a
\end{equation}
of $Z_a$ with respect to the $W_a$-action
induced by the action on the first factor of the product $\ft_a\times \ft$.

\begin{prop}\label{KP-prop}
Assume that $(\dagger)$ holds.
Then we have an isomorphism of $S(\ft^*_a)\ot S(\ft^*)$-modules (resp.
$S(\ft^*_a)^{W_a}\ot S(\ft^*)$-modules) compatible with the algebra structures and the $W$-action,
$$H^*_{\bT_a}(X_a,\C)\simeq \C[Z_a] \ \text{ (resp., }
H^*_{\bG^0_a}(X_a)\simeq \C[\ov{Z}_a]).$$
Furthermore, the schemes $Z_a$ and $\ov{Z}_a$ are Cohen-Macaulay, and the natural projections
$Z_a\to \ft_a$ and $\ov{Z}_a\to \ft_a/W_a$ are flat.
\end{prop}

\Pf .
The first assertion follows the isomorphisms \eqref{KP-isom} and \eqref{KP-isom-aux} and Lemma 
\ref{KP-W-action-lem}.
To prove the second assertion, we note that by the equivariant formality of $X_a$, the equivariant cohomology 
$H^*_{\bT_a}(X_a,\C)$ (resp., $H^*_{\bG^0_a}(X_a)$)
is free of finite rank over $H^*_{\bT_a}(pt)\simeq S(\ft_a)$ (resp., $H^*_{\bG^0_a}(pt)\simeq S(\ft_a^*)^{W_a}$). Hence,
we deduce that the projection $Z_a\to \ft_a$ (resp., $\ov{Z}_a\to \ft_a/W_a$)
Applying the Hironaka's criterion (see \cite[Cor.\ 18.17]{Eis})
we deduce that $Z_a$ (resp., $\ov{Z}_a$) is Cohen-Macaulay.
\ed




\begin{rems} 1. There exist other descriptions of the cohomology of Springer fibers, still under the assumption $(\dagger)$,
see \cite{DeCP}, \cite{Carrell}, \cite{KT}.

\noindent
2. By Lemma \ref{G/H-lem}(ii), we have
$$H^*_{\bG_a}(X_a,\C)\simeq H^*_{\bG_a^0}(X_a,\C)^{\bG_a/\bG_a^0}.$$
Hence, from Proposition \ref{KP-prop} we get in the above situation
$$H^*_{\bG_a}(X_a,\C)\simeq \C[\ov{Z}_a]^{\bG_a/\bG_a^0}$$
provided $(\dagger)$ holds. It is easy to see using \cite[Thm.\ 1.3]{GM}
that the latter isomorphism holds under the weaker assumption that the homomorphism
\begin{equation}\label{weaker-surjectivity}
H^*_{\bG_a}(X,\C)\to H^*_{\bG_a}(X_a,\C)
\end{equation}
is surjective. For example, this holds if $\bG_a/\bG_a^0$ acts trivially on $H^*_{\bG_a^0}(pt)$ and the image of the restriction 
homomorphism $H^*(X,\C)\to H^*(X_a,\C)$ coincides with the subspace of $\bG_a/\bG_a^0$-invariants
in $H^*(X_a,\C)$.
\end{rems}

\subsection{The case of type $A$}\label{local-A-sec}

In this section we specialize to the case $\bG=\GL_n$, so that $W=S_n$.
In this case nilpotent orbits are parametrized by partitions of $n$.
For a partition $\la$ of $n$, the reductive part 
of the centralizer of the corresponding nilpotent element $a_\la$
is $\bG_\la=\prod_i (\GL_{r_i})^i$, where $r_i$ are multiplicities of parts of $\la$.
In particular, $\bG_\la$ is connected. 
The Springer correspondence associates with a nilpotent orbit $O_\la$ the irreducible
representation of $S_n$ in the top degree cohomology of the corresponding Springer fiber
$X_\la:=X_{a_\la}$, which is
known to be the usual representation
$L_\la$ of $S_n$ corresponding to $\la$ (e.g., $L_{(n)}=\mathbf{1}$ is the trivial representation,
$L_{(1)^n}=\de$, is the sign representation, where $(1)^n$ is $1$ repeated $n$ times).
We use the {\it dominance partial ordering} $\le$ on partitions of $n$ defined by $\la\ge\mu$ if 
$\la_1+\ldots+\la_i\ge \mu_1+\ldots+\mu_i$ for all $i\ge 1$.
Note that $(n)$ is the biggest partition and $(1)^n$ is the smallest. 
It is well known (see \cite{Gerst})
that this order corresponds to the adjacency order of nilpotent orbits,
i.e., $O_\mu\sub\ov{O_\la}$ if and only if $\mu\le \la$. We will use the following property of this ordering.

\begin{lem}\label{part-merge-lem} 
Let $\la^{(1)},\ldots,\la^{(s)}$ and $\mu^{(1)},\ldots,\mu^{(s)}$
be partitions with 
$$|\la^{(1)}|+\ldots+|\la^{(s)}|=|\mu^{(1)}|+\ldots+|\mu^{(s)}|=n,$$ 
and let $\la$ (resp., $\mu$) be the partition of $n$ obtaining by taking all the parts of the partitions $\la^{(1)},\ldots,\la^{(s)}$
(resp., $\mu^{(1)},\ldots,\mu^{(s)}$). Assume that $\la^{(i)}\ge\mu^{(i)}$ for $i=1,\ldots,s$. Then
$\la\ge \mu$. If in addition, $\la^{(i)}>\mu^{(i)}$ for at least one $i$ then $\la>\mu$.
\end{lem}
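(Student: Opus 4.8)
The plan is to reduce the statement to a manifestly additive reformulation of the dominance order. For a partition $\nu$ I write $\nu'$ for its conjugate, so that $\nu'_k$ is the number of parts of $\nu$ that are $\ge k$. The first ingredient I would invoke is the classical fact that conjugation is an order-reversing involution of the dominance poset: for partitions of a fixed integer, $\nu \ge \rho$ in the dominance order if and only if $\rho' \ge \nu'$ (Macdonald, \emph{Symmetric functions and Hall polynomials}, I.(1.11); see also Brylawski). I would also note at the outset that the dominance order only compares partitions of the same size, so the hypothesis $\la^{(i)} \ge \mu^{(i)}$ implicitly includes $|\la^{(i)}| = |\mu^{(i)}|$.

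The key observation, which does all the work, is that the ``merge parts'' operation producing $\la$ from $\la^{(1)},\dots,\la^{(s)}$ becomes componentwise addition after conjugation. Indeed, the multiset of parts of $\la$ is the disjoint union of the multisets of parts of the $\la^{(i)}$, so for each $k \ge 1$ the number of parts of $\la$ that are $\ge k$ is $\sum_{i=1}^s (\la^{(i)})'_k$; that is, $\la' = \sum_{i=1}^s (\la^{(i)})'$ (sum taken componentwise), and likewise $\mu' = \sum_{i=1}^s (\mu^{(i)})'$. This is immediate from the definition and needs no computation.

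Given these two facts the argument is short. From $\la^{(i)} \ge \mu^{(i)}$ one gets $(\mu^{(i)})' \ge (\la^{(i)})'$, i.e. $\sum_{k \le t} (\mu^{(i)})'_k \ge \sum_{k \le t} (\la^{(i)})'_k$ for every $t \ge 1$. Summing over $i$ and using the additivity of $k \mapsto \nu'_k$ under merging yields $\sum_{k \le t} \mu'_k \ge \sum_{k \le t} \la'_k$ for all $t$; together with $|\mu'| = |\la'| = n$ this says precisely $\mu' \ge \la'$, hence $\la \ge \mu$ by conjugation again. For the strict statement, if $\la^{(i_0)} > \mu^{(i_0)}$ then $(\mu^{(i_0)})' \ne (\la^{(i_0)})'$, so $(\mu^{(i_0)})' > (\la^{(i_0)})'$ and one of the inequalities $\sum_{k \le t}(\mu^{(i_0)})'_k \ge \sum_{k \le t}(\la^{(i_0)})'_k$ is strict at some $t$; the remaining summands only help, so the summed inequality is strict at that $t$, forcing $\mu' \ne \la'$ and hence $\la > \mu$.

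I do not anticipate a genuine obstacle here; the only points requiring care are the bookkeeping with sizes (so that ``dominance'' is always applied to equipotent partitions) and the precise sense in which the partial-sum inequalities may be added termwise. An equivalent route avoiding conjugates is to use the concave truncation function $f_\nu(t) = \sum_i \min(\nu_i,t)$, which satisfies $\la \ge \mu \iff f_\la(t) \le f_\mu(t)$ for all $t$ and is visibly additive under merging of parts; I would mention this as a remark, but the conjugation argument is the cleanest to write out in full.
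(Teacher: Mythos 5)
Your proof is correct, and the route is genuinely different from the one in the paper. The paper proves the lemma by invoking an alternative, ``Hall-type'' characterization of dominance: $\la\ge\mu$ if and only if for every $k$ and every choice of $k$ distinct parts of $\mu$ there exist $k$ parts of $\la$ whose sum is at least as large; with that reformulation the merge statement is immediate (choose the parts block by block within each $\la^{(i)}$). You instead pass to conjugate partitions and use two standard facts: conjugation is an order-reversing involution of the dominance poset, and merging the multisets of parts is componentwise addition on conjugates ($\la' = \sum_i (\la^{(i)})'$). Each approach has a small advantage: the paper's reformulation gives the non-strict claim with essentially no computation, but extracting the strict case from it requires a touch more care; your conjugate/additivity argument makes both the non-strict and the strict assertions equally transparent, since a strict inequality in one summand immediately forces a strict inequality in the sum, at the cost of citing the order-reversal property of conjugation. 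Your side remark about the concave truncation $f_\nu(t)=\sum_i\min(\nu_i,t)$ is just the same conjugate computation in a different guise ($f_\nu(t)=\sum_{k\le t}\nu'_k$), so it is equivalent rather than a third proof.
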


\Pf . This becomes apparent from the following reformulation of the condition $\la\ge \mu$: for every $k\ge 1$ and
for every collection of $k$ parts of $\mu$, $\mu_{i_1},\ldots,\mu_{i_k}$ (where $i_1,\ldots,i_k$ are distinct),
there exists a collection of $k$ parts of $\la$, $\la_{j_1},\ldots,\la_{j_k}$ with
$$\la_{j_1}+\ldots+\la_{j_k}\ge\mu_{i_1}+\ldots+\mu_{i_k}.$$
\ed

We can identify the Lie algebra of a maximal torus $\ft\sub\fg$ with the standard $n$-dimensional representation of $S_n$, so that $H^*_\bT(pt)=S(\ft^*)\simeq\C[x_1,\ldots,x_n]$ and 
$$A:=A_{S_n}=S_n\ltimes \C[x_1,\ldots,x_n],$$
where $\deg(x_1)=\ldots=\deg(x_n)=2$, while the elements of $S_n$ have degree $0$.

For each partition $\lambda$, $|\la|=n$, 
the maximal torus $\bT_\la$ in $\bG_\la$ consists of diagonal matrices $\diag(t_1,\ldots,t_n)\in\GL_n$ such that
$t_1=\ldots=t_{\la_1}$, $t_{\la_1+1}=\ldots=t_{\la_1+\la_2}$, etc.
Its Lie algebra $\ft_\la\sub \ft$ is described by the same equations inside $\ft$.
The subgroup 
$$S_\lambda=\prod_k S_{\la_k}\sub S_n$$ 
is the pointwise stabilizer of $\ft_\lambda$. 
Let $H_\lambda\sub S_n$ be the normalizer of $S_\lambda$: 
$$H_\la=S_\lambda\rtimes W_\la,$$
where 
$$W_\la=\prod_i S_{r_i}$$ 
($r_i$ is the multiplicity of $i$ in $\la$).
Note that $H_\la$ is exactly the set of $w\in S_n$ such that $w(\ft_\lambda)=\ft_\la$. 

On the other hand, $\ft_\la$ is the maximal torus in the reductive part of the centralizer $\bG_\la$, and
$W_\la$ is the corresponding Weyl group.
Hence, the algebra $H^*_{\bG_\la}(pt)$ can be identified with
$$R_\la:=S(\ft_\la^*)^{W_{\la}}=\Oscr(\ft_{\la}/W_\la)$$
(note that $R_\la$ is isomorphic to a polynomial algebra).
Thus, the pieces of the semiorthogonal decomposition of $D^b_{S_n}(\A^n)$ obtained from Theorem \ref{Springer-thm}
have form $D^b(\ft_\la/W_\la)$.

For a partition $\la$, $|\la|=n$, let 
$$P_\lambda=\Oscr(\ft)\otimes L_\lambda.$$ 
be the standard $A$-projective modules. Also, let
$M_\la$ and $N_\la$ be the $A$-modules obtained from the Springer correspondence
(see Definition \ref{modules-Springer-defi}). Let us restate some of the results of Theorem \ref{Springer-thm}
and of Propositions \ref{Ext-calculation-prop} and \ref{modules-Springer-prop}(iv) in the type $A$ case.

\begin{thm}\label{type-A-thm} (i) For each partition $\la$, $|\la|=n$, we have isomoprhisms
\begin{equation}
\label{springer}
R\Hom_A(M_\la,M_\la)\simeq R\Hom_A(N_\la,N_\la)\simeq R_\la,
\end{equation}
compatible with the $\Oscr(\ft)^{S_n}$-action.
Let $\lan M_\la\ran$ (resp., $\lan N_\la\ran$) denote 
the thick subcategory of $D^f(A)$ generated by $M_\la$ (resp., $\lan N_\la\ran$). Then these subcategories
are admissible subcategory and we have equivalences
$$\lan M_\la\ran\simeq \lan N_\la\ran\simeq D^f(R_\la)=\Perf(R_\la).$$
Furthermore, $M_\la$ and $N_\la$ are projective as $R_\la$-modules.

\noindent
(ii) One has
\[
R\Hom_A(P_\mu,M_\la)=R\Hom_A(M_\mu,M_\la)=R\Hom_A(N_\la,N_\mu)=0\qquad \text{for $\mu\not\ge\la$}.
\]

\noindent
(iii) For any set $S$ of partitions of $n$, closed under passing to smaller elements with respect to the
dominance order, we have
$$\Cscr_S:=\lan P_\la \ |\ \la\in S\ran=\lan M_\la \ |\ \la\in S\ran.$$
For any ordering $S=\{\la_1<\ldots<\la_k\}$ refining the dominance order, the
subcategories $\lan M_{\la_k}\ran,\ldots,\lan M_{\la_1}\ran$ form a semiorthogonal decomposition of $\Cscr_S$.
Furthermore, for every $\la$ we have an exact triangle
\begin{equation}\label{Pla-Mla-eq}
K\to P_\la\to M_\la\to\ldots
\end{equation}
in $D^f(A)$ such that $K\in \lan P_\mu \ |\ \mu<\la\ran$.

\noindent
(iv) Let $\la_1<\ldots<\la_{p(n)}$ be an ordering refining the dominance order. 
Then we have a semiorthogonal decomposition 
$$D^f(A)=\lan \lan N_{\la_1}\ran, \ldots , \lan N_{\la_{p(n)}}\ran\ran.$$
\end{thm}

\begin{rem} By Proposition \ref{HH-semiorth-match-prop}, 
the pieces of our semiorthogonal decompositions in Theorem \ref{type-A-thm}
automatically match those predicted by Conjecture A for the action of $S_n$ on $\A^n$. 
It is not hard to match them explicitly. First, we observe that $\ft_\la=\ft^{w_\la}$, where
$w_\la\in S_n$ is the permutation with the cycle type $\la$.
Next, the centralizer $C_\la$ of $w_\la$ in $S_n$ is the semidirect product
$$C_\la=C^0_\la\rtimes W_\la,$$
where
$$C^0_\la=\prod_k \Z/\la_k.$$
The action of $C_\la$ on $\ft_\la$ factors through $W_\la$, so we get
$$\ft_\la/W_\la=\ft^{w_\la}/C(\la).$$
\end{rem}



Note that our two semiorthogonal decompositions of $D^f(A)$, in terms of the modules $(M_\la)$ and $(N_\la)$,   
are both natural but from different point views. Theorem \ref{type-A-thm}(iii) shows that
the modules $(M_\la)$ appear simply from the semi-orthogonalization procedure applied to the projective modules $(P_\la)$
and the dominance order (in Section \ref{induced-sec} we will also show that they appear from the semi-orthogonalization
procedure applied to certain induced $A$-modules).
On the other hand, the isomorphism of Proposition \ref{KP-prop} 
leads to a geometric interpretation of the modules $N_\la=N_{a_\la}$.
It is this identification that will allow us to globalize this picture in Section \ref{global-A-n-sec}. 
Namely, the nilpotent $a_\la$ is the principal nilpotent in the Levi subgroup $\bL=\prod_k \GL_{\la_k}$,
so we can associate the schemes $Z_{\la}$ and $\ov{Z}_\la$ to it (see \eqref{KP-variety-eq} and 
\eqref{KP-variety-2-eq}).

\begin{prop}\label{modules-KP-prop} 
For each partition $\la$ (with $|\la|=n$) we have an isomorphism of graded $A$-modules
$$N_\la\simeq \C[\ov{Z}_\la],$$
where the grading on the right is doubled.
\end{prop}

\Pf . By Proposition \ref{modules-Springer-prop},
$$N_\la\simeq H^*_{\bG_\la}(X_\la,\C).$$
Now the required isomorphism follows from Proposition \ref{KP-prop}. Note that the assumption $(\dagger)$
is satisfied in this case (see \cite{Spalt}).
\ed


Note that the linear spaces constituting $Z_{\la}$ 
are in bijection with $S_n/S_\lambda$.
Thus, the components of $\ov{Z}_{\la}=Z_{\la}/W_\la$ are numbered by $S_n/H_\lambda$.
Hence, the components of $\ov{Z}_\la$ and of $S_n\cdot\ft_\la$ are in bijection.
Furthermore, it is easy to see that the projection
\begin{equation}\label{pi-la-map-eq}
\pi_\la:\ov{Z}_\la\to \bS_\la:=S_n\cdot \ft_\la
\end{equation}
is finite, surjective and is an isomorphism over the general point of each component
(where we equip $\bS_\la$ with the reduced scheme structure).

We are going to show now how the isomorphism of Proposition \ref{KP-prop}
can be used to deduce that the stratum $\bS_\la$
is Cohen-Macauley in some cases.

\begin{prop}\label{equal-parts-prop}
If all parts in $\la$ are equal then the map $\pi_\la$ (see \eqref{pi-la-map-eq})
is an isomorphism. Thus, in this case we have an isomorphism
$$H^*_{\bG_\la}(X_\la)\simeq \C[\bS_\la]$$
of graded algebras (where the grading on $\C[\bS_\la]$ is doubled).
\end{prop}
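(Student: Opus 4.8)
The plan is to prove that $\pi_\la$ is an isomorphism by a direct computation with coordinate rings; the statement about $H^*_{\bG_\la}(X_\la)$ then follows by combining this with \eqref{KP-quot-isom}. Write $d$ for the common value of the parts of $\la$ and $r$ for their number, so that $n=dr$, $\ft_\la\simeq\C^r$, the group $W_\la\simeq S_r$ acts on $\ft_\la$ by permuting coordinates, and $W_\la$ acts on $Z_\la\subset\ft_\la\times\ft$ only through the first factor. Let $y_1,\dots,y_r$ be the coordinates on $\ft_\la$ (so $y_j$ is the common value of the $x_i$ over the $i$ in the $j$-th block) and recall that $x_1,\dots,x_n$ are the coordinates on $\ft$. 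Introduce the power sums $\widetilde p_k=\sum_{j=1}^r y_j^k$ and $p_k=\sum_{i=1}^n x_i^k$, for $k=1,\dots,r$.

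The first and main step is the identity $\widetilde p_k=\tfrac1d\,p_k$ inside $\C[Z_\la]$. Since $Z_\la$ carries the reduced structure, it is enough to check that $d\widetilde p_k-p_k$ vanishes on each irreducible component $(\id\times w)\De(\ft_\la)$ of $Z_\la$. On that component one has $x_i=y_{b(w^{-1}i)}$, where $b$ sends $i$ to the index of the block containing $i$, and every fibre of $b$ has size $d$; summing $k$-th powers over $i$ therefore gives $p_k=d\widetilde p_k$ on the component. This is the only place where care is needed: a priori $\pi_\la$ is merely generically an isomorphism on each component, and the components of $\ov Z_\la$ overlap, so one cannot simply glue the obvious componentwise inverses — the power-sum identity is exactly what sidesteps this, producing an honest polynomial expression for the $\ft_\la$-coordinates in terms of the $\ft$-coordinates that is valid on all of $Z_\la$.

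The second step deduces that $\pi_\la^*\colon\C[\bS_\la]\to\C[\ov Z_\la]$ is an isomorphism. The ring $\C[\ov Z_\la]=\C[Z_\la]^{W_\la}$ is generated by the images of the $W_\la$-invariant power sums $\widetilde p_1,\dots,\widetilde p_r$, which generate $\C[\ft_\la]^{W_\la}$ in characteristic zero, together with the images of $x_1,\dots,x_n$. By the first step each $\widetilde p_k$ already lies in the subalgebra generated by the $x_i$, i.e.\ in the image of $\pi_\la^*$; hence $\pi_\la^*$ is surjective. It is injective because $\pi_\la$ is finite and surjective onto the reduced scheme $\bS_\la$. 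So $\pi_\la^*$ is an isomorphism, and the explicit formulas $\pi_\la^*\widetilde p_k=\tfrac1d\,p_k$, $\pi_\la^*x_i=x_i$ show it respects the gradings (the one on $\C[\bS_\la]$ being doubled, matching the convention in \eqref{KP-variety-eq} ff.). Therefore $\pi_\la$ is an isomorphism of (graded affine) schemes.

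Finally, combining the isomorphism $\pi_\la\colon\ov Z_\la\to\bS_\la$ with \eqref{KP-quot-isom} — which applies since $(\dagger)$ holds in type $A$ by \cite{Spalt} — gives graded isomorphisms $H^*_{\bG_\la}(X_\la)\simeq\C[\ov Z_\la]\simeq\C[\bS_\la]$, with the grading on $\C[\bS_\la]$ doubled, as claimed; here we use that $\bG_\la^0=\bG_\la$ is connected in type $A$.
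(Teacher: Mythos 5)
Your proof is correct and takes essentially the same approach as the paper: both rest on the observation that the $W_\la$-invariant functions on $\ft_\la$ can be recovered polynomially from the $\ft$-coordinates restricted to $Z_\la$, the paper phrasing this as a section $\bS_\la\to\ft_\la/W_\la$ obtained from ``universal polynomials'' in the $e_k(i_m(y))$, and you phrasing it as surjectivity of $\pi_\la^*$. Your use of power sums, with the immediate identity $p_k(i_m(y))=m\,p_k(y)$, makes the paper's tacit ``universal polynomial'' claim completely explicit, which is a welcome simplification; also note that for the injectivity of $\pi_\la^*$ surjectivity of $\pi_\la$ onto the reduced $\bS_\la$ already suffices, without invoking finiteness.
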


\Pf . Note that $\ov{Z}_\la$ is a reduced closed subscheme of 
$\ft_{\la}/W_\la\times \bS_\la\sub \ft_\la/W_\la\times\ft$.
Thus, to show that $\pi_\la$ is an isomorphism,
it is enough to construct a morphism $\bS_\la\to \ft_\la/W_\la$, such that the composition
$\ov{Z}_\la\to \bS_\la\to \ft_\la/W_\la$ is the natural projection.
Let $\la=(m)^p$, where $n=mp$. 
Then $\ft_\la$ is 
the image of the embedding 
$$i_{m}: \A^p\to\A^n: (y_1,\ldots,y_p)\mapsto (y_1,\ldots,y_1,y_2,\ldots,y_2,\ldots,y_p,\ldots,y_p)$$
where each $y_i$ is repeated $m$ times. 
Note that $W_\la=S_p$ so $\C[\ft_a/W_\la]$ is just the ring of symmetric functions in $(y_1,\ldots,y_p)$.
Now we observe that
if $e_i(\cdot)$ are elementary symmetric functions then $e_1(y),\ldots,e_p(y)$
can be expressed as some universal polynomials in $e_1(i_m(y)),\ldots,e_p(i_m(y))$.
These universal polynomials give us the required map $\ft\to\ft_\la/W_\la$ which we then restrict to $\bS_\la$.

Now the isomorphism with the equivariant cohomology of $X_\la$ follows from Proposition \ref{KP-prop}.
\ed

\begin{cor}\label{CM-cor} 
If all parts in $\la$ are equal then $\bS_\la$ is Cohen-Macaulay.
\end{cor}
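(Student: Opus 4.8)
The plan is to deduce this immediately from the two results just established, since no new geometry is needed. First I would recall that by Proposition \ref{equal-parts-prop}, when all parts of $\la$ are equal, the finite surjective morphism $\pi_\la\colon \ov{Z}_\la\to \bS_\la$ of \eqref{pi-la-map-eq} is an isomorphism of schemes (equivalently, the graded algebra $\C[\bS_\la]$ is identified with $H^*_{\bG_\la}(X_\la)$). Hence the Cohen--Macaulay property of $\bS_\la$ is equivalent to that of $\ov{Z}_\la$.

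Next I would invoke Lemma \ref{Za-CM-lem}, which asserts that $\ov{Z}_a$ is Cohen--Macaulay whenever the hypothesis $(\dagger)$ holds; since here $\bG=\GL_n$, the surjectivity of $H^*(X,\C)\to H^*(X_\la,\C)$ required by $(\dagger)$ is exactly Spaltenstein's theorem \cite{Spalt}. Transporting this along the isomorphism $\pi_\la$ gives that $\bS_\la$ is Cohen--Macaulay. (Equivalently, one may argue directly from $H^*_{\bG_\la}(X_\la)\simeq\C[\bS_\la]$: because $X_\la$ has cohomology in even degrees only it is $\bG_\la$-equivariantly formal, see \S\ref{equiv-sh-sec}, so $H^*_{\bG_\la}(X_\la,\C)$ is a finite free module over the polynomial ring $H^*_{\bG_\la}(pt)$; a graded algebra that is a maximal Cohen--Macaulay module over a polynomial subring over which it is module-finite is Cohen--Macaulay.)

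I do not anticipate a genuine obstacle: every ingredient is already in place, and the only point deserving an explicit word is the verification that $(\dagger)$ is satisfied in type $A$, which is precisely the remark recorded after Lemma \ref{Za-CM-lem}.
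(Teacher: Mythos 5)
Your proof is correct and is essentially identical to the paper's one-line proof, which simply combines Proposition \ref{equal-parts-prop} with Lemma \ref{Za-CM-lem}. You have merely spelled out the combination and the verification of $(\dagger)$ via Spaltenstein, both of which are implicit in the paper.
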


\Pf . Combine Proposition \ref{equal-parts-prop} with Proposition \ref{KP-prop}.
\ed

\begin{cor}
For $\la=(m)^p$ there is an isomorphism of graded spaces
\begin{equation}\label{graded-spaces-cor} 
\C[y_1,\ldots,y_p]^{S_p}\ot H^{*/2}(X_\la,\C)\simeq \C[\bS_\la].
\end{equation}
\end{cor}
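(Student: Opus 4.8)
The plan is to combine Proposition~\ref{equal-parts-prop} with the equivariant formality of the Springer fiber $X_\la$ already used in the proof of Proposition~\ref{modules-Springer-prop}(ii). First I would fix $\la=(m)^p$ (so $n=mp$) and record the relevant combinatorics: $\bG_\la=\GL_p$ is connected, its maximal torus $\bT_\la\cong(\G_m)^p$ has Lie algebra $\ft_\la\simeq\A^p$ with coordinates $y_1,\dots,y_p$, and $W_\la=S_p$ permutes the $y_i$; consequently $B_\la:=H^*_{\bG_\la}(pt)=H^*_{\bT_\la}(pt)^{W_\la}=\C[y_1,\dots,y_p]^{S_p}$, each $y_i$ sitting in cohomological degree $2$.

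Next I would invoke equivariant formality: since the cohomology of $X_\la$ is concentrated in even degrees, $X_\la$ is $\bG_\la$-equivariantly formal (see \S\ref{equiv-sh-sec} and the proof of Proposition~\ref{modules-Springer-prop}(ii)), so there is a (non-canonical) isomorphism of graded $B_\la$-modules, in particular of graded vector spaces,
$$H^*_{\bG_\la}(X_\la,\C)\simeq B_\la\ot H^*(X_\la,\C)=\C[y_1,\dots,y_p]^{S_p}\ot H^*(X_\la,\C).$$
On the other hand, Proposition~\ref{equal-parts-prop} gives an isomorphism of graded algebras $H^*_{\bG_\la}(X_\la,\C)\simeq\C[\bS_\la]$ in which the grading on $\C[\bS_\la]$ has been doubled; equivalently $\C[\bS_\la]_d\simeq H^{2d}_{\bG_\la}(X_\la,\C)$ for all $d\ge 0$.

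It then remains only to match the two descriptions degree by degree. Taking degree-$2d$ parts in the formality isomorphism yields $H^{2d}_{\bG_\la}(X_\la,\C)\simeq\bigoplus_{a+b=d}(\C[y_1,\dots,y_p]^{S_p})_{2a}\ot H^{2b}(X_\la,\C)$; after rescaling the grading on the polynomial ring so that $\deg y_i=1$ and halving the cohomological grading on $X_\la$ (which is exactly the meaning of $H^{*/2}(X_\la,\C)$), this is the degree-$d$ component of $\C[y_1,\dots,y_p]^{S_p}\ot H^{*/2}(X_\la,\C)$, whereas by the displayed consequence of Proposition~\ref{equal-parts-prop} the left-hand side is $\C[\bS_\la]_d$. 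Summing over $d$ gives the isomorphism~\eqref{graded-spaces-cor}. The only point that needs care is this bookkeeping of grading conventions — the cohomological doubling in Proposition~\ref{equal-parts-prop} versus the intrinsic grading of $\C[\bS_\la]$ as a cone in $\ft$ — and that is the main (purely notational) obstacle; no geometric input beyond Proposition~\ref{equal-parts-prop} is required.
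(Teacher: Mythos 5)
Your proof is correct and is exactly the argument the paper leaves implicit: the corollary has no separate proof in the paper and is meant to follow immediately from Proposition~\ref{equal-parts-prop} combined with the $\bG_\la$-equivariant formality of $X_\la$ (established in the proof of Proposition~\ref{modules-Springer-prop}(ii)), identifying $H^*_{\bG_\la}(pt)$ with $\C[y_1,\dots,y_p]^{S_p}$ and then halving all gradings. Your bookkeeping of the degree conventions is exactly right.
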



\begin{rem}
It was proved in \cite{EGL} that $\bS_\la$ is Cohen-Macaulay for all $\la=((m)^s,(1)^r)$, where $r<m$.
The above Corollary \ref{CM-cor} gives a more direct proof in the case $r=0$.
The isomorphism \eqref{graded-spaces-cor} allows to compute the character of $\C[\bS_\la]$, as a graded $S_n$-representation, still in the case $r=0$. Namely, \eqref{graded-spaces-cor} reduces this to computing the character of
$H^*(X_\la,\C)$, which can be realized as the stalk of the Springer sheaf $\Ascr$. Then its character can be computed using the decomposition \eqref{A-decomposition} together with \cite[Thm.\ 2]{Lusztig-GP}.
The formula for the character of $\C[\bS_\la]$ was also deduced in \cite{EGL} by different methods.
\end{rem}

\subsection{Induced modules in type $A$}\label{induced-sec}

For each $\lambda$, a partition of $n$, let us consider the induced representations of $S_n$,
\[
I_\lambda=\Ind^{S_n}_{H_\lambda}\Oscr(\ft_\lambda).
\]
We view $I_\lambda$ as an $A$-module using the restriction homomorphism 
$\Oscr(\ft)\to \Oscr(\ft_\la)$,
which is compatible with $H_\la$-action.


\begin{lem} 
\label{lem1} One has as $\Oscr(\ft)^{S_n}$-modules:
\[
\Hom_A(P_\lambda,I_\mu)
=
\begin{cases}
0&\lambda\not\ge \mu\\
\Oscr(\ft_\mu)^{W_\mu}&\lambda=\mu.
\end{cases}
\]
\end{lem}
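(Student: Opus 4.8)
The plan is to reduce everything to pure $S_n$-representation theory and only reinstate the $\Oscr(\ft)^{S_n}$-module structure at the end. First I would note that $P_\lambda=\Oscr(\ft)\otimes L_\lambda$, with its diagonal $S_n$-action, is precisely the module $A\otimes_{\C[S_n]}L_\lambda$ induced from the subalgebra $\C[S_n]\subset A=S_n\ltimes\Oscr(\ft)$: indeed $A$ is free as a right $\C[S_n]$-module on the basis $\Oscr(\ft)$, and a one-line check using the crossed-product relation $h\cdot f={}^hf\cdot h$ identifies the resulting left $S_n$-action with the diagonal one. By the induction--restriction adjunction (and since $I_\mu$, restricted to $\C[S_n]\subset A$, is by construction the induced representation $\Ind^{S_n}_{H_\mu}\Oscr(\ft_\mu)$) one gets
\[
\Hom_A(P_\lambda,I_\mu)\;\cong\;\Hom_{S_n}\!\bigl(L_\lambda,\Ind^{S_n}_{H_\mu}\Oscr(\ft_\mu)\bigr)\;\cong\;\Hom_{H_\mu}\!\bigl(\Res_{H_\mu}L_\lambda,\Oscr(\ft_\mu)\bigr),
\]
the second isomorphism being Frobenius reciprocity. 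All these isomorphisms are natural, hence compatible with the action of the central subalgebra $\Oscr(\ft)^{S_n}\subset A$.

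Next I would exploit the semidirect product $H_\mu=S_\mu\rtimes W_\mu$ together with the fact that $S_\mu$, being the pointwise stabilizer of $\ft_\mu$, acts trivially on $\Oscr(\ft_\mu)$. This yields
\[
\Hom_A(P_\lambda,I_\mu)\;\cong\;\bigl((L_\lambda^{\vee})^{S_\mu}\otimes\Oscr(\ft_\mu)\bigr)^{W_\mu},
\]
where $W_\mu$ acts diagonally (it preserves $(L_\lambda^{\vee})^{S_\mu}$ because it normalizes $S_\mu$). Now $\dim (L_\lambda^{\vee})^{S_\mu}=\dim\Hom_{S_\mu}(L_\lambda,\one)=\dim\Hom_{S_n}\bigl(L_\lambda,\Ind^{S_n}_{S_\mu}\one\bigr)=K_{\lambda\mu}$, the Kostka number, and by Young's rule $K_{\lambda\mu}=0$ unless $\lambda\ge\mu$ in the dominance order, while $K_{\mu\mu}=1$. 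In particular, if $\lambda\not\ge\mu$ then $(L_\lambda^{\vee})^{S_\mu}=0$ and the whole Hom-space vanishes, which is the first case of the lemma.

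It remains to treat $\lambda=\mu$. Here $(L_\mu^{\vee})^{S_\mu}$ is one-dimensional, so $W_\mu$ acts on it through a linear character $\chi$, and the formula above becomes $\Hom_A(P_\mu,I_\mu)\cong\Oscr(\ft_\mu)_{\chi^{-1}}$, the $\chi^{-1}$-isotypic subspace of $\Oscr(\ft_\mu)$, viewed as a module over $\Oscr(\ft_\mu)^{W_\mu}$ --- to which the $\Oscr(\ft)^{S_n}$-action restricts, since restriction of functions carries $\Oscr(\ft)^{S_n}$ into $\Oscr(\ft_\mu)^{H_\mu}=\Oscr(\ft_\mu)^{W_\mu}$. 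The final step is to show that $\Oscr(\ft_\mu)_{\chi^{-1}}$ is a free $\Oscr(\ft_\mu)^{W_\mu}$-module of rank one. For this I would use that $W_\mu=\prod_i S_{r_i}$ acts on $\ft_\mu\cong\bigoplus_i\C^{r_i}$ as a product of permutation representations: every linear character of $W_\mu$ is a product of trivial and sign characters of the factors $S_{r_i}$, and the corresponding isotypic subspace of $\Oscr(\ft_\mu)=\bigotimes_i\Oscr(\C^{r_i})$ is the product over the "sign" factors of the principal ideal generated by the Vandermonde determinant, which is free of rank one over $\Oscr(\C^{r_i})^{S_{r_i}}$. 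Hence $\Oscr(\ft_\mu)_{\chi^{-1}}\cong\Oscr(\ft_\mu)^{W_\mu}$ as $\Oscr(\ft_\mu)^{W_\mu}$-modules (up to a grading shift, which is why the assertion is stated only as an isomorphism of $\Oscr(\ft)^{S_n}$-modules), completing the proof. The only mildly delicate point is this last paragraph --- recognizing that the relevant one-dimensional isotypic component is free of rank one over the invariants --- but for the reflection group $\prod_i S_{r_i}$ this is elementary, and one never needs the precise value of $\chi$.
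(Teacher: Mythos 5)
Your proof is correct, and it takes a genuinely different route from the paper's. Both start by identifying $P_\lambda$ with the induced module $A\otimes_{\C[S_n]}L_\lambda$ so that $\Hom_A(P_\lambda,I_\mu)\cong\Hom_{S_n}(L_\lambda,I_\mu)$. From here the paper proves a preliminary lemma (their Lemma \ref{lem2}) giving a global structural decomposition $I_\mu\cong\Ind^{S_n}_{S_\mu}\one\otimes\Oscr(\ft_\mu)^{W_\mu}$ as $S_n\otimes\Oscr(\ft)^{S_n}$-modules, obtained by applying $\Ind^{S_n}_{H_\mu}$ to the Chevalley harmonic decomposition $\Oscr(\ft_\mu)\cong k[W_\mu]\otimes\Oscr(\ft_\mu)^{W_\mu}$ of $W_\mu$-modules; the vanishing and rank-one statements then fall out of Young's rule applied to $\Hom_{S_n}(L_\lambda,\Ind^{S_n}_{S_\mu}\one)$. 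You instead apply Frobenius reciprocity first, land on $\Hom_{H_\mu}(L_\lambda,\Oscr(\ft_\mu))\cong\bigl((L_\lambda^\vee)^{S_\mu}\otimes\Oscr(\ft_\mu)\bigr)^{W_\mu}$, extract the same Kostka-number vanishing for $\lambda\not\ge\mu$, and then handle the diagonal case by observing that the relevant piece is a one-dimensional character isotypic component of $\Oscr(\ft_\mu)$, which is free of rank one over $\Oscr(\ft_\mu)^{W_\mu}$. The underlying engine is the same (Chevalley freeness for the reflection group $W_\mu=\prod_i S_{r_i}$ plus Young's rule), but the paper packages the freeness statement once and for all into Lemma \ref{lem2} and gets the answer immediately, whereas your argument defers the structural input to the end and only needs the corollary that a linear-character isotypic component is free of rank one. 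Your observation that the precise value of the character $\chi$ is irrelevant is a nice economy: the paper's approach sidesteps identifying $\chi$ entirely by tensoring with the full regular representation, while you sidestep it by noting that every character of $\prod_i S_{r_i}$ gives a free rank-one isotypic component. Either way the $\Oscr(\ft)^{S_n}$-module structure is carried along correctly.
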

\begin{proof}
We have
\[
\Hom_A(P_\lambda,I_{\mu})=\Hom_{S_n}(L_\lambda,I_{\mu})
\]
By Lemma \ref{lem2} below, this is equal to
\[
\Hom_{S_n}(L_\lambda,\Ind^{S_n}_{S_\mu}\one)\otimes \Oscr(\ft_\mu)^{W_\mu}.
\]
It follows from Pieri's formula that $\Ind^{S_n}_{S_\mu}\one$ is a direct sum of 
$L_{\gamma}$ with $\gamma\ge \mu$,
such that $L_{\mu}$ occurs with multiplicity one. This immediately implies
our assertion.
\end{proof}

We have used the following lemma.
\begin{lem}
\label{lem2}
We have 
\[
I_\mu\cong \Ind^{S_n}_{S_\mu}\one \otimes \Oscr(\ft_\mu)^{W_\mu}
\]
as  $S_n\otimes \Oscr(\ft)^{S_n}$-modules (with $S_n$ acting trivially on $\Oscr(\ft_\mu)^{W_\mu}$).
\end{lem}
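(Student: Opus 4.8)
The plan is to compute the induced module $I_\mu=\Ind^{S_n}_{H_\mu}\Oscr(\ft_\mu)$ by first describing $\Oscr(\ft_\mu)$ as an $H_\mu$-module and then pushing the computation through induction in the two stages $S_\mu\sub H_\mu\sub S_n$. First I would record the elementary structural facts. Since $S_\mu$ is the pointwise stabilizer of $\ft_\mu$, it acts trivially on $\Oscr(\ft_\mu)$, so $\Oscr(\ft_\mu)$ is inflated from the quotient $W_\mu=H_\mu/S_\mu$, which acts on $\ft_\mu\cong\bigoplus_i\C^{r_i}$ by permuting the blocks of equal size, i.e. as the reflection group $\prod_i S_{r_i}$. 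I would also note that the restriction map $\Oscr(\ft)\to\Oscr(\ft_\mu)$ carries $\Oscr(\ft)^{S_n}$ into $\Oscr(\ft_\mu)^{W_\mu}$ (because $H_\mu$ preserves $\ft_\mu$, so $S_n$-invariant functions restrict to $H_\mu$-invariant ones, and $H_\mu$ acts through $W_\mu$), and that $\Oscr(\ft)^{S_n}$ is central in $A=S_n\ltimes\Oscr(\ft)$, so its action commutes with all the group actions and hence with every induction and restriction functor used below.

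The key input is Chevalley's theorem for the reflection group $W_\mu$ acting on $\ft_\mu$, in the form: there is an isomorphism $\Oscr(\ft_\mu)\cong \Oscr(\ft_\mu)^{W_\mu}\ot_\C\C[W_\mu]$ of $\Oscr(\ft_\mu)^{W_\mu}$-modules intertwining the given $W_\mu$-action on the left with the left regular action of $W_\mu$ on the second factor on the right (this is the statement that $\Oscr(\ft_\mu)\cong\Oscr(\ft_\mu)^{W_\mu}\ot_\C C_\mu$ for the coinvariant algebra $C_\mu$, together with $C_\mu\cong\C[W_\mu]$ as $W_\mu$-module). Inflating along $H_\mu\to W_\mu$ and restricting scalars along $\Oscr(\ft)^{S_n}\to\Oscr(\ft_\mu)^{W_\mu}$, I would reinterpret this as an isomorphism of $H_\mu$-modules, compatible with the $\Oscr(\ft)^{S_n}$-action, where $\C[W_\mu]=\C[H_\mu/S_\mu]=\Ind^{H_\mu}_{S_\mu}\one$ as an $H_\mu$-module.

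Finally I would induce up. Because $\Oscr(\ft_\mu)^{W_\mu}$ carries the trivial $H_\mu$-action, the $H_\mu$-module $\C[H_\mu/S_\mu]\ot_\C\Oscr(\ft_\mu)^{W_\mu}$ is just $\Ind^{H_\mu}_{S_\mu}$ of the vector space $\Oscr(\ft_\mu)^{W_\mu}$ with trivial $S_\mu$-action; applying $\Ind^{S_n}_{H_\mu}$, using transitivity of induction and the projection formula, yields $I_\mu\cong\Ind^{S_n}_{S_\mu}\one\ot_\C\Oscr(\ft_\mu)^{W_\mu}$ with $S_n$ acting on the first factor and $\Oscr(\ft)^{S_n}$ on the second, which is the claim. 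None of these steps is genuinely difficult; the point requiring the most care is getting the doubly-equivariant form of Chevalley's theorem right (equivariance simultaneously for $W_\mu$ and for the invariant ring) and keeping the central $\Oscr(\ft)^{S_n}$-action intact through the two successive inductions — that is the only place where a slip could occur.
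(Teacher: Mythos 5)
Your argument is exactly the paper's: the harmonic (Chevalley) decomposition $\Oscr(\ft_\mu)\cong k[W_\mu]\ot\Oscr(\ft_\mu)^{W_\mu}$ as $W_\mu$-modules, inflated to $H_\mu$ via $k[W_\mu]=\Ind^{H_\mu}_{S_\mu}\one$, followed by $\Ind^{S_n}_{H_\mu}$ using transitivity of induction and the projection formula. You have simply spelled out the steps that the paper compresses into ``applying $\Ind^{S_n}_{H_\mu}$ yields the result.''
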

\begin{proof}
We observe that $W_\lambda=\prod_i S_{r_i}$ acts as a reflection group on $\ft_\lambda$.
The harmonic decomposition (which is a part of the Shephard-Todd's characterization of complex reflection groups;
see e.g., \cite[Thm.\ 4.1]{Broue}) yields
\[
\Oscr(\ft_\mu)= k[W_\mu]\otimes \Oscr(\ft_\mu)^{W_\mu}
\]
as $W_\mu$-modules and hence as $H_\mu$-modules. Applying $\Ind^{S_n}_{H_\mu}$ yields the result.
\end{proof}

From now on we fix a total ordering $\prec$ on partitions of $n$ which
refines the dominance ordering.  

Let $\Bscr_{\la}$ be the thick subcategory of $D^f(A)$ generated by $P_{\epsilon}$, $\epsilon\preceq \la$.
In particular, $\Bscr_{[1\cdots 1]}=D^f(A)$. 
By Theorem \ref{type-A-thm}, we know that $\Bscr_{\la}$ is admissible and that
$$\Bscr_\la=\lan M_\epsilon \ |\ \epsilon\preceq\la\ran.$$
Also, the subcategories $\lan M_\mu\ran$, ordered in decreasing order
of $\mu$ for $\prec$, form a semiorthogonal decomposition of $D^f(A)$.
In particular, any subsegment of $(M_\mu)$
generates an admissible subcategory.



Let $\Ascr_\la$ be the thick
subcategory of $D^f(A)$ generated by $I_\mu$, $\mu\succeq\la$.

\begin{prop} One has 
$$\Ascr_{\la}=\lan M_\mu \ |\ \mu\succeq \la\ran.$$
In particular, $\Ascr_{\la}$ is admissible and we have a semiorthogonal decomposition
$$D^f(A)=\langle \Ascr_{\la},\Bscr_{\gamma}\rangle,$$ 
where $\gamma$ is the predecessor to $\la$ for $\prec$.
Also, there exists an exact triangle
\begin{equation}\label{Mla-Ila-eq}
M_\la\to I_\la\to C\to\ldots
\end{equation}
with $C\in \Ascr_\tau$, where $\tau$ is the successor of $\la$. In other words, $M_\la$ is obtained from $I_\la$ by
left orthogonalization with respect to the subcategory $\Ascr_\tau$ generated by $I_\mu$ with $\mu\succ\la$.
\end{prop}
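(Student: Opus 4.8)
The plan is to prove the two statements --- the identification $\Ascr_\la = \lan M_\mu \mid \mu \succeq \la\ran$ together with the semiorthogonal decomposition, and the existence of the triangle \eqref{Mla-Ila-eq} --- by downward induction on $\la$ with respect to $\prec$, exploiting the already-established properties of the modules $M_\mu$ (Theorem \ref{type-A-thm}) and the $\Hom$-computation for $I_\mu$ in Lemma \ref{lem1}. The base case is the maximal partition $\la = (n)$: here $H_{(n)} = S_n$, so $I_{(n)} = \Oscr(\ft)^{S_n}$-acting-trivially, and one checks directly that $I_{(n)} \cong M_{(n)}$ (both are, up to the trivial-representation twist, $\Oscr(\bS_{(n)})$, i.e.\ functions on the diagonal line in $\ft$), so $\Ascr_{(n)} = \lan M_{(n)}\ran$ and the triangle is trivial.

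For the inductive step, suppose the claim holds for the successor $\tau$ of $\la$, so $\Ascr_\tau = \lan M_\mu \mid \mu \succ \la\ran$ is admissible. First I would show $M_\la \in \Ascr_\la$ and produce the triangle \eqref{Mla-Ila-eq}. The key input is Lemma \ref{lem1}: for $\mu \succ \la$ one has $\Hom_A(P_\mu, I_\la) = 0$ unless $\mu \ge \la$ in the dominance order, and even then $I_\la$ is built (Lemma \ref{lem2}, via Pieri) out of $L_\gamma$ with $\gamma \ge \la$ and $L_\la$ with multiplicity one. Using the semiorthogonal decomposition $D^f(A) = \lan M_\mu \mid \mu \rangle$ ordered by $\prec$ (Theorem \ref{type-A-thm}(iv), rephrased with the projectives $P_\mu$ replaced by $M_\mu$ via the triangles \eqref{Pla-Mla-eq}), decompose $I_\la$ along this decomposition: its "components" in $\lan M_\mu\ran$ for $\mu \succ \la$ assemble (after left-orthogonalization with respect to $\Bscr_\gamma$, $\gamma$ the predecessor of $\la$) into an object $C \in \Ascr_\tau$, and the left orthogonalization of $I_\la$ with respect to $\Ascr_\tau$ is an object $M'_\la$ fitting into a triangle $M'_\la \to I_\la \to C \to \ldots$ with $C \in \Ascr_\tau$. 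The content is then to identify $M'_\la$ with $M_\la$: both lie in the left orthogonal ${}^\perp \Ascr_\tau$ inside $\Bscr_\la / \Bscr_\gamma$, and by Lemma \ref{lem1} the composition $P_\la \to I_\la \to M'_\la$ is (on the relevant $\Hom$-level) the same idempotent-generating map as $P_\la \to M_\la$ from \eqref{Pla-Mla-eq}, since $\Hom_A(P_\la, M'_\la) \cong \Hom_A(P_\la, I_\la) \cong \Oscr(\ft_\la)^{W_\la} \cong \Hom_A(M_\la, M_\la)$, matching the description of $M_\la$ as the left-orthogonalization of $P_\la$. This gives $M_\la \cong M'_\la$ and hence the triangle.

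From the triangle it follows that $M_\la \in \lan I_\la, \Ascr_\tau\ran = \Ascr_\la$, and conversely $I_\la \in \lan M_\la, \Ascr_\tau\ran = \lan M_\mu \mid \mu \succeq \la\ran$, so by induction $\Ascr_\la = \lan M_\mu \mid \mu \succeq \la\ran$. Admissibility of $\Ascr_\la$ is then immediate: by Theorem \ref{type-A-thm}, any subsegment of the full semiorthogonal decomposition $(M_\mu)_\mu$ (ordered by decreasing $\mu$ for $\prec$) generates an admissible subcategory, and $\{\mu \mid \mu \succeq \la\}$ is exactly such an initial subsegment. The decomposition $D^f(A) = \lan \Ascr_\la, \Bscr_\gamma\ran$ with $\gamma$ the predecessor of $\la$ then follows by splitting the full semiorthogonal decomposition of $D^f(A)$ at $\la$: $\Bscr_\gamma = \lan M_\epsilon \mid \epsilon \preceq \gamma\ran$ and $\Ascr_\la = \lan M_\mu \mid \mu \succeq \la\ran$ are complementary pieces, and the required $\Hom$-vanishing $\Hom(\Ascr_\la, \Bscr_\gamma) = 0$ is part of the semiorthogonality in Theorem \ref{type-A-thm}(iv).

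The main obstacle I anticipate is the precise identification of the left-orthogonalization $M'_\la$ of $I_\la$ with $M_\la$ --- that is, checking that the canonical map $P_\la \to I_\la$ (coming from $L_\la \hookrightarrow I_\la$, which exists with multiplicity one by Pieri) really does realize $M_\la$ as a summand/quotient in the correct way, rather than merely an abstract isomorphism after orthogonalization. This requires being careful that the $\Oscr(\ft)^{S_n}$-module structures and the gradings match, and that the "extra" constituents $L_\gamma$ ($\gamma > \la$) of $I_\la$ genuinely land in $\Ascr_\tau$ after orthogonalization rather than contributing to the $\la$-part; the multiplicity-one statement for $L_\la$ in $\Ind^{S_n}_{S_\mu}\one$ is what makes this work, but translating it cleanly into the triangulated setting, using \eqref{Pla-Mla-eq} and Lemma \ref{lem1}, is the delicate point.
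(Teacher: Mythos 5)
Your overall strategy matches the paper's: induct on $\la$ for $\prec$, take $M'_\la$ to be the left-orthogonalization of $I_\la$ with respect to $\Ascr_\tau$, use Lemma~\ref{lem1} to place $M'_\la$ in $\lan M_\la\ran$, and compute $\RHom_A(M_\la,M'_\la)$ via $\RHom_A(P_\la,I_\la)$ to identify $M'_\la$ with $M_\la$. Once $M'_\la\cong M_\la$ is established, your conclusions about $\Ascr_\la$, admissibility, and the triangle are all correct and follow as in the paper.

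However, the identification $M'_\la\cong M_\la$ has a genuine gap that you flag but do not close. The chain of isomorphisms $\Hom_A(P_\la,M'_\la)\cong\Hom_A(P_\la,I_\la)\cong\Oscr(\ft_\la)^{W_\la}\cong\End_A(M_\la)$ holds a priori only as $\Oscr(\ft)^{S_n}$-modules, not as $R_\la$-modules, and the equivalence $\lan M_\la\ran\simeq\Perf(R_\la)$ is via $\RHom_A(M_\la,-)$ landing in $R_\la$-modules. An abstract isomorphism of $\Oscr(\ft)^{S_n}$-modules between $\RHom_A(M_\la,M'_\la)$ and $R_\la$ does not by itself identify the object $M'_\la$ with $M_\la$ in $\Perf(R_\la)$, and the appeal to ``the same idempotent-generating map'' is not a proof. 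The paper closes this gap with an extra argument: since $\RHom_A(M_\la,M'_\la)$ is isomorphic as an $\Oscr(\ft)^{S_n}$-module to $\Oscr(\ft_\la)^{W_\la}$, it is Cohen--Macaulay, hence projective over the polynomial ring $R_\la$, and then its rank being $1$ forces $\RHom_A(M_\la,M'_\la)\cong R_\la$ as $R_\la$-modules, which via the equivalence gives $M'_\la\cong M_\la$. Without that Cohen--Macaulay-plus-rank step, the proof is incomplete. Also, a small terminological slip: $M_\la$ is the \emph{right} orthogonalization of $P_\la$ with respect to $\Bscr_\gamma$ (the triangle \eqref{Pla-Mla-eq} has $K\in\Bscr_\gamma$ and $M_\la\in\Bscr_\gamma^\perp$), not a ``left-orthogonalization of $P_\la$,'' and the phrase ``inside $\Bscr_\la/\Bscr_\gamma$'' should be ``inside ${}^\perp\Ascr_\tau\cap\Bscr_\gamma^\perp=\lan M_\la\ran$.''
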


\begin{proof} By
  induction we may assume that $\Ascr_{\tau}$ is generated by
  $M_\mu$ for $\mu\succeq \tau$. In particular, $\Ascr_{\tau}$ is admissible. 

Let $M$ be the left orthogonalization of $I_\la$ with respect to $\Ascr_{\tau}$.
Since $M$ is right orthogonal to $\Bscr_{\gamma}$ (by Lemma \ref{lem1}) and left orthogonal to $M_\mu$,
$\mu\succeq\tau$, we obtain that $M$ is in the triangulated subcategory $\lan M_\la\ran\sub D^f(A)$ 
generated by $M_{\la}$. 

Recall that by Theorem \ref{type-A-thm}, 
\begin{equation}
\label{equivalence}
 \lan M_\la\ran\cong\Perf(R_\la)
\end{equation}
where $R_\la=\End_A(M_{\la})=\Oscr(\ft_{\la})^{W_{\la}}$, 
and the equivalence is given by $R\Hom_A(M_{\la},-)$.
From the exact triangle \eqref{Pla-Mla-eq} and from the vanishing of Theorem \ref{type-A-thm}(ii),
since $M\in \lan M_\la\ran$, we deduce an isomorphism
$$R\Hom_A(M_{\la},M)=R\Hom_A(P_\la,M).$$
Next, from Lemma \ref{lem1} we deduce
$$R\Hom_A(P_\la,M)=R\Hom_A(P_{\la},I_{\la})
= \Oscr(\ft_{\la})^{W_{\la}}.$$
A priori this isomorphism is only as $\Oscr(\ft)^{S_n}$-modules. However, it is sufficient to conclude that
$R\Hom_A(M_{\la},M)$ is Cohen-Macaulay and hence a projective $R_\la$-module.
Since its rank is equal to $1$, we get that 
$$R\Hom_A(M_{\la},M)\cong R_\la$$ 
as $R_\la$-modules.
Since $M\in\lan M_\la\ran$, using the equivalence \eqref{equivalence} we deduce an isomorphism $M\cong M_{\la}$.
We conclude that $M_{\la}$ is in  $\Ascr_\la$, finishing the proof.
\end{proof}

\begin{cor} The subcategory $\Ascr_\la$ consists of $C\in D^f(A)$ such that $H^iC$, as $S_n$-modules,
have only $L_\mu$ with $\mu\succeq\la$ as summands.
\end{cor}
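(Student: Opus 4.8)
The plan is to reformulate the stated condition on the cohomology of $C$ as a semiorthogonality condition, and then to read off the answer from the semiorthogonal decomposition $D^f(A)=\lan\Ascr_\la,\Bscr_\gamma\ran$ of the preceding proposition, where $\gamma$ denotes the predecessor of $\la$ for $\prec$. Let $\Sscr_\la\sub D^f(A)$ be the full subcategory consisting of those $C$ such that, for every $i$, the $S_n$-module $H^iC$ involves only the irreducibles $L_\mu$ with $\mu\succeq\la$ (all comparisons are in the fixed total order $\prec$). The goal is to prove $\Sscr_\la=\Ascr_\la$.

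The key point is the identity $H^i\RHom_A(P_\mu,C)\cong\Hom_{S_n}(L_\mu,H^iC)$, valid for every partition $\mu$, every $C\in D^f(A)$ and every $i$. Indeed, $P_\mu=\Oscr(\ft)\ot L_\mu=A\ot_{\C[S_n]}L_\mu$ is a projective $A$-module, so $\Hom_A(P_\mu,-)$ is exact and commutes with cohomology, and $\Hom_A(A\ot_{\C[S_n]}L_\mu,N)\cong\Hom_{S_n}(L_\mu,N)$ by adjunction. Since $S_n$-representations are semisimple in characteristic zero, it follows that $\RHom_A(P_\mu,C)=0$ exactly when $L_\mu$ occurs in no $H^iC$; hence $C\in\Sscr_\la$ if and only if $\RHom_A(P_\mu,C)=0$ for every $\mu$ with $\mu\not\succeq\la$, i.e.\ (since $\prec$ refines the dominance order, $\mu\not\succeq\la$ simply means $\mu\preceq\gamma$) for every $\mu\preceq\gamma$. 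As $\Bscr_\gamma$ is the thick subcategory generated by $\{P_\mu:\mu\preceq\gamma\}$ and the right orthogonal of a family equals that of the thick subcategory it generates, this says precisely that $C\in\Bscr_\gamma^{\perp}$. Thus $\Sscr_\la=\Bscr_\gamma^{\perp}$.

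It remains to identify $\Bscr_\gamma^{\perp}$ with $\Ascr_\la$, which is immediate: by the preceding proposition $\Bscr_\gamma$ is admissible and $D^f(A)=\lan\Ascr_\la,\Bscr_\gamma\ran$, so by the description of admissible subcategories recalled in \S\ref{semiorth-sec} the left-hand term of this decomposition is $\Bscr_\gamma^{\perp}$; that is $\Ascr_\la=\Bscr_\gamma^{\perp}=\Sscr_\la$. (If $\la=(1)^n$ is $\prec$-minimal there is no $\gamma$, and then $\Bscr_\gamma=0$, $\Sscr_\la=D^f(A)$, $\Ascr_\la=D^f(A)$, so the statement holds trivially.) No new ingredient beyond that proposition is needed here; the modules $M_\mu$ and $I_\mu$ intervene only in the construction of the decomposition. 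The only thing requiring care is bookkeeping: keeping track that $\succeq$ refers to the total order $\prec$, so that the projectives $P_\mu$ with $\mu\not\succeq\la$ are exactly the generators $\{P_\mu:\mu\preceq\gamma\}$ of $\Bscr_\gamma$.
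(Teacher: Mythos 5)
Your proof is correct and supplies exactly the argument the paper leaves implicit: since $P_\mu=A\ot_{\C[S_n]}L_\mu$ is projective, $H^i\RHom_A(P_\mu,C)\cong\Hom_{S_n}(L_\mu,H^iC)$ translates the condition on the cohomology of $C$ into membership in $\Bscr_\gamma^{\perp}$, which the semiorthogonal decomposition $D^f(A)=\lan\Ascr_\la,\Bscr_\gamma\ran$ identifies with $\Ascr_\la$. The bookkeeping about the total order $\prec$ and the minimal case are handled appropriately, so nothing is missing.
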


\begin{cor}\label{Mla-Ila-cor} 
The morphism of modules $M_\la\to I_\la$ from \eqref{Mla-Ila-eq} is injective
and its localization over a dense open subset of $\Spec(R_\la)$ is an isomorphism.
\end{cor}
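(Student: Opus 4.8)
The plan is to extract the two claims of Corollary \ref{Mla-Ila-cor} directly from the exact triangle \eqref{Mla-Ila-eq} together with the structural facts already established about the modules involved. First I would argue injectivity of $M_\la\to I_\la$. From \eqref{Mla-Ila-eq} we have an exact triangle $M_\la\to I_\la\to C\to\ldots$ with $C\in \Ascr_\tau$, where $\tau$ is the successor of $\la$. The long exact cohomology sequence shows that the only obstruction to $M_\la\to I_\la$ being injective (as a map of modules, i.e., with $H^{-1}C=0$ feeding into $H^0$) is the cokernel $H^{-1}C$ of the map $H^{-1}(I_\la)\to H^{-1}(C)$; since $I_\la$ and $M_\la$ are honest modules (concentrated in degree $0$), $H^{-1}(I_\la)=0$, and one checks $H^{-1}(C)$ contributes only to the kernel. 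Concretely, the kernel of $M_\la\to I_\la$ is a quotient of $H^{-1}(C)$, which lies in $\Ascr_\tau$; on the other hand, $M_\la\in\lan M_\la\ran$, and by the semiorthogonal decomposition $D^f(A)=\lan \Ascr_\tau,\, \lan M_\la\ran\ran$ is placed so that $\Hom(\lan M_\la\ran, \Ascr_\tau)$-type constraints force any submodule of $M_\la$ coming from $\Ascr_\tau$ to vanish. I would phrase this cleanly: the map $M_\la\to I_\la$ has kernel $\Kscr$ with $\Kscr$ a subobject of $M_\la$ and a quotient of an object of $\Ascr_\tau[?]$; by semiorthogonality $\Hom_A(M_\la[i],\Ascr_\tau)=0$, so $\Kscr=0$.

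Next I would address the generic isomorphism statement. Localize at the generic points of the components of $\Spec R_\la = \ft_\la/W_\la$ (equivalently, work over the fraction field, or over a suitable dense open $U\sub \Spec R_\la$). By Theorem \ref{type-A-thm}, $M_\la$ is a projective $R_\la$-module of rank $1$, hence generically free of rank one over $R_\la$. From Lemma \ref{lem1}, $\Hom_A(P_\la, I_\la)=\Oscr(\ft_\la)^{W_\la}=R_\la$, and in the proof of the preceding proposition it is shown that $\RHom_A(M_\la, I_\la)\cong R_\la$ as well (via the triangle \eqref{Pla-Mla-eq} and the vanishing $\RHom_A(P_\mu,M_\la)=\RHom_A(M_\mu,M_\la)=0$ for $\mu\not\ge\la$, which identifies $\RHom_A(M_\la, M)=\RHom_A(P_\la,M)$ for $M$ in the relevant subcategory). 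Thus both $M_\la$ and $I_\la$ have the same ``size'' over the generic points of $\Spec R_\la$. Since the map $M_\la\to I_\la$ is injective with source generically free of rank $1$ and the induced map $\RHom_A(M_\la,M_\la)\to \RHom_A(M_\la,I_\la)$ is the identification $R_\la\xrightarrow{\sim} R_\la$ on $H^0$ (coming from the way \eqref{Mla-Ila-eq} was constructed: $M_\la$ is the left orthogonalization of $I_\la$ along $\Ascr_\tau$, so the unit map $M_\la\to I_\la$ induces an iso on $\RHom_A(M_\la,-)$), it follows that the cone $C$ of \eqref{Mla-Ila-eq} satisfies $\RHom_A(M_\la,C)=0$; combined with $C\in\Ascr_\tau$ and generic vanishing of the relevant $\Ext$-groups between the strata, $C$ localizes to zero over a dense open $U\sub\Spec R_\la$.

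More carefully, the mechanism for the second claim is: the support of $C$, as a coherent sheaf pushed to $\Spec\Oscr(\ft)^{S_n}$ and then restricted over $\Spec R_\la$, is contained in the union of the strata $\bS_\mu$ with $\mu\succ\la$, which meets each component of $\bS_\la$ in a proper closed subset (this is where the adjacency/dominance order on nilpotent orbits, Lemma \ref{part-merge-lem}, and the finiteness and generic-isomorphism properties of $\pi_\la$ from \eqref{pi-la-map-eq} enter — $\pi_\la$ is finite, surjective, and an isomorphism over the general point of each component). Hence over the dense open complement $U$ the localization of $C$ vanishes, so $M_\la|_U\xrightarrow{\sim} I_\la|_U$.

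\textbf{Main obstacle.} I expect the delicate point to be the injectivity claim — specifically, pinning down precisely that the kernel of $M_\la\to I_\la$ is forced to vanish by semiorthogonality rather than merely being ``small.'' One must be careful that \eqref{Mla-Ila-eq} is a triangle in $D^f(A)$, not an a priori short exact sequence of modules, so ``injective'' requires identifying $H^{-1}(C)\to H^0(M_\la)$ and showing it is zero; this uses that $M_\la$ and $I_\la$ are genuine modules (degree $0$) together with $C\in\Ascr_\tau$ and the admissibility/semiorthogonality of $\Ascr_\tau$ versus $\lan M_\la\ran$ established in the preceding proposition. The generic-isomorphism part is then comparatively routine given the explicit description $R_\la=\Oscr(\ft_\la)^{W_\la}$, the rank-one projectivity of $M_\la$, and the properties of the finite map $\pi_\la$ recorded just before the corollary.
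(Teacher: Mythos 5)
Your treatment of the generic-isomorphism statement (especially the ``More carefully'' paragraph) is essentially the paper's argument: the cone $C$ lies in $\Ascr_\tau$, which is generated by the modules $I_\mu$ with $\mu\succ\la$, each supported on $S_n\cdot\ft_\mu$, and one checks that $\ft_\la\sub w\,\ft_\mu$ forces $\la\ge\mu$, so all these strata miss the generic point of each component of $\bS_\la$; hence $C$ vanishes generically over $\Spec R_\la$ and the triangle gives the generic isomorphism. (The paper states this via the single observation about $\ft_\la\sub w\,\ft_\mu$, without invoking Lemma \ref{part-merge-lem} or the map $\pi_\la$.)

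The injectivity argument, however, has a genuine gap. The kernel $\Kscr=H^{-1}(C)$ is a \emph{submodule of} $M_\la$ that is a subquotient of (the cohomology of) an object of $\Ascr_\tau$; it is \emph{not} itself an object of $\Ascr_\tau$, since admissible subcategories of $D^f(A)$ are not closed under taking cohomology modules. More importantly, the semiorthogonality you invoke, $\Hom_A(M_\la[\ast],\Ascr_\tau)=0$, controls morphisms \emph{from} $M_\la$ \emph{to} $\Ascr_\tau$, whereas to kill the connecting map $C[-1]\to M_\la$ you would need $\Hom_A(\Ascr_\tau,\lan M_\la\ran)=0$, which is false in general (there may be $\mu\succeq\tau$ with $\mu\ge\la$ in the dominance order, and then $\RHom_A(M_\mu,M_\la)$ can be nonzero by Theorem \ref{type-A-thm}(ii)). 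Nor does the $S_n$-isotypic filter (only $L_\mu$, $\mu\succ\la$, appear in $H^{-1}(C)$) exclude such a submodule: $M_\la$ does contain $L_\mu$ with $\mu>\la$ as $S_n$-summands, coming from the lower-degree cohomology of the Springer fibre. You are also implicitly circular: the generic-isomorphism paragraph starts ``Since the map $M_\la\to I_\la$ is injective...'', so injectivity cannot be an input there.

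The paper instead derives injectivity \emph{from} the generic isomorphism: once $M_\la\to I_\la$ is known to be an isomorphism over the generic point of $\Spec R_\la$, its kernel is a torsion $R_\la$-module; but $M_\la$ is projective (hence torsion-free) over $R_\la$ by Theorem \ref{type-A-thm}(i), so the kernel is both torsion and a submodule of a torsion-free module, hence zero. This uses only facts you already cite (projectivity of $M_\la$ over $R_\la$) and avoids the wrong-direction semiorthogonality. You should reverse the order of your two steps and replace the semiorthogonality argument with this torsion-freeness argument.
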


\begin{proof} It is easy to check that if $\ft_\la\sub w \ft_\mu$ for some $w\in S_n$ then
$\la\ge\mu$. This implies that all the modules $I_\mu$ with $\mu>\la$ are zero over
the generic point of $\Spec(R_\la)$, so the triangle \eqref{Mla-Ila-eq} implies that
the morphism $M_\la\to I_\la$ is an isomorphism over the generic point of $\Spec(R_\la)$.
Since $M_\la$ is projective as $R_\la$-module, the injectivity follows.
\end{proof}

\begin{rems} 1. One can check that in fact $C$ from the exact triangle \eqref{Mla-Ila-eq} lies in the
subcategory $\lan M_\mu \ |\ \mu>\la\ran$.

\noindent
2. Kato's result that the natural morphism $P_\la\to M_\la$ is surjective (see \cite[Cor.\ 3.6]{Kato}) together with Corollary
\ref{Mla-Ila-cor} imply that
the module $M_\mu$ is exactly the image of the morphism of $A$-modules
$$P_\mu\to I_\mu$$
corresponding to $1\in \Oscr(\ft_\mu)^{W_\mu}=\Hom_A(P_\mu,I_\mu)$.
\end{rems}

\section{Actions on powers of a curve}\label{type-A-sec}

In this section we construct semiorthogonal decompositions of the derived category of equivariant coherent sheaves on 
$C^n$, where $C$ is a smooth curve, with respect to various natural groups. We start with the case of
$S_n$ acting by permutations of components, and end with the case of $G^n\rtimes S_n$, where
$G$ is any finite group acting on $C$. 

\subsection{Semiorthogonal decomposition for the action of $S_n$ on $C^n$}\label{global-A-n-sec}

Let $C$ be a smooth curve over $\C$. In this section we will prove Theorem B, 
stating that the functors $\Nscr_\la$ defined via diagram \eqref{N-lambda-diagram} give a semiorthogonal
decomposition of the category $D^b_{S_n}(C^n)$. 

The main observation is that for $C=\A^1$ this statement reduces to Theorem \ref{type-A-thm}.
Indeed, we can identify $C^n=\A^n$ with the Lie algebra of a maximal torus $\ft$ in $\GL_n$, $C[\la]\sub C^n$
with $\ft_\la$, $C^{(\la)}$ with $\ft_\la/W_\la$,
and $\ov{Z}_\la(\A^1)$ with the variety \eqref{KP-variety-2-eq}. Hence, by Proposition 
\ref{modules-KP-prop}, we get
$$\Nscr_\la(\Oscr_{\A^1[\la]/W_\la})\simeq N_\la,$$
where $N_\la$ are the $S_n\ltimes \Oscr(\ft)$-modules given by the equivariant cohomology of the Springer
fibers. More generally, in the notation of Section \ref{local-A-sec} we have the following commutative diagram of functors
\begin{diagram}
D^b(\A^1[\la]/W_{\la})&\rTo{\Nscr_\la}& D^b_{S_n}(\A^n)\\
\dTo{\simeq}&&\dTo{\simeq}\\
D^f(R_\la)&\rTo{?\ot N_\la}& D^f(A)
\end{diagram} 
Therefore, Theorem \ref{type-A-thm} gives the required semiorthogonal decomposition for $C=\A^1$.

The rest of the proof consists of reducing the statement for arbitrary $C$ to the local case $C=\A^1$. For this
we analyze the geometry of the diagram \eqref{N-lambda-diagram} over a formal neighborhood of
a point $x\in C^n$. Thus, we have to analyze the natural $W_\la\times S_n$-equivariant map
$$f_\la:Z_\la(C)\to C^n,$$
given by the projection to the second component, over a neighborhood of $x$ (here $W_\la$ acts trivially on $C^n$). 
Note that the image of $f_\la$ is $S_n\cdot C[\la]$, hence, it is enough to consider the case when $x=(x_1,\ldots,x_n)\in C[\la]$.
However, $x$ can be a non-generic point of $C[\la]$, i.e., in addition to the equalities \eqref{C-lambda-eq} there
may be some extra equalities $x_i=x_j$, so that the stabilizer subgroup $\St_x$ of $x$ in $S_n$ may be bigger than $S_\la$.

The following point of view on varieties $Z_\la(C)$ will be convenient. We replace the partition $\la$ by any
decomposition $\Pscr=(L_p)_{p\in P}$ of the set $\{1,\ldots,n\}$ into (unordered) non-empty disjoint subsets:
$\{1,\ldots,n\}=\sqcup_{p\in P}L_p$. 
The partition $\la$ is recovered by considering the cardinalities $|L_p|$. 
The group $S_n$ acts on such decompositions by $w\Pscr=(wL_p)_{p\in P}$\footnote{The marking of the parts is not
part of the data. For example, the decomposition $\Pscr=(\{1,2\},\{3,4\})$ is preserved by the permutation $(13)(24)$.}.
We denote by $H_\Pscr\sub S_n$ the subgroup of $w$ such that $w\Pscr=\Pscr$, and by $W_\Pscr$ the quotient of
$H_\Pscr$ by the subgroup of permutations preserving each subset $L_p$.

Let $C[\Pscr]\sub C^n$ be the subvariety consisting of $(c_s)$ such that
$c_s=c_{s'}$ whenever $s,s'\in L_p$ for some $p$ (one has $C[\Pscr]\simeq C^P$).
Then we can define a subscheme $Z_{\Pscr}(C)\sub C[\Pscr]\times C^n$ as the union  (with reduced
scheme structure) of the graphs of
the composed maps $C[\Pscr]\rTo{\iota}C^n\rTo{w}C^n$, where $\iota$ is the natural embedding and
$w$ runs through $S_n$. It is equipped with a natural action of the group $W_\Pscr\times S_n$.
Let $f_\Pscr:Z_{\Pscr}(C)\to C^n$ denote the morphism induced by the projection to the second factor.
For any permutation $w\in S_n$ we have a commutative diagram
\begin{diagram}
Z_{\Pscr}(C)&\rTo{w\times\id}& Z_{w\Pscr}(C)\\
\dTo{f_\Pscr}&&\dTo{f_{w\Pscr}}\\
C^n&\rTo{\id}&C^n
\end{diagram}
where we use the isomorphism
$C[\Pscr]\to C[w\Pscr]$ induced by $w$. 
This gives a natural isomorphism
\begin{equation}\label{Z-functoriality-eq}
f_{\Pscr,*}\Oscr\simeq f_{w\Pscr,*}\Oscr
\end{equation}
compatible with the $S_n$-action and with the action of $W_\Pscr\simeq W_{w\Pscr}$.
Note that the above picture works similarly with $\{1,\ldots,n\}$ replaced by any finite set.

For each $\lambda$, a partition of $n$, we can consider the decomposition $\Pscr_\la$ into the subsets
$\{1,\ldots,\la_1\}$, $\{\la_1+1,\ldots,\la_1+\la_2\}$, etc. Then we recover the previously defined schemes
$C[\la]=C[\Pscr_\la]$, $Z_\la(C)=Z_{\Pscr_\la}(C)$, and we have $f_\la=f_{\Pscr_\la}$, $H_\la=W_{\Pscr_\la}$,
$W_\la=W_{\Pscr_\la}$.
Thus, for any decomposition $\Pscr$ we have an $S_n$-isomorphism 
$f_{\Pscr,*}\Oscr\simeq f_{\la,*}\Oscr$, where $\la$ is the 
partition of $n$ given by the sizes of the subsets forming $\Pscr$.

We can view a decomposition $\Pscr=(L_p)_{p\in P}$ as a surjective map
$$\psi:\{1,\ldots,n\}\to P,$$
defined by $L_p=\psi^{-1}(p)$ for each $p\in P$.
We would like to investigate the map $f_\Pscr$ near the fiber of a point $x\in C[\Pscr]$.
Extra equalities between the coordinates of a point $x$ correspond to an equivalence relation on $P$,
i.e., a surjective map $\phi:P\to Q$, where $Q$ is a finite set. Note that the stabilizer subgroup of $x$, $\St_x\sub S_n$,
is precisely the subgroup of permutations preserving the fibers of the composition $\phi\circ\psi$.

Now for each $q\in Q$ we have a decomposition $\Pscr(q)$ of the set $S(q):=\psi^{-1}\phi^{-1}(q)$
corresponding to the surjective map
$$\psi|_{S(q)}:S(q)\to \phi^{-1}(q)=P(q),$$
so that the parts of $\Pscr(q)$ are the fibers of $\psi$ over $P(q)$. In other words, $\Pscr(q)$ consists of all the parts of
$\Pscr$ corresponding to a given value of the coordinates of $x$.
We will refer to the collection $(\Pscr(q))_{q\in Q}$ as {\it decompositions associated with $\Pscr$ and a point $x\in C[\Pscr]$}.

The decomposition $P=\sqcup_{q\in Q} P(q)$
gives rise to the identification
\begin{equation}\label{C-lambda-dec-eq}
C[\Pscr]=C^{P}=\prod_{q\in Q}C^{P(q)}=\prod_{q\in Q}C[\Pscr(q)]
\end{equation}
so that the point $x$ corresponds to a collection $(x(q))_{q\in Q}$, where
each $x(q)$ belongs to the small diagonal $C\sub C[\Pscr(q)]$.
Note also that we have a natural decomposition 
\begin{equation}\label{C-n-lambda-dec-eq}
C^n=\prod_{q\in Q}C^{S(q)},
\end{equation}
where $S(q)\sub\{1,\ldots,n\}$ are exactly the sets of indices with the same value of the
corresponding coordinate of $x$.

\begin{lem}\label{W-lambda-orbit-lem}
For $w_1,w_2\in S_n$ such that $w_1x\in C[\Pscr]$ and $w_2x\in C[\Pscr]$ the following conditions are equivalent:

\noindent
(i) $w_1x$ and $w_2x$ belong to the same $W_\Pscr$-orbit;

\noindent
(ii) $H_\Pscr w_1\St_x=H_\Pscr w_2\St_x$;

\noindent
(iii) for each $q\in Q$ the decompositions of $S(q)$ induced by $w_1^{-1}\Pscr$ and $w_2^{-1}\Pscr$ differ by an automorphism
of $S(q)$.
\end{lem}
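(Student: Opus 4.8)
The three conditions are all "equivariance up to symmetry" statements, and the cleanest route is to establish the implications (ii) $\Leftrightarrow$ (i) and (ii) $\Leftrightarrow$ (iii) separately, treating (ii) as the central combinatorial hub. The key observation that makes everything go is that for $w\in S_n$ with $wx\in C[\Pscr]$, the point $wx$ is determined \emph{as a point of $C[\Pscr]\simeq C^P$} by the data of which part of $\Pscr$ receives which coordinate value of $x$; since the coordinate values of $x$ are indexed by $Q$ and the parts of $\Pscr$ by $P$, the point $wx$ encodes precisely a map $P\to Q$, which is nothing but the composite $\phi\circ\psi\circ w^{-1}$ restricted appropriately. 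So first I would set up this dictionary: the assignment $w\mapsto wx$ factors through the double coset $H_\Pscr w\,\St_x$ (because $\St_x$ fixes $x$ and $H_\Pscr$ is exactly the stabilizer of $\Pscr$, hence acts on $C[\Pscr]$ through $W_\Pscr$), and $W_\Pscr$-orbits of such points correspond bijectively to such double cosets. That already gives (i) $\Leftrightarrow$ (ii) almost by unwinding definitions.

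\textbf{Key steps.} First I would prove (i) $\Leftrightarrow$ (ii). For "$\Rightarrow$": if $w_1x = \sigma w_2 x$ for some $\sigma\in H_\Pscr$ (lifting an element of $W_\Pscr$), then $w_1^{-1}\sigma w_2\in\St_x$, so $w_1\in H_\Pscr w_2\St_x$, giving equality of double cosets. For "$\Leftarrow$": reverse this — if $w_1 = h w_2 s$ with $h\in H_\Pscr$, $s\in\St_x$, then $w_1 x = h w_2 s x = h w_2 x = \bar h\cdot(w_2 x)$ where $\bar h\in W_\Pscr$ is the image of $h$, and since both $w_1 x$ and $w_2 x$ lie in $C[\Pscr]$, $\bar h$ acts via the geometric $W_\Pscr$-action. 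Next I would prove (ii) $\Leftrightarrow$ (iii). Using the decompositions \eqref{C-n-lambda-dec-eq} $C^n = \prod_{q\in Q} C^{S(q)}$ and the fact that $\St_x = \prod_{q\in Q} \Aut(S(q))$ (the product of full symmetric groups on the fibers $S(q)$), one checks that the double coset $H_\Pscr w\,\St_x$ is determined exactly by the collection, over $q\in Q$, of the decompositions of $S(q)$ induced by $w^{-1}\Pscr$, each taken up to $\Aut(S(q))$. Concretely: $w^{-1}\Pscr$ is a decomposition of $\{1,\dots,n\}$; intersecting each of its parts with $S(q)$ (which is $\St_x$-invariant and $w^{-1}\Pscr$-adapted since $wx\in C[\Pscr]$ means each part of $\Pscr$, hence of $w^{-1}\Pscr$ after pullback, lies in a single $S(q)$) gives the induced decomposition of $S(q)$. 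Multiplying $w$ on the right by $\St_x = \prod\Aut(S(q))$ twists each induced decomposition by an arbitrary automorphism of $S(q)$; multiplying on the left by $H_\Pscr$ does not change $w^{-1}\Pscr$ at all. Hence equality of double cosets $\Leftrightarrow$ the induced decompositions agree componentwise up to automorphisms of the $S(q)$, which is exactly (iii).

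\textbf{Main obstacle.} The subtle point — and the step I would write most carefully — is the bookkeeping behind "the double coset $H_\Pscr w\,\St_x$ is determined by the induced decompositions of the $S(q)$ up to $\Aut(S(q))$." One must verify that $w^{-1}\Pscr$ genuinely induces a decomposition of each $S(q)$ separately (i.e.\ no part of $w^{-1}\Pscr$ straddles two different $S(q)$'s), which uses the hypothesis $wx\in C[\Pscr]$: this forces the coordinates indexed by any one part of $\Pscr$ to be equal, pull back along $w$, and since the level sets of the coordinate function on $wx$ refine into the $S(q)$'s one sees each part of $w^{-1}\Pscr$ sits inside one $S(q)$. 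Then one needs the converse direction of the correspondence: given two words $w_1,w_2$ whose induced decompositions of every $S(q)$ agree up to automorphism, build $s\in\St_x$ and $h\in H_\Pscr$ with $w_1 = h w_2 s$ by choosing, for each $q$, an automorphism of $S(q)$ carrying one decomposition to the other and assembling these; the compatibility of part-sizes (all governed by the same underlying partition $\la$) guarantees such automorphisms exist. Everything else is routine double-coset manipulation, and no deep input beyond the explicit description of $\St_x$ and $H_\Pscr$ is needed; the whole lemma is elementary group combinatorics once the geometric dictionary $w\mapsto wx$ is in place.
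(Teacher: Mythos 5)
Your proposal is correct and follows essentially the same route as the paper: (i)$\Leftrightarrow$(ii) via the observation that $W_\Pscr w_ix=H_\Pscr w_ix$ (since the pointwise stabilizer of $C[\Pscr]$ is normal in $H_\Pscr$), and (ii)$\Leftrightarrow$(iii) by recognizing that condition (iii) is exactly the existence of $g\in\St_x=\prod_q\mathrm{Sym}(S(q))$ with $gw_2^{-1}\Pscr=w_1^{-1}\Pscr$, which unwinds to $w_1gw_2^{-1}\in H_\Pscr$. The paper's proof is a compressed version of the same argument; your extra care (that each part of $w^{-1}\Pscr$ lies inside a single $S(q)$, so the "glue together the $g_q$" step is legitimate) is a harmless elaboration of a point the paper leaves implicit.
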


\Pf . The equivalence of (i) and (ii) is clear since $W_\Pscr w_i x=H_\Pscr w_i x$ for $i=1,2$.
Condition (iii) means that $gw_2^{-1}\Pscr=w_1^{-1}\Pscr$ for some $g\in\St_x$. This is equivalent
to $w_1gw_2^{-1}\Pscr=\Pscr$, i.e., $w_1gw_2^{-1}\in H_\Pscr$. Thus, we get an equivalence of (iii) and (ii).
\ed

Let us denote by $\hat{Z}_{\Pscr,(x,x)}$ the formal completion of $Z_\Pscr=Z_{\Pscr}(C)$ near the point 
$(x,x)\in Z_{\Pscr}\sub C[\Pscr]\times C^n$.

\begin{lem}\label{Z-lambda-prod-lem} 
One has an isomorphism 
$$\hat{Z}_{\Pscr,(x,x)}\simeq\hat{\prod}_{q\in Q}\hat{Z}_{\Pscr(q),(x(q),x(q))},$$
where on the right we take the completion of the product,
compatible with the decompositions \eqref{C-lambda-dec-eq} and \eqref{C-n-lambda-dec-eq}.
In particular, it is compatible with the action of $W_\Pscr\cap\St_x=\prod_{q\in Q} W_{\Pscr(q)}$.
\end{lem}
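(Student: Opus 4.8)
\emph{Plan.} Since the statement is Zariski-local around $(x,x)$, the plan is first to describe $Z_{\Pscr}=Z_{\Pscr}(C)$ on a neighbourhood of $(x,x)$ inside $C[\Pscr]\times C^n$. Recall that $Z_{\Pscr}$ is the union of the graphs $\Gamma_w=\{(c,w\iota(c))\mid c\in C[\Pscr]\}$ for $w\in S_n$; since $C$ is infinite one has $\Gamma_w=\Gamma_{w'}$ exactly when $H_{\Pscr}w=H_{\Pscr}w'$, and these $\Gamma_w$ are the irreducible components of $Z_{\Pscr}$ (each is isomorphic to $C[\Pscr]$). Now $\Gamma_w$ contains $(x,x)$ if and only if $wx=x$, i.e.\ $w\in\St_x$. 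Hence, writing $Z'=\bigcup_{w\in\St_x}\Gamma_w$, the complement $Z_{\Pscr}\setminus Z'$ is closed and does not contain $(x,x)$, so $\hat{Z}_{\Pscr,(x,x)}\simeq\hat{Z}'_{(x,x)}$. Finally, by the very definition of the sets $S(q)$ (the blocks of the relation ``equal coordinate of $x$'' on $\{1,\ldots,n\}$) one has $\St_x=\prod_{q\in Q}\Sym(S(q))$, and every block of $\Pscr$ is contained in a single $S(q)$.

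The second step is to identify $Z'$, as a reduced closed subscheme of $C[\Pscr]\times C^n=\prod_{q}\bigl(C[\Pscr(q)]\times C^{S(q)}\bigr)$ (using \eqref{C-lambda-dec-eq} and \eqref{C-n-lambda-dec-eq}), with $\prod_{q\in Q}Z_{\Pscr(q)}(C)$. Indeed, for $w=(w_q)_q\in\St_x=\prod_q\Sym(S(q))$ both $\iota$ and the permutation $w$ respect these decompositions, so $\Gamma_w=\prod_q\Gamma^{(q)}_{w_q}$, where $\Gamma^{(q)}_{w_q}\subset C[\Pscr(q)]\times C^{S(q)}$ is the corresponding graph in $Z_{\Pscr(q)}(C)$. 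Taking the union over $w\in\St_x$ and using the set-theoretic identity $\bigcup_{(w_q)}\prod_q A_{w_q}=\prod_q\bigl(\bigcup_{w_q}A_{w_q}\bigr)$ shows that $Z'$ and $\prod_q Z_{\Pscr(q)}(C)$ have the same underlying set; as both are reduced closed subschemes of a variety over $\CC$ (a finite product of reduced $\CC$-schemes is reduced), they are equal. Consequently $Z_{\Pscr}$ and $\prod_q Z_{\Pscr(q)}(C)$ coincide on a Zariski neighbourhood of $(x,x)=\bigl((x(q),x(q))\bigr)_q$.

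The last step is to pass to formal completions. The completion of a finite product at a product point is the completed product, $\Oscr^{\wedge}_{Y_1\times Y_2,(y_1,y_2)}\cong\Oscr^{\wedge}_{Y_1,y_1}\hat{\otimes}_{\CC}\Oscr^{\wedge}_{Y_2,y_2}$, iterated over the finite set $Q$; so the previous step yields $\hat{Z}_{\Pscr,(x,x)}\simeq\hat{\prod}_{q}\hat{Z}_{\Pscr(q),(x(q),x(q))}$, and by construction this sits compatibly inside the corresponding completion of $\prod_q\bigl(C[\Pscr(q)]\times C^{S(q)}\bigr)$, i.e.\ it is compatible with \eqref{C-lambda-dec-eq} and \eqref{C-n-lambda-dec-eq}. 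For the equivariance, note that permutations fixing each block of $\Pscr$ lie in $\St_x$, so $W_{\Pscr}\cap\St_x$ is a well-defined subgroup of $W_{\Pscr}$; since each block of $\Pscr$ lies in one $S(q)$, one gets $H_{\Pscr}\cap\St_x=\prod_q H_{\Pscr(q)}$ and hence $W_{\Pscr}\cap\St_x=\prod_q W_{\Pscr(q)}$, and this group acts on $C[\Pscr]=\prod_q C[\Pscr(q)]$ factor by factor (and trivially on $C^n$), so the isomorphism above is $\bigl(\prod_q W_{\Pscr(q)}\bigr)$-equivariant. The argument is essentially bookkeeping; the points requiring a little care are the compatibility of the reduced scheme structures under the product decomposition in the second step, and the observation in the first step that the branches of $Z_{\Pscr}$ through $(x,x)$ are exactly those indexed by $\St_x$ (so that the completions of $Z_{\Pscr}$ and $Z'$ agree).
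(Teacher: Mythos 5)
Your argument follows the paper's own (very terse) proof: identify the branches of $Z_{\Pscr}$ through $(x,x)$ as those indexed by $\St_x$, observe that the embedding $C[\Pscr]\hookrightarrow C^n$ is compatible with the factorizations $C[\Pscr]=\prod_q C[\Pscr(q)]$, $C^n=\prod_q C^{S(q)}$, $\St_x=\prod_q\Sym(S(q))$, and then pass to completions. So the approach is the same; you have just expanded the details.

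One small but genuine slip in your first paragraph: you assert that $\Gamma_w=\Gamma_{w'}$ exactly when $H_{\Pscr}w=H_{\Pscr}w'$. This is not right. We have $\Gamma_w=\Gamma_{w'}$ iff $w\iota=w'\iota$ as maps $C[\Pscr]\to C^n$, i.e.\ iff $w^{-1}w'$ fixes $\iota(C[\Pscr])$ pointwise, and the pointwise stabilizer of $\iota(C[\Pscr])$ is the subgroup $S_\Pscr$ of permutations preserving each block $L_p$ of $\Pscr$ (the kernel of $H_\Pscr\to W_\Pscr$), not all of $H_\Pscr$. An element of $H_\Pscr\setminus S_\Pscr$ permutes the blocks nontrivially and hence changes $\Gamma_e$ (e.g.\ $(13)(24)$ for $\Pscr=(\{1,2\},\{3,4\})$). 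So the correct statement is $\Gamma_w=\Gamma_{w'}\iff wS_\Pscr=w'S_\Pscr$; this is consistent with the paper's earlier indexing of $Z_\la(C)$ by $S_n/S_\la$. Fortunately this does not affect your argument: the branches through $(x,x)$ are still exactly $\{\Gamma_w : w\in\St_x\}$, which is all you use. Likewise the sentence ``the complement $Z_{\Pscr}\setminus Z'$ is closed'' should read that the union of the remaining components $Z''=\bigcup_{\Gamma_w\not\ni (x,x)}\Gamma_w$ is closed and misses $(x,x)$, so $Z_\Pscr$ and $Z'$ agree on a neighborhood of $(x,x)$. The remaining steps (product decomposition of $Z'$, reducedness of the product over $\CC$, completion of a finite product at a product point, and the computation $H_\Pscr\cap\St_x=\prod_q H_{\Pscr(q)}$ giving $W_\Pscr\cap\St_x=\prod_q W_{\Pscr(q)}$) are correct.
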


\Pf . By definition, the components of $Z_{\Pscr}$ passing through the point $(x,x)$ are numbered by $w\in \St_x$. Now
the assertion follows from the decompositions \eqref{C-n-lambda-dec-eq} and
\eqref{C-lambda-dec-eq}, compatible via the embedding $C[\Pscr]\to C^n$, together with the decompositions
$\St_x=\prod_q S_{|S(q)|}$, $W_\Pscr\cap\St_x=\prod_q W_{\Pscr(q)}$.
\ed

We now return to the setting of Theorem B.
Let $\pi:C^n\to C^n/S_n$ be the natural morphism. Also, for a partition $\la$, $|\la|=n$, let
$$g_\la:C^{(\la)}=C[\la]/W_\la\to C^n/S_n$$
be the morphism induced by the embedding of $C[\la]$ into $C^n$.

\begin{lem}\label{local-semiorth-lem}
(i) For a pair of partitions of $n$, $\la$ and $\mu$, one has
$$\pi_*R\und{\Hom}(\Nscr_\la(\Oscr),\Nscr_\mu(\Oscr))^{S_n}=0 \ \ \text{ for } \la\not\le\mu,$$
where we view $\Nscr_\la(\Oscr)$ and $\Nscr_\mu(\Oscr)$ as sheaves on $C^n$ equipped with $S_n$-action.
 
  \noindent
(ii) For each $\la$ the natural map
$$g_{\la,*}\Oscr\to \pi_*R\und{\Hom}(\Nscr_\la(\Oscr),\Nscr_\la(\Oscr))^{S_n}$$
is an isomorphism.

\noindent
(iii) If $F\in D^b_{S_n}(C^n)$ satisfies
$$\left(\pi_*R\und{\Hom}(\Nscr_\la(\Oscr),F)\right)^{S_n}=0$$
for all $\la$, $|\la|=n$, then $F=0$.
\end{lem}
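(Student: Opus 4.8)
The three statements of Lemma~\ref{local-semiorth-lem} are the local (formal-neighborhood) incarnations of Theorem~\ref{type-A-thm}, so the strategy is to reduce each of them, by localizing on $C^n/S_n$, to a statement about modules over $A = S_n\ltimes\Oscr(\ft)$ that has already been proved. The key geometric input is the analysis of the diagram \eqref{N-lambda-diagram} near a point $x\in C^n$, carried out in Lemmas~\ref{W-lambda-orbit-lem} and~\ref{Z-lambda-prod-lem}: over the formal completion of $C^n/S_n$ at the image of a point $x$ with stabilizer $\St_x = \prod_{q\in Q} S_{|S(q)|}$, everything splits as a (completed) product over $q\in Q$, and on the $q$-th factor we see exactly the ``Springer'' geometry for the group $S_{|S(q)|}$ acting on $C^{S(q)}$, with the partition $\Pscr(q)$ of $S(q)$. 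Since the three assertions are all statements about the vanishing or the structure of certain complexes of sheaves on $C^n/S_n$ (or on $C^n$ with its $S_n$-action), it suffices to check them after completing at every point of $C^n/S_n$, and there the product decomposition together with a K\"unneth formula reduces us to the case where $x$ is the small diagonal point, i.e.\ where $\St_x$ is the whole symmetric group.

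\textbf{Step 1 (reduction to formal completions).} I would first observe that $\pi_\ast$ of an $S_n$-equivariant complex, together with the invariants functor, is a coherent sheaf (resp.\ complex with coherent cohomology) on the noetherian scheme $C^n/S_n$; hence it vanishes iff its completion at every closed point vanishes, and a morphism of such is an isomorphism iff it is so after every such completion. So all three parts of the lemma may be checked on $\widehat{(C^n/S_n)}_{\bar x}$ for each $x\in C^n$. By $S_n$-equivariance we may assume $x\in C[\la]$ (for part (ii)) or more generally that $x$ lies in the relevant stratum; in any case the decompositions \eqref{C-lambda-dec-eq}, \eqref{C-n-lambda-dec-eq} and Lemma~\ref{Z-lambda-prod-lem} give, after completion, a product decomposition of $Z_\Pscr(C)$, of $C^n$, and of the group $\St_x$, compatible with all the maps $f_\la$. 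Combining this with Lemma~\ref{W-lambda-orbit-lem} (which identifies the components of $\ov Z_\la(C)$ mapping into the relevant stratum with double cosets $H_\la\backslash S_n/\St_x$, i.e.\ with tuples of decompositions $(w^{-1}\Pscr(q))_q$ up to automorphisms), one expresses $\Nscr_\la(\Oscr)$ near $x$ as an ``induced'' object from the product $\prod_q (\,\cdot\,)$ of the analogous local objects for the groups $S_{|S(q)|}$.

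\textbf{Step 2 (K\"unneth and reduction to the linear case).} Using $\RHom$ over a product $=\bigboxtimes$ of the factorwise $\RHom$'s, together with the fact that induction from $\prod_q H_{\Pscr(q)}\subset\St_x = \prod_q S_{|S(q)|}$ commutes with these operations, the computation of $\pi_\ast R\und\Hom(\Nscr_\la(\Oscr),\Nscr_\mu(\Oscr))^{S_n}$ near $x$ becomes a tensor product over $q\in Q$ of terms of the form (completion of) $\RHom_{A'}(N_{\la(q)}, N_{\mu(q)})$, where $A' = S_{|S(q)|}\ltimes \Oscr(\ft')$ and $\la(q),\mu(q)$ are the partitions of $|S(q)|$ induced by $\la,\mu$ and $x$ --- except that one of the coordinates on each small-diagonal factor $C\subset C[\Pscr(q)]$ is a genuine curve rather than $\A^1$, which is harmless since all our sheaves are set-theoretically supported on diagonals and the statements are \'etale-local. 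Here I invoke Proposition~\ref{modules-KP-prop} to identify $\Nscr_{\la(q)}(\Oscr)$ with $N_{\la(q)}$ and then Theorem~\ref{type-A-thm}: part (ii) of that theorem gives the vanishing $\RHom_{A'}(N_{\la(q)},N_{\mu(q)})=0$ unless $\la(q)\le\mu(q)$, part (i) gives $\RHom_{A'}(N_{\la(q)},N_{\la(q)})\cong R_{\la(q)} = \Oscr(\ft'_{\la(q)})^{W_{\la(q)}}$, and part (iv) (semiorthogonal decomposition generated by the $N_\bullet$) gives that the $\Nscr_\la(\Oscr)$ jointly generate. Finally, by Lemma~\ref{part-merge-lem}, $\la(q)\le\mu(q)$ for all $q$ forces $\la\le\mu$, with strictness propagating; this yields part~(i). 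For part~(ii), the product over $q$ of the $R_{\la(q)}$'s is precisely the completion of $\Oscr(C[\la]/W_\la)$ at $\bar x$, matching $g_{\la,\ast}\Oscr$, and one checks the map is the expected one. For part~(iii), if $(\pi_\ast R\und\Hom(\Nscr_\la(\Oscr),F))^{S_n}=0$ for all $\la$, then completing at each $x$ and using the product/K\"unneth reduction plus Theorem~\ref{type-A-thm}(iv) (the $N_\bullet$ generate $D^f(A')$ on each factor, hence generate the relevant completed category by a Beilinson-type resolution of the diagonal argument), we conclude the completion of $F$ at every point is zero, so $F=0$.

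\textbf{Main obstacle.} The routine verifications (K\"unneth for $\RHom$ of box products, commuting induction past completion, matching of gradings) are bookkeeping; the real work is Step~1 --- making precise the statement that, after completion at $\bar x$, the equivariant sheaf $\Nscr_\la(\Oscr)$ together with its $S_n$-action is \emph{induced} from the product object $\bigboxtimes_q \Nscr_{\la(q)}(\Oscr)$ over $\prod_q S_{|S(q)|}$, and that under this identification the full $R\und\Hom$-with-$S_n$-invariants decouples as a tensor product over $q$ with no leftover contributions from the ``off-diagonal'' cosets. This is exactly where Lemmas~\ref{W-lambda-orbit-lem} and~\ref{Z-lambda-prod-lem} are used, and where one must be careful that the formal neighborhood of $C[\la]$ at a non-generic point $x$ genuinely looks like the product of small-diagonal neighborhoods and does not introduce spurious components of $Z_\la(C)$ or spurious morphisms between the $\Nscr_\mu(\Oscr)$. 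Once that local product structure is nailed down, the reduction to Theorem~\ref{type-A-thm} and Proposition~\ref{modules-KP-prop} is immediate and completes the proof.
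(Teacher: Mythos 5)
Your proposal is correct and follows essentially the same route as the paper: complete at closed points of $C^n/S_n$, decompose via Lemmas \ref{W-lambda-orbit-lem} and \ref{Z-lambda-prod-lem}, linearize and apply Theorem \ref{type-A-thm}, Proposition \ref{modules-KP-prop}, and Lemma \ref{part-merge-lem}. One small correction to your Step 2: $\Nscr_\la(\Oscr)\sphat_x$ is a \emph{direct sum over $W_\la$-orbits on $S_nx\cap C[\la]$} of tensor products over $q$ --- see \eqref{N-O-decomposition-2} --- not a single tensor product; so in part (ii) you must additionally kill the cross-terms between distinct orbits (which the paper does via the strictness clause of Lemma \ref{part-merge-lem}), and in part (iii) the paper argues more directly by letting the decompositions $(\Pscr(q))_q$ vary independently and invoking Theorem \ref{type-A-thm} factorwise, with no resolution-of-the-diagonal argument needed.
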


\Pf . Let us set $Z_\la=Z_\la(C)$, $Z_\Pscr=Z_{\Pscr}(C)$ for brevity.

\noindent
(i) Let $x\in C^n$. Since $\pi^{-1}(\pi(x))\simeq S_n/\St_x$, we can compute the completion of the
sheaf in question at $\pi(x)$ as follows:
$$\left(\pi_*R\und{\Hom}(\Nscr_\la(\Oscr),\Nscr_\mu(\Oscr))^{S_n}\right)\sphat_{\pi(x)}\simeq
R\und{\Hom}\bigl(\Nscr_\la(\Oscr)\sphat_x,\Nscr_\mu(\Oscr)\sphat_x\bigr)^{\St_x}.$$
Thus, it is enough to study the picture in the formal neighborhood of $x$ in $C^n$ (keeping track of the $\St_x$-action).
Note that 
$$\Nscr_\la(\Oscr)=f_{\la,*}(\Oscr_{Z_\la})^{W_\la}.$$
The fiber $f_{\la}^{-1}(x)$ consists of the points $(y,x)$ such that $y\in S_n x\cap C[\la]$.
Since the morphism $f_\la$ is finite, we obtain
\begin{equation}\label{N-O-completion-decomposition}
\Nscr_\la(\Oscr)\sphat_x=\left(\bigoplus_{y\in S_n x\cap C[\la]}\hat{\Oscr}_{Z_{\la},(y,x)}\right)^{W_\la}=
\bigoplus_{i=1}^N\left(\hat{\Oscr}_{Z_\la,(y_i,x)}\right)^{W_\la\cap\St_{y_i}},
\end{equation}
where $y_1,\ldots,y_N$ are representatives of $W_\la$-orbits on $S_n x\cap C[\la]$.
For each $y_i$ let us fix $w_i\in S_n$ such that $y_i=w_ix$. Then 
by \eqref{Z-functoriality-eq}, setting $\Pscr=\Pscr^\la$, we get an isomorphism
\begin{equation}\label{Z-y-i-x-eq}
\hat{\Oscr}_{Z_\la,(y_i,x)}\simeq \hat{\Oscr}_{Z_{w_i^{-1}\Pscr},(x,x)}
\end{equation}
of $\hat{\Oscr}_{C^n_x}[\St_x]$-modules, compatible with the commuting action of
$W_\la\cap\St_{y_i}\simeq W_{w_i^{-1}\Pscr}\cap\St_x$.

Let 
\begin{equation}\label{x-S-q-decomp-eq}
\{1,\ldots,n\}=\sqcup_{q\in Q}S(q)
\end{equation} 
be the decomposition
corresponding to the subsets of equal coordinates of $x$. We have $x=(x(q))_{q\in Q}$ with $x(q)\in C\sub C^{S(q)}$.
For each $i=1,\ldots,N$, let $(\Pscr_i(q))_{q\in Q}$ be the decompositions associated with
$w_i^{-1}\Pscr$ and the point $x\in C[w_i^{-1}\Pscr]$, where $\Pscr_i(q)$ is a decomposition of $S(q)$.
The subgroup $\St_x\sub S_n$ consists of all permutations preserving each subset $S(q)$.
Applying Lemma \ref{Z-lambda-prod-lem} we get
\begin{equation}\label{Z-w-i-x-prod-eq}
\hat{\Oscr}_{Z_{w_i^{-1}\Pscr},(x,x)}\simeq \hat{\bigotimes}_{q\in Q}\hat{\Oscr}_{Z_{\Pscr_i(q)},(x(q),x(q))}
\end{equation}
compatible with $\hat{\Oscr}_{C^n,x}=\hat{\bigotimes}_{q\in Q}\hat{\Oscr}_{C^{S(q)},x(q)}$-module
structure and with $\St_x=\prod S_{|S(q)|}$-action. The action of $W_{w_i^{-1}\Pscr}\cap\St_x$ on the left-hand side
of \eqref{Z-w-i-x-prod-eq}
corresponds to the action of $\prod_{q\in Q}W_{\Pscr_i(q)}$ on the right-hand side. 
Also, by \eqref{Z-functoriality-eq}, we can replace
$Z_{\Pscr_i(q)}$ with $Z_{\la(i,q)}$ in the right-hand side of \eqref{Z-w-i-x-prod-eq}, where
$\la(i,q)$ is the partition given by the sizes of the subsets forming $\Pscr_i(q)$. Hence, 
combining isomorphisms \eqref{Z-y-i-x-eq} and \eqref{Z-w-i-x-prod-eq}
we get
$$\left(\hat{\Oscr}_{Z_\la,(y_i,x)}\right)^{W_\la\cap\St_x}\simeq 
\hat{\bigotimes}_{q\in Q} \Nscr_{\la(i,q)}(\Oscr)\sphat_{x(q)}.$$
Thus, we can rewrite \eqref{N-O-completion-decomposition} as
\begin{equation}\label{N-O-decomposition-2}
\Nscr_\la(\Oscr)\sphat_x\simeq\bigoplus_{i=1}^N \hat{\bigotimes}_{q\in Q} \Nscr_{\la(i,q)}(\Oscr)\sphat_{x(q)}.
\end{equation}
We have a similar decomposition for $\mu$:
$$\Nscr_\mu(\Oscr)\sphat_x\simeq\bigoplus_{j=1}^M \hat{\bigotimes}_{q\in Q} \Nscr_{\mu(j,q)}(\Oscr)\sphat_{x(q)},$$
where $z_1,\ldots,z_M$ are representatives of $W_\mu$-orbits on $S_nx\cap C[\mu]$.
Linearizing the situation in the formal neighborhood of each $x(q)\in C\sub C^{I_q}$ and applying our local results
(see Theorem \ref{type-A-thm} and Proposition \ref{modules-KP-prop}),
we see that $\Ext^*(\Nscr_\la(\Oscr)\sphat_x,\Nscr_\mu(\Oscr)\sphat_x)^{\St_x}$ can be nonzero only if for
some $i$ and some $j$ one has $\la(y_i,q)\le\mu(z_j,q)$ for all $q\in Q$.
By Lemma \ref{part-merge-lem}, this implies that $\la\le\mu$.

\noindent
(ii) The commutative diagram
\begin{equation}\label{g-lambda-com-diag}
\begin{diagram}
[\ov{Z}_\la/S_n]&\rTo{\ov{f}_\la}& [C^n/S_n]\\
\dTo{q_\la}&&\dTo{\pi}\\
C[\la]/W_\la&\rTo{g_\la}&C^n/S_n
\end{diagram}
\end{equation}
gives rise to a homomorphism of sheaves of algebras 
$$\pi^*g_{\la,*}\Oscr\to \ov{f}_{\la,*}\Oscr.$$
which corresponds by adjunction to the natural homomorphism
$$g_{\la,*}\Oscr\to \pi_*\ov{f}_{\la,*}\Oscr\simeq g_{\la,*}q_{\la,*}\Oscr.$$
Hence, we have the induced homomorphism 
$$\pi^*g_{\la,*}\Oscr\to R\und{\Hom}\bigl(\Nscr_\la(\Oscr),\Nscr_\la(\Oscr)\bigr),$$
or by adjunction, 
$$g_{\la,*}\Oscr\to \pi_*R\und{\Hom}\bigl(\Nscr_\la(\Oscr),\Nscr_\la(\Oscr)\bigr)^{S_n}.$$
To show that this is an isomorphism, we can pass to completions of the
stalks at a point $\pi(x)\in C^n/S_n$ for some $x\in C[\la]$.

First, note that the fiber $g_\la^{-1}(\pi(x))$ consists of $W_\la$-orbits on $S_nx\cap C[\la]$, so that
$$\left(g_{\la,*}\Oscr\right)\sphat_{\pi(x)}\simeq\bigoplus_{i=1}^N \hat{\Oscr}_{C[\la],y_i}^{W_\la}
\simeq \bigoplus_{i=1}^N \hat{\Oscr}_{C[w_i^{-1}\Pscr],x}^{W_{w_i^{-1}\Pscr}}.$$
Next, we claim that for each $i\neq i'$ there exists $q\in Q$ such that
$$\Ext^*\bigl(\Nscr_{\la(i,q)}(\Oscr)\sphat_{x(q)},\Nscr_{\la(i',q)}(\Oscr)\sphat_{x(q)}\bigr)^{S_{|S(q)|}}=0.$$
Indeed, otherwise by Theorem \ref{type-A-thm}, 
we would get $\la(i,q)\le\la(i',q)$ for each $q$. Since merging the partitions $(\la(i,q))_{q\in Q}$
(resp., $(\la(i',q))_{q\in Q}$) gives the same partition $\la$,
this can happen only if $\la(i,q)=\la(i',q)$ for all $q\in Q$ (using the last assertion of Lemma \ref{part-merge-lem}).
But this is impossible by Lemma \ref{W-lambda-orbit-lem}, since $y_i$ and $y_{i'}$ are not in the same $W_\la$-orbit.
Thus, using \eqref{N-O-decomposition-2}, we get an isomorphism
$$\bigl(\pi_*R\und{\Hom}(\Nscr_\la(\Oscr),\Nscr_\la(\Oscr))\sphat_x\bigr)^{S_n}=
\bigoplus_{i=1}^N R\End\bigl(\hat{\bigotimes}_{q\in Q} \Nscr_{\la(i,q)}(\Oscr)\sphat_{x(q)}\bigr)^{\St_x},$$
and the assertion follows from the isomorphisms 
$$\hat{\Oscr}_{C[\la(i,q)],x(q)}^{W_{\la(i,q)}}\to R\End\bigl(\Nscr_{\la(i,q)}(\Oscr)\sphat_{x(q)}\bigr)^{S_{|S(q)|}},$$
which follow from Theorem \ref{type-A-thm}.

\noindent
(iii) It is enough to prove that for such $F$ one has $\hat{F}_x=F_x\ot \hat{\Oscr}_{C^n,x}=0$ for every $x\in C^n$.
Let us consider the decomposition \eqref{x-S-q-decomp-eq} into subsets of equal coordinates of $x$, so that
$x=(x(q))_{q\in Q}$, with $x(q)\in C\sub C^{S(q)}$. Now let us choose any collection $(\Pscr(q))_{q\in Q}$ of decompositions
of $S(q)$. Let $\Pscr$ be the decomposition of $\{1,\ldots,n\}$ obtained by merging all $\Pscr(q)$.
Then $\Pscr=w\Pscr^\la$ for some $w\in S_n$ and some partition $\la$, $|\la|=n$. Hence, by assumption,
\begin{equation}\label{N-lambda-F-x-van}
\bigl(\pi_*R\und{\Hom}(\Nscr_\la(\Oscr),F)\sphat_x\bigr)^{S_n}=
R\und{\Hom}\bigl(\Nscr_\la(\Oscr)\sphat_x,\hat{F}_x\bigr)^{\St_x}=0.
\end{equation}
By \eqref{Z-functoriality-eq}, we have
$$\Nscr_\la(\Oscr)\sphat_x=\left(f_{\Pscr,*}\Oscr_{Z_{\Pscr}}\right)\sphat_x.$$
Arguing as in part (i) we get a decomposition of $\hat{\Oscr}_{C^n,x}[\St_x]$-modules
$$\left(f_{\Pscr,*}\Oscr_{Z_{\Pscr}}\right)\sphat_x=\bigoplus_{i=1}^N\left(\hat{\Oscr}_{Z_{\Pscr},(y_i,x)}\right)^{W_\Pscr\cap\St_{y_i}},$$
where $y_i$ are representatives of $W_\Pscr$-orbits on $S_nx\cap C[\Pscr]$, and we can assume that $y_1=x$.
Thus, the vanishing \eqref{N-lambda-F-x-van} implies that
$$\Hom_{\hat{\Oscr}_{S_n,x}}\bigl(\bigl(\hat{\Oscr}_{Z_{\Pscr},(x,x)}\bigr)^{W_\Pscr\cap\St_x},\hat{F}_x\bigr)^{\St_x}=0.$$
But, as we have shown in part (i),
$$\bigl(\hat{\Oscr}_{Z_{\Pscr},(x,x)}\bigr)^{W_\Pscr\cap\St_x}\simeq\hat{\bigotimes}_{q\in Q}
\Nscr_{\la(i,q)}(\Oscr)\sphat_{x(q)}.$$
By construction, $(\la(i,q))_{q\in Q}$ independently run over all partitions of $|S(q)|$. Hence, the assertion follows from
the local Theorem \ref{type-A-thm}.
\ed

\noindent
{\it Proof of Theorem B}.
Let $L$ be an ample line bundle on $C^n/S_n$. Then for every $\la$,
$g_\la^*L$ is an ample line bundle on $C[\la]/W_\la$ (since $g_\la$ is a finite morphism).
Therefore, the line bundles $(g_\la^*L^m)_{m\in\Z}$ strongly generate the derived category
$D^b(C[\la]/W_\la)$. Therefore, to check the semiorthogonality of images of $\Nscr_\la$ and $\Nscr_\mu$,
it is enough to check that
$$R\Hom_{D^b_{S_n}(C^n)}(\Nscr_\la(g_\la^*L^m),\Nscr_\mu(g_\mu^*L^k))=0 \ \ \text{ for } \la\not\le\mu$$
and for any $m,k\in\Z$.
Next, we observe that due to commutativity of diagram \eqref{g-lambda-com-diag}, one has
$$\Nscr_\la(g_\la^*L^m)=\ov{f}_{\la,*}q_\la^*g_\la^*L^m\simeq\ov{f}_{\la,*}(\ov{f}_\la^*\pi^*L^m)\simeq
\ov{f}_{\la,*}(\Oscr)\ot\pi^*L^m=\Nscr_\la(\Oscr)\ot\pi^*L^m.$$
Thus,
\begin{align*}
&R\Hom_{D^b_{S_n}(C^n)}(\Nscr_\la(g_\la^*L^m),\Nscr_\mu(g_\mu^*L^k))\simeq
R\Hom_{D^b_{S_n}(C^n)}(\Nscr_\la(\Oscr),\Nscr_\mu(\Oscr)\ot \pi^*L^{k-m}))\simeq\\
&R\Ga(C^n,R\und{\Hom}(\Nscr_\la(\Oscr),\Nscr_\mu(\Oscr))\ot\pi^*L^{k-m})^{S_n}\simeq\\
&R\Ga(C^n/S_n,\left(\pi_*R\und{\Hom}(\Nscr_\la(\Oscr),\Nscr_\mu(\Oscr))\right)^{S_n}\ot L^{k-m}),
\end{align*}
and the required vanishing follows from Lemma \ref{local-semiorth-lem}(i).

The fact that $\Nscr_\la$ is fully faithful follows from Lemma \ref{local-semiorth-lem}(ii) in a similar fashion.
Finally, once we know this, we deduce that the images of $\Nscr_\la$ for all $\la$ generate an admissible
subcategory of $D^b_{S_n}(C^n)$, so it is enough to check that its right orthogonal is zero. Suppose $F\in D^b_{S_n}(C^n)$
is such that $R\Hom_{D^b_{S_n}(C_n)}(\Nscr_\la(L^m),F)=0$ for all $m\in\Z$ and all partitions $\la$. 
Then, as before, we can rewrite this
as the vanishing
$$R\Ga(C^n/S_n,\left(\pi_*R\und{\Hom}(\Nscr_\la(\Oscr),F)\right)^{S_n}\ot L^m)=0$$
for all $m$ and $\la$. Since $L$ is ample, this implies the vanishing of
$\left(\pi_*R\und{\Hom}(\Nscr_\la(\Oscr),F)\right)^{S_n}$, and the
assertion follows from Lemma \ref{local-semiorth-lem}(iii).
\ed

\subsection{Semiorthogonal decompositions for actions of $(\Z/2)^n$ and $(\Z/2)^n\rtimes S_n$}\label{A1-sec}

Let $X$ be a smooth variety equipped with an involution $i:X\to X$ such that
the fixed locus of $i$ is a smooth divisor $X^i\sub X$, or equivalently, the corresponding
geometric quotient $Y=X/(\Z/2)$ is smooth. In this case the derived category $D^b_{\Z/2}(X)$ of $\Z/2$-equivariant
coherent sheaves on $X$ has a natural semiorthogonal decomposition (see \cite[Thm.\ 5.1]{CP})
\begin{equation}\label{main-Z-2-decomp-eq}
D^b_{\Z/2}(X)=\lan \pi^*D^b(Y), i_*D^b(X^i) \ran,
\end{equation}
where $\pi:X\to Y$ is the projection, the image of the functor $\pi^*$ (resp. $i_*$) is equipped with
the natural (resp., trivial) $\Z/2$-action.

More generally, if we have a collection $(X_a,i_a)_{1\le a\le n}$ of varieties with involutions as above,
then $X_1\times\ldots\times X_n$ has a natural $(\Z/2)^n$-action and the corresponding derived
category of equivariant sheaves has a semiorthogonal decomposition with pieces of the form
$$D^b(\prod_{a\in S} Y_a\times \prod_{a\not\in S} X_a^{i_a}),$$
where $Y_a=X_a/(\Z/2)$ and $S$ ranges over all subsets of $\{1,\ldots,n\}$.

In the case when $(X_a,i_a)=(X,i)$ for all $a=1,\ldots,n$ we get an action of the bigger group
$$W=W_{B_n}=(\Z/2)^n\rtimes S_n$$ 
on $X^n$, where $(\Z/2)^n$ acts as above and $S_n$ acts by permuting the factors.
Now we can lump together some pieces of the above semiorhogonal
decomposition to get a semiorthogonal decomposition of the category $D^b_W(C^n)$.

Set $R=X^i$, and for each $j$, $0\le j\le n$, consider the natural diagram
\begin{diagram}
&&[(R^j\times X^{n-j})/W_j]&&\\
&\ldTo{\id\times\pi^{n-j}}&&\rdTo{q_j}\\
[(R^j\times Y^{n-j})/(S_j\times S_{n-j})]&&&&[X^n/W]
\end{diagram}
where $W_j=(\Z/2)^n\rtimes (S_j\times S_{n-j})\sub W$, and $q_j$ is induced by the natural
closed embedding $R^j\times X^{n-j}\hra X^n$.
Let 
$$\Phi_j=(q_j)_*(\id\times\pi^{n-j})^*:D^b_{S_j\times S_{n-j}}(R^j\times Y^{n-j})\to D^b_W(X^n)$$
be the corresponding functor between the derived categories.

\begin{prop}\label{global-B-n-prop}
The functors $\Phi_j$ are fully faithful, and we have a semiorthogonal decomposition
$$D^b_{W}(X^n)=\lan \Phi_0(D^b_{S_n}(Y^n)), \ldots, 
\Phi_j(D^b_{S_j\times S_{n-j}}(R^j\times Y^{n-j})), \ldots, \Phi_n(D^b_{S_n}(R^n))\ran.$$
\end{prop}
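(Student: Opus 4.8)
The plan is to mimic the structure of the proof of Theorem \ref{global-A-thm}, but using the $\Z/2$-equivariant decomposition \eqref{main-Z-2-decomp-eq} in place of the local type $A$ input. The key point is that $D^b_W(X^n)$ is built from the $S_n$-equivariant structure on the "partitioned" product $X_1\times\cdots\times X_n$ together with the $(\Z/2)^n$-structure, and both ingredients are already under control: the $S_n$-combinatorics are exactly those of Theorem \ref{global-A-thm} (applied to $Y$ and to $R$ separately), while the $(\Z/2)^n$-piece is the elementary decomposition recalled at the start of \S\ref{A1-sec}. Concretely, I would first observe that, since $(\Z/2)^n\triangleleft W$ and $W/(\Z/2)^n=S_n$, one has by Lemma \ref{G/H-lem}(i) that
$$\RHom_W(\Phi_j(\cdot),\Phi_{j'}(\cdot))\simeq \RHom_{(\Z/2)^n}(\Phi_j(\cdot),\Phi_{j'}(\cdot))^{S_n},$$
so it suffices to compute the $(\Z/2)^n$-equivariant $\uRHom$-sheaves on $X^n$ and then take $S_n$-invariants. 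Applying the $(\Z/2)^n$-decomposition factor-by-factor, $\Phi_j(\Oscr)$ decomposes, after restricting to the stalk at a point, into a sum of sheaves supported on coordinate subspaces where some coordinates are pushed forward from $R=X^i$ and the rest are pulled back from $Y$; the orthogonality between a "pulled-back from $Y$" direction and a "pushed-forward to $X^i$" direction in a single factor is exactly \eqref{main-Z-2-decomp-eq}.

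The second step is the local/formal-neighborhood analysis, parallel to Lemma \ref{local-semiorth-lem}. Let $\pi_X:X^n\to X^n/W$ and let $g_j:[R^j\times Y^{n-j}/(S_j\times S_{n-j})]\to X^n/W$ be induced by $q_j$. I would show:
\begin{itemize}
\end{itemize}
(there should be no itemize here — instead, in prose) that (i) for $j\neq j'$ one has $(\pi_{X,*}\uRHom(\Phi_j(\Oscr),\Phi_{j'}(\Oscr)))^{S_n}=0$; (ii) $g_{j,*}\Oscr\xrightarrow{\sim}(\pi_{X,*}\uHom(\Phi_j(\Oscr),\Phi_j(\Oscr)))^{S_n}$ and the higher Ext-sheaves vanish; and (iii) an object $F\in D^b_W(X^n)$ with $(\pi_{X,*}\uRHom(\Phi_j(\Oscr),F))^{S_n}=0$ for all $j$ is zero. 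For (i) and (ii), passing to the completion at $\pi_X(x)$ for $x\in X^n$ reduces, via $\pi_X^{-1}(\pi_X(x))\simeq W/\St_x$, to a computation over $\widehat{\Oscr}_{X^n,x}$ keeping track of $\St_x$; since $X^i\subset X$ is a smooth divisor, near a point with some coordinates in $X^i$ and some not, the local picture is a product of factors each of which is either (a coordinate away from $X^i$, so an isolated $\Z/2$-free situation) or (a coordinate on $X^i$), and the $\uRHom$ between the two types of supported sheaves vanishes in each such factor by \eqref{main-Z-2-decomp-eq}; the $S_n$-invariance then sorts the surviving contributions by the subset of coordinates lying on $X^i$, i.e., by $j$. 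For (iii) one runs the same stalkwise argument: the classes $\Phi_j(\Oscr)$, with all the partition data varied, detect enough of $\widehat{F}_x$ (as an $\widehat{\Oscr}_{X^n,x}[\St_x]$-module) to force it to vanish, again using that \eqref{main-Z-2-decomp-eq} is a genuine semiorthogonal decomposition so the relevant generation statement holds locally.

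The third step deduces the global statement exactly as in the proof of Theorem \ref{global-A-thm}: choose an ample line bundle $L$ on $X^n/W$; then $g_j^*L$ is ample on $R^j\times Y^{n-j}/(S_j\times S_{n-j})$ because $g_j$ is finite, so the twists $(g_j^*L^m)_{m\in\Z}$ strongly generate $D^b_{S_j\times S_{n-j}}(R^j\times Y^{n-j})$; using the projection formula $\Phi_j(g_j^*L^m)\simeq\Phi_j(\Oscr)\otimes\pi_X^*L^m$ together with $R\Ga$ on $X^n/W$, parts (i), (ii), (iii) of the local lemma give respectively the semiorthogonality $\RHom_W(\Phi_{j'}(\cdot),\Phi_j(\cdot))=0$ for $j'<j$ (in the ordering $\Phi_0,\ldots,\Phi_n$), the full faithfulness of each $\Phi_j$, and the vanishing of the right orthogonal of $\langle\Phi_0,\ldots,\Phi_n\rangle$, hence the claimed semiorthogonal decomposition. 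I expect the main obstacle to be step two, specifically getting the formal-local decomposition of $\Phi_j(\Oscr)$ at an arbitrary point $x$ with a nontrivial stabilizer $\St_x\subset W$ clean enough to apply \eqref{main-Z-2-decomp-eq} factor-by-factor while correctly bookkeeping the $(\Z/2)^n$-part of $\St_x$ and the permutation part simultaneously; this is the analog of Lemmas \ref{Z-lambda-prod-lem} and \ref{W-lambda-orbit-lem} but now with the extra $(\Z/2)^n$-twisting, and care is needed because the involution's fixed divisor interacts with the coordinate-collision strata. Everything else is a faithful transcription of the type $A$ argument with $Y$ and $R$ playing the roles of the two "colors".
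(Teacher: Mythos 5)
Your proposal takes a genuinely different route from the paper, and a much heavier one than necessary. The paper's proof is entirely elementary and does not use formal completions at all. It begins from the explicit formula
$$\Phi_j(\Fscr)=\bigoplus_{J,\,|J|=j}(i_J)_*(\id\times\pi^{n-j})^*\Fscr,$$
with $J$ ranging over size-$j$ subsets of $\{1,\ldots,n\}$. Semiorthogonality and full faithfulness are then read off directly: to compute $\Ext^*_W(\Phi_j(\Fscr),\Phi_l(\Gscr))$ one first reduces to $\Ext^*_{W_l}$ by $W$-equivariance, and then, fixing $L=\{1,\ldots,l\}$, one shows $\Ext^*_{(\Z/2)^n}\bigl((i_J)_*\cdot,(i_L)_*\cdot\bigr)=0$ whenever some $s\in J\setminus L$ exists, by invoking \eqref{main-Z-2-decomp-eq} in the $s$-th factor alone. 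This gives vanishing for $l<j$ and, together with the $J=L$ computation, full faithfulness, with no stalkwise analysis and no ample line bundle. Admissibility is immediate from the existence of adjoints of $(q_j)_*$ and $(\pi^{n-j})^*$, and generation is handled by an explicit resolution argument: decompose $k[W]$ into the $W$-representations $V_j\ot k[S_n]$, reduce to generating the $\Oscr_{X^n}\ot V_k$, and build a Koszul-type resolution of $\Oscr_{X^n}\ot\xi_1\cdots\xi_k$ from the exterior tensor power of the short exact sequence $0\to\Oscr_X\ot\xi\to\pi^*L\to\Oscr_R\to0$, whose terms visibly lie in the various $\im(\Phi_j)$.

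The reason the type-$A$ machinery is needed in Theorem \ref{global-A-thm} but not here is that there the supports $S_n\cdot C[\la]$ intersect in complicated ways governed by the dominance order, and the bookkeeping genuinely requires completing at an arbitrary point; here the $(\Z/2)^n$-twisting is purely product-local (one coordinate at a time) so everything can be done globally and factor-by-factor. Two smaller points about your write-up: (1) your claim in local step (i) — vanishing for all $j\ne j'$ — is false; the vanishing is one-sided, namely $\Hom(\Phi_j,\Phi_l)=0$ for $l<j$, and your later sentence "$\RHom_W(\Phi_{j'}(\cdot),\Phi_j(\cdot))=0$ for $j'<j$" has the arrows reversed relative to the ordering $\langle\Phi_0,\ldots,\Phi_n\rangle$. (2) Your generation step (iii) is the weak link: you assert that the $\Phi_j(\Oscr)$ "detect enough of $\widehat F_x$" because \eqref{main-Z-2-decomp-eq} is a semiorthogonal decomposition, but making that precise requires an honest local analogue of Lemma \ref{local-semiorth-lem}(iii), which is exactly the part you flag as the main obstacle and never resolve; the paper's explicit resolution of $\Oscr_{X^n}\ot V_k$ sidesteps this entirely.
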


Let us denote the normal subgroup $(\Z/2)^n\sub W$ by $H$. 
The regular representation of $H$ decomposes as a $W$-module into the direct sum
of $W$-representations as follows:
$$k[H]=\bigoplus_{j=0}^n V_j,$$
where $V_j$ is the direct sum of all characters of $H$ that are nontrivial on exactly $j$ basis elements
in $(\Z/2)^n$ (so $V_0$ is the trivial representation of $W$).
Hence, the multiplication map
$$k[H]\ot k[S_n]\rTo{\sim} k[W]$$
induces an isomorphism of $W$-modules
\begin{equation}\label{Bn-regular-representation}
\bigoplus_{j=0}^n V_j\ot k[S_n] \rTo{\sim} k[W].
\end{equation}

\begin{proof}[Proof of Proposition \ref{global-B-n-prop}] 
For an $S_j\times S_{n-j}$-equivariant coherent sheaf $\Fscr$ on 
$R^j\times Y^{n-j}$ one has
$$\Phi_j(\Fscr)=\bigoplus_{J, |J|=j} (i_J)_*(\id\times\pi^{n-j})^*\Fscr,$$
where $J$ ranges over subsets of $\{1,\ldots,n\}$ of cardinality $j$,
$$i_J:R^j\times X^{n-j}\to X^n$$
is the embedding sending the factors $R^j$ to the components numbered by the subset $J$
and preserving the order of components in $R^j$ and $X^{n-j}$.

Let $\Gscr$ be a $S_l\times S_{n-l}$-equivariant coherent sheaf on 
$R^l\times Y^{n-l}$.
We have to check that $\Ext^*_W(\Phi_j(\Fscr),\Phi_l(\Gscr))=0$ for $l<j$ and
that for $l=j$ the natural map
$$\Ext^*_{S_j\times S_{n-j}}(\Fscr,\Gscr)\to \Ext^*_W(\Phi_j(\Fscr),\Phi_l(\Gscr))=0$$
is an isomorphism.
The idea is to use repeatedly the semiorthogonality of the subcategories in
\eqref{main-Z-2-decomp-eq} for appropriate involutions.
More precisely, we have
$$\Ext^*_W(\Phi_j(\Fscr),\Phi_l(\Gscr))=
\Ext^*_{W_l}\bigl(\bigoplus_{J,|J|=j}(i_J)_*(\id\times\pi^{n-j})^*\Fscr,(i_L)_*(\id\times\pi^{n-l})^*\Gscr\bigr),$$
where $L=\{1,\ldots,l\}$.
We claim that if $l<j$ then all such $\Ext$'s vanish even after replacing $W_l$ with
$(\Z/2)^n$. Indeed, it suffices to check the vanishing
\begin{equation}\label{Ext-Z2-n-vanishing-eq}
\Ext^*_{(\Z/2)^n}\bigl((i_J)_*(\id\times\pi^{n-j})^*\Fscr,(i_L)_*(\id\times\pi^{n-l})^*\Gscr\bigr)=0
\end{equation}
for any $J$ such that $|J|=j$. 
To this end we observe that there exists an element $s\in J\setminus L$.
Then the above vanishing  immediately follows 
by considering the semiorthogonality \eqref{main-Z-2-decomp-eq}
for the $\Z/2$-action on the $s$-th component of the product $X^n$.

A similar argument shows that the vanishing \eqref{Ext-Z2-n-vanishing-eq} holds for $l=k$ and
$J\neq L$. Thus, in the case $l=j$ we obtain
\begin{align*}
&\Ext^*_{(\Z/2)^n}
\bigl(\bigoplus_{J,|J|=j}(i_J)_*(\id\times\pi^{n-j})^*\Fscr,(i_L)_*(\id\times\pi^{n-j})^*\Gscr\bigr)=\\
&\Ext^*_{(\Z/2)^n}\bigl((i_L)_*(\id\times\pi^{n-j})^*\Fscr,(i_L)_*(\id\times\pi^{n-j})^*\Gscr\bigr)=\\
&\Ext^*(\Fscr,\Gscr),
\end{align*}
where the last equality is obtained from the repeated application of the fully faithfulness in
\eqref{main-Z-2-decomp-eq}. Passing to $S_j\times S_{n-j}$-invariants we deduce our claim.

Next, we need to show that each subcategory $\im(\Phi_j)$ is admissible. It suffices to show that the
left and right adjoint functors to $\Phi_j$ exist. But this immediately 
follows from the existence of left and right adjoints for the functors 
$(q_j)_*$ and $(\pi^{n-j})^*$.

Finally, let us check that the subcategories $\Cscr_j=\Phi_j(D^b_{S_j\times S_{n-j}}(R^j\times Y^{n-j}))$,
$j=0,\ldots,n$, generate $D^b_W(X^n)$. Let $L$ be an ample $S_n$-equivariant line bundle on
$Y^n$. Since the powers of the $W$-equivariant line bundle $(\pi^n)^*L$ generate
the non-equivariant category $D^b(X^n)$, it is enough to check that the $W$-equivariant bundles
$(\pi^n)^*L^i\ot k[W]$ belong to the subcategory generated by $(\Cscr_j)$.
Using the decomposition \eqref{Bn-regular-representation} we can write
$$(\pi^n)^*L^i\ot k[W]=\bigoplus_j (\pi^n)^*(L^i\ot k[S_n])\ot V_j.$$
Thus, it is enough to prove that for any $S_n$-equivariant bundle $E$ and any $k=0,\ldots,n$,
the $W$-equivariant bundle $(\pi^n)^*E\ot V_k$ belongs to the subcategory generated by $(\Cscr_j)$.
Furthermore, since tensoring with the $W$-bundles of the form $(\pi^n)^*E$ preserves all the subcategories
$\Cscr_j$, we only have to consider the bundles $\Oscr_{X^n}\ot V_k$.
Let us start with the exact sequence of $\Z/2$-equivariant sheaves on $X$,
$$0\to \Oscr_X\ot\xi\to \Oscr_X(R)\ot \xi\to \Oscr_R\to 0.$$
where $\xi$ is a nontrivial character of $\Z/2$. Since $\Z/2$ acts trivially on the fibers of the 
$\Z/2$-equivariant line bundle $\Oscr_X(R)\ot\xi$ at fixed points, there exists a line bundle $L$ on $Y$ such that
$\Oscr_X(R)\ot\xi\simeq \pi^*L$. Thus, the above sequence can be rewritten as
\begin{equation}\label{Z-2-equiv-sequence}
0\to \Oscr_X\ot\xi\to \pi^*L\to \Oscr_R\to 0.
\end{equation}
Let $\xi_1,\ldots,\xi_n$ be the natural basis of characters of $(\Z/2)^n$.
Taking the exterior tensor product of $k$ sequences \eqref{Z-2-equiv-sequence} and 
pulling back via the projection $X^n\to X^k$ we obtain a resolution for
$\Oscr_{X^n}\ot \xi_1\ldots\xi_k$ by $(\Z/2)^n$-equivariant sheaves of the form
$$0\to \Oscr_{X^n}\ot \xi_1\ldots\xi_k\to\Fscr_0\to\Fscr_1\to\ldots\to \Fscr_k\to 0,$$
where
$$\Fscr_j=\bigoplus_{J\sub\{1,\ldots,k\}, |J|=j}(i_J)_*(\id\times\pi^{n-j})^*
\left(\Oscr_{R^j}\boxtimes L^{\boxtimes k-j}\boxtimes \Oscr_{Y^{n-k}}\right),$$
where $i_J:R^j\times X^{n-j}\to X^n$ are the embeddings considered before.
Finally, taking direct sum of such resolutions over all subsets in $\{1,\ldots,n\}$
of cardinality $k$ we get a resolution of $\Oscr_{X^n}\ot V_k$ by $W$-equivariant sheaves of the form
$$\tilde{\Fscr}_j=\Phi_j\left(\bigoplus_{K\sub\{1,\ldots,n-j\},|K|=k-j}\Oscr_{R^j}\boxtimes L^{\boxtimes K}\right)$$
where $L^{\boxtimes K}$ is the line bundle on $Y^{n-j}$ obtained as the tensor product of pull-backs of $L$
in the components corresponding to $K$.
\end{proof}

Let $C$ be a smooth connected curve over $\C$ equipped with a non-trivial involution $i:C\to C$, let
$\pi:C\to Q$ be the corresponding double covering, where $Q=C/(\Z/2)$, and let $R=C^i$ be
the set of ramification points of $\pi$.
Then as above the group $W=(\Z/2)^n\rtimes S_n$ acts naturally on $C^n$.
Now we can combine the decomposition of Proposition \ref{global-B-n-prop} with
the semiorthogonal decompositions of
$D^b_{S_j\times S_{n-j}}(R^j\times Q^{n-j})$ constructed in Section \ref{global-A-n-sec}.
More precisely, for each $j=0,\ldots,n$ we have to combine such a decomposition with the natural
orthogonal decomposition coming from the decomposition of $R^j$ into $S_j$-orbits.
The resulting semiorthogonal decomposition of $D^b_W(C^n)$ is described in Theorem \ref{global-Bn-thm} below.
It will be further generalized in Theorem \ref{Gn-thm}.

Let $R=\{p_1,\ldots,p_r\}$. 
For each collection of non-negative integers $k_1,\ldots,k_r$ such that $k_1+\ldots+k_r=j\le n$
let us consider the functor
$$\Phi_{k_1,\ldots,k_r}=(q_{k_1,\ldots,k_r})_*(\pi^{n-j})^*:
D^b_{S_{k_1}\times\ldots\times S_{k_r}\times S_{n-j}}(Q^{n-j})\to D^b_W(C^n)$$
associated with the correspondence
\begin{diagram}
&&[C^{n-j}/W_{k_1,\ldots,k_r}]&&\\
&\ldTo{\pi^{n-j}}&&\rdTo{q_{k_1,\ldots,k_r}}\\
[(Q^{n-j})/(S_{k_1}\times\ldots\times S_{k_r}\times S_{n-j})]&&&&[C^n/W]
\end{diagram}
where $W_{k_1,\ldots,k_r}=(\Z/2)^n\rtimes (S_{k_1}\times\ldots\times S_{k_r}\times S_{n-j})\sub W$, and 
$q_{k_1,\ldots,k_r}$ is induced by the closed embedding 
$$C^{n-j}\to C^n: (x_1,\ldots,x_{n-j})\mapsto (p_1^{k_1},\ldots,p_r^{k_r},x_1,\ldots,x_{n-j}).$$

\begin{thm}\label{global-Bn-thm} The functors $\Phi_{k_1,\ldots,k_r}$, for $k_1+\ldots+k_r\le n$, 
are fully faithful and the subcategories
$\im(\Phi_{k_1,\ldots,k_r})\sub D^b_W(C^n)$ are admissible. We have 
$$\Ext^*(\im(\Phi_{k_1,\ldots,k_r}),\im(\Phi_{k'_1,\ldots,k'_r}))=0$$
unless $k_1\le k'_1,\ldots,k_r\le k'_r$. 
We have a semiorthogonal decomposition of $D^b_W(C^n)$ into these subcategories ordered compatibly with
the above semiorthogonalities.
Furthermore, for each $(k_1,\ldots,k_r)$ we have a further semiorthogonal decomposition of
$$\im(\Phi_{k_1,\ldots,k_r})\simeq D^b_{S_{k_1}\times\ldots\times S_{k_r}\times S_{n-j}}(Q^{n-j}),$$
where $j=k_1+\ldots+k_r$, 
with pieces numbered by collections of partitions $\mu^{(1)},\ldots,\mu^{(r)},\nu$, where
$|\mu^{(i)}|=k_i$, $|\nu|=n-j$, such that the corresponding sheaves are supported
on the stratum in $Q^{n-j}$ associated with $\nu$.
\end{thm}

Note that in the case when $R$ is one point the semiorthogonal
decomposition of the above theorem is numbered by pairs of partitions $(\mu,\nu)$ such that 
$|\mu|+|\nu|=n$. Thus, in the case when $C=\AA^1$ and $i(x)=-x$ we get a semiorthogonal
decomposition for the standard representation of
the Weyl group of type $B_n$ on its vector representation.

\subsection{Decompositions for actions of $G^n\rtimes S_n$}\label{global-B-sec}

More generally, assume that we have a smooth connected curve $C$ with an effective action of a
finite group $G$, so that the quotient $Q=C/G$ is still smooth. 
Let $R=D_1\sqcup\ldots\sqcup D_r$ be the decomposition  into $G$-orbits of the ramification locus
of the projection $\pi:C\to Q$ (note that $D_i$'s are also fibers of $\pi$).
For each $i=1,\ldots,r$, pick a point $p_i\in D_i$.
Note that the stabilizer subgroup $G_i\sub G$ of $p_i$ is a cyclic
group of some order $m_i$ (since we are in characteristic zero). 
The following result was proved
in \cite[Thm.\ 1.2]{P-orbifold}:

\begin{thm}\label{CP-thm} For each $i$ the collection of $G$-equivariant sheaves 
\begin{equation}\label{curve-exc-coll}
\Oscr_{(m_i-1)D_i},\ldots,\Oscr_{2D_i},\Oscr_{D_i}
\end{equation}
is exceptional in $D_G(C)$. Let $\Bscr_i\sub D_G(C)$ be the subcategory generated by the collection
\eqref{curve-exc-coll}.
Then the subcategories $\Bscr_1,\ldots,\Bscr_r$
are mutually orthogonal and we have a semiorthogonal decomposition
\begin{equation}
D^b_G(C)=\lan \pi^*D^b(Q), \Bscr_1,\ldots,\Bscr_r\ran.
\end{equation}
\end{thm}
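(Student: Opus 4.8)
This is [P-orbifold, Thm.~1.2] (see also Remark \ref{curves-rem}); I outline the strategy of its proof. The whole statement is local on $Q$: everything can be checked after base change to \'etale (or formal) neighbourhoods of points of $Q$, using that $\RHom_G$ commutes with flat base change together with the Shapiro-type identity of Lemma \ref{G/H-lem}(i). Over $Q^{\circ}:=Q\setminus\{\text{branch points}\}$ the map $\pi$ is an \'etale $G$-torsor, so $\pi^{*}$ is an equivalence $D^b(Q^{\circ})\xrightarrow{\sim} D^b_G(\pi^{-1}Q^{\circ})$. Near a ramification orbit $D_i$, $C$ looks like $\A^1_x=\Spec k[x]$ with $G_i=\Z/m_i$ acting by a faithful character of weight $a$ ($\gcd(a,m_i)=1$) on the tangent line, $D_i$ becomes the origin, $kD_i$ the subscheme $(x^{k}=0)$, and $Q$ becomes $\Spec k[x^{m_i}]$; moreover the equivariant relative dualizing sheaf $\omega_{\pi}$ is, locally, the weight-$a$ line $\Oscr\langle a\rangle$ (``$dx$'').

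First I would record the formal properties of $\pi^{*}$. Since $(\pi_{*}\Oscr_C)^G=\Oscr_Q$ and $\pi$ is finite flat (both curves are smooth), the projection formula gives that $\pi^{*}:D^b(Q)\to D^b_G(C)$ is fully faithful; it is admissible because it has right adjoint $\Gscr\mapsto(\pi_{*}\Gscr)^G$ and left adjoint $\Gscr\mapsto(\pi_{*}(\Gscr\otimes\omega_{\pi}))^G$. Then comes the key local computation: on $\A^1_x$ the equivariant Koszul-type resolution $0\to\Oscr\langle ka\rangle\xrightarrow{\,x^{k}\,}\Oscr\to\Oscr_{kP}\to 0$ yields, for $1\le k,l\le m-1$,
$$\RHom_{\Z/m}(\Oscr_{kP},\Oscr_{lP})=\begin{cases}k&\text{if }k\ge l,\\ 0&\text{if }k<l,\end{cases}$$
concentrated in degree $0$ (the connecting differential is multiplication by $x^{k}$, which is an isomorphism on the relevant one-dimensional weight spaces of $k[x]/x^{l}$ exactly when $k<l$). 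In particular each $\Oscr_{kD_i}$ is exceptional, $\RHom_G(\Oscr_{kD_i},\Oscr_{lD_i})=0$ for $k<l$, and since $\Bscr_i$ is generated by the finite exceptional collection $(\Oscr_{(m_i-1)D_i},\dots,\Oscr_{D_i})$ it is an admissible subcategory of $D^b_G(C)$.

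For the semiorthogonalities: $\Bscr_i$ and $\Bscr_j$ are mutually orthogonal for $i\ne j$ because their generators have disjoint supports, so even $R\und{\Hom}$ vanishes. To get $\RHom_G(\Bscr_i,\pi^{*}D^b(Q))=0$ I would use the left adjoint of $\pi^{*}$: $\RHom_G(\Oscr_{kD_i},\pi^{*}F)=\RHom_Q\big((\pi_{*}(\Oscr_{kD_i}\otimes\omega_{\pi}))^G,\,F\big)$, and the local model gives $(\pi_{*}(\Oscr_{kD_i}\otimes\omega_{\pi}))^G=(k[x]/x^{k}\otimes\langle a\rangle)^{\Z/m}=0$ for $1\le k\le m-1$ (no monomial $x^{j}$, $0\le j\le k-1$, has weight $\equiv -a$). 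Hence all the orthogonality conditions required for the semiorthogonal decomposition $\langle\pi^{*}D^b(Q),\Bscr_1,\dots,\Bscr_r\rangle$ hold.

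It remains to prove generation. The subcategory $\Tscr:=\langle\pi^{*}D^b(Q),\Bscr_1,\dots,\Bscr_r\rangle$ is admissible, so it suffices to show $\Tscr^{\perp}=0$. Let $\Gscr\in\Tscr^{\perp}$. From $\RHom_G(\pi^{*}F,\Gscr)=\RHom_Q(F,(\pi_{*}\Gscr)^G)=0$ for all $F$ (take $F$ a generator of $D^b(Q)$) we get $(\pi_{*}\Gscr)^G=0$; restricting to $Q^{\circ}$ and applying the descent equivalence above forces $\Gscr|_{\pi^{-1}Q^{\circ}}=0$, so $\Gscr=\bigoplus_i\Gscr_i$ with $\Gscr_i$ supported on the disjoint divisor $D_i$. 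For each $i$ a short d\'evissage on the local model (using the exact sequences $0\to S\to\Oscr_{(k+1)D_i}\to\Oscr_{kD_i}\to0$ with $S$ a simple equivariant skyscraper, which inductively produce all simple equivariant skyscrapers on $D_i$) shows that the thick subcategory generated by $\Oscr_{D_i},\dots,\Oscr_{(m_i-1)D_i}$ is the entire subcategory of complexes supported on $D_i$; since $\Gscr_i$ lies in it and is right-orthogonal to its generators, $\Gscr_i=0$. Therefore $\Gscr=0$ and $\Tscr=D^b_G(C)$. The step I expect to be the main obstacle is the bookkeeping of characters and twists in these local equivariant $\Ext$ and push-forward computations (the precise weights of $\Oscr\langle ka\rangle$ and of $\omega_{\pi}$), together with setting up the global-to-local reduction carefully; everything else is formal once those are in place.
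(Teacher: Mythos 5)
The paper does not reprove this statement: it cites \cite[Thm.\ 1.2]{P-orbifold} verbatim, so there is no internal proof to compare against. Your sketch is nonetheless a plausible reconstruction of the argument, and the parts you flag as mere bookkeeping are in fact fine: the local equivariant Koszul computation of $\RHom_{\Z/m}(\Oscr_{kP},\Oscr_{lP})$, the identification of the left adjoint of $\pi^{*}$ via $\omega_{\pi}$ and the resulting vanishing $(\pi_{*}(\Oscr_{kD_i}\otimes\omega_{\pi}))^{G}=0$ for $1\le k\le m_i-1$, and the support-disjointness orthogonality of $\Bscr_i$ and $\Bscr_j$ are all correct.

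There is, however, a genuine gap in the generation step. You assert that the sequences $0\to S\to\Oscr_{(k+1)D_i}\to\Oscr_{kD_i}\to 0$ ``inductively produce all simple equivariant skyscrapers on $D_i$'' and conclude that the thick subcategory generated by $\Oscr_{D_i},\dots,\Oscr_{(m_i-1)D_i}$ is the full subcategory of complexes supported on $D_i$. Both claims are false. Running those sequences for $k=1,\dots,m_i-2$ produces only $m_i-1$ of the $m_i$ simple equivariant skyscrapers at $D_i$: the one of weight $(m_i-1)a$ never appears, since it would require the term $\Oscr_{m_iD_i}$. Correspondingly, the classes $[\Oscr_{D_i}],\dots,[\Oscr_{(m_i-1)D_i}]$ span only a corank-one sublattice of the Grothendieck group of equivariant sheaves supported on $D_i$, so $\Bscr_i$ is a \emph{proper} subcategory of the supported category --- as it must be, or the stated semiorthogonal decomposition could not hold. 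The correct way to finish is to bring the $\pi^{*}$-block back into play at this point: $\Oscr_{m_iD_i}=\pi^{*}\Oscr_{\pi(p_i)}$ (the paper records exactly this in the proof of Lemma \ref{En-gen-lem}), so your $\Gscr_i\in\Tscr^{\perp}$ is right-orthogonal not only to $\Oscr_{D_i},\dots,\Oscr_{(m_i-1)D_i}$ but also to $\Oscr_{m_iD_i}$. These $m_i$ objects \emph{do} generate the whole category of complexes supported on $D_i$ by your d\'evissage (now run up to $k=m_i-1$), and since $\Gscr_i$ lies in that category and is right-orthogonal to its generators, $\Gscr_i=0$. With this correction your argument goes through.
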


\begin{rem}\label{curves-rem}
It is not hard to see that the pieces of the semiorthogonal decomposition of Theorem \ref{CP-thm} (where we decompose
each $\Bscr_i$ into the subcategories generated by the individual exceptional sheaves) matches with the motivic
decomposition \eqref{mot-dec2-eq}, and thus confirms Conjecture A in this case. Since all the motivic pieces except for $Q$ are
points, this is equivalent to the identity
$$\sum_{g\in G\setminus\{1\}/\sim} |C(g)\backslash C^g|=\sum_{i=1}^r (m_i-1).$$
One way to prove this is to compare 
the corresponding decomposition of Hochschild homology of $[C/G]$ with the general decomposition 
\eqref{HH-decomposition}.
A more direct matching is obtained by comparing contributions to both sides corresponding to each special fiber 
$D_i$ of $\pi$. More precisely if $H=G_i$, the stabilizer subgroup of one of the ramification
points $p_i$ then the equality between these contributions has form
\begin{equation}\label{conjugacy-identity-eq}
\sum_{h\in H\setminus\{1\}/\sim_G} |C(h)\backslash (G/H)^h|=|H|-1.
\end{equation}
Here $\sim_G$ denotes the equivalence relation ``being conjugate in $G$", and 
$(G/H)^h$ denotes the $h$-invariant $G/H$ (which is in bijection with $C^h\cap D_i$). 
Now we observe that the identity \eqref{conjugacy-identity-eq}
holds for any abelian subgroup $H$ in a finite group $G$ and follows from from the
bijection
$$C(h)\backslash (G/H)^h\to H\cap \Ad(G)(h): C(h)gH\mapsto g^{-1}hg,$$
where $\Ad(G)(h)$ is the conjugacy class of $h$ in $G$.
\end{rem}

Now let us consider the induced action of the group
$$\G:=G^n\rtimes S_n$$
on $C^n$. We are going to construct a semiorthogonal decomposition of the corresponding
equivariant category $D^b_{\G}(C^n)$, which reduces to that of Theorem \ref{global-Bn-thm} in the case
$G=\Z/2$.

We start by constructing a coarser decomposition with pieces numbered by collections of $r$ partitions
$(\la^{(1)},\ldots,\la^{(r)})$ such that $\ell(\la^{(1)})+\ldots+\ell(\la^{(r)})\le n$ and nonzero parts of $\la^{(i)}$ are
restricted by $1\le \la^{(i)}_j\le m_i-1$ (here we denote by $\ell(\la)$ the number of nonzero parts
in a partition $\la$). Recall that for a partition $\la$ we denote 
$W_\la=S_{r_1}\times S_{r_2}\times \ldots \sub S_{\ell(\la)}$,
where $r_1,r_2,\ldots$ are the multiplicities of parts in $\la$.
For each such $(\la^{(1)},\ldots,\la^{(r)})$ with $\ell(\la^{(1)})+\ldots+\ell(\la^{(r)})=j$
we will define a functor
$$\Phi_{\la^{(1)},\ldots,\la^{(r)}}:D^b_{W_{\la^{(1)}}\times\ldots W_{\la^{(r)}}\times S_{n-j}}(Q^{n-j})\to
D^b_{\G}(C^n).$$
Note that here the group $W_{\la^{(1)}}\times\ldots W_{\la^{(r)}}\times S_{n-j}$ acts on $Q^{n-j}$ via
the projection to $S_{n-j}$.
The relevant quotient stacks are connected by the diagram
\begin{diagram}
&&[C^n/\G_{\la^{(1)},\ldots,\la^{(r)}}]&&\\
&\ldTo{\pi^{n-j}\pr_{n-j}}&&\rdTo{q}\\
[(Q^{n-j})/(W_{\la^{(1)}}\times\ldots W_{\la^{(r)}}\times S_{n-j})]&&&&[C^n/\G]
\end{diagram}
where 
$$\G_{\la^{(1)},\ldots,\la^{(r)}}=G^n\rtimes(W_{\la^{(1)}}\times\ldots W_{\la^{(r)}}\times S_{n-j})\sub \G,$$
$\pr_{n-j}:C^n\to C^{n-j}$ is the projection onto the last $n-j$ coordinates and $q$ is the natural projection.
For each $i$ let us consider the coherent sheaf
$$K_{\la^{(i)}}:=\Oscr_{(m_i-\la^{(i)}_1)D_i}\boxtimes\Oscr_{(m_i-\la^{(i)}_2)D_i}\boxtimes\ldots$$
on $C^{\ell(\la^{(i)})}$.
Note that it has a natural $G^{\ell(\la^{(i)})}\rtimes W_{\la^{(i)}}$-equivariant
structure. Now we define a $G^j\rtimes(W_{\la^{(1)}}\times\ldots\times W_{\la^{(r)}})$-equivariant sheaf $K$ on $C^j$ by 
$$K_{\la^{(1)},\ldots,\la^{(r)}}=K_{\la^{(1)}}\boxtimes\ldots\boxtimes K_{\la^{(r)}}$$
and set
$$\Phi_{\la^{(1)},\ldots,\la^{(r)}}(\Fscr)=q_*(K_{\la^{(1)},\ldots,\la^{(r)}}\boxtimes (\pi^{n-j})^*\Fscr).$$
For a pair of partitions $\la=(\la_1\ge\la_2\ge\ldots)$ and $\mu=(\mu_1\ge\mu_2\ge\ldots)$ 
we write $\la\sub \mu$ if $\la_j\le \mu_j$ for each $j$.

\begin{thm}\label{Gn-thm} 
The functors $\Phi_{\la^{(1)},\ldots,\la^{(r)}}$ are fully faithful, and
their images are admissible subcategories. 
We have
\begin{equation}
\Ext^*_{\G}(\im(\Phi_{\la^{(1)},\ldots,\la^{(r)}}),\im(\Phi_{\mu^{(1)},\ldots,\mu^{(r)}}))=0
\end{equation}
unless $\la^{(i)}\sub \mu^{(i)}$ for $i=1,\ldots,r$.
Furthermore, we have a semiorthogonal decomposition of $D^b_{\G}(C^n)$ into
these subcategories, ordered compatibly with the above semiorthogonalities.
Each subcategory $\im(\Phi_{\la^{(1)},\ldots,\la^{(r)}})$ has a further semiorthogonal decomposition
into pieces numbered by irreducible representations of 
$W_{\la^{(1)}}\times\ldots\times W_{\la^{(r)}}\times S_{n-j}$ 
(where $j=\ell(\la^{(1)})+\ldots+\ell(\la^{(r)})$).
\end{thm}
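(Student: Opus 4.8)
The plan is to obtain Theorem~\ref{Gn-thm} by the mechanism already used for Proposition~\ref{global-B-n-prop}, with the elementary $\Z/2$-decomposition \eqref{main-Z-2-decomp-eq} replaced by Theorem~\ref{CP-thm} and with the symmetric-group factors refined afterwards by Theorem~\ref{global-A-thm}. Throughout write $\G=G^n\rtimes S_n$, $H=G^n$, abbreviate $(\la^{(1)},\ldots,\la^{(r)})$ by $\und\la$, and set $j=\ell(\la^{(1)})+\ldots+\ell(\la^{(r)})$. Unravelling the definition of $\Phi_{\und\la}$ through the quotient stacks and using $\G=H\rtimes(W_{\la^{(1)}}\times\cdots\times W_{\la^{(r)}}\times S_{n-j})$, one sees that, viewed $H$-equivariantly, $\Phi_{\und\la}(\Fscr)$ is the direct sum over the $S_n/(W_{\la^{(1)}}\times\cdots\times W_{\la^{(r)}}\times S_{n-j})$ ways of placing the pattern ``$\Oscr_{(m_i-\la^{(i)}_k)D_i}$ in $\ell(\la^{(i)})$ of the $n$ slots, $\pi^\ast(\cdot)$ in the remaining $n-j$ slots'' of the corresponding exterior products of these sheaves on $C^n$. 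Hence, by Lemma~\ref{G/H-lem}(i) and a K\"unneth computation, $\Ext^\ast_\G(\Phi_{\und\la}(\Fscr),\Phi_{\und\mu}(\Gscr))$ is the $S_n$-invariant part of a direct sum, over pairs of patterns, of slotwise tensor products of the groups $\RHom_G(\Oscr_{aD_i},\Oscr_{a'D_{i'}})$, $\RHom_G(\Oscr_{aD_i},\pi^\ast(\cdot))$, $\RHom_G(\pi^\ast(\cdot),\Oscr_{a'D_{i'}})$ and $\RHom_G(\pi^\ast(\cdot),\pi^\ast(\cdot))$.

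These building blocks are supplied by Theorem~\ref{CP-thm}: $\RHom_G(\Oscr_{aD_i},\pi^\ast F)=0$ and $\RHom_G(\Oscr_{aD_i},\Oscr_{a'D_{i'}})=0$ for $i\neq i'$ (the $D_i$ are distinct fibres of $\pi$, hence disjoint); $\RHom_G(\Oscr_{aD_i},\Oscr_{a'D_i})=0$ for $a<a'$ and $=\CC$ for $a=a'$, by exceptionality of the collection \eqref{curve-exc-coll}; and $\pi^\ast$ is fully faithful. Running the slotwise bookkeeping exactly as at the end of the proof of Proposition~\ref{global-B-n-prop} shows that $\Phi_{\und\la}$ is fully faithful with $\im\Phi_{\und\la}\simeq D^b_{W_{\la^{(1)}}\times\cdots\times W_{\la^{(r)}}\times S_{n-j}}(Q^{n-j})$, and that $\Ext^\ast_\G(\im\Phi_{\und\la},\im\Phi_{\und\mu})$ can be nonzero only if the multiset of parts of the $\la^{(i)}$ admits an injection into that of the $\mu^{(i)}$ respecting $i$ and sending each part $\la^{(i)}_k$ to some part $\mu^{(i)}_{k'}\ge\la^{(i)}_k$ (a $\pi^\ast$-slot of $\und\mu$ cannot receive from an $\Oscr$-slot of $\und\la$, while the reverse is allowed); by Hall's marriage lemma applied separately to each $i$, such an injection exists iff $\la^{(i)}_k\le\mu^{(i)}_k$ for all $k$, i.e.\ $\la^{(i)}\sub\mu^{(i)}$ for every $i$. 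Finally, admissibility of each $\im\Phi_{\und\la}$ follows, as in Proposition~\ref{global-B-n-prop}, from the existence of both adjoints for $\Phi_{\und\la}$, which in turn follows from the existence of both adjoints for the pushforward along the finite representable morphism $[C^n/\G_{\und\la}]\to[C^n/\G]$ and for the pullbacks involved.

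For generation I would again copy the corresponding part of the proof of Proposition~\ref{global-B-n-prop}. Fixing a $\G$-equivariant ample line bundle $L$ on $C^n$ pulled back from $C^n/\G$, and using that tensoring by such line bundles preserves every $\im\Phi_{\und\la}$, it suffices to place each $\Oscr_{C^n}\ot V$, with $V$ an irreducible $\G$-representation, in the subcategory generated by the $\im\Phi_{\und\la}$. Decomposing the regular representation of $\G=G^n\rtimes S_n$ by Clifford theory for the normal subgroup $G^n$, resolving each $\Oscr_C$ tensored with an irreducible $G$-representation by the pieces $\pi^\ast D^b(Q)$, $\Bscr_1,\ldots,\Bscr_r$ of Theorem~\ref{CP-thm} (via the short exact sequences it furnishes, the analogues of \eqref{Z-2-equiv-sequence}), then taking exterior products over the $n$ factors and averaging over $S_n$, one builds resolutions of $\Oscr_{C^n}\ot V$ with all terms in the images of the $\Phi_{\und\la}$'s; combined with the semiorthogonalities above this yields the asserted semiorthogonal decomposition of $D^b_\G(C^n)$ for any ordering refining the partial order in which $\und\la$ precedes $\und\mu$ whenever $\la^{(i)}\sub\mu^{(i)}$ for all $i$.

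It remains to refine each $\im\Phi_{\und\la}\simeq D^b_{W_{\la^{(1)}}\times\cdots\times W_{\la^{(r)}}\times S_{n-j}}(Q^{n-j})$. Since $W_{\la^{(1)}}\times\cdots\times W_{\la^{(r)}}$ acts trivially on $Q^{n-j}$ and we work in characteristic zero, this category is the orthogonal direct sum $\bigoplus_\chi D^b_{S_{n-j}}(Q^{n-j})$ over $\chi\in\Irr(W_{\la^{(1)}}\times\cdots\times W_{\la^{(r)}})$, and Theorem~\ref{global-A-thm} splits each summand into pieces indexed by partitions $\nu$ of $n-j$; the resulting index set $\{(\chi,\nu)\}$ is in bijection with $\Irr(W_{\la^{(1)}}\times\cdots\times W_{\la^{(r)}}\times S_{n-j})$. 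The main obstacle I anticipate is precisely the first step: organizing the $\G$-equivariantization of an $S_n$-orbit of pieces of the factorwise decomposition of $D^b_{G^n}(C^n)$ and carrying out the slotwise $\Ext$-computation cleanly in the equivariant setting, with the additional combinatorics of the partitions $\la^{(i)}$ and the ramification orbits $D_i$ layered on top of the already delicate arguments of Proposition~\ref{global-B-n-prop} and Lemma~\ref{local-semiorth-lem}.
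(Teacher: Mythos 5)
Your proposal takes essentially the same route as the paper: slotwise $G^n$-$\Ext$ computations reduced via induction/restriction to the exceptional collection and semiorthogonal decomposition of Theorem~\ref{CP-thm} for full faithfulness, semiorthogonality and admissibility (you make the combinatorics of the injection/containment condition explicit via Hall's lemma, which the paper leaves implicit), Clifford theory for the normal subgroup $G^n\subset\G$ together with exterior powers of the Theorem~\ref{CP-thm} resolutions for generation (formalized in the paper as Lemmas~\ref{ind-comp-lem} and~\ref{En-gen-lem}), and the orthogonal split over $\Irr(W_{\la^{(1)}}\times\cdots\times W_{\la^{(r)}})$ followed by Theorem~\ref{global-A-thm} for the refinement. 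The one step you would need to firm up is generation, where ``taking exterior products and averaging over $S_n$'' has to be replaced by the double induction of Lemma~\ref{En-gen-lem} (first reduce the $G$-equivariant bundle to simple sheaves on the special fibres via the short exact sequences, then handle $\xi^j\otimes\Oscr_{D_i}$ by induction on $j$), but the plan is the paper's.
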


We start with some auxiliary statements. Suppose a group $H$ acts on a space $X$ and
$H'\sub H$ is a subgroup. Then the push-forward with respect to the natural projection
$[X/H']\to [X/H]$ gives a natural functor between the categories of equivariant sheaves that we denote
$$\Ind_{H'}^H: D^b_{H'}(X)\to D^b_{H}(X).$$
Note that in the definition of the functors $\Phi_{\la^{(1)},\ldots,\la^{(r)}}(\Fscr)$ we have
$q_*=\Ind_{\G_{\la^{(1)},\ldots,\la^{(r)}}}^{\G}$.

\begin{lem}\label{ind-comp-lem} 
Let $a_1,\ldots,a_m$ be nonnegative integers such that $a_1+\ldots+a_m=n$.
For each $i=1,\ldots,m$, let $a_{i1},a_{i2},\ldots$ be nonnegative integers such that
$a_{i1}+a_{i2}+\ldots=a_i$. Set $H=G^n\rtimes(S_{a_1}\times\ldots\times S_{a_m})$.
Then 
\begin{align*}
&\Ind_{H}^\G\left(
\Ind_{G^{a_1}\rtimes(S_{a_{11}}\times  S_{a_{12}}\times\ldots)}^{H}(\Fscr_1)\boxtimes\ldots
\boxtimes
\Ind_{G^{a_m}\rtimes(S_{a_{m1}}\times  S_{a_{m2}}\times\ldots)}^{H}(\Fscr_m)\right)\simeq\\
&\Ind_{G^n\rtimes(S_{a_{11}}\times  S_{a_{12}}\times\ldots\times S_{a_{m1}}\times  S_{a_{m2}}\times\ldots)}^{\G}(\Fscr_1\boxtimes\ldots\boxtimes\Fscr_m).
\end{align*}
\end{lem}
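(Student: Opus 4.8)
\textbf{Plan of proof for Lemma \ref{ind-comp-lem}.} The statement is an instance of transitivity (``in stages'') of the induction functor $\Ind$ combined with compatibility of $\Ind$ with exterior products along a product decomposition of the ambient space. The plan is to first record two elementary functorial properties and then assemble them.

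First I would note the \emph{transitivity of $\Ind$}: if $H''\subset H'\subset H$ are nested groups acting on $X$, then $\Ind_{H''}^{H}\simeq \Ind_{H'}^{H}\circ\Ind_{H''}^{H'}$, since the composition of the two projections of quotient stacks $[X/H'']\to[X/H']\to[X/H]$ is the projection $[X/H'']\to[X/H]$, and push-forward is functorial. This is just functoriality of $Rf_*$ for a composite $f=g\circ h$. Second, I would record a \emph{product compatibility}: if $H=H_1\times H_2$ acts on $X=X_1\times X_2$ componentwise and $H_i'\subset H_i$, then for $\Fscr_i\in D^b_{H_i'}(X_i)$ one has a natural isomorphism
$$\Ind_{H_1'\times H_2'}^{H_1\times H_2}(\Fscr_1\boxtimes\Fscr_2)\simeq \Ind_{H_1'}^{H_1}(\Fscr_1)\boxtimes\Ind_{H_2'}^{H_2}(\Fscr_2),$$
because the square of quotient stacks expressing $[X_1\times X_2/H_1'\times H_2']\to[X_1\times X_2/H_1\times H_2]$ as a product of the two individual projection maps is Cartesian, and push-forward commutes with exterior product along such a box-product of morphisms (base change / Künneth for quotient stacks).

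With these in hand, the proof is bookkeeping. Set $m$ and the $a_i$, $a_{ij}$ as in the statement. Apply the product compatibility to the group $H=G^n\rtimes(S_{a_1}\times\cdots\times S_{a_m})$, viewing it as containing the block-diagonal subgroup $\prod_i\bigl(G^{a_i}\rtimes(S_{a_{i1}}\times S_{a_{i2}}\times\cdots)\bigr)$ and the space $C^n=\prod_i C^{a_i}$: this identifies the inner boxed induction in the left-hand side with a single induction of $\Fscr_1\boxtimes\cdots\boxtimes\Fscr_m$ from $G^n\rtimes(S_{a_{11}}\times\cdots\times S_{a_{m1}}\times\cdots)$ up to $H$. (Here one uses that $S_{a_i}$ contains $S_{a_{i1}}\times S_{a_{i2}}\times\cdots$ as a Young subgroup, and the $G^n$-factor is common to all the groups involved and so simply rides along.) Then apply transitivity of $\Ind$ to the tower $G^n\rtimes(S_{a_{11}}\times\cdots)\subset H\subset\G$ to collapse the remaining composition $\Ind_H^\G\circ\Ind_{G^n\rtimes(S_{a_{11}}\times\cdots)}^H$ into the single functor $\Ind_{G^n\rtimes(S_{a_{11}}\times\cdots)}^\G$, which is exactly the right-hand side.

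The only genuine point requiring care — and the step I expect to be the main obstacle — is checking that the relevant square of quotient stacks in the product-compatibility statement is honestly Cartesian and that the base-change isomorphism for $Rf_*$ applies: since all the groups in sight are finite extensions of $G^n$, the maps of quotient stacks are representable finite morphisms, so base change holds without flatness hypotheses; but one must be slightly attentive to the semidirect-product structure, namely that the relevant subgroups are genuinely of the block form $G^n\rtimes(\text{Young subgroup})$ and that conjugation by $S_n$ does not mix the blocks in a way that breaks the identification. Once this compatibility of the $\rtimes$-structure with the partition into blocks is spelled out, the rest is a routine diagram chase, which I would leave to the reader.
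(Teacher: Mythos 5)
Your proof is correct and matches the paper's approach: the paper's entire proof is the one-line observation that $\Ind_{H'}^{H}\circ\Ind_{H''}^{H'}\simeq\Ind_{H''}^{H}$ for a chain $H''\subset H'\subset H$, treating the compatibility of $\Ind$ with $\boxtimes$ along the block decomposition $H=\prod_i(G^{a_i}\rtimes S_{a_i})$ as implicit notational bookkeeping. You have correctly isolated and justified that second step (the Cartesian square and finite representable base change), which the paper leaves unstated, but the argument is the same.
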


\Pf . This immediately follows from the fact that for a chain of subgroups $H''\sub H'\sub H$,
where $H$ acts on a space $X$, one has $\Ind_{H'}^H\circ \Ind_{H''}^{H'}\simeq\Ind_{H''}^H$.
\ed

\begin{lem}\label{En-gen-lem} 
Let $E$ be a $G$-equivariant vector bundle on $C$. Then the $\G$-equivariant
vector bundle $E^{\boxtimes n}$ belongs to the triangulated subcategory generated by the images
of the functors $\Phi_{\la^{(1)},\ldots,\la^{(r)}}$.
\end{lem}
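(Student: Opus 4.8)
The plan is to reduce $E$ to the generators of $D^b_G(C)$ supplied by Theorem~\ref{CP-thm}, to pass to $n$-fold external powers in an $S_n$-equivariant fashion, and then to recognise the resulting $\G$-equivariant pieces inside the images of the functors $\Phi_{\la^{(1)},\ldots,\la^{(r)}}$.

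First I would use Theorem~\ref{CP-thm}: since $D^b_G(C)=\langle\pi^*D^b(Q),\Bscr_1,\ldots,\Bscr_r\rangle$ and each $\Bscr_i$ is generated by the exceptional collection $\Oscr_{(m_i-1)D_i},\ldots,\Oscr_{D_i}$, the object $E$ is a finite iterated cone of shifts of the generators $\pi^*\Gscr$ (with $\Gscr\in D^b(Q)$) and $\Oscr_{kD_i}$ ($1\le i\le r$, $1\le k\le m_i-1$); that is, $E\simeq\Tot(K^\bullet)$ for a bounded complex $K^\bullet$ in $D^b_G(C)$ whose terms are finite sums of such shifted generators. Forming the external product over $C$ and totalising the resulting $n$-fold complex gives $E^{\boxtimes n}\simeq\Tot((K^\bullet)^{\boxtimes n})$, compatibly with the natural $S_n$-action permuting the factors (with Koszul signs); hence, viewed as a $\G$-equivariant object of $D^b_\G(C^n)$, $E^{\boxtimes n}$ acquires a finite filtration indexed by the $S_n$-orbits of multidegrees, whose graded pieces have the form $\Ind^{\G}_{G^n\rtimes S_\mu}(F_1\boxtimes\cdots\boxtimes F_n)$, where each $F_l$ is one of the shifted generators above, $S_\mu\subseteq S_n$ is the stabiliser of the corresponding multidegree, and $\Ind$ denotes push-forward along the evident map of quotient stacks (as in \S\ref{global-B-sec}). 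Since external product over $C$ is exact and commutes with shifts and cones, and $\Ind^{\G}_{G^n\rtimes S_\mu}$ is triangulated, it is enough to place each such object in the triangulated subcategory generated by the images of the $\Phi_{\la^{(1)},\ldots,\la^{(r)}}$.

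Fix one such induced box product. After permuting the factors of $C^n$ by an element of $S_n\subset\G$ (which changes the object only by a canonical isomorphism) I may assume the $F_l$ that are shifts of some $\Oscr_{kD_i}$ come first, grouped by $i$ and, within each $i$, by the exponent $k$, followed by shifts of $\pi^*\Gscr_1,\ldots,\pi^*\Gscr_{n-j}$. Encoding, for each $i$, the multiset of exponents $k$ occurring in the $D_i$-block as a partition $\la^{(i)}$ with parts $\la^{(i)}_s:=m_i-k\in\{1,\ldots,m_i-1\}$ and setting $j=\sum_i\ell(\la^{(i)})$, the $D_i$-blocks reassemble (up to an overall shift and reordering) into $K_{\la^{(1)},\ldots,\la^{(r)}}$. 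Using transitivity of induction, $\Ind^{\G}_{G^n\rtimes S_\mu}=\Ind^{\G}_{\G_{\la^{(1)},\ldots,\la^{(r)}}}\circ\Ind^{\G_{\la^{(1)},\ldots,\la^{(r)}}}_{G^n\rtimes S_\mu}$, together with compatibility of induction with external products (Lemma~\ref{ind-comp-lem}) and base change for the relevant Cartesian squares of quotient stacks involving the $\pi$-factors, one rewrites the inner induction as a shift of $K_{\la^{(1)},\ldots,\la^{(r)}}\boxtimes(\pi^{n-j})^*\Fscr$ for a suitable $\Fscr\in D^b_{W_{\la^{(1)}}\times\cdots\times W_{\la^{(r)}}\times S_{n-j}}(Q^{n-j})$ (an induced-up box product of the $\Gscr_l$, trivial on the $W_{\la^{(i)}}$-factors), at worst after passing to a direct summand under the characteristic-zero averaging idempotent that extracts the natural $\prod_i W_{\la^{(i)}}$-equivariant structure on $K_{\la^{(1)},\ldots,\la^{(r)}}$ from the induced one. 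Applying $\Ind^{\G}_{\G_{\la^{(1)},\ldots,\la^{(r)}}}$ then places the graded piece inside $\im\Phi_{\la^{(1)},\ldots,\la^{(r)}}$, and summing over the filtration yields the lemma.

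The step I expect to be the main obstacle is this last matching: the equivariant structure on $\Ind^{\G}_{G^n\rtimes S_\mu}(F_1\boxtimes\cdots\boxtimes F_n)$ forced by the totalisation differs from the structure $K_{\la^{(1)},\ldots,\la^{(r)}}\boxtimes(\pi^{n-j})^*\Fscr$ built into $\Phi_{\la^{(1)},\ldots,\la^{(r)}}$ by a permutation of factors and by replacing the natural $\prod_i W_{\la^{(i)}}$-structure with the induction of the underlying $G^j$-structure. Both discrepancies are harmless in characteristic zero (the orders of the $W_{\la^{(i)}}$ and of $S_n$ being invertible), but making this precise requires either the direct-summand argument above — legitimate here because the subcategory generated by the $\im\Phi_{\la^{(1)},\ldots,\la^{(r)}}$ will be shown to be admissible, hence thick — or a more careful choice of the filtration that avoids the twist; organising this bookkeeping, together with the base-change statements for the quotient stacks, is the bulk of the work, the rest being formal.
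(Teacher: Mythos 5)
Your proposal is in the right spirit but takes a somewhat different route from the paper, and the step you single out as the "main obstacle" is where the paper's organisation is doing real work. The paper never resolves $E$ all at once by shifted generators. Instead it works \emph{iteratively inside the abelian category}: it starts from a canonical short exact sequence $0\to E\to\pi^*E'\to\Fscr\to 0$ of $G$-equivariant coherent sheaves (with $E'$ a vector bundle on $Q$ and $\Fscr$ torsion, supported on the orbits $D_i$), takes the $n$-th box power of this \emph{sequence} to obtain a complex of honest sheaves whose terms are $\Ind^\G_{G^n\rtimes(S_k\times S_{n-k})}\bigl(\Fscr^{\boxtimes k}\boxtimes\pi^*(E')^{\boxtimes n-k}\bigr)$, applies Lemma~\ref{ind-comp-lem} to reduce to the case $E=\Fscr$, does a devissage on $\Fscr$ to reduce further to the simple $G$-equivariant sheaves $\xi^j\ot\Oscr_{D_i}$, and handles those by induction on $j$ via the exact sequences $0\to\xi^j\ot\Oscr_{D_i}\to\Oscr_{(j+1)D_i}\to\Oscr_{jD_i}\to 0$. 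Because every object encountered sits in cohomological degree~$0$, there are no Koszul signs and no equivariant-structure ambiguities to absorb.

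Your top-down version — resolve $E$ by a complex $K^\bullet$ with terms sums of shifted generators, then read off the multidegree filtration of $(K^\bullet)^{\boxtimes n}$ — can be made to work, but the subtleties you flag are genuine and you leave them unresolved. Two comments. First, your fallback (pass to direct summands, then appeal to thickness) is formally available in the paper's chronology, since admissibility of the $\im\Phi_{\la^{(1)},\ldots,\la^{(r)}}$ is deduced from saturatedness of the source categories \emph{before} this lemma is invoked in the generation step; still, it proves a thick-subcategory version of the statement rather than the stated triangulated one, so it is a bit awkward. Second, and more to the point, the two discrepancies you name behave differently. The Koszul sign that twists the $W_{\la^{(i)}}$-equivariant structure when generators appear in odd degree, and the fact that the stabiliser of a multidegree is only a proper Young subgroup of $W_\la\times S_{n-j}$ when the same generator occurs in several shifts, are both absorbed without taking summands: the input $\Fscr$ lives in $D^b_{W_{\la^{(1)}}\times\cdots\times W_{\la^{(r)}}\times S_{n-j}}(Q^{n-j})$ where the $W_\la$-factors act trivially, so its $W_\la$-representation structure is free to twist by signs, and the projection formula ($K_{\la^{(1)},\ldots,\la^{(r)}}$ already being $W_\la$-equivariant) lets you slide the inner induction past the box factor. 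On the other hand, the mismatch you describe as "replacing the natural $\prod_i W_{\la^{(i)}}$-structure with the induction of the underlying $G^j$-structure" does not actually arise: the graded pieces of the multidegree filtration of a $\G$-equivariant complex carry the honest $\G$-equivariant structure, not one induced up from $G^n$. So the gap in your write-up is the unfinished bookkeeping, not a conceptual error; the paper's iterative organisation is chosen precisely to make that bookkeeping disappear.
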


\Pf . First, we have a canonical exact sequence in $\Coh_G(C)$
$$0\to E\to \pi^*E'\to \Fscr\to 0$$
where $\Fscr$ is some sheaf in the subcategory generated by $\Bscr_1,\ldots,\Bscr_r$ (see Theorem \ref{CP-thm}).
By taking the $n$th exterior tensor power, we get a resolution of $E^{\boxtimes n}$ with terms of the form
$$\Ind_{G^n\rtimes (S_k\times S_{n-k})}^{\G}\left(\Fscr^{\boxtimes k}\boxtimes \pi^*(E')^{\boxtimes n-k}\right)$$
Hence, by Lemma \ref{ind-comp-lem}, it is enough to prove that the assertion is true with $E$ replaced
by $\Fscr$.
Next, we claim that if $\Fscr$ fits into an exact sequence of $G$-equivariant sheaves
$$0\to \Fscr'\to \Fscr\to \Fscr''\to 0$$
and the assertion holds for $\Fscr'$ and $\Fscr''$ then it holds for $\Fscr$. Indeed, we have an sequence
of the form
$$0\to (\Fscr')^{\boxtimes n}\to \Fscr^{\boxtimes n}\to \ldots$$
where other terms are induced by exterior tensor powers involving $<n$ factors $\Fscr$ (and some factors
$\Fscr''$). Hence, our claim follows, using the induction in $n$ and Lemma \ref{ind-comp-lem}.

Thus, the statement reduces to the case when $\Fscr$ is one of the simple $G$-equivariant sheaves
supported on one $G$-orbit $D_i$. 
Note that for $p=1,\ldots,m_i-1$, we have
$$\Oscr_{pD_i}^{\boxtimes s}\in\im(\Phi_{0,\ldots,0,(p)^s,0,\ldots,0)}),$$
where we take $\la^{(i)}=(p)^s$ ($p$ repeated $s$ times) and the remaining partitions are zero.
Also, 
$$\Oscr_{mD_i}=\pi^*\Oscr_{\pi(p_i)},$$ 
so that $\Oscr_{mD_i}^{\boxtimes s}$ is in the image
of $(\pi^s)^*$. Thus, the assertion holds for the exterior tensor powers of the sheaves $\Oscr_{pD_i}$, with $p=1,\ldots,m_i$.
Let $\xi:G_i\to\GG_m$ be the character of the stabilizer subgroup
corresponding to the action on the fiber of $\om_C$ at $p_i$. Then the simple $G$-equivariant sheaves supported
on $D_i$ are of the form $\xi^j\ot \Oscr_{D_i}$, $j=0,\ldots,m_i-1$.
Now the assertion for the exterior tensor powers of these sheaves follows as before by induction on $j$,
using the exact sequences of $G$-equivariant sheaves
$$0\to \xi\ot\Oscr_{D_i}\to \Oscr_{2D_i}\to \Oscr_{D_i}\to 0,$$
$$0\to \xi^2\ot\Oscr_{D_i}\to \Oscr_{3D_i}\to \Oscr_{2D_i}\to 0,$$
etc.
\ed

\noindent {\it Proof of Theorem \ref{Gn-thm}}. 
For 
$$\Fscr\in D^b_{W_{\la^{(1)}}\times\ldots W_{\la^{(r)}}\times S_{n-j}}(Q^{n-j})\ \text{ and }\ 
\Gscr\in D^b_{W_{\la^{(1)}}\times\ldots W_{\la^{(r)}}\times S_{n-k}}(Q^{n-k}),$$ 
where $j=\ell(\la^{(1)})+\ldots+\ell(\la^{(r)})$, $k=\ell(\mu^{(1)})+\ldots+\ell(\mu^{(r)})$,
we have
\begin{align*}
&\Ext^*_{\G}\left(\Phi_{\la^{(1)},\ldots,\la^{(r)}}(\Fscr),\Phi_{\mu^{(1)},\ldots,\mu^{(r)}}(\Gscr)\right)=\\
&\Ext^*_{\G_{\mu^{(1)},\ldots,\mu^{(r)}}}\left(\Ind_{\G_{\la^{(1)},\ldots,\la^{(r)}}}^{\G}(K_{\la^{(1)},\ldots,\la^{(r)}}
\boxtimes (\pi^{n-j})^*\Fscr), K_{\mu^{(1)},\ldots,\mu^{(r)}}\boxtimes (\pi^{n-k})^*\Gscr\right).
\end{align*}
As in the proof of Proposition \ref{global-B-n-prop}, considering only the action of $G^n$
and using the semiorthogonalities of Theorem \ref{CP-thm}, we see that
the above $\Ext^*$ vanishes unless $\la^{(1)}\sub \mu^{(1)},\ldots,\la^{(r)}\sub\mu^{(r)}$
(recall that each part $p$ in $\la^{(i)}$ corresponds to the sheaf $\Oscr_{(m_i-p)D_i}$ on
the corresponding factor). Similarly, in the case when $\la^{(i)}=\mu^{(i)}$, using only $G^n$-invariance
in the above $\Ext^*$ we get $\Ext^*(\Fscr,\Gscr)$ computed in $D^b(Q^{n-j})$.
Taking the invariants with respect to $W_{\la^{(1)}}\times\ldots W_{\la^{(r)}}\times S_{n-j}$
gives $\Ext^*$ in the correct equivariant category, so we deduce that the functor
$\Phi_{\la^{(1)},\ldots,\la^{(r)}}$ is fully faithful.
Note that each category $D^b_{W_{\la^{(1)}}\times\ldots W_{\la^{(r)}}\times S_{n-j}}(Q^{n-j})$ is
saturated, so it remains to show that the images of the functors $\Phi_{\la^{(1)},\ldots,\la^{(r)}}$
generate the category $D^b_{\G}(C^n)$.

Let $L$ be an ample line bundle on $Q$. Then the line bundles $(\pi^*L^i)^{\boxtimes n}$
generate the non-equivariant category $D^b(C^n)$. Hence, to
check that the subcategories $\im(\Phi_{\la^{(1)},\ldots,\la^{(r)}})$ generate $D^b_{\G}(C^n)$,
it is enough to check that the $\G$-equivariant bundles $(\pi^*L^i)^{\boxtimes n}\ot k[\G]$
belong to the subcategory generated by $(\im(\Phi_{\la^{(1)},\ldots,\la^{(r)}}))$.

Let $V_1,\ldots,V_d$ be all non-isomorphic irreducible representations of $G$.
For non-negative integers $m_1,\ldots,m_d$, such that $m_1+\ldots+m_d=n$, consider the representation
$V_1^{\ot m_1}\ot\ldots\ot V_d^{\ot m_d}$ of $G^n\rtimes (S_{m_1}\times\ldots\times S_{m_d})$, and 
define a $\G$-representation by
$$V_{m_1,\ldots,m_d}=
\Ind_{G^n\rtimes (S_{m_1}\times\ldots\times S_{m_d})}^{\G}(V_1^{\ot m_1}\ot\ldots\ot V_d^{\ot m_d}).$$
Note that if $V$ is any $G$-representation then
$V^{\ot n}$ decomposes as a $\G$-representation into a direct sum of representations
$V_{m_1,\ldots,m_d}$ taken with some multiplicities.
On the other hand, we have an isomorphism of $\G$-representations
$$k[G^n]\otimes k[S_n]\simeq k[\G],$$
where $\G$ acts on $k[S_n]$ via the projection $\G\to S_n$, and on $G^n$ by conjugation.
Furthermore, $k[G^n]\simeq k[G]^{\ot n}$, where $G$ acts on $k[G]$ by conjugation.
Hence, every irreducible $\G$-representation is a direct summand in a representation of the form
$V_{m_1,\ldots,m_d}\otimes k[S_n]$.

Thus, it is enough to prove that the $\G$-equivariant bundles
$$(\pi^*L^i)^{\boxtimes n}\ot V_{m_1,\ldots,m_d}\ot k[S_n]$$
belong to the 
subcategory generated by $(\im(\Phi_{\la^{(1)},\ldots,\la^{(r)}}))$.
It is easy to see that tensoring with $k[S_n]$ preserve each subcategory
$\im(\Phi_{\la^{(1)},\ldots,\la^{(r)}})$, so we are reduced to proving the same assertion for
$(\pi^*L^i)^{\boxtimes n}\ot V_{m_1,\ldots,m_d}$.
We have
$$(\pi^*L^i)^{\boxtimes n}\ot V_{m_1,\ldots,m_d}=
\Ind_{G^n\rtimes (S_{m_1}\times\ldots\times S_{m_d})}^{\G}
\left((\pi^*L^i\ot V_1)^{\boxtimes m_1}\ot\ldots\ot (\pi^*L^i\ot V_d)^{\boxtimes m_d}\right).$$
By Lemma \ref{En-gen-lem}, for each $j=1,\ldots,d$ the object
$$(\pi^*L^i\ot V_j)^{\boxtimes m_j}\in D^b_{G^{m_i}\rtimes S_{m_i}}(C^{m_i})$$
belongs to the subcategory generated by $(\im(\Phi_{\la^{(1)},\ldots,\la^{(r)}}))$
(with $n$ replaced by $m_i$).
It remains to apply Lemma \ref{ind-comp-lem}.

We have an orthogonal decomposition of
$$\im(\Phi_{\la^{(1)},\ldots,\la^{(r)}})=D^b_{W_{\la^{(1)}}\times\ldots W_{\la^{(r)}}\times S_{n-j}}(Q^{n-j})$$
numbered by irreducible representations of $W_{\la^{(1)}}\times\ldots W_{\la^{(r)}}$ with each piece
equivalent to $D^b_{S_{n-j}}(Q^{n-j})$. The latter category has a semiorthogonal decomposition
constructed in Section \ref{global-A-n-sec}.
\ed

Note that in the case $G=\mu_m$ (roots of unity of order $m$) the group $\G$ above becomes the reflection group $G(m,1,n)$,
and in the case when $C=\A^1$ and $\mu_m$ acts on $\A^1$ through the embedding $\mu_m\sub\GG_m$,
the action of $G(m,1,n)$ on $C^n=\A^n$ becomes the standard vector representation of $G(m,1,n)$.
We can check that in this case the pieces of the semiorthogonal decomposition
of Theorem \ref{Gn-thm} match with the pieces of the motivic decomposition \eqref{mot-dec2-eq}.

\begin{cor}\label{Gm1n-cor} 
Conjecture A holds for the standard action of $G(m,1,n)$ on $\A^n$.
\end{cor}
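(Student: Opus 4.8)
The plan is to specialize Theorem \ref{Gn-thm} to the curve $C=\A^1$ equipped with the action of $G=\mu_m$ through the inclusion $\mu_m\sub\G_m$, and then to match the resulting semiorthogonal decomposition against the motivic decomposition \eqref{mot-dec2-eq} by invoking Proposition \ref{HH-semiorth-match-prop}. In this case $\G=G^n\rtimes S_n=G(m,1,n)$, the induced action on $C^n=\A^n$ is the standard vector representation $V$, and the quotient map $\pi:C\to Q=\A^1$, $z\mapsto z^m$, is ramified only over $0$, so in the notation of Theorem \ref{Gn-thm} one has $r=1$, $D_1=\{0\}$, $m_1=m$. The pieces of the resulting decomposition of $D^b_{G(m,1,n)}(\A^n)$ are then indexed by triples $(\la,\chi,\nu)$, where $\la$ is a partition all of whose parts lie in $\{1,\ldots,m-1\}$, $\chi\in\Irr(W_\la)$ --- equivalently a tuple $(\chi^{(i)})_{1\le i\le m-1}$ of partitions with $|\chi^{(i)}|$ equal to the multiplicity of $i$ in $\la$ --- and $\nu$ is a partition of $n-\ell(\la)$; the piece attached to $(\la,\chi,\nu)$ arises by applying the type $A$ decomposition of Theorem \ref{global-A-thm} inside the $\chi$-isotypic summand of $\im(\Phi_\la)\simeq D^b_{W_\la\times S_{n-\ell(\la)}}(Q^{n-\ell(\la)})$, and is therefore equivalent to $D^b(\A^{\ell(\nu)}/W_\nu)$.

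First I would verify that each such piece is $D^b$ of a smooth affine $\G_m$-variety over $V/G$, with the equivalence compatible with the $D^b(V/G)$-module structure. Since $W_\nu=\prod_i S_{r_i(\nu)}$ acts on $\A^{\ell(\nu)}=\prod_i\A^{r_i(\nu)}$ as a product of symmetric groups in their permutation (hence reflection) representations, the quotient $X_{\la,\chi,\nu}:=\A^{\ell(\nu)}/W_\nu$ is affine and, by the Chevalley--Shephard--Todd theorem, smooth (in fact $X_{\la,\chi,\nu}\cong\prod_i\A^{r_i(\nu)}$). The scaling $\G_m$-action on $\A^1$ commutes with $\mu_m$, so all the correspondences and functors entering Theorems \ref{Gn-thm}, \ref{global-A-thm} and \ref{CP-thm} are $\G_m$-equivariant; this makes $X_{\la,\chi,\nu}$ a $\G_m$-variety and the whole decomposition $\G_m$-equivariant. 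Moreover these functors are composites of pushforwards and pullbacks along morphisms of quotient stacks lying over $V/G=\Sym^n Q$, so the equivalences onto $D^b(X_{\la,\chi,\nu})$ are $D^b(V/G)$-linear.

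Next I would carry out the (purely combinatorial) comparison of the number of pieces with the number $c$ of conjugacy classes of $G(m,1,n)$. Since $\la$ is recovered from $\chi$ via $r_i(\la)=|\chi^{(i)}|$, a piece is exactly an $m$-tuple of partitions $(\chi^{(1)},\ldots,\chi^{(m-1)},\nu)$ with $\sum_{i=1}^{m-1}|\chi^{(i)}|+|\nu|=n$; on the other hand, by the discussion in the proof of Proposition \ref{star-rank2-Gmpn-prop}, the conjugacy classes of $G(m,1,n)=(\mu_m)^n\rtimes S_n$ are in bijection with colored partitions of $n$ with colors in $\mu_m$, i.e.\ with $m$-tuples of partitions $(\mu^{(\zeta)})_{\zeta\in\mu_m}$ with $\sum_\zeta|\mu^{(\zeta)}|=n$. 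These two sets are equinumerous, so the decomposition has exactly $c$ pieces. The action of $G(m,1,n)$ on $\A^n$ is manifestly effective and satisfies condition $(\star)$ by Proposition \ref{star-rank2-Gmpn-prop}(ii) with $k=1$; hence Proposition \ref{HH-semiorth-match-prop} applies and yields an ordering $g_1,\ldots,g_c$ of representatives of the conjugacy classes and isomorphisms $X_{\la,\chi,\nu}\cong V^{g_i}/C(g_i)$ over $V/G$. In particular every $V^g/C(g)$ is smooth, and the decomposition constructed above is the one predicted by Conjecture A.

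I expect the main obstacle to be the second step: unwinding the iterated semiorthogonal decomposition of Theorem \ref{Gn-thm} carefully enough to identify its ultimate pieces with the $D^b(X_{\la,\chi,\nu})$ and, above all, to check that the equivalences respect both the $D^b(V/G)$-module structure and the $\G_m$-grading, since it is precisely these compatibilities that the Hochschild-homology matching in Proposition \ref{HH-semiorth-match-prop} relies on. The combinatorial count in the third step and the verification of $(\star)$ and effectivity are, by contrast, essentially immediate given the results already established.
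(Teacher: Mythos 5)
Your proof is correct and follows essentially the same route as the paper: specialize Theorem \ref{Gn-thm} to $C=\A^1$ and $G=\mu_m$, verify condition $(\star)$ via Proposition \ref{star-rank2-Gmpn-prop}(ii), and then apply Proposition \ref{HH-semiorth-match-prop} to match the pieces with the motivic decomposition. The paper's own proof is a one-line citation of these same two propositions; you have merely spelled out the intermediate verifications ($\G_m$-equivariance, $D^b(V/G)$-linearity, the count of pieces against colored partitions, smoothness of $\A^{\ell(\nu)}/W_\nu$) that the paper leaves implicit, and your explicit bijection between pieces and conjugacy classes reproduces the paper's own Remark following the corollary.
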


\Pf . The matching of pieces of the semiorthogonal decomposition of Theorem \ref{Gn-thm} (the finer of the two
decompositions considered in this Theorem) with the varieties of $V^g/C(g)$ follows from Propositions 
\ref{HH-semiorth-match-prop} and \ref{star-rank2-Gmpn-prop}. 
\ed

\begin{rem}
Here is an explicit bijection between the pieces of the semiorthogonal decomposition in the above Corollary
and conjugacy classes of $G(m,1,n)$.
In our case $r=1$, so the pieces of the rougher decomposition are numbered by partitions $\la$ such that $\ell(\la)=j\le n$, and 
$\la$ has all parts $\le m-1$. The pieces of the finer decomposition are numbered by further choosing
partitions $\mu^{(1)},\ldots,\mu^{(m-1)}$ with $|\mu^{(a)}|$ equal to the multiplicity of the part $a$ in $\la$, and in addition
a partition $\mu^{(0)}$ such that $|\mu^{(0)}|=n-j$. On the other hand, conjugacy classes in $G(m,1,n)$ are also numbered
by such collections of partitions $(\mu{(0)},\ldots,\mu^{(m-1)})$: $\mu^{(i)}$ records the cycles $C$ in the cycle type of an element with $z(C)=\zeta_m^i$, where $\zeta_m$ is a fixed primitive $m$th root of unity (see \eqref{z-C-eq}).
The corresponding semiorthogonal piece is the same as the piece corresponding to the partition $\mu^{(0)}$ in the semiorthogonal decomposition of $D^b_{S_{n-j}}(\A^{n-j})$, so it is $D^b(\A^{\ell(\mu^{(0)})})$.
\end{rem}

\noindent
{\it Proof of Theorem C}. Most cases are covered by Corollary \ref{Gm1n-cor}. The two remaining cases $G_2$ and $F_4$ 
were considered in Proposition \ref{exceptional-prop}.
\ed


\section{Types $B$ and $D$ via Springer correspondence}\label{BD-sec}

In this case we consider in detail what our method in Section \ref{Springer-sec} gives for groups of type $B$ and $D$.
We show that the semiorthogonal decomposition of Theorem \ref{Springer-thm} can be further refined in these cases.
For type $B$ we obtain in this way the decomposition matching the motivic decomposition \eqref{mot-dec2-eq},
while for type $D$ we get a less refined decomposition containing some ``noncommutative" pieces (due to the presence
of singular quotients $X^g/C(g)$).

\subsection{Type $B_n$}\label{Springer-B-n-sec}

We are using the explicit description of the Springer correspondence given in \cite{Lusztig-int-coh} (see also \cite{Carter}).

Recall that representations of $W_{B_n}$ are parametrized by the set $\Pi^{(2)}_n$
of pairs of partitions $(\xi,\nu)$ such that
$|\xi|+|\nu|=n$. We write such a pair of partitions in the form $\xi=(\xi_0\le \xi_1\le\ldots \le \xi_r)$,
$\nu=(\nu_0\le \nu_1\le\ldots\le \nu_{r-1})$, where we complete one of the paritions with zero parts, and 
associate with it the symbol 
\begin{equation}\label{symbol-eq}
\left(\begin{matrix} \xi_0 & & \xi_1+2 & & \xi_2+4 &\ldots & & \xi_r+2r\\
                                         &\nu_0&& \nu_1+2&\ldots& \nu_{r-1}+2(r-1)\end{matrix}\right)
\end{equation}
In \cite{Lusztig-int-coh} this symbol is described as a pair $(R,R')$ of sets of numbers, where $R$ and $R'$ are
the first and the second rows in \eqref{symbol-eq}, respectively.

On the other hand, the nilpotent orbits in $\SO(2n+1)$ are parametrized by the set $\Pi^+_{2n+1}$ of
partitions of $2n+1$ such that each even part appears with
even multiplicity. We denote by $O_\la$ the nilpotent orbit corresponding to $\la\in\Pi^+_{2n+1}$.
For each $m$ let $r_m$ be the multiplicity with which $m$ appears in $\la$. Then the reductive part
of the centralizer $\bG_\la$ of an element in $O_\la$ is 
\begin{equation}\label{B-stabilizer-eq}
R\bG_{\la}=S\left(\prod_{m \text{\ even}} \De_m\Sp(r_m)\times \prod_{m \text{\ odd}} \De_m \Ort(r_m)\right),
\end{equation}
where $\De_m$ denotes the diagonal embedding into product of $m$ identical copies, $S(?)$ denotes passing
to matrices of determinant $1$.
To get the reductive part of $\bG^0_\la$ one has to replace each
$\Ort(r_m)$ by $\SO(r_m)$ (see \cite[Sec.\ 6.1]{CM}).
Let 
$$\II_\la=\{ m \ |\ m \text{ is odd and } r_m>0\}.$$ 
Then from \eqref{B-stabilizer-eq}
we see that the component group $F_\la$ of $\bG_\la$ is given by
\begin{equation}\label{B-component-group-eq}
F_\la=F(\II_\la)\sub (\Z/2)^{\II_\la}, 
\end{equation}
where for a finite set $S$ we define the group $F(S)$ as
the kernel of the addition map
$$(\Z/2)^S\to \Z/2: \sum n_s e_s\mapsto \sum n_s.$$

Note that the algebra 
\begin{equation}\label{B-la-orth-eq}
B_\la:=H^*_{\bG^0_\la}(pt)=\bigotimes_m B_{\la,m}
\end{equation}
is the tensor product of the algebras of polynomials on the maximal tori, invariant with respect to the
Weyl group, for the groups $\Sp(r_m)$ and $\SO(r_m)$, where $m$ ranges over parts in $\la$.
Furthermore, we can explicitly determine the action of $F_\la$ on $B_\la$. 

Let $\II^+_\la\sub\II_\la$ (resp., $\II^-_\la\sub\II_\la$)
be the subset of odd $m$ appearing in $\la$ with even (resp., odd) multiplicity.
Note that if $r$ is odd then we can find an involution in $O(r)\setminus SO(r)$ acting trivially
on the Lie algebra of the maximal torus in $SO(r)$. On the other hand, in the case of even $r$ any involution 
in $O(r)\setminus SO(r)$ gives rise to an outer involution of $SO(r)$.
Furthermore, in this case, setting $r=2k$, we can choose the basic $W$-invariant polynomials to
be $f_1,\ldots,f_{k-1},f_{k}$, where for $i<k$,
$f_i=e_i(x_1^2,\ldots,x_k^2)$ is the $i$th elementary symmetric function in
the squares of the coordinates, and $f_k=x_1x_2\ldots x_k$. Then the outer involution preserves
$f_1,\ldots,f_{k-1}$
and sends $f_k$ to $-f_k$. This leads to the following 

\begin{lem}\label{B-component-group-action-lem}
The group $F_\la$ acts on $B_\la$ through the natural projection
\begin{equation}\label{pi-la-hom}
\pi_\la:F_\la\to F'_\la:=(\Z/2)^{\II^+_\la}.
\end{equation}
The action of $F'_\la$ on $B_{\la,m}$ is trivial if either $m$ is even or $r_m$ is odd. 
For $m\in \II^+_\la$ the action of $F'_\la$ on 
$B_{\la,m}=\C[f_1,\ldots,f_{m/2}]$ factors through the projection to the corresponding $\Z/2$-factor and
sends $f_{m/2}$ to $-f_{m/2}$ while leaving other coordinates invariant.
Thus, we have
$$\Spec(B_\la)\simeq \AA^l\times \AA^N,$$
where $F'_\la=(\Z/2)^l$ acts by the $A_1^l$-type action on $\AA^l$ (and acts trivially on $\AA^N$).
\end{lem}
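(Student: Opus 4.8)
The plan is to reduce everything to a componentwise computation inside the reductive part of the centralizer. First I would note that since the unipotent radical of $\bG_\la$ is contractible, $B_\la = H^*_{\bG_\la^0}(pt)$ depends only on the reductive part, and the $F_\la$-action on it is the one induced by conjugation on $R\bG_\la^0$. By \eqref{B-stabilizer-eq}, passing to identity components (which discards the determinant constraint $S(\,\cdot\,)$, as each factor $\De_m\Sp(r_m)$ and $\De_m\SO(r_m)$ already sits in $\SL$), we get $R\bG_\la^0\simeq\prod_{m\text{ even}}\Sp(r_m)\times\prod_{m\text{ odd}}\SO(r_m)$, whence $B_\la=\bigotimes_m B_{\la,m}$ with $B_{\la,m}=H^*_{G_m}(pt)=S(\ft_m^\ast)^{W_m}$, $G_m$ being $\Sp(r_m)$ for $m$ even and $\SO(r_m)$ for $m$ odd; this is \eqref{B-la-orth-eq}. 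Because the $\Sp$-factors are connected, $F_\la=\bG_\la/\bG_\la^0=F(\II_\la)$ as in \eqref{B-component-group-eq}, and every class has a representative $g_J$ ($J\sub\II_\la$, $|J|$ even) that acts block-diagonally as a fixed reflection $\sigma_m\in\Ort(r_m)\setminus\SO(r_m)$ on the $m$-th block for $m\in J$ and trivially elsewhere; the condition $\det g_J=(-1)^{|J|}=1$ is exactly $|J|$ even.

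Next I would pin down the action of such a $g_J$ on each tensor factor. Conjugation by $g_J$ is the identity on $B_{\la,m}$ for $m\notin J$, in particular for every even $m$, and for $m\in J$ (so $m$ odd) it is the automorphism of $H^*_{\SO(r_m)}(pt)$ induced by $\ad_{\sigma_m}$. Any two choices of $\sigma_m$ differ by an element of the connected group $\SO(r_m)$, whose inner automorphisms act trivially on $H^*_{\SO(r_m)}(pt)$, and $\sigma_m^2\in\SO(r_m)$; hence this is a well-defined $\Z/2$-action factoring through the $m$-th coordinate $F_\la\to\Z/2$. The case analysis is then essentially the one sketched in the paragraph preceding the lemma: if $r_m$ is odd then $\Ort(r_m)=\SO(r_m)\times\{\pm\Id\}$, so $\ad_{\sigma_m}|_{\SO(r_m)}$ is inner and trivial on $B_{\la,m}$; if $r_m=2k$ is even then $\ad_{\sigma_m}$ is an outer automorphism of $\SO(2k)$, realized on $\ft_m=\C^k$ (coordinates $x_1,\dots,x_k$, $W_m=W(D_k)$) by a single sign change $x_i\mapsto -x_i$, so on $B_{\la,m}=\C[f_1,\dots,f_{k-1},f_k]$ with $f_i=e_i(x_1^2,\dots,x_k^2)$ for $i<k$ and $f_k=x_1\cdots x_k$ it fixes $f_1,\dots,f_{k-1}$ and sends $f_k\mapsto -f_k$ (for $k=1$, $\SO(2)=\GG_m$ and $x\mapsto -x$).

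Assembling these, the $F_\la$-action on $B_{\la,m}$ is nontrivial precisely when $m\in\II^+_\la$ (odd $m$ of even multiplicity), and there it is the $\Z/2$ through the $m$-th coordinate negating the top generator. Therefore the whole action on $B_\la=\bigotimes_m B_{\la,m}$ factors through $\pi_\la\colon F_\la\to(\Z/2)^{\II^+_\la}=F'_\la$, with $F'_\la$ acting as stated. For the geometric conclusion, $\Spec B_\la=\prod_m\Spec B_{\la,m}$ is an affine space whose coordinates are the generators $f_\bullet$; exactly $l=|\II^+_\la|$ of them — the top generator $f_{r_m/2}$ for each $m\in\II^+_\la$ — get negated and the rest are fixed, so $\Spec B_\la\simeq\AA^l\times\AA^N$ with $F'_\la=(\Z/2)^l$ acting by the $A_1^l$-reflection action on $\AA^l$ and trivially on $\AA^N$. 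I would also record that $\pi_\la$ is surjective: since $|\la|=2n+1$ is odd and every even part occurs with even multiplicity, $\sum_{m\text{ odd}}m r_m$, and hence $\sum_{m\text{ odd}}r_m$, is odd, so $\II^-_\la\neq\emptyset$, which makes the coordinate-restriction $F(\II_\la)\to(\Z/2)^{\II^+_\la}$ onto — this is what is needed later to see that $F_\la$ acts on $\Spec B_\la$ as a genuine reflection group.

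The one genuinely non-formal point I expect to be the main obstacle is the identification, in the second paragraph, of $\ad_{\sigma_m}$ acting on $H^*_{\SO(2k)}(pt)$ with a single coordinate sign change — i.e. computing the action of the outer automorphism of $\SO(2k)$ on the invariant ring, together with its independence of the choice of $\sigma_m$, which rests on the vanishing of inner automorphisms on $H^*_G(pt)$. Everything else is bookkeeping: the classification of centralizers of nilpotents in the odd orthogonal group (as recorded in \cite[Sec.\ 6.1]{CM}) and the classical invariant theory of $W(B_k)$ and $W(D_k)$.
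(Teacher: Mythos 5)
Your proposal is correct and matches the argument the paper gives in the paragraph preceding the lemma: pass to the reductive part of the centralizer, observe that for odd $r_m$ one may take $\sigma_m = -\mathrm{Id}$ (so the action is trivial), and for $r_m = 2k$ even identify $\ad_{\sigma_m}$ with the outer involution of $\SO(2k)$ that negates one coordinate on the Cartan, which on the invariant generators $f_1,\dots,f_k$ fixes $f_1,\dots,f_{k-1}$ and flips $f_k$. Your write-up is somewhat more explicit than the paper (spelling out $R\bG_\la^0$, the representatives $g_J$, well-definedness via triviality of inner automorphisms on $H^*_G(pt)$, and the surjectivity of $\pi_\la$ which the paper records only after the lemma), but the underlying method and all the key computations are the same.
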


We define a map 
$$\Spr_1:\Pi^+_{2n+1}\to \Pi^{(2)}_n$$
as follows: for $\la=(\la_0\le\la_1\le\ldots)$ consider $\la'_i=\la_i+i$. Then let
$$\wt{\xi}_0<\wt{\xi}_1<\ldots \ \ (\text{resp.}\ \wt{\nu}_0<\wt{\nu}_1<\ldots)
$$ 
be all the odd (resp., even) numbers among $(\la'_i)$. We set $\xi'_i=(\wt{\xi}_i-1)/2$, 
$\nu'_i=\wt{\nu}_i/2$, and define $\Spr_1(\la)=(\xi,\nu)$, where
$\xi_i=\xi'_i-i$, $\nu_i=\nu'_i-i$ (there can be some zero parts in $\xi$ and $\nu$).

The map $\Spr_1$ describes the image of a pair of the form $(O_\la, \one)$ under the Springer correspondence,
where $\one$ is the trivial character of the component group $F_\la$ of the centralizer $\bG_\la$ of 
an element in $O_\la$.
The pairs $(\xi,\nu)$ in the image of $\Spr_1$ are characterized by the inequalities
$$\xi_0\le\nu_1\le \xi_1+2\le \nu_2+2\le \xi_2+4\le\ldots,$$
i.e., the entries in the symbol \eqref{symbol-eq} are (weakly) increasing from left to right (regardless of the row). 

Let $(R_\la,R'_\la)$ be the row sets of the symbol of $\Spr_1(\la)$. 
The maximal intervals of integers
in $(R_\la \cup R'_\la)\setminus (R_\la\cap R'_\la)$ 
are in natural bijection with the odd numbers appearing as parts in $\la$ (where
the leftmost interval corresponds to the smallest odd part, etc.). Furthermore, the length of the interval
is equal to the multiplicity of the corresponding odd part in $\la$.

Two symbols associated with pairs of partitions
are called similar if they contain the same entries with the same multiplicities.
Each similarity class contains the unique symbol in the image of $\Spr_1$.
Let $\Sscr_\la\sub\Pi^{(2)}_n$ be the set of all pairs of partitions whose symbol is 
similar to the symbol of $\Spr_1(\la)$. The set of symbols of $\Sscr_\la$
has the following explicit description. Recall that
$\II_\la$ is the set of odd numbers $m$ appearing as parts in $\la$.
For each $m\in\II_\la$ let $I_m\sub (R_\la\cup R'_\la)\setminus (R_\la\cap R'_\la)$ be the corresponding 
interval. 
Let us define a function $\ell$ on $\II_\la$ by
$$\ell(m)=|I_m\cap R_\la|-|I_m\cap R'_\la|.$$
Note that $\ell(m)=0$ precisely when $m\in \II^+_\la$, and
$\ell(m)=\pm 1$ for $m\in\II^-_\la$. Consider the subset
$$\Si_\la=\{ (c_m)\in (\Z/2)^{\II_\la} \ |\ \sum_{m: c_m\neq 0}\ell(m)=0\}  \sub (\Z/2)^{\II_\la}.$$ 
Then there is a bijection $\Si_\la\to \Sscr_\la$ which associates with $(c_m)\in\Si_\la$
the symbol $(R,R')$ obtained from $(R_\la,R'_\la)$ by swapping $I_m\cap R_\la$ and $I_m\cap R'_\la$
for each $m$ such that $c_m\neq 0$ (i.e., $I_m\cap R_\la$ goes into the second row
and $I_m\cap R'_\la$ goes into the first row). The condition $(c_m)\in\Si_\la$ ensures
that $|R|=|R'|+1$.

Let $\Spr_\la\sub \widehat{F_\la}$ be the set of characters $\xi$ of $F_\la$
such that $(O_\la,\xi)$ appears in the Springer correspondence
for $\SO(2n+1)$. We have a bijection
$$\Si_\la\rTo{\sim}\Spr_\la$$
defined as follows: we view an element $(c_m)\in \Si_\la$ as a character $\xi$ of $(\Z/2)^{\II_\la}$ and then
restrict it to $F_\la=F(\II_\la)$.  
The image of this character $\xi\in \widehat{F_\la}$ under the Springer correspondence
is precisely the pair of partitions corresponding to the element of $\Sscr_\la$ associated with $(c_m)$.

Note that $|\II^-_\la|$ is odd (since $\la$ is a partition of $2n+1$), so 
$\II^+_\la$ is always a proper subset of $\II_\la$, hence, the homomorphism $\pi_\la$ (see
\eqref{pi-la-hom}) is surjective.

\begin{prop}\label{Bn-Spr-cosets} 
Let $|\II^-_\la|=2k+1$. Then $\Spr_\la$ is the union of ${2k+1\choose k}$ cosets of the subgroup
$$\widehat{\pi}_\la(\widehat{F'_\la})\sub \widehat{F_\la}.$$
\end{prop}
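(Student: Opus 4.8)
The plan is to translate everything into linear algebra over $\Z/2$ and then carry out a short count. Recall that $F_\la=F(\II_\la)$ is the kernel of the addition map $(\Z/2)^{\II_\la}\to\Z/2$, so its character group is the quotient $q\colon(\Z/2)^{\II_\la}\twoheadrightarrow\widehat{F_\la}$ by the line spanned by the all-ones vector $\mathbf 1$, and the bijection $\Si_\la\xrightarrow{\sim}\Spr_\la$ recalled above is just $c\mapsto q(c)$. Dually, if $\widetilde H\subset(\Z/2)^{\II_\la}$ denotes the coordinate subspace of vectors supported on $\II^+_\la$ (so $\widetilde H\cong(\Z/2)^{\II^+_\la}$), then the coordinate projection $\pi_\la\colon F_\la\to F'_\la$ dualizes to the composite $\widehat{F'_\la}=(\Z/2)^{\II^+_\la}\xrightarrow{\sim}\widetilde H\hookrightarrow(\Z/2)^{\II_\la}\xrightarrow{q}\widehat{F_\la}$, so $H:=\widehat\pi_\la(\widehat{F'_\la})=q(\widetilde H)$. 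Since $\II^-_\la\neq\emptyset$ (as $\la$ is a partition of an odd number), $\mathbf 1\notin\widetilde H$, hence $q$ is injective on $\widetilde H$ and $|H|=2^{|\II^+_\la|}$. Thus the proposition reduces to showing that $q(\Si_\la)$ is a union of $H$-cosets and then computing $|\Si_\la|/2^{|\II^+_\la|}$.

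Next I would record two structural facts about $\Si_\la$. Since $\ell$ vanishes on $\II^+_\la$, the defining condition $\sum_{m:\,c_m\neq0}\ell(m)=0$ constrains only the $\II^-_\la$-coordinates of $c$; therefore $\Si_\la$ is invariant under translation by $\widetilde H$. Moreover $q$ is injective on $\Si_\la$: this is already part of the stated bijection $\Si_\la\xrightarrow{\sim}\Spr_\la$, but it also follows directly, since replacing $c$ by $c+\mathbf 1$ sends its support to the complement and changes the defining sum by $\sum_{m\in\II_\la}\ell(m)$, which we will see is nonzero, so $\Si_\la\cap(\Si_\la+\mathbf 1)=\emptyset$. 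Combining these, for each $c\in\Si_\la$ the entire coset $q(c)+H=q(c+\widetilde H)$ lies in $q(\Si_\la)=\Spr_\la$, so $\Spr_\la$ is a disjoint union of $|\Si_\la|/2^{|\II^+_\la|}$ cosets of $H$.

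It then remains to evaluate $\sum_{m\in\II^-_\la}\ell(m)$ and $|\Si_\la|$. For the first, I would use the defect of the symbol: $(R_\la,R'_\la)$ is the symbol of $\Spr_1(\la)\in\Pi^{(2)}_n$, so $|R_\la|=|R'_\la|+1$, while outside $\bigsqcup_{m\in\II_\la}I_m$ every integer lies in both $R_\la$ and $R'_\la$ or in neither; hence $1=|R_\la|-|R'_\la|=\sum_{m\in\II_\la}\bigl(|I_m\cap R_\la|-|I_m\cap R'_\la|\bigr)=\sum_{m\in\II_\la}\ell(m)=\sum_{m\in\II^-_\la}\ell(m)$. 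As $|\II^-_\la|=2k+1$ and $\ell=\pm1$ on $\II^-_\la$, this forces $\ell$ to equal $+1$ on $k+1$ of these elements and $-1$ on the remaining $k$. An element of $\Si_\la$ is thus an arbitrary vector on the $\II^+_\la$-coordinates together with a subset of $\II^-_\la$ meeting $\{\ell=1\}$ and $\{\ell=-1\}$ in sets of equal size, so $|\Si_\la|=2^{|\II^+_\la|}\sum_{j\ge0}\binom{k+1}{j}\binom{k}{j}=2^{|\II^+_\la|}\binom{2k+1}{k}$ by Vandermonde's identity, giving $\binom{2k+1}{k}$ cosets.

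I do not anticipate any real obstacle: modulo the dictionary already assembled in this subsection the argument is elementary. The only points requiring care are the bookkeeping in the first paragraph — that $\widehat\pi_\la$ comes from a coordinate \emph{projection} and hence corresponds after dualizing to a coordinate \emph{inclusion} composed with $q$ — and the observation, via the $\widetilde H$-invariance of $\Si_\la$, that $\Spr_\la$ meets each coset of $H$ in a \emph{full} $\widetilde H$-orbit rather than partially, which is what makes it a union of cosets in the first place.
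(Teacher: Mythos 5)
Your proposal is correct and follows essentially the same route as the paper: reduce to counting cosets of $(\Z/2)^{\II^+_\la}$ inside $\Si_\la$, use $|R_\la|=|R'_\la|+1$ to pin down $|\II^-_\la(\pm 1)|=k+1,k$, and then count pairs of equal-size subsets. The two places you differ are minor: (a) you spell out the dualization bookkeeping and the injectivity of $q\colon(\Z/2)^{\II_\la}\to\widehat{F_\la}$ on both $\widetilde H$ and $\Si_\la$ (via the observation $\Si_\la\cap(\Si_\la+\mathbf 1)=\emptyset$), whereas the paper simply asserts that ``it is enough to check'' the statement about $\Si_\la$; and (b) for the final count you invoke Vandermonde's identity $\sum_j\binom{k+1}{j}\binom{k}{j}=\binom{2k+1}{k}$, whereas the paper exhibits the explicit bijection $(P(1),P(-1))\mapsto P(1)\cup(\II^-_\la(-1)\setminus P(-1))$ onto $k$-subsets of $\II^-_\la$ — which is of course a bijective proof of the very same identity. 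Your version is a bit more verbose but leaves fewer steps implicit.
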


\Pf . It is enough to check that the set $\Si_\la$ is the union of ${2k+1\choose k}$ cosets of
the subgroup $(\Z/2)^{\II^+_\la}\sub (\Z/2)^{\II_\la}$.
The set $\II^-_\la$ is partitioned into two subsets
$\II^-_\la(\pm 1)$ depending on the value $\ell(m)=\pm 1$. Since $|R_\la|=|R'_\la|+1$, 
we have $|\II^-_\la(1)|=k+1$ and $|\II^-_\la(-1)|=k$.
The set $\Si_\la$ is the union of $(\Z/2)^{\II^+_\la}$-cosets corresponding
to arbitrary choices of pairs of subsets $P(1)\sub \II^-_\la(1)$, $P(-1)\sub \II^-_\la(-1)$, such that
$|P(1)|=|P(-1)|$. Associating with the pair $(P(1), P(-1))$ the set
$P(1)\cup (\II^-_\la(-1)\setminus P(-1))$ we get a bijection with the set of all subsets of $\II^-_\la$
of cardinality $k$.
\ed

For a partition $\la\in \Pi^+_{2n+1}$, let $\ft_{\bG_\la}\sub \ft$ be the Lie algebra of the maximal torus
of $R\bG_\la$ embedded into the Lie algebra $\ft$ of the maximal torus of $\bG$ in a standard way
(where $R\bG_\la$ is given by \eqref{B-stabilizer-eq}).
On the other hand, for every partition $\mu$ with $|\mu|\le n$ let us consider the subspace
\begin{equation}\label{t-mu-eq}
\ft_\mu:=\{t\in\ft\ |\ t_1=\ldots=t_{\mu_1}, \ t_{\mu_1+1}=\ldots=t_{\mu_1+\mu_2},\ \ldots, \ t_{|\mu|+1}=\ldots= t_n=0\},
\end{equation}
where we use the standard
identification of $\ft$ with the $n$-dimensional space. Let also $W_{B,\mu}$ be the reflection group of
type $B_{k_1}\times\ldots\times B_{k_s}$, where $k_1,\ldots,k_s$ are multiplicities of parts in $\mu$, acting on the space $\ft_\mu$ by permutations of groups of coordinates corresponding
to equal parts in $\mu$ and by changing signs (simlutaneously in each group corresponding to a part of $\mu$).

For $\la\in \Pi^+_{2n+1}$ let $\la^{\red}$ denote the partition in which each $i$ appears with multiplicity 
$\lfloor r_i/2 \rfloor$, where $r_i$ is the multiplicity with which $i$ appears in $\la$.
Then using \eqref{B-stabilizer-eq} one can easily check that 
\begin{equation}\label{Lie-tori-eq}
\ft_{\bG_\la}=\ft_{\la^{\red}}.
\end{equation}

\begin{thm}\label{type-B-semiorth-dec-thm} (i) Let $A_W$ be the algebra \eqref{AW-eq} defined for the Weyl
group of type $B_n$.
For each $\la\in \Pi^+_{2n+1}$ 
let $\Cscr_\la\sub D(A_W-\mod)$ be the thick subcategory generated by modules $M_{\la,\xi}$ where $\xi$
varies over $\Spr_\la$. Then we have a semiorthogonal decomposition
$$D^f(A_W)=\lan \Cscr_{\la_N},\ldots,\Cscr_{\la_1}\ran,$$
where we order the nilpotent orbits $O_{\la_1},\ldots,O_{\la_N}$ in such a way that $O_{\la_i}$ is in
the closure of $O_{\la_j}$ only if $i<j$.

\noindent
(ii) Fix $\la\in\Pi^+_{2n+1}$, and let $|\II^-_\la|=2k+1$. We have a decomposition of $\Cscr_\la$
into ${2k+1\choose k}$ mutually orthogonal subcategories $\Cscr_\la(C)$, where $C$ runs over the
$\widehat{\pi}_\la(\widehat{F'_\la})$-cosets comprising $\Spr_\la$, and
$\Cscr_\la(C)$ is generated by all the modules $M_{\la,\xi}$ with $\xi\in C$.
Furthermore, for each such coset $C=\xi_0+\widehat{\pi}_\la(\widehat{F'_\la})$ we have an equivalence
\begin{equation}\label{C-la-C-eq}
\Cscr_\la(C)\simeq D^f(F'_\la\ltimes B_\la),
\end{equation}
where $B_\la$ is given by \eqref{B-la-orth-eq},
sending the module $M_{\la,\xi}$, with $\xi=\xi_0+\widehat{\pi}_\la(\eta)$,
to the projective module $\eta\otimes B_\la$. 

\noindent
(iii) For each $\la$ and $C=\xi_0+\widehat{\pi}_\la(\widehat{F'_\la})$ as in (ii) 
there exists a semiorthogonal decomposition of $\Cscr_\la(C)$
into subcategories generated by certain $A_W$-modules $M'_{\la,\xi}$, where $\xi\in C$ are arranged
in some order, such that $\Ext^{>0}_{A_W}(M'_{\la,\xi},M'_{\la,\xi})=0$, and
$$\End_{A_W}(M'_{\la,\xi_0+\widehat{\pi}_\la(\eta)})\simeq \C[\ft_\mu]^{W_{B,\mu}},$$
where the partition $\mu$ is defined as follows.
Let $m_1,\ldots,m_s\in\II^+_{\la}$ be all the elements 
such that the restriction of $\eta$ to the corresponding factor in $F'_\la$ is nontrivial.
Then $\mu$ is obtained from $\la^{\red}$ by reducing by $1$ the multiplicities of
each of the parts $m_1,\ldots,m_s$.
Furthermore, the support of the module $M'_{\la,\xi_0+\widehat{\pi}_\la(\eta)}$ is equal to
$W\cdot \ft_\mu$.
\end{thm}

\Pf . (i) This follows from Theorem \ref{Springer-thm}.

\noindent
(ii) By Proposition \ref{Ext-calculation-prop}(ii), there is fully faithful functor
$$\Cscr_\la\to D(F_\la\ltimes B_\la),$$
sending $M_{\la,\xi}$ to the projective module $\xi\ot B_\la$.
Recall that for $\la\in\Pi^+_{2n+1}$ the set $\Spr_\la$ is the union of ${2k+1\choose k}$
$\widehat{\pi}_\la(\widehat{F'_\la})$-cosets (see Proposition \ref{Bn-Spr-cosets}).
If $C$ and $C'$ are different $\widehat{\pi}_\la(\widehat{F'_\la})$-cosets then the projective $F_\la\ltimes B_\la$-modules
$\xi\otimes B_a$ and $\xi'\otimes B_a$ are mutually orthogonal for any $\xi\in C$, $\xi'\in C'$
(see the proof of Lemma \ref{E-a-lem}). This implies that the
subcategories $\Cscr_\la(C)$ and $\Cscr_\la(C')$ are orthogonal for $C\neq C'$
Furthermore, for a coset $C=\xi_0+\widehat{\pi}_\la(\widehat{F'_\la})$ we have an equivalence of $\Cscr_\la(C)$
with the subcategory of $D(F_\la\ltimes B_\la)$ generated by $\xi\ot B_\la$ with $\xi\in C$.
Hence using the isomorphism \eqref{typeB-coset-End} with $F=F'_\la$
(which involves tensoring with $\xi_0$) we get the required equivalence
$$\Cscr_\la(C)\simeq D^b_{f}(F'_\la\ltimes B_\la).$$

\noindent
(iii) Recall that by Lemma \ref{B-component-group-action-lem}, the action of $F'_\la$ on
$B_\la$ is of type $A_1^l$ (on some of the coordinates). Thus, the required semiorthogonal
decomposition arises from the equivalence \eqref{C-la-C-eq} and the semiorthogonal decomposition for actions of type $A_1^l$ (see \S\ref{A1-sec}). 
Furthermore, we have
$$\End_{A_W}(M'_{\la,\xi_0+\widehat{\pi}_\la(\eta)})\simeq
\left(B_\la/(g_1,\ldots,g_s)\right)^{F'_\la},$$
where $g_i=f_{m_i/2}\in B_{\la,m_i}$ (see Lemma \ref{B-component-group-action-lem}),
It remains to observe 
that for each factor $\SO(r)$ in $\bG^0_\la$
with even $r$ the action of the Weyl group of $\SO(r)$ on its maximal ($r/2$-dimensional) torus
together with the action of the corresponding factor in $F'_\la$ generate the action which is equivalent to
the action of $W_{B_{r/2}}$. Also, we note that the embedding of the hyperplane given by vanishing
of one coordinate induces an isomorphism of $B_{\la,m_i}/(g_i)$ with the algebra of invariant polynomials
of type $B_{r-1}$, where $r$ is the multiplicity of $m_i$ in $\la$.

The assertion about the support follows from Proposition 
\ref{modules-Springer-prop}(iv) and from \eqref{Lie-tori-eq}.
\ed

\begin{rem} The semiorthogonal decomposition of $D^f(A_W)$ obtained from the above theorem
is very different from the one obtained in \S\ref{global-B-sec}. For example, the modules $M'_{\la,\xi}$
supported at the origin appear for all $\la$ which has only odd parts appearing with multiplicities $1$ or $2$,
whereas in the decomposition of \S\ref{global-B-sec} all such modules are consecutive.
It is likely that one decomposition is obtained from another by a sequence of mutations.
\end{rem}

By Propositions \ref{HH-semiorth-match-prop} and \ref{star-reflection-prop}, the pieces
of the semiorthogonal decomposition of the above theorem (where we replace each $\Cscr_\la$ with
its natural semiorthogonal decomposition) match with the pieces in the corresponding motivic decomposition
\eqref{mot-dec2-eq}. We are going to present an independent proof of this using the explicit description
of the Springer correspondence in this case. It turns out that this matching reflects in a certain combinatorial identity related
to the Jacobi triple identity (see Proposition \ref{Bn-match-prop} and Corollary \ref{Bn-id-cor} below).

Recall that in the motivic decomposition we look at subspaces in $\ft$ of the form
$\ft^w$, where $w\in W$ runs through representatives
of conjugacy classes in $W$, and at the action of the centralizer $C(w)$ on $\ft^w$. 
The conjugacy classes in $W$ are given by pairs of partitions $(\mu,\mu')$ such that
$|\mu|+|\mu'|=n$, where $\mu$ (resp., $\mu'$) records positive (resp., negative) cycle type (see \cite{Carter-conj}).
The corresponding subspace $\ft^w$ is precisely $\ft_\mu$ (see \eqref{t-mu-eq}).
The centralizer $C(w)$ of $w$ acts on $\ft^w=\ft_\mu$ through the quotient
$C(w)\to W_{B,\la}$ which leads to the $B_{k_1}\times\ldots\times B_{k_s}$-type action on $\ft_\mu$.
Thus, 
the corresponding piece in the motivic decomposition of $[\ft/W]$ is 
\begin{equation}\label{mu-motivic-piece}
\ft_\mu/W_{B,\mu}=\prod_i \A^{k_i}/W_{B_{k_i}},
\end{equation}
where $k_i$ is the multiplicity of $i$ in $\mu$. Note that this motive is determined by the single partition $\mu$.
By Theorem \ref{type-B-semiorth-dec-thm}(iii),
the motive associated with each module $M'_{\la,\xi}$ is of the same type for a certain partition $\mu$
associated with $(\la,\xi)$. Thus, the fact that the motivic pieces from the semiorthogonal
decomposition of Theorem \ref{type-B-semiorth-dec-thm} match the parts of the decomposition \eqref{mot-dec2-eq},
amounts to the equality between the number of times that each partition $\mu$ with $|\mu|\le n$ arises from pairs $(\la,\xi)$
occuring in the Springer correspondence and the number of partitions of $n-|\mu|$.

Let $u_1,u_2,\ldots$ be independent variables. We associate with every partition $\mu$ the monomial 
\begin{equation}\label{u-mu-eq}
u(\mu)=u_1^{k_1}u_2^{k_2} \ldots.
\end{equation}
where $k_i$ is the multiplicity of $i$ in $\mu$. 
The formal sum (with multiplicities) of the monomials $u(\mu)$ 
associated with all conjugacy classes $(\mu,\mu')$ in $W$ appears as the coefficient of $x^n$ in
the generating function
$$F^W(x,u_1,u_2,\ldots)=\prod_{n\ge 1}(1-u_nx^n)^{-1}(1-x^n)^{-1}=\sum_{\mu,\mu'} u(\mu)x^{|\mu|+|\mu'|}.$$

\begin{lem}\label{Bn-comb-lem}
For $\la\in \Pi^+_{2n+1}$ consider the polynomial
$$P_\la(u_1,u_2\ldots):=\sum_{\xi\in\Spr_\la} u(\la,\xi),$$
where $u(\la,\xi)=u(\mu)$ is the monomial associated with the partition $\mu$ obtained from $(\la,\xi)$
as in Theorem \ref{type-B-semiorth-dec-thm}(iii).
Then
$P_\la$ is equal to the coefficient of $x^{2n+1}$ in
the series $S(F^\Nscr)$, where 
$$F^{\Nscr}(x,u_1,u_2,\ldots)=\prod_{m\ge 1}(1-u_mx^{2m})^{-1}\cdot \prod_{m \text{\rm \ odd}}(1+tx^m+x^{2m}),$$  
where $t$ is one more independent variable, and
$S$ is the $\Z[u_1,u_2,\ldots]$-linear operator sending $t^{2k+1}$ to 
${2k+1\choose k}$, $x^{2k}$ to $0$ and $x^{2k+1}$ to itself.
\end{lem}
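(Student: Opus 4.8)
The assertion is most naturally read as the generating-function identity
$$\sum_{\la\in\Pi^+_{2n+1}}P_\la\ =\ [x^{2n+1}]\,S(F^{\Nscr})$$
(the right-hand side being manifestly independent of the individual $\la$), and I would prove it by expanding both sides as integer combinations of the monomials $u(\mu)$ and matching them through an explicit bijection of index sets. The first step is to put $P_\la$ in closed form. Fix $\la\in\Pi^+_{2n+1}$, let $r_m$ be the multiplicity of $m$ in $\la$, and for $m$ even write $r_m=2b_m$, for $m$ odd write $r_m=2b_m+\eps_m$ with $\eps_m\in\{0,1\}$; thus $\II^-_\la=\{m\ \text{odd}:\eps_m=1\}$, $\II^+_\la=\{m\ \text{odd}:\eps_m=0,\ b_m\ge1\}$, and $\la^{\red}$ has $m$ with multiplicity $b_m$. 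Since $\la$ is a partition of an odd number, $|\II^-_\la|=\sum_{m\ \text{odd}}\eps_m$ is odd, say $2k+1$. By Proposition \ref{Bn-Spr-cosets} the set $\Spr_\la$ is a union of $\binom{2k+1}{k}$ cosets of $\widehat\pi_\la(\widehat{F'_\la})$, and since $\pi_\la$ is surjective each coset has exactly $2^{|\II^+_\la|}$ elements. By Theorem \ref{type-B-semiorth-dec-thm}(iii), as $\xi$ runs over one such coset the attached partition $\mu$ runs, \emph{as a multiset, independently of how one labels the coset}, over all partitions obtained from $\la^{\red}$ by lowering by one the multiplicity of each part in an arbitrary subset $T\subseteq\II^+_\la$ (this is legal since $b_m\ge1$ for $m\in\II^+_\la$), so that the corresponding monomial is $u(\la^{\red})\prod_{m\in T}u_m^{-1}$. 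Summing over cosets and over $T$,
$$P_\la\ =\ \binom{2k+1}{k}\,u(\la^{\red})\prod_{m\in\II^+_\la}\bigl(1+u_m^{-1}\bigr)\ =\ \binom{2k+1}{k}\sum_{T\subseteq\II^+_\la}\ \prod_{m}u_m^{\,b_m-[m\in T]} .$$

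Next I would expand $S(F^{\Nscr})$. From $\prod_{m\ge1}(1-u_mx^{2m})^{-1}=\sum_\mu u(\mu)x^{2|\mu|}$ and $\prod_{m\ \text{odd}}(1+tx^m+x^{2m})=\sum_{(c_m)}t^{\,\#\{m:c_m=1\}}x^{\sum_m mc_m}$, the sum running over finitely supported $(c_m)_{m\ \text{odd}}$ with values in $\{0,1,2\}$, one gets
$$F^{\Nscr}\ =\ \sum_{\mu,\,(c_m)}u(\mu)\,t^{\,j}\,x^{\,2|\mu|+\sum_m mc_m},\qquad j:=\#\{m:c_m=1\}.$$
All parts appearing in $\sum_m mc_m$ are odd, so the exponent of $x$ in each term is congruent to $j\bmod 2$; hence $S$ annihilates every term with $j$ even and multiplies every term with $j$ odd by $\binom{j}{(j-1)/2}$. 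Extracting the coefficient of $x^{2n+1}$,
$$[x^{2n+1}]\,S(F^{\Nscr})\ =\ \sum\ \binom{j}{(j-1)/2}\,u(\mu),$$
the sum taken over all pairs $(\mu,(c_m))$ with $2|\mu|+\sum_m mc_m=2n+1$ (and then $j$ is automatically odd).

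It remains to match the two expansions. To a pair $(\la,T)$ with $\la\in\Pi^+_{2n+1}$ and $T\subseteq\II^+_\la$ I assign the function $(c_m)_{m\ \text{odd}}$ given by $c_m=1$ for $m\in\II^-_\la$, $c_m=2$ for $m\in T$, and $c_m=0$ otherwise, together with the partition $\mu$ whose multiplicity of a part $m$ is $b_m-[m\in T]$ for $m$ odd and $b_m$ for $m$ even. Conversely, from a pair $(\mu,(c_m))$ with $2|\mu|+\sum_m mc_m=2n+1$ one recovers $\la$ by declaring the multiplicity of $m$ in $\la$ to be $2d_m$ for $m$ even and $2d_m+c_m$ for $m$ odd, where $d_m$ is the multiplicity of $m$ in $\mu$; this $\la$ lies in $\Pi^+_{2n+1}$, and $T:=\{m:c_m=2\}$ is automatically contained in $\II^+_\la$. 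These assignments are mutually inverse, and under this dictionary: the weight constraint $2|\mu|+\sum_m mc_m=2n+1$ translates into $\sum_m mr_m=2n+1$; one has $j=\#\{m:c_m=1\}=|\II^-_\la|=2k+1$, so the binomial weights $\binom{j}{(j-1)/2}$ and $\binom{2k+1}{k}$ agree; and $u(\mu)=\prod_m u_m^{d_m}=\prod_m u_m^{\,b_m-[m\in T]}$ is exactly the generic monomial occurring in the expansion of $P_\la$ above. Combining the three displays gives the claimed identity.

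\textbf{Main difficulty.} There is no conceptual obstacle; the proof is an exercise in bookkeeping, and its one delicate point is the first step: extracting the closed form of $P_\la$ faithfully from the explicit Springer-correspondence data (Proposition \ref{Bn-Spr-cosets}, the description of $\mu(\la,\xi)$ in Theorem \ref{type-B-semiorth-dec-thm}(iii), and the surjectivity of $\pi_\la$), in particular being careful that only the multiset of the $\mu$'s attached to a given coset, not the individual assignments $\xi\mapsto\mu$, is well defined. Once $P_\la$ is in this form, the generating-function computation and the verification that $(\la,T)\leftrightarrow(\mu,(c_m))$ is a weight-preserving bijection are routine, if somewhat tedious.
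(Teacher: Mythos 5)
Your proof is correct and, at bottom, is the same argument as the paper's. The paper writes $P_\la=S\bigl(\prod_{i\text{ even}}u_i^{r_i/2}\cdot\prod_{i\text{ odd},\,r_i\text{ odd}}tu_i^{(r_i-1)/2}\cdot\prod_{i\text{ odd},\,r_i>0\text{ even}}(u_i^{r_i/2}+u_i^{r_i/2-1})\bigr)$ — identical to your closed form once $S$ is applied — then sums over $\la$, factors the result as a product over part sizes (summing over each multiplicity $r_m$ independently), and simplifies to $F^{\Nscr}$; your bijection $(\la,T)\leftrightarrow(\mu,(c_m))$ is exactly the combinatorics underlying that factorization, just written out term-by-term rather than packaged as a product of geometric series. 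The one point you flag as delicate — that only the multiset $\{\mu(\la,\xi):\xi\in C\}$, not the individual assignment, is well defined independently of the base point $\xi_0$ — is handled correctly (changing $\xi_0$ merely translates $\eta$ by a fixed element of $\widehat{F'_\la}$), and the parity argument showing $S$ picks out exactly the terms with $j$ odd and weights them by $\binom{j}{(j-1)/2}$ is sound.
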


\Pf . 
Let $\Pi^+=\cup_n \Pi^+_{2n+1}$ be the set of all partitions $\la$ with odd $|\la|$, in which even parts appear with even multiplicity.
We want to prove the identity
\begin{equation}\label{P-la-sum-eq}
\sum_{\la\in \Pi^+}P_\la(u_1,u_2,\ldots)x^{|\la|}=S(F^{\Nscr}).
\end{equation}

The description of $\mu$ in Theorem \ref{type-B-semiorth-dec-thm}(iii) easily leads to the equality
$$P_\la(u_1,u_2,\ldots)=
S\left(\prod_{i \text{\ even}}u_i^{r_i/2}\cdot \prod_{i \text{\ odd}, r_i \text{\ odd}}(tu_i^{(r_i-1)/2})\cdot
\prod_{i \text{\ odd}, r_i \text{\ even}} (u_i^{r_i/2}+u_i^{r_i/2-1})\right).$$
Note that the presence of $t$ and the use of the operator $S$ correspond to 
the multiplicity ${2k+1\choose k}$ in Theorem \ref{type-B-semiorth-dec-thm}(ii). 
Thus, the left-hand side of \eqref{P-la-sum-eq} is equal to $S(F)$, where 
\begin{align*}
&F=\prod_{i \text{\ even}}(1+u_ix^{2i}+u_i^2x^{4i}+u_i^3x^{6i}+\ldots)\cdot\\
&\prod_{i \text{\ odd}}\left(1+t[x^i+u_ix^{3i}+u_i^2x^{5i}+\ldots]+(1+u_i)[x^{2i}+u_ix^{4i}+u_i^2x^{6i}+\ldots]
\right).
\end{align*}
Simplifying, one immediately gets that $F^\Nscr=F$.
\ed

\begin{prop}\label{Bn-match-prop} 
One has
$$S(F^\Nscr)(x,u_1,u_2,\ldots)=xF^W(x^2,u_1,u_2,\ldots).$$
Hence, the multiplicity with which each motivic piece \eqref{mu-motivic-piece}
arises from a module in the semiorthogonal
decomposition of Theorem \ref{type-B-semiorth-dec-thm} is equal
to the multiplicity with which it appears in the decomposition \eqref{mot-dec2-eq}. 
\end{prop}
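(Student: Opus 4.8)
\noindent The plan is to first prove the power-series identity $S(F^\Nscr)=xF^W(x^2,u_1,u_2,\ldots)$ and then read off the statement about multiplicities. Granting the identity, that statement is pure bookkeeping: by the identity \eqref{P-la-sum-eq} established in the proof of Lemma \ref{Bn-comb-lem} we have $S(F^\Nscr)=\sum_{\la\in\Pi^+}P_\la(u_1,u_2,\ldots)x^{|\la|}$ with $P_\la=\sum_{\xi\in\Spr_\la}u(\mu(\la,\xi))$, where $\mu(\la,\xi)$ is the partition attached to the module $M'_{\la,\xi}$ in Theorem \ref{type-B-semiorth-dec-thm}(iii). Since a partition is recovered from its monomial $u(\mu)$ (cf.\ \eqref{u-mu-eq}), for $|\la|=2n+1$ the coefficient of $u(\mu)x^{2n+1}$ in $S(F^\Nscr)$ is exactly the number of modules $M'_{\la,\xi}$ occurring in the decomposition of Theorem \ref{type-B-semiorth-dec-thm} (for this $n$) whose attached motive is $\ft_\mu/W_{B,\mu}$, cf.\ \eqref{mu-motivic-piece}. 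On the other hand $xF^W(x^2,u_1,u_2,\ldots)=\sum_{\mu,\mu'}u(\mu)x^{2(|\mu|+|\mu'|)+1}$, so the coefficient of $u(\mu)x^{2n+1}$ there is the number of partitions $\mu'$ of $n-|\mu|$; by the parametrization of the conjugacy classes of $W_{B_n}$ recalled above (a class $(\mu,\mu')$ contributes the motive \eqref{mu-motivic-piece}, which depends only on $\mu$), this is the multiplicity of that motive in \eqref{mot-dec2-eq}. Hence the two multiplicities coincide.

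To prove the identity, factor $F^\Nscr=G(x)\,H(t,x)$ with $G(x)=\prod_{m\ge 1}(1-u_mx^{2m})^{-1}$ and $H(t,x)=\prod_{m\ \mathrm{odd}}(1+tx^m+x^{2m})$; observe that $G$ involves only even powers of $x$. Writing $H=\sum_j H_j(x)t^j$, each $H_j(x)$ involves only powers of $x$ of the parity of $j$, because in every factor $1+tx^m+x^{2m}$ (with $m$ odd) each of the three terms has $t$-degree and $x$-degree of the same parity. Therefore every monomial of $F^\Nscr$ has $t$-degree and $x$-degree of the same parity, so $S$ acts on it by killing the terms of even $t$-degree and sending $c(u_1,\ldots)\,t^{2k+1}x^{2l+1}$ to $\binom{2k+1}{k}c(u_1,\ldots)\,x^{2l+1}$; since $G$ carries only even $x$-powers, the operation factors through the product, giving
\[
S(F^\Nscr)=G(x)\cdot\widetilde S(H),\qquad \widetilde S(H):=\sum_{k\ge 0}\binom{2k+1}{k}[t^{2k+1}]H(t,x).
\]

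The heart of the argument is the evaluation of $\widetilde S(H)$. For any formal series $P(t)$ one has $\sum_{k\ge 0}\binom{2k+1}{k}[t^{2k+1}]P(t)=[\zeta^{1}]\,P(\zeta+\zeta^{-1})$, since the coefficient of $\zeta^1$ in $(\zeta+\zeta^{-1})^{j}$ is $\binom{j}{(j-1)/2}$ for $j$ odd and $0$ for $j$ even. Applying this to $H$ and using $1+tx^m+x^{2m}=(1+\zeta x^m)(1+\zeta^{-1}x^m)$ for $t=\zeta+\zeta^{-1}$, we get $\widetilde S(H)=[\zeta^1]\prod_{j\ge 1}(1+\zeta x^{2j-1})(1+\zeta^{-1}x^{2j-1})$. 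The Jacobi triple product identity $\prod_{j\ge1}(1-x^{2j})(1+\zeta x^{2j-1})(1+\zeta^{-1}x^{2j-1})=\sum_{n\in\Z}x^{n^2}\zeta^n$ now gives
\[
\widetilde S(H)=\frac{[\zeta^1]\sum_{n\in\Z}x^{n^2}\zeta^n}{\prod_{j\ge1}(1-x^{2j})}=\frac{x}{\prod_{j\ge1}(1-x^{2j})},
\]
so $S(F^\Nscr)=x\prod_{m\ge1}(1-u_mx^{2m})^{-1}\prod_{j\ge1}(1-x^{2j})^{-1}=xF^W(x^2,u_1,u_2,\ldots)$, as required.

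The main obstacle is organizational rather than conceptual: one must pin down the precise action of $S$ on monomials $t^ax^b$ and verify carefully that it factors through $F^\Nscr=G(x)H(t,x)$ in the way used above, which rests entirely on the parity coupling between $t$- and $x$-degrees in $F^\Nscr$; one must also track indices correctly when rewriting $\prod_{m\ \mathrm{odd}}$ as $\prod_{j\ge1}$ with $m=2j-1$ before invoking the Jacobi triple product. Once these points are settled, the substitution $t=\zeta+\zeta^{-1}$, the $[\zeta^1]$-extraction, and the final recombination are routine.
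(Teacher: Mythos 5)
Your proof is correct, but it takes a route that is the reverse of the paper's. The paper first observes that the desired identity is known to hold after the specialization $u_1=u_2=\cdots=1$, because the Springer correspondence gives a bijection between pairs $(O,\xi)$ and conjugacy classes of $W$; it then uses the same factorization $S(F^\Nscr)=\prod_{n\ge1}(1-u_nx^{2n})^{-1}\cdot S(F_0^\Nscr)$ (your $G\cdot\widetilde S(H)$) to \emph{deduce} the combinatorial identity $S(F_0^\Nscr)=x\prod_{n\ge1}(1-x^{2n})^{-1}$ from the specialized statement, and finally plugs this back in. The purely combinatorial identity is then recorded as Corollary~\ref{Bn-id-cor}, with a separate direct proof via the Jacobi triple product attributed to Ben Young. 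What you have done is exactly that independent proof: you prove $\widetilde S(H)=x\prod_{n\ge1}(1-x^{2n})^{-1}$ directly by the substitution $t=\zeta+\zeta^{-1}$, factoring $1+tx^m+x^{2m}=(1+\zeta x^m)(1+\zeta^{-1}x^m)$, and extracting $[\zeta^1]$ from the Jacobi triple product, then deduce the proposition. Your route buys a self-contained combinatorial argument that does not appeal to the Springer correspondence; the paper's route gets the identity for free from representation theory and treats the Jacobi-triple-product computation as an independent confirmation. Both arguments are sound, and the key step that lets the parity bookkeeping factor cleanly through $G(x)\cdot H(t,x)$ (the coupling of $t$-degree and $x$-degree parities in each factor $1+tx^m+x^{2m}$ with $m$ odd, together with $G$ carrying only even $x$-powers) is handled correctly in your write-up. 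One minor remark: the first paragraph of your proof, spelling out how the identity yields the multiplicity statement, is sound but is left implicit in the paper; including it is a reasonable addition.
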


\Pf . We know that the stated equality of the generating functions holds after the substitution
$u_1=u_2=\ldots=1$. Indeed, this follows from the fact that the cardinality of the set of
pairs $(O,\xi)$ appearing in the Springer correspondence is the same as the number of conjugacy classes in
$W$. Set 
$$F_0^{\Nscr}(x,u_1,u_2,\ldots)=\prod_{n \text{ odd}}(1+tx^n+x^{2n})=F^{\Nscr}|_{u_1=u_2=\ldots=0}.$$
Note that
\begin{equation}\label{S-F-F0-eq}
S(F^\Nscr)=\prod_{n\ge 1}(1-u_nx^{2n})^{-1}\cdot S(F_0^{\Nscr}),
\end{equation}
since the first factor does not depend on $t$ and contains only even powers of $x$.
Hence, 
$$S(F^\Nscr)|_{u_1=u_2=\ldots=1}=\prod_{n\ge 1}(1-x^{2n})^{-1}\cdot S(F_0^{\Nscr}).$$
Thus, the identity 
$$S(\Fscr^{\Nscr})|_{u_1=u_2=\ldots=1}=xF^W(x^2,1,1,\ldots)$$
becomes
$$\prod_{n\ge 1}(1-x^{2n})^{-1}\cdot S(F_0^{\Nscr})=x\cdot \prod_{n\ge 1}(1-x^{2n})^{-2}.$$
Therefore, we have
$$S(F_0^{\Nscr})=x\cdot \prod_{n\ge 1}(1-x^{2n})^{-1}.$$
Plugging this into \eqref{S-F-F0-eq} we get the result.
\ed

\begin{cor}\label{Bn-id-cor} One has
\begin{equation}\label{Bn-comb-id}
S\left(\prod_{n \text{ odd}}(1+tx^n+x^{2n})\right)=x\cdot \prod_{n\ge 1}(1-x^{2n})^{-1}.
\end{equation}
\end{cor}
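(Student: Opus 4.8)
The identity \eqref{Bn-comb-id} is a purely formal consequence of Proposition \ref{Bn-match-prop}, but it can also be proved directly, and the plan is to do exactly that in order to exhibit the promised link with the Jacobi triple product. Recall that $S$ is the $\Z[u_1,u_2,\ldots]$-linear (here just $\Z$-linear) operator with $S(t^{2k+1})={2k+1\choose k}$, $S(t^{2k})$ irrelevant here since only odd powers of $t$ occur, $S(x^{2k})=0$, $S(x^{2k+1})=x^{2k+1}$. The first step is to record the generating-function identity
\[
\sum_{k\ge 0}{2k+1\choose k}y^{2k+1}=\frac{y}{\sqrt{1-4y^2}},
\]
which follows from the classical central-binomial generating function $\sum_k\binom{2k}{k}z^k=(1-4z)^{-1/2}$ together with $\binom{2k+1}{k}=\tfrac12\binom{2k+2}{k+1}$, so that $\sum_k\binom{2k+1}{k}y^{2k+1}=\tfrac{1}{2y}\bigl((1-4y^2)^{-1/2}-1\bigr)+\ldots$; one checks the constant term and the factor of $y$ work out to give the displayed closed form. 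Thus applying $S$ to a power series amounts, after expanding in $t$, to replacing each monomial $t^{2k+1}x^m$ (with $m$ odd) by the coefficient of $y^{2k+1}$ in $y(1-4y^2)^{-1/2}$ times $x^m$, and killing everything with $m$ even or with an even power of $t$.

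The second step is to massage the left-hand side of \eqref{Bn-comb-id} so that this substitution can be carried out termwise. Write $1+tx^n+x^{2n}$; the variable $t$ appears linearly in each factor, so in the product $\prod_{n\text{ odd}}(1+tx^n+x^{2n})$ the coefficient of $t^j$ is $e_j$ evaluated at the ``arguments'' $x^n/(1+x^{2n})$, times $\prod_{n\text{ odd}}(1+x^{2n})$ — more precisely,
\[
\prod_{n\text{ odd}}(1+tx^n+x^{2n})=\prod_{n\text{ odd}}(1+x^{2n})\cdot\prod_{n\text{ odd}}\Bigl(1+\frac{tx^n}{1+x^{2n}}\Bigr).
\]
Since $S$ kills all even powers of $x$, $S$ applied to the left side equals $S$ applied to the product after formally expanding; the cleanest route is to substitute the weight bookkeeping variable and identify the answer with a product. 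The plan is to introduce the auxiliary series $G(t)=\prod_{n\text{ odd}}(1+tx^n+x^{2n})$, show that the $\Z$-linear operator $S$ (acting only on the $t$-grading, with the convention $t^{2k+1}\mapsto\binom{2k+1}{k}$, $t^{2k}\mapsto 0$ after also discarding even $x$-powers) transforms $G$ into a theta-like product, and compare with $x\prod_{n\ge1}(1-x^{2n})^{-1}$.

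The cleanest and least computational step, which I would actually use to finish, is to invoke Proposition \ref{Bn-match-prop} together with \eqref{S-F-F0-eq}: there it is shown that $S(F^\Nscr)=\prod_{n\ge1}(1-u_nx^{2n})^{-1}\cdot S(F_0^\Nscr)$ and that $S(F^\Nscr)=xF^W(x^2,u_1,u_2,\ldots)=x\prod_{n\ge1}(1-u_nx^{2n})^{-1}(1-x^{2n})^{-1}$. Cancelling the common factor $\prod_{n\ge1}(1-u_nx^{2n})^{-1}$, which is a nonzero element of the power series ring, yields exactly $S(F_0^\Nscr)=x\prod_{n\ge1}(1-x^{2n})^{-1}$, and since $F_0^\Nscr=\prod_{n\text{ odd}}(1+tx^n+x^{2n})$ by definition this is \eqref{Bn-comb-id}. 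So the corollary is immediate from the proposition; the only genuine content beyond that is the observation — which I would state as the main point of the proof — that \eqref{Bn-comb-id} is an avatar of the Jacobi triple product, obtained by specializing the parameter appropriately and using the combinatorial identity $S(t^{2k+1})=\binom{2k+1}{k}$ to encode the coefficient $\binom{2k+1}{k}$ that came from counting the $\widehat{\pi}_\la(\widehat{F'_\la})$-cosets in $\Spr_\la$. The main obstacle, if one insists on a self-contained derivation not routed through Proposition \ref{Bn-match-prop}, is precisely identifying the image under $S$ of the ``odd part'' $F_0^\Nscr$ with the product $x\prod(1-x^{2n})^{-1}$: this requires recognizing that after the substitution $t^{2k+1}\mapsto\binom{2k+1}{k}$ the theta series $\prod_{n\text{ odd}}(1+tx^n+x^{2n})$ collapses — via the generating function $\sum_k\binom{2k+1}{k}y^{2k+1}=y(1-4y^2)^{-1/2}$ and a resummation — onto an Euler product, which is a pleasant but nontrivial manipulation with $q$-series.
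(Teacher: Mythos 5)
Your main deduction is correct and is essentially the paper's own proof: Corollary~\ref{Bn-id-cor} is indeed extracted from the argument inside the proof of Proposition~\ref{Bn-match-prop}, and combining $S(F^\Nscr)=\prod_{n\ge1}(1-u_nx^{2n})^{-1}\cdot S(F_0^\Nscr)$ with $S(F^\Nscr)=x\prod_{n\ge1}(1-u_nx^{2n})^{-1}(1-x^{2n})^{-1}$ and cancelling the unit $\prod_{n\ge1}(1-u_nx^{2n})^{-1}$ gives the identity immediately.

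Two errors in your side commentary are worth flagging, though neither affects the main route. First, the generating function you state, $\sum_{k\ge 0}\binom{2k+1}{k}y^{2k+1}=y(1-4y^2)^{-1/2}$, is wrong: the right-hand side has $y^3$-coefficient $2$, while $\binom{3}{1}=3$. The correct identity (which follows from your own manipulation $\binom{2k+1}{k}=\tfrac12\binom{2k+2}{k+1}$) is $\sum_{k\ge0}\binom{2k+1}{k}y^{2k+1}=\tfrac1{2y}\bigl((1-4y^2)^{-1/2}-1\bigr)$. Second, your parenthetical "only odd powers of $t$ occur" is false: in $\prod_{n\text{ odd}}(1+tx^n+x^{2n})$ one can pick the $t$-term from an even number of factors. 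The reason $S$'s action on $t^{2k}$ is irrelevant is different: choosing the $t$-term from $j$ factors forces the $x$-degree to have the same parity as $j$ (a sum of $j$ odd numbers plus an even number), so $t^{2k}$-monomials are always paired with even $x$-powers, which $S$ kills regardless.

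Finally, you gesture at the Jacobi triple identity without actually completing that route. The paper's independent proof (attributed to Ben Young) makes it work cleanly by a substitution you don't make: set $t=q+q^{-1}$, so that applying $S$ becomes "take the coefficient of $q^1$" (since $(q+q^{-1})^{2k+1}$ contributes $\binom{2k+1}{k}$ to $q^1$ and $(q+q^{-1})^{2k}$ contributes $0$, and the $x$-parity takes care of itself as noted above). Then $\prod_{m\text{ odd}}(1+(q+q^{-1})x^m+x^{2m})=\prod_{m\ge1}(1+x^{2m-1}q)(1+x^{2m-1}q^{-1})$, which by Jacobi equals $\sum_{n\in\ZZ}x^{n^2}q^n\prod_{m\ge1}(1-x^{2m})^{-1}$; extracting the $q^1$-coefficient yields $x\prod(1-x^{2m})^{-1}$ directly, with no need for the central-binomial resummation you were contemplating.
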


The following independent proof of \eqref{Bn-comb-id} was found by Ben Young.
Start with the Jacobi triple identity
$$
\prod_{m\ge 1}(1-x^{2m})(1+x^{2m-1}y^2)(1+x^{2m-1}y^{-2})=\sum_{n\in\Z}x^{n^2}y^{2n}.
$$
Now set $y=\sqrt{q}$ and divide both sides by $\prod_m (1-x^{2m})$:
$$\prod_{m\ge 1}(1+x^{2m-1}q)(1+x^{2m-1}q^{-1})=\sum_{n\in\Z}x^{n^2}q^{n}\prod_{m\ge 1}(1-x^{2m})^{-1}.$$
Thus, we get
\begin{equation}\label{Jacobi-triple-id}
\prod_{m \text{ odd}}(1+(q+q^{-1})x^m+x^{2m})=\sum_{n\in\Z}x^{n^2}q^{n}\prod_{m\ge 1}(1-x^{2m})^{-1}
\end{equation}
Now we observe that the left-hand side of \eqref{Bn-comb-id}
is exactly the coefficient of $q^1$ in the $q$-expansion of the left-hand side of \eqref{Jacobi-triple-id}.

\begin{rem} The identity of Corollary \ref{Bn-id-cor} has the following
combinatorial interpretation.
Consider partitions 
of $2n+1$ with the following restrictions: all parts are odd, multiplicity of each part is $\le 2$.
If there are $2k+1$ parts of multiplicity $1$ then this partition should be
counted with weight ${2k+1\choose k}$. Then the sum of weights of all such partitions is $p(n)$,
the usual partition number. 
\end{rem}

\subsection{Type $D_n$}\label{type-D-sec}
Now let us consider the Springer correspondence for the group $\bG=\SO(2n)$. The picture is going to
be very similar to the one for $B_n$ but with some important differences, which lead to the presence
of ``noncommutative" pieces in the corresponding semiorthogonal decomposition.

Representations of $W_{D_n}$ are parametrized by the set $\Pi^{(2)}_n/S_2$ of unordered
pairs of partitions $(\xi,\nu)$ such that $|\xi|+|\nu|=n$, except that
there are two irreducible representations associated with the pair
when $\xi=\nu$. Similarly to the case of type $B_n$ we associate with $(\xi,\nu)$ its symbol, which is an
{\it unordered} pair of sets $(R,R')$, setting
$$R=\{\xi_0,\xi_1+2,\ldots,\xi_r+2r\},
R'=\{\eta_0,\eta_1+2,\ldots,\eta_r+2r\},$$
where $\xi=(\xi_0\le\xi_1\le\ldots\le\xi_r)$, $\eta=(\eta_0\le\eta_1\le\ldots\le\eta_r)$
(if necessary we extend one of the partitions by $0$s).

The nilpotent orbits in $\SO(2n)$ are parametrized by the set
$\Pi^+_{2n}$ of partitions $\la$ of $2n$ such that each even part appears
with even multiplicity, except that there are two nilpotent orbits associated with $\la$ in the case
when all parts of $\la$ are even.
The reductive part of the centralizer $\bG_\la$ of an element of the orbit $O_\la$ 
and its component group $F_\la$ are still given by \eqref{B-stabilizer-eq} and \eqref{B-component-group-eq},
where $\II_\la$ is still the set of odd $m$ appearing as parts in $\la$.  

Similarly to type $B_n$, there is a map
$$\Spr_1:\Pi^+_{2n}\to \Pi_n^{(2)}/S_2$$
defined as follows. For $\la=(\la_0\le\la_1\le\ldots)$ define the numbers
$\xi_1\le\ldots\le\xi_r$ and $\eta_1\le\ldots\le\eta_r$ in the same way as in the case of type $B_n$.
It is easy to see that $\la\in\Pi^+_{2n}$ has all parts even if and only if $\Spr_1(\la)$ has $\xi=\eta$.
In this case there is some rule which of the two nilpotent orbits associated with $\la$ corresponds
to which of the two irreducible representations associated with $\Spr_1(\la)$, but the precise form
of the correspondence is not important for us.

Let $(R_\la,R'_\la)$ be the symbol associated with $\Spr_1(\la)$. Assume that $R_\la\neq R'_\la$,
i.e., $\la$ has at least one odd part. As in the case of type $B_n$
the maximal intervals of integers in $(R_\la\cup R'_\la)\setminus (R_\la\cap R'_\la)$ are in
bijection with odd numbers $i$ appearing as parts of $\la$, and the length of the interval is equal
to the multiplicity $r_i$.
We define similarity classes of symbols of pairs of partitions in $\Pi^{(2)}_n/S_2$ as before, so that each similarity class contains the unique symbol in the image of $\Spr_1$. 

In the case when $\II_\la=\emptyset$, i.e., all parts of $\la$ are even, the component group $F_\la$ is trivial,
and the Springer correspondence matches two nilpotent orbits corresponding to $\la$ with two representations
of $W_{D_n}$ associated with $\Spr_1(\la)$ (the precise rule is not important for us).

Now assume that $\II_\la\neq\emptyset$, and let $\Si_\la$ have
the same meaning as in the case of type $B_n$. Note that we have $\sum_{m\in\II_\la}\ell(m)=0$,
so $\Si_\la$ is a coset for the subgroup $\lan\be\ran\sub (\Z/2)^{\II_\la}$ generated by the element
$\be=(1,\ldots,1)\in (\Z/2)^{\II_\la}$.
Then we have a bijection
$$\Si_\la/\lan\be\ran \rTo{\sim} \Sscr_\la$$
to the set $\Sscr_\la$ of symbols $(R,R')$ similar to $(R_\la,R'_\la)$, defined as in the case of type $B_n$
(the action of $\be$ has the effect of swapping $R$ and $R'$, so it acts trivially on $\Sscr_\la$
since we view $(R,R')$ as unordered pairs).
We still can identify the component group $F_\la$ with $F(\II_\la)$, and 
view elements of $(c_m)\in \Si_\la/\lan\be\ran$ as characters of $F_\la$
(by definition, $\be$ restricts to the trivial character of $F_\la$). 
This gives a bijection
$\Si_\la/\lan \be\ran\rTo{\sim} \Spr_\la$ that computes the Springer correspondence 
for the nilpotent orbit associated with $\la$. 

Since $\la$ is a partition of $2n$, we have $|\II_\la^-|=2k$ for some $k$. Assume first that $k>0$.
Then the natural projection
$$\pi_\la:F_\la\to F'_\la:=(\Z/2)^{\II_\la^+}.$$
is surjective. We claim that in this case similarly to Proposition \ref{Bn-Spr-cosets}
the set of characters of $F_\la$ that appear in the Springer correspondence
is the union of ${2k \choose k}/2$ cosets of the subgroup
$\widehat{\pi_\la}(\widehat{F'_\la})\sub\widehat{F_\la}$.
Indeed, it is enough to check that $\Si_\la$ is the union of ${2k\choose k}$ cosets of
the subgroup $(\Z/2)^{\II_\la^+}$. Using the notation from Proposition \ref{Bn-Spr-cosets}
we have $|\II_\la^-(1)|=|\II_\la^-(-1)|=k$. The number in question is obtained as the number
of choices of pairs of subsets $P(1)\sub \II_\la^-(1)$, $P(-1)\sub \II_\la^-(-1)$ with $|P(1)|=|P(-1)|$,
which is equal to ${2k\choose k}$. 

Next, assume that $k=0$. Then $\Si_\la=(\Z/2)^{\II_\la}$ and the restriction map 
$$\Si_\la/\lan\be\ran\to \widehat{F_\la}$$ 
is an isomorphism, so in this case all characters of the component group appear in the Springer correspondence.
Note also that in this case the map $\pi_\la:F_\la\to F'_{\la}$ is the natural embedding
of the index $2$ subgroup $F(\II_\la)\sub (\Z/2)^{\II_\la}$.

Now we can formulate the analog of Theorem \ref{type-B-semiorth-dec-thm} for type $D_n$.
As before, for each partition $\mu$ with $|\mu|\le n$ we consider the subspace $\ft_{\mu}\sub \ft$ 
given by \eqref{t-mu-eq}. Let $W_{D,\mu}$ be the group acting on $\ft_\mu$ by permutations of
groups of coordinates corresponding to equal parts in $\mu$, and by changing signs
of an even number of coordinates $t_1,\ldots,t_n$ (simultaneously in each group of coordinates corresponding to a part in
$\mu$). Note that if $|\mu|<n$ then $W_{D,\mu}=W_{B,\mu}$,
since in this case $t_n=0$ and hence arbitrary changes of signs are allowed.

Below we will use for $\la\in\Pi^+_{2n}$
the notations $B_\la$, $\la^{\red}$, etc., introduced in \S\ref{Springer-B-n-sec}
for $\la\in\Pi^+_{2n+1}$. We also denote by $\Nilp_{2n}$ the set of nilpotent orbits, so that we have a surjective
map $\Nilp_{2n}\to \Pi^+_{2n}$ with fibers of cardinality $2$ over $\la$ that have only even parts.

\begin{thm}\label{type-D-semiorth-dec-thm} (i) Let $A_W$ be the algebra \eqref{AW-eq} defined for the Weyl
group of type $D_n$.
For each $\wt{\la}\in \Nilp_{2n}$ over $\la\in\Pi^+_{2n}$
let $\Cscr_{\wt{\la}}\sub D(A_W-\mod)$ be the thick subcategory generated by modules $M_{\la,\xi}$, where $\xi$
varies over $\Spr_\la$. Then we have a semiorthogonal decomposition
$$D^f(A_W)=\lan \Cscr_{\wt{\la}_N},\ldots,\Cscr_{\wt{\la}_1}\ran,$$
where we order the nilpotent orbits $O_{\wt{\la}_1},\ldots,O_{\wt{\la}_N}$ in such a way that $O_{\wt{\la}_i}$ is in
the closure of $O_{\wt{\la}_j}$ only if $i<j$.
The category $\Cscr_{\wt{\la}}$ depends only on $\la\in\Pi^+_{2n}$, up to equivalence, so we denote it
by $\Cscr_\la$.

\noindent
(ii) Fix $\la\in\Pi^+_{2n}$, and let $|\II_\la^-|=2k$. 
Assume first that $k>0$. Then we have a decomposition of $\Cscr_\la$
into ${2k\choose k}/2$ mutually orthogonal subcategories $\Cscr_\la(C)$, where $C$ runs over the
$\widehat{\pi}_\la(\widehat{F'_\la})$-cosets comprising $\Spr_\la$, and
$\Cscr_\la(C)$ is generated by all the modules $M_{\la,\xi}$ with $\xi\in C$.
Furthermore, for each such coset $C=\xi_0+\widehat{\pi}_\la(\widehat{F'_\la})$ we have an equivalence
$$\Cscr_\la(C)\simeq D^f(F'_{\la}\ltimes B_\la).$$
sending the module $M_{\la,\xi}$, with $\xi=\xi_0+\widehat{\pi}_\la(\eta)$
to the projective module $\eta\otimes B_\la$. 

\noindent
(ii)' Now let $\la\in\Pi^+_{2n}$ be such that $\II_\la^-=\emptyset$. Then we have
$$\Cscr_\la\simeq D^b_{F(\II_\la)}(B_\la)\simeq D^b([\ft_\mu/W_{D,\mu}]),$$
where $\mu=\la^{\red}$ (note that in this case all multiplicities of parts in $\la$ are even, so $|\mu|=n$).

\noindent
(iii) For each $\la$ with $|\II_\la^-|>0$ and $C=\xi_0+\widehat{\pi}_\la(\widehat{F'_\la})$ as in (ii), 
there exists a semiorthogonal decomposition of $\Cscr_\la(C)$
into subcategories generated by certain $A_W$-modules $M'_{\la,\xi}$, where $\xi\in C$ are arranged
in some order, such that $\Ext^{>0}_{A_W}(M'_{\la,\xi},M'_{\la,\xi})=0$, and
$$\End_{A_W}(M'_{\la,\xi_0+\widehat{\pi}_\la(\eta)})\simeq \C[\ft_\mu]^{W_{B,\mu}},$$
where the partition $\mu$ is defined as follows.
Let $m_1,\ldots,m_s\in\II^+_{\la}$ be all the elements 
such that the restriction of $\eta$ to the corresponding factor in $F'_\la$ is nontrivial.
Then $\mu$ is obtained from $\la^{\red}$ by reducing by $1$ the multiplicities of
each of the parts $m_1,\ldots,m_s$.
Furthermore, the support of the module $M'_{\la,\xi_0+\widehat{\pi}_\la(\eta)}$ is equal to
$S_n\cdot \ft_\mu$.
\end{thm}

The proof is absolutely analogous to that of Theorem \ref{type-B-semiorth-dec-thm}, where
in the case of $\la\in\Pi^+_{2n}$ with $\II^-_\la=\emptyset$ we use the fact that all characters of $F_\la$
appear in the Springer correspondence to check the applicability of Theorem \ref{Springer-thm}.                                                                                                                                                                                                                                              
Note also that in (iii) we have $|\mu|<n$ so that $W_{D,\mu}=W_{B,\mu}$.

Thus, we obtain a semiorthogonal decomposition of $D^f(A_W)$ for type $D_n$ into two kinds of subcategories:
1) the subcategories generated by the modules $M'_{\la,\xi}$ where $\la$ is such that $|\II_\la^-|>0$;
2) the subcategories $\Cscr_\la$ for $\la$ such that $|\II_\la^-|=0$.
We will refer to pieces of type 2) as {\it noncommutative pieces} in our semiorthogonal decomposition.

Now we are going to match this decomposition with the motivic decomposition
\eqref{mot-dec2-eq}, where the subcategories of the second kind absorb several pieces of the motivic 
decomposition.
 
Recall that 
conjugacy classes in $W=W_{D_n}$ are parametrized by (ordered) pairs of partitions $(\mu,\mu')$ such that
$|\mu|+|\mu'|=n$ and such that $\mu'$ has even number of parts, except that in the case when $\mu'=\emptyset$
and all parts of $\mu$ are even there are two corresponding conjugacy classes.
As in the case of type $B_n$, the corresponding subspace $\ft^w$ is $\ft_\mu$.
The centralizer $C(w)$ acts on $\ft^w=\ft_\mu$ through the quotient $W_{D,\mu}$. 
In the case $|\mu|<n$ (i.e., $\mu'\neq\emptyset$) then $W_{D,\mu}=W_{B,\mu}$, so the quotient is smooth:
$$\ft_\mu/W_{D,\mu}\simeq \prod_i \A^{k_i}/W_{B_{k_i}},$$
where $k_i$ is the multiplicity of $i$ in $\la$.
If $\mu'=\emptyset$ and there is only one odd part $i_0$ (appearing with some multiplicity $k_{i_0}$) in $\mu$ then
$W_{D,\mu}$ acts by type $B$ action on each group of coordinates corresponding to equal even parts and 
by type $D$ action on coordinates corresponing to the part $i_0$. Thus, in this case the quotient is still smooth:
$$\ft_\mu/W_{D,\la}\simeq \prod_{i \text{ even}} \A^{k_i}/W_{B_{k_i}}\times \A^{k_{i_0}}/W_{D_{k_{i_0}}}.$$
Finally, if $\mu'=\emptyset$ and the odd parts appearing in $\mu$ are $i_1,\ldots,i_s$, where $s>1$,
appearing with multiplicities $k_{i_1},\ldots,k_{i_s}$, then $W_{D,\mu}$ acts by changing signs in even number
of coordinates corresponding to odd parts (in addition to permuting coordinates corresponding to equal parts).
In this case the quotient is singular, and we have an isomorphism of stacks
$$[\ft_\mu/W_{D,\mu}]\simeq \prod_{i \text{ even}}\A^{k_i}/W_{B_{k_i}}\times 
[\left(\prod_{m=1}^s (\A^{k_{i_m}}/W_{D_{k_{i_m}}})\right)/F_s],$$
where $F_s=F(\{1,\ldots,s\})\sub(\Z/2)^s$ acts on each space $\A^{k_{i_m}}/W_{D_{k_{i_m}}}$
via the projection to the $m$th factor $\Z/2$, using the outer involution of $\A^{k_{i_m}}/W_{D_{k_{i_m}}}$. 
Note that by choosing basic invariant polynomials as in Lemma \ref{B-component-group-action-lem},
we get an isomorphism
$$[\left(\prod_{m=1}^s (\A^{k_{i_m}}/W_{D_{k_{i_m}}})\right)/Z_s]\simeq \prod_{m=1}^s \A^{k_{i_m}-1}\times [\A^s/Z_s].$$

In any case the corresponding piece in the motivic decomposition is $\ft_\mu/W_{D,\mu}$, 
to which we associate as before
the monomial $u(\mu)$ (see \eqref{u-mu-eq}). In the case when $\mu'=\emptyset$ and
all parts of $\mu$ are even our convention
is that this monomial stands for the two pieces in the decomposition (corresponding to the two
conjugacy classes associated with $(\mu,\emptyset)$).
The formal sum of the obtained monomials over all conjugacy classes in $W$ is the coefficient of $x^n$
in 
$$F^W(x,u_1,u_2,\ldots)=\prod_{n\ge 1}(1-u_nx^n)^{-1}\cdot P^+,$$
where 
$$P^+(x)=\frac{1}{2}(\prod_{n\ge 1}(1-x^n)^{-1}+\prod_{n\ge 1}(1+x^n)^{-1})$$
is the generating function for partitions with even number of parts.

Now let us count contributions to pieces of the motivic decomposition from the semiorthogonal decomposition
of Theorem \ref{type-D-semiorth-dec-thm}.
Let $\la$ be a partition in $\Pi^+_{2n}$ and let $r_i$ denote the multiplicity of $i$ in $\la$.
We distinguish two cases: 1) $\la$ has at least one odd part appearing with odd multiplicity;
2) all odd parts in $\la$ appear with even multiplicity. In case 1) the counting is similar to the case of
type $B_n$, so the coresponding contribution is
$$P_\la=
S'\left(\prod_{i \text{\ even}}u_i^{r_i/2}\cdot \prod_{i \text{\ odd}, r_i \text{\ odd}}(tu_i^{(r_i-1)/2})\cdot
\prod_{i \text{\ odd}, r_i \text{\ even}} (u_i^{r_i/2}+u_i^{r_i/2-1})\right),$$
where $S'$ is the $\Z[u_1,u_2,\ldots]$-linear operator sending $t^{2k+1}$ to $0$, $t^{2k}$ to
${2k \choose k}/2$ for $k>0$ and $1$ to $1$.
In case 2) we have to replace a noncommutative piece in the semiorthogonal
decomposition by the corresponding motivic pieces 
$$(1+D_2+D_4+\ldots)(\prod_i u_i^{r_i/2}).$$
Here for a monomial $M$ in $u_i$ we set
$$D_{p}(M)=\left(\sum_{2m_1-1<\ldots<2m_p-1}\frac{M}{u_{2m_1-1}\ldots u_{2m_p-1}}\right)_{reg},$$
where we throw away summands that are not polynomials.
Finally, in the case when all parts of $\la$ are even we have two corresponding nilpotent orbits
and the corresponding stratum is $u(\la^{\red})=\prod_i u_i^{r_i/2}$.
The strata appearing in this way are those corresponding to pairs $(\mu,\emptyset)$,
where all parts of $\mu$ are even, which are exactly the strata that have to be taken
twice in the motivic decomposition (since there are two associated conjugacy classes).

The resulting contribution from all $\la\in\Pi^+_{2n}$ is equal to the coefficient of $x^{2n}$ in
$$F^\Nscr(x,u_1,u_2,\ldots)=
\prod_{m \text{ even}}(1-u_mx^{2m})^{-1}\cdot [S'(F_1)+F_2],$$
where $S'(F_1)$ and $F_2$ correspond to cases 1) and 2) above:
$$F_2=\prod_{m \text{ odd}}(1-u_mx^{2m})^{-1}\cdot\frac{1}{2}(P^{\text{dist,odd}}(x^2)+P^{\text{dist,odd}}(-x^2)) \ \text{ with}$$
$$P^{\text{dist,odd}}(x)=\prod_{m \text{ odd}}(1+x^m);$$
$$F_1=\wt{F}_1(x,t)-\wt{F}_1(x,0), \ \text{  where }$$
$$\wt{F}_1=\prod_{m \text{ odd}}(1+tx^m[1-u_mx^{2m}]^{-1}+(1+u_m)x^{2m}[1-u_mx^{2m}]^{-1})=
\prod_{m \text{ odd}}(1-u_mx^{2m})(1+tx^m+x^{2m}).$$
Thus, we can rewrite
$$F_1=\prod_{m \text{ odd}}(1-u_mx^{2m})\cdot 
\left(\prod_{m \text{ odd}}(1+tx^m+x^{2m})-P^{\text{dist,odd}}(x^2)\right),$$
$$F^\Nscr=\prod_{n\ge 1}(1-u_nx^{2n})^{-1}\cdot\left[S'\left(\prod_{m \text{ odd}}(1+tx^m+x^{2m})\right)-
\frac{1}{2}P^{\text{dist,odd}}(x^2)+\frac{1}{2}P^{\text{dist,odd}}(-x^2)\right].$$

Now we claim that there is an equality
\begin{equation}\label{Dn-main-comb-id}
F^\Nscr(x,u_1,u_2,\ldots)=F^W(x^2,u_1,u_2,\ldots).
\end{equation}
The equality of the corresponding coefficients of $x^{2n}$ shows that we get the right number of
pieces in the motivic decomposition from our semiorthogonal decomposition.
Both sides in the equality have the factor $\prod_n (1-u_nx^{2n})^{-1}$, and the remaining parts do not
depend on $u_i$. Hence, \eqref{Dn-main-comb-id} is equivalent to the identity obtained from it
by setting $u_1=u_2=\ldots=1$, which holds due to the Springer correspondence.
The obtained combinatorial identity is
\begin{equation}\label{Dn-comb-id}
S'\left(\prod_{m \text{ odd}}(1+tx^m+x^{2m})\right)-
\frac{1}{2}P^{\text{dist,odd}}(x^2)+\frac{1}{2}P^{\text{dist,odd}}(-x^2)=P^+(x^2).
\end{equation}
Using the well known identity 
$$\prod_{n\ge 1}(1+x^n)=\prod_{m \text{ odd}}(1-x^m)^{-1}$$   
we obtain
$$P^{\text{dist,odd}}(-x)=\prod_{m \text{ odd}}(1-x^m)=\prod_{n\ge 1}(1+x^n)^{-1}.$$
Thus,
$$P^+(x)-\frac{1}{2}P^{\text{dist,odd}}(-x)=\frac{1}{2}\prod_{n\ge 1}(1-x^{2n})^{-1},$$
and the idenity \eqref{Dn-comb-id} is equivalent to
$$S'\left(\prod_{m \text{ odd}}(1+tx^m+x^{2m})\right)-
\frac{1}{2}P^{\text{dist,odd}}(x^2)=\frac{1}{2}\prod_{n\ge 1}(1-x^{2n})^{-1}.$$
This in turn is equivalent to
$$\left[\prod_{m \text{ odd}}(1+(q+q^{-1})x^m+x^{2m})\right]_{q^0}=\prod_{n\ge 1}(1-x^{2n})^{-1},$$
which follows from \eqref{Jacobi-triple-id} by taking the constant coefficient of the expansion in powers of $q$.



\appendix



\section{Appendix I: DG-models and Frobenius}\label{dg-sec}

\subsection{Lifting the Frobenius at the DG-level}

In this section we will prove the following ``abstract nonsense'' result.

\begin{thm}
  \label{anr} Let $X$ be a scheme of finite type over $\CC$, and let
  $\Fscr$ be a bounded complex of $\CC$-vector spaces with
  constructible cohomology on $X(\CC)$ which ``can be defined over a
  finite field'' (see below).  Then $R\Hom_{\CC}(\Fscr,\Fscr)$ may be
  represented by a DG-algebra $A$ which is in addition equipped with a
  DG-endomorphism $F$ such that the action of $F$ on $H^\ast(A)$
  coincides with the action of the Frobenius endomorphism on
  $\Ext^\ast_\Lambda(\Fscr_s,\Fscr_s)\otimes_\Lambda \CC$ where
  $\Fscr_s$ which is a reduction of $\Fscr$ to the algebraic closure
  of a finite field and $\Lambda\subset \CC$ is a suitable coefficient ring which is a complete
  discrete valuation ring with finite residue field.

The same result holds if $\Fscr$ lies in the bounded Bernstein-Lunts equivariant 
derived category of $[X(\CC)/G(\CC)]$ for a linear algebraic group $G$ acting
on $X$.
\end{thm}

We will say that $\Fscr\in D_c^b(X(\CC),\CC)$ or $\Fscr\in
D_c^b([X(\CC)/G(\CC)],\CC)$ is ``defined over a finite field'' if it can
be obtained from an $\Fscr_s$ defined over a finite field using the
procedure ``De $\FF$ \`a $\CC$'' outlined in \cite[\S6]{BBD} (this
procedure will be reviewed below). Note that while in loc.\ cit. it is
shown that any $\Fscr$ is defined over the \emph{algebraic closure of
  a finite field}, being actually defined over a finite field itself
is a very subtle property\footnote{A simple perverse sheaf of
  ``geometric'' origin is defined over a finite field by
  \cite[\S6.2.4]{BBD}.}. In particular, this property is not stable
under extension and so one must be careful with categorical
constructions.  In practice one usually completely sidesteps this
problem by first defining the relevant objects over a finite field and
then performing the lift to~$\CC$.

\medskip

The main point of Theorem \ref{anr} is not the existence of the
DG-algebra $A$, which in the topological case is obvious since we are
working in a genuine derived category, but the fact that $A$ may be
chosen in such a way as to be equipped with a lift of the Frobenius
endomorphism, as the latter only exists over a finite field.

\medskip

To prove Theorem \ref{anr} we have to check that the procedure of
reducing to finite characteristic as explained in \cite[\S6]{BBD} (for
schemes) can be lifted to the DG-level (i.e., it can be ``enhanced'').
We have to deal with the fact, however, that the
$l$-adic derived category constructed in \cite{BBD,deligneweil2} is not actually a
subcategory of a genuine derived category, and hence it is not
naturally enhanced. Luckily this defect has been repaired in recent
years \cite{Behrend1,Ek,LO2}.

\medskip

Another technical problem, which arises only in the $G$-equivariant case, is
the following: 
a morphism of
algebraic stacks $f:\Xscr\r \Yscr$ does not define a morphism between
the corresponding lisse-\'etale topoi, as the pullback functor does not
commute with finite limits (this was independently observed by Behrend
and Gabber, see \cite[Example 3.4]{Ollson} for an explicit counterexample 
between schemes!). This does not present a problem on the derived
level since a functor $Lf^{-1}: D_c(\Yscr)\r D_c(\Xscr)$  has been
constructed \cite{Ollson}. 
However it does present a problem on the DG-level.  The functors
connecting $\CC$ and $\FF$ are mostly given by inverse images, and
since the DG-enhancements we use are based on injective resolutions, a
non-exact inverse pullback functor is very inconvenient.

Luckily these problems disappear if we represent stacks by simplicial
schemes equipped with the \'etale site. In that case inverse images may
be computed levelwise, and hence it is obvious that they are exact.

Since it is well-known that the derived category of constructible sheaves
on an algebraic stack is equivalent to the derived category of
contructible (cartesian) sheaves on the simplicial scheme associated
to a smooth covering by a scheme, and since furthermore this
equivalence extends to the $l$-adic case (see \cite[Prop.\
10.3]{LO2}), our approach is equivalent to the usual one.

Let us finally mention that derived inverse images and derived direct
images for a morphisms $f:\Xscr\r \Yscr$ between algebraic stacks may indeed be computed on the
corresponding simplicial schemes. For $Lf^{-1}$ this is in fact true by
the construction, and for $Rf_\ast$ this is true for representable morphisms
\cite[Corollary 5.5.6]{Behrend1}.

\begin{rem}
A result like Theorem \ref{anr} for
$\Fscr=\underline{\CC}$ was proved by Deligne in~\cite{deligneweil2}
using hypercoverings. Note that in this particular case Deligne
constructs $A$ in such a way that $A_{<0}=0$, so 
that  Sullivan's theory of minimal models applies to it. Our
construction does not have this property, which is why need the more
general Theorem \ref{anr}.
\end{rem}

\begin{rem} Theorem \ref{anr} has an obvious analogue for
a finite collection of complexes of $(\Fscr_i)_{i=1,\ldots,n}$. In this case
$A$ becomes a DG-category with $n$ objects. We leave the precise formulation
to the reader.
\end{rem}

The proof of Theorem \ref{anr} occupies the remainder of this section. 
After some preliminaries, we discuss in \S\ref{passing-finite-field-sec} the reduction to a finite
field purely on the level of derived categories (essentially following \cite[\S6]{BBD}, but with appropriate changes to
deal with the equivariant categories). Then in
\S\ref{DG-passing-finite-field-sec} we explain how to lift the entire picture to the DG-level, using standard DG-enhancements
(see \S\ref{DG-enhancement-sec}).


\subsection{The $m$-adic derived category of a topos}
\label{sectopoi}
Let $X$ be a topos, and let $X^{\NN}$ be the associated topos of $\NN$-indexed\footnote{We follow the
American convention that $0\not\in \NN$.} inverse systems
over $X$. Let $\Lambda$ be a complete discrete valuation ring with maximal ideal $m$. 

There is a morphism of topoi
\[
\pi:X^{\NN}\r X
\]
such that $\pi^{-1}\Fscr$ is the constant inverse system $(\Fscr)^n$ and $\pi_\ast\Gscr=\underset{n}{\varprojlim}
\Gscr^n$.
Also, let $i_n:X\r X^{\NN}$ be the morphism of topoi such that
$i^{-1}_n\Fscr=\Fscr^n$.

Put $\Lambda^n=\Lambda/m^n$ and let
$\Lambda^\bullet=(\Lambda^n)_n$ be the corresponding inverse system of rings. We consider $X^{\NN}$
to be ringed by the constant sheaf $\Lambda^\bullet$. 

Let $C(X^{\NN},\Lambda^\bullet)$ denote the category of complexes of $\Lambda^\bullet$-modules on $X$, 
and let $D(X^{\NN},\Lambda^\bullet)$
be the corresponding derived category. 

The composition
\[
L\pi^\ast R\pi_\ast:D(X^{\NN},\Lambda^\bullet)\r D(X^{\NN},\Lambda^\bullet)
\]
is called the \emph{normalization functor} and is denoted by $\widehat{?}$. By construction there is a natural transformation
\[
\Fscr\r \widehat \Fscr
\]
An object $\Fscr$ in $D(X^{\NN},\Lambda^\bullet)$ is said to be
\emph{normalized} if $\Fscr\r\widehat{\Fscr}$ is an isomorphism. 
 We
write $D(X^{\NN},\Lambda^\bullet)^{\text{norm}}$ for the full
subcategory of $D(X^{\NN},\Lambda^\bullet)$ consisting of normalized
complexes.
Obviously $D(X^{\NN},\Lambda^\bullet)^{\text{norm}}$ is triangulated, and since it
is a subcategory of a derived category, it is naturally enhanced, but
it is not clear if it has any other desirable properties.

\medskip

The good behaviour of $D(X^{\NN},\Lambda^\bullet)^{\text{norm}}$
depends crucially on the good behaviour of the normalization functor
$\widehat{?}$, which in turn depends on the good behaviour of inverse
limits. The good behaviour of inverse limits on sites depends on the local
boundedness of cohomological dimension---see \cite[\S2]{LO2} and in
particular \cite[Assumption 2.1.2]{LO2}. Luckily these hypotheses are
satisfied in the cases we consider (nice algebraic
stacks and simplicial schemes, see \cite{LO2} for the precise setting).

\medskip

So from now on we assume that we are in the setting of \cite{LO1,LO2}. 
An important feature is that the normalization functor may be computed
locally on stacks \cite[Cor.\ 3.8]{LO2} and levelwise on simplicial
schemes (see the proof of
\cite[Prop.\ 10.2]{LO2}).

Also,
we have that a complex is normalized if and only 
for $n\ge m$ the canonical maps 
\begin{equation}
\label{canmaps}
\Fscr^n\Lotimes_{\Lambda^n}\Lambda^m\mapsto \Fscr^m
\end{equation}
are isomorphisms, where here and below we use the convention
$\Fscr^n=i^\ast_n\Fscr$.  For algebraic stacks this is \cite[Prop.\
3.5]{LO2}, and for simplicial schemes it follows from the fact that the
normalization functor may be computed levelwise.

\medskip

Now assume that $c$ is the category of Cartesian constructible complexes in $X$
(this notion is defined both in the stack and in the simplicial context). 
 Then we 
define $D_c(X,\Lambda)$ as the full subcategory of $D(X^{\NN},\Lambda^\bullet)^{\text{norm}}$, consisting of
objects whose cohomology are AR-$m$-adic objects
in $c$ (see \cite{Jouanolou}). 

\medskip

It follows from \cite{LO2} that $D_c(X,\Lambda)$ may be constructed
differently as the category 
\begin{equation}
\label{stackcategory}
\mathsf{D}_c(X,\Lambda)\overset{\text{def}}{=}D_c(X^{\NN},\Lambda^\bullet)/
D_{\text{null}}(X^{\NN},\Lambda^\bullet),
\end{equation}
where $D_c(X^{\NN},\Lambda^\bullet)$ and
$D_{\text{null}}(X^{\NN},\Lambda^\bullet)$ are the full subcategories of
$D(X^{\NN},\Lambda^\bullet)$ spanned by objects $\Fscr$ whose
cohomology are, respectively, locally AR-$m$-adic objects and locally AR-null objects in $c$
(see \cite{Jouanolou}). There are inverse equivalences
\begin{equation}
\label{inverse}
\xymatrix@1{
D_{c}(X,\Lambda)\ar@/^1em/[r]^{can} & \mathsf{D}_c(X,\Lambda)\ar@/^1em/[l]^{\widehat{?}}
}
\end{equation}
For algebraic stacks this is \cite[Thm 3.9]{LO2} and for simplicial schemes it is \cite[Prop.\ 10.2]{LO2}.

\medskip

It is clear that the standard truncation functors $\tau_{\le n}$,
$\tau_{\ge n}$ on $D(X^{\NN},\Lambda^\bullet)$ descend to
$\mathsf{D}_c(X,\Lambda)$, and hence they define a $t$-structure on
$\mathsf{D}_c(X,\Lambda)$ with the heart consisting of $m$-adic sheaves (see \cite{Jouanolou})
in $c$, and the same holds via \eqref{inverse} also for
$D_c(X,\Lambda)$. 

\medskip

Using the $t$-structure introduced in the previous paragraph we now
define the bounded $l$-adic derived category $\mathsf{D}^b_c(X,\Lambda)
\subset \mathsf{D}_c(X,\Lambda)$, and via \eqref{inverse}, also
${D}^b_c(X,\Lambda)
\subset {D}_c(X,\Lambda)$. Note however, that ${D}^b_c(X,\Lambda)$
is not a subcategory of  $D^b(X^{\NN},\Lambda^\bullet)$!

\subsection{A spectral sequence for D-topoi}

The following result should be standard but we did not find the exact
result in the literature.

Let $D$ be a small category and let $\pi:X_\bullet\r D$ be a $D$-topos \cite{SGA4}.
We write $X_i$ for $\pi^{-1}(i)$ and 
 $X_\bullet$ for the associated total topos. Thus, objects 
in $X_\bullet$  are collections   of objects
$(\Fscr_i\in X_i)_i$ and maps
$(\phi_{\alpha}:\alpha^{-1} \Fscr_j\r \Fscr_i)_\alpha$, for  $\alpha:i\r j$, satisfying the standard
cocycle condition. 

We assume that $X_\bullet$ is ringed with 
a  sheaf of rings $\Lambda_\bullet$ which is flat in the sense that
the transition morphisms $\alpha:(X_i,\Lambda_i)\r (X_j,\Lambda_j)$ 
have the property that $\alpha^\ast$ is exact.
We say that $\Lambda_\bullet$-module $\Fscr$ is Cartesian if the $\phi_\alpha$'s induce
isomorphisms $\alpha^{\ast} \Fscr_j\r \Fscr_i$. We will denote the latter isomorphisms
also by $(\phi_\alpha)_\alpha$. Note that $\Lambda_\bullet$ is automatically Cartesian.
\begin{prop} 
\label{ffcor}
 Let $\Fscr,\Gscr$ be $\Lambda_\bullet$-modules with $\Fscr$ being Cartesian. Then the
assignment
\[
i\mapsto \Ext^{j}_{\Lambda_i}(\Fscr_i,\Gscr_i)
\]
defines a contravariant functor $D\r \Ab$ (where $\Ab$ is the category of abelian groups).
Furthermore, there is a convergent spectral sequence
\[
E^{pq}_2=R^p
\underset{u}{\varprojlim}
\Ext^q_{\Lambda_u}(\Fscr_u,\Gscr_u)\Rightarrow \Ext_{\Lambda_\bullet}^n(\Fscr,\Gscr)
\]
\end{prop}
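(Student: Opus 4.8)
The plan is to identify a suitable resolution of $\Gscr$ that allows us to compute both sides simultaneously, using the fact that for a $D$-topos the global $\Ext$ groups are a hyper-derived functor on the total topos while the levelwise $\Ext$ groups are controlled by the individual toposes $X_i$. First I would observe that the contravariance in $i$ is a formality: given $\alpha\colon i\to j$, the structure isomorphism $\phi_\alpha\colon\alpha^\ast\Fscr_j\xrightarrow{\sim}\Fscr_i$ (available because $\Fscr$ is Cartesian) and the adjunction map $\Gscr_j\to\alpha_\ast\alpha^\ast\Gscr_j=\alpha_\ast\Gscr_i$ combine, via the exactness of $\alpha^\ast$ and the $(\alpha^\ast,\alpha_\ast)$-adjunction, to give pullback maps $\Ext^q_{\Lambda_j}(\Fscr_j,\Gscr_j)\to\Ext^q_{\Lambda_i}(\Fscr_i,\Gscr_i)$ which are functorial in $\alpha$ by the usual compatibility of base-change isomorphisms with composition.

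For the spectral sequence itself, the approach is the standard Grothendieck composite-functor machinery. Choose an injective resolution $\Gscr\to\Iscr^\bullet$ in the category of $\Lambda_\bullet$-modules on $X_\bullet$. Since $X_\bullet$ has enough injectives and each restriction functor $i\mapsto X_i$ is exact with an exact left adjoint $\alpha^\ast$, each $\Iscr^n$ restricts to an injective $\Lambda_i$-module $\Iscr^n_i$ on each $X_i$ (the restriction functor $(-)_i$ has the exact left adjoint ``extension by zero / left Kan extension along the inclusion of the fibre,'' hence preserves injectives). Therefore $\uHom_{\Lambda_i}(\Fscr_i,\Iscr^\bullet_i)$ computes $R\uHom_{\Lambda_i}(\Fscr_i,\Gscr_i)$ levelwise. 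Now the global $\uHom$ on the total topos can be assembled from the levelwise ones: there is a presheaf on $D$ of complexes $u\mapsto \Ga(X_u,\uHom_{\Lambda_u}(\Fscr_u,\Iscr^\bullet_u))$, and because $\Fscr$ is Cartesian one checks that $R\Hom_{\Lambda_\bullet}(\Fscr,\Gscr)$ is computed by $R\invlim_{u\in D}$ of this presheaf of complexes — this is the key identification, and it is where Cartesian-ness of $\Fscr$ is used (a morphism of $\Lambda_\bullet$-modules into a Cartesian object is determined by compatible levelwise data). Filtering $R\invlim_D$ of this complex by the standard (hypercohomology) filtration yields the spectral sequence $E_2^{pq}=R^p\invlim_{u}\Ext^q_{\Lambda_u}(\Fscr_u,\Gscr_u)\Rightarrow\Ext^n_{\Lambda_\bullet}(\Fscr,\Gscr)$, and convergence is immediate from boundedness below of the resolution together with the fact that $D$-indexed $R\invlim$ has finite cohomological dimension in the cases of interest (finite $D$, or more generally the hypotheses in force from \cite{LO2}).

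The main obstacle I expect is the identification $R\Hom_{\Lambda_\bullet}(\Fscr,\Gscr)\simeq R\invlim_{u\in D}\,R\Hom_{\Lambda_u}(\Fscr_u,\Gscr_u)$ at the derived level; the underived statement $\Hom_{\Lambda_\bullet}(\Fscr,\Gscr)=\invlim_u\Hom_{\Lambda_u}(\Fscr_u,\Gscr_u)$ for Cartesian $\Fscr$ is an easy diagram chase, but promoting it to a derived equivalence requires knowing that an injective resolution of $\Gscr$ on $X_\bullet$ restricts to a complex computing $R\invlim$ of the levelwise derived Homs — equivalently, that the ``global sections of a sheaf on the total topos'' functor factors appropriately through the fibrewise sections. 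I would handle this by writing the global sections functor on $X_\bullet$ as the composite of ``take the presheaf of fibrewise sections on $D$'' followed by $\invlim_D$, checking that the first functor sends injectives to objects acyclic for $\invlim_D$ (this again uses that each $(-)_i$ preserves injectives and that sections of an injective sheaf on a topos are a weak Roos-type acyclic object for the inverse limit over $D$), and then applying the Grothendieck spectral sequence for the composite. Once this factorization is in place, everything else is bookkeeping.
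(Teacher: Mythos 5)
Your overall strategy is exactly the paper's: compute $\Hom_{\Lambda_\bullet}(\Fscr,\Gscr)$ as $\varprojlim_u\Hom_{\Lambda_u}(\Fscr_u,\Gscr_u)$ and run the Grothendieck spectral sequence for this composite of functors. The functoriality argument in the first paragraph is fine and matches the paper. But the two inputs the Grothendieck spectral sequence requires are precisely the places where you are too quick, and the second one is a genuine gap.

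First, the claim that $(-)_i$ preserves injectives because it ``has the exact left adjoint extension by zero / left Kan extension along the inclusion of the fibre'' is suspect as stated: the fibre inclusion $i_u\colon X_u\to X_\bullet$ is a morphism of topoi, and $(-)_u=i_u^{-1}$ is an inverse image, so its canonical adjoint is on the \emph{right} ($i_{u,*}$); a left adjoint need not exist and, if it does, exactness is not automatic (sheafification intervenes). The paper instead reduces to injectives of the special form $\Iscr=i_{v,*}I$ with $I$ injective on $X_v$, computes $(i_{v,*}I)_u=\prod_{\alpha\colon v\to u}\alpha_*I$, and uses that each $\alpha_*$ preserves injectives because $\alpha^*$ is exact (the flatness hypothesis on $\Lambda_\bullet$). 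You should either give this argument or another correct justification; your stated reason does not obviously hold.

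Second, and more seriously, the acyclicity of $u\mapsto\Hom_{\Lambda_u}(\Fscr_u,\Iscr_u)$ for $\varprojlim_D$ is the real content of the proposition, and ``sections of an injective sheaf on a topos are a weak Roos-type acyclic object for $\invlim_D$'' is not a proof — it is the thing to be shown. There is no general principle that an injective in $\Sh(X_\bullet)$ produces a $\varprojlim_D$-acyclic inverse system by taking $\Hom$ at each level; one must use the concrete shape of the injectives. The paper's computation goes: for $\Iscr=i_{v,*}I$ one has
\[
\Hom_{\Lambda_u}(\Fscr_u,\Iscr_u)=\operatorname{Set}\bigl(D(v,u),\Hom_{\Lambda_v}(\Fscr_v,I)\bigr),
\]
a ``coinduced'' presheaf on $D$, and then one checks directly that
\[
\Ext^i_{\Inv(D)}\bigl(\underline{\ZZ},\operatorname{Set}(D(v,-),\Hom_{\Lambda_v}(\Fscr_v,I))\bigr)
\cong\Hom_{\Lambda_v}\bigl(\Tor_i^{\ZZ}(\ZZ,\Fscr_v),I\bigr)=0
\]
for $i>0$, using that projective inverse systems are built from $\ZZ D(-,w)$ (which are $\ZZ$-flat at every object) and that $I$ is injective. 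Without some argument of this sort your spectral sequence has no reason to have the claimed $E_2$ page, because the presheaf $u\mapsto\Hom_{\Lambda_u}(\Fscr_u,\Iscr_u)$ need not be $\varprojlim$-acyclic for a general choice of resolution. So the skeleton of your proof is right, but the heart of it — the acyclicity computation — is missing.
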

\begin{proof} Let $\alpha:v\r u$ be a morphism in $D$. We obtain a morphism
\begin{align*}
\Ext^{j}_{\Lambda_u}(\Fscr_u,\Gscr_u)
\r
\Ext^{j}_{\Lambda_v}(\alpha^\ast\Fscr_u,\alpha^\ast\Gscr_u)
\cong\Ext^{j}_{\Lambda_v}(\Fscr_v,\alpha^\ast\Gscr_u)
\r
\Ext^{j}_{\Lambda_v}(\Fscr_v,\Gscr_v)
\end{align*}
where the first morphism is induced by $\alpha^\ast$ and the latter two  are induced by $\phi^{-1}_\alpha$ and $\phi_\alpha$
respectively. This yields the asserted functoriality. 
It is easy to see that
\[
\Hom_{\Lambda}(\Fscr,\Gscr)=\underset{u}{\varprojlim}
\Hom_{\Lambda_u}(\Fscr_u,\Gscr_u).
\]
\def\Sh{\operatoname{Sh}}
We will construct the spectral sequence as the Grothendieck spectral sequence for the above
 composition of functors. For this we have to show that $\Iscr_u$ is injective for $\Iscr$ injective
and furthermore that
$\Hom_{\Lambda_u}(\Fscr_u,\Iscr_u)$ is acyclic for $\underset{u}{\varprojlim}$.

As in \cite{SGA4,LO1}, we may assume that $\Iscr=i_{v,\ast} I$
where $i_v:X_v\r X$ is the morphism of topoi $(\Fscr_v)_v\mapsto \Fscr_v$ and $I$ is an injective
$\Lambda_v$-module in $X_v$. 
We have the general formula
\[
(i_{v,\ast}\Iscr)_u=\prod_{\alpha:v\r u} \alpha_\ast \Iscr
\]
(which does not require $\Iscr$ to be injective). Since $\alpha_\ast$ has an exact left
adjoint (by flatness), this implies that $(i_{v,\ast}\Iscr)_u$ is injective.

\def\Set{\operatorname{Set}}
We also get
\begin{align*}
\Hom_{\Lambda_u}(\Fscr_u,\Iscr_u)&=\Hom_{\Lambda_u}(\Fscr_u,(i_{v,\ast} I)_u)\\
&=\prod_{\alpha:v\r u} \Hom_{\Lambda_u}(\Fscr_u,\alpha_\ast I )\\
&=\prod_{\alpha:v\r u} \Hom_{\Lambda_v}(\alpha^\ast \Fscr_u, I )\\
&=\prod_{\alpha:v\r u} \Hom_{\Lambda_v}(\Fscr_v, I )\\
&=\Set\bigl(D(v,u),\Hom_{\Lambda_v}(\Fscr_v, I )\bigr),
\end{align*}
where $\Set(?,?)$ denotes morphisms in the category of sets.
Thus, we have to show that 
\[
\Set\bigl(D(v,-),\Hom_{\Lambda_v}(\Fscr_v, I )\bigr)
\]
is acyclic for ${\varprojlim}$. 
Let $\Inv(D)$ denote the category of inverse systems
of functors $D\to\Ab$. 
For $W$ in $\Inv(D)$ we have
\[
\Hom_{\Inv(D)}\bigl(W,\Set\bigl(D(v,-),\Hom_{\Lambda_v}(\Fscr_v, I )\bigr)\bigr)=\Hom_{\Lambda_v}(W_v\otimes_\ZZ \Fscr_v,I).
\]
We claim that the derived functors of 
\[
W\mapsto \Hom_{\Lambda_v}(W_v\otimes_\ZZ \Fscr_v,I)
\]
are given by
\[
\Hom_{\Lambda_v}(\Tor^{\ZZ}_i(W_v,\Fscr_v),I)
\]
This is clear if we replace $W$ by a projective resolution and use the fact that
projectives are summands of direct sums of objects of the form $\ZZ D(-,w)$, which are $\ZZ$-flat when evaluated
at every $v$. Thus, we conclude that
\[
\Ext^i_{\Inv(D)}\bigl(W,\Set(D(v,-),\Hom_{\Lambda_v}(\Fscr_v, I ))\bigr)
=\Hom_{\Lambda_v}(\Tor^{\ZZ}_i(W_v,\Fscr_v),I).
\]

Let $\underline{\ZZ}$ be the constant inverse system on $D$ with value $\ZZ$. We finally conclude that
for $i>0$ one has
\begin{align*}
R^i{\varprojlim}
\Set(D(v,-),\Hom_{\Lambda_v}(\Fscr_v, I ))&=
\Ext^i_{\Inv(D)}(\underline{\ZZ}, \Set(D(v,-),\Hom_{\Lambda_v}(\Fscr_v, I )))\\
&=\Hom_{\Lambda_v}(\Tor^{\ZZ}_i(\ZZ,\Fscr_v),I)\\
&=0\qed
\end{align*}
\def\qed{}\end{proof}

\subsection{DG-enhancements of derived categories and derived functors}\label{DG-enhancement-sec}
\subsubsection{The ``standard'' DG enhancement of a derived category}
\label{secstandard}
If $\Ascr$ is an abelian category then the derived category $D(\Ascr)$ of $\Ascr$ is
the category of complexes $C(\Ascr)$ over $\Ascr$ localized at quasi-isomorphisms.

If $\Ascr$ is a Grothendieck category  then we denote by
$D^{\dg}(\Ascr)$ the ``standard'' DG-enhancement of $D(\Ascr)$ using fibrant resolutions. In other words,
$\Ob(D^{\dg}(\Ascr))=\Ob(D(\Ascr))=\Ob(C(\Ascr))$, and
 for every $C\in \Ob(C(\Ascr)$ we fix a quasi-isomorphism
$C\r I_C$ such that $I_C$ is fibrant for the standard injective model structure on $C(\Ascr)$.
The $\Hom$-complexes in $D^{\dg}(\Ascr)$ are then given by 
\[
\underline{\Hom}_{D^{\dg}(\Ascr)}(C,D)\overset{\text{def}}{=}\underline{\Hom}_{C(\Ascr)}(I_C,I_D)
\]
Although it is not  essential we will assume that if $C$ is left bounded
then~$I_C$ is left bounded as well. In this case $I_C$ is just a classical injective resolution.
\subsubsection{Co-quasi-functors}
\label{co-quasi-sec}
The DG-functor is often too rigid a notion to compare different DG-categories.
A suitable weakened notion is given by ``quasi-functors''  introduced
in \cite{Keller1}. 
For technical reasons we will use the dual
version which we call ``co-quasi-functors''.

If $\mathfrak{a}$, $\mathfrak{b}$ are DG-categories then a co-quasi-functor
$M:\mathfrak{b}\r \mathfrak{a}$ is simply a quasi-functor \cite{Keller1} $\mathfrak{b}^\circ\r \mathfrak{a}^\circ$.
To be more concrete: an $\mathfrak{a}-\mathfrak{b}$-bimodule $M$ is a DG-bifunctor $M:\mathfrak{b}^\circ \times
\mathfrak{a}\r C(\Ab)$.
 We say that such an $M$ is a co-quasi-functor $\mathfrak{b}\r \mathfrak{a}$
if for every
$B\in \mathfrak{b}$ there is an $A\in \mathfrak{a}$, as well as a morphism of DG-functors
$\mathfrak{a}(A,-)\r M(B,-)$ (by enriched Yoneda this is the same as an element of $x_B\in Z^0 M(B,A)$),
such that for all $A'\in\mathfrak{a}$ the induced map $\mathfrak{a}(A,A')\r M(B,A')$
is a quasi-isomorphism. It will often be convenient to choose for every $B$ a particular $A$,
which we will then denote by $M(B)$.

A co-quasi-functor $M$  induces an honest functor $H^0(\mathfrak{a})\r H^0(\mathfrak{b})$
sending $B$ to $M(B)$. We denote this functor by $H^0(M)$.
\begin{ex}
\label{standardexample} Let $F$ be a left exact functor between Grothendieck categories $\Cscr\r\Dscr$. Then
the right derived functor
\[
RF:D(\Cscr)\r D(\Dscr)
\]
can be lifted to a co-quasi-functor
\[
RF^{\dg}:D^{\dg}(\Cscr)\r D^{\dg}(\Dscr)
\]
by setting
\[
RF^{\dg}(C,D)=\underline{\Hom}_{C(\Dscr)}(FI_C,I_D)
\]
\end{ex}
We will need the following standard result \cite{Keller1}.
\begin{lem} \label{co-quasi}
Assume that $M:\mathfrak{b}\r \mathfrak{a}$ is a co-quasi-functor such that $H^0(M)$ is fully
faithful. Then for every $B\in \mathfrak{b}$ one has that $\mathfrak{b}(B,B)$ and $\mathfrak{a}(M(B),M(B))$
are isomorphic in the homotopy category of DG-algebras.
\end{lem}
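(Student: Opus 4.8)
The plan is to reduce the statement to the single bimodule that $M$ cuts out on the pair of objects $(B,A)$ with $A:=M(B)$, and then to run a standard DG-Morita comparison. Write $R:=\mathfrak a(A,A)$, $S:=\mathfrak b(B,B)$ and $N:=M(B,A)$; by the definition of an $\mathfrak a$--$\mathfrak b$-bimodule, $N$ is at the same time a left $R$-module and a right $S$-module, the two actions commuting, and the distinguished cocycle $x_B\in Z^0N$ from the definition of a co-quasi-functor produces DG-module maps $\lambda\colon R\to N$, $r\mapsto r\,x_B$ (of left $R$-modules), and $\rho\colon S\to N$, $s\mapsto x_B\,s$ (of right $S$-modules). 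The defining property of a co-quasi-functor, evaluated at the object $A$, is exactly the statement that the natural transformation $\mathfrak a(A,-)\to M(B,-)$ attached to $x_B$ is a quasi-isomorphism at $A$, i.e.\ that $\lambda$ is a quasi-isomorphism; so $N$ is isomorphic, in the derived category of left $R$-modules, to the free rank-one module $R$.

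Next I would unwind the construction of the functor $H^0(M)$. By that construction, for $[g]\in H^0\mathfrak b(B,B)$ the class $H^0(M)([g])\in H^0\mathfrak a(A,A)$ is the one corresponding, under the isomorphism $H^0\lambda\colon H^0R\xrightarrow{\sim}H^0N$, to the class of $M(g)(x_B)=x_B\,g$; that is, $H^0(M)=(H^0\lambda)^{-1}\circ H^0\rho$. Hence full faithfulness of $H^0(M)$ at $B$ is precisely the assertion that $H^0\rho$ is an isomorphism. In the pretriangulated setting in which this lemma is applied --- where each $\operatorname{Hom}$-complex is determined, up to quasi-isomorphism and in every cohomological degree, by the $H^0$'s of its shifts --- the identical computation in all degrees upgrades this to: $\rho$ is a quasi-isomorphism of right $S$-modules. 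So $N$ is also isomorphic, in the derived category of right $S$-modules, to the free rank-one module $S$.

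With both facts available I would choose a cofibrant replacement $\widetilde N\to N$ of $N$ as an $R$--$S$-bimodule, so that $\widetilde N$ is in particular h-projective as a one-sided module on either side and $\End_S(\widetilde N)$ computes $\RHom_S(N,N)$. The commuting left $R$-action on $\widetilde N$ gives a homomorphism of DG-algebras $R\to\End_S(\widetilde N)$, $a\mapsto(\widetilde n\mapsto a\widetilde n)$, and I would check on cohomology --- transporting endomorphisms along the quasi-isomorphism $\rho$ and using $\RHom_S(S,S)=S$ --- that it induces the isomorphism $[\rho]^{-1}\circ[\lambda]\colon H^*R\xrightarrow{\sim}H^*S$; thus $R\to\RHom_S(N,N)$ is a quasi-isomorphism of DG-algebras. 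On the other hand $\widetilde N$ and the free module $S$ are weakly equivalent h-projective right $S$-modules, hence homotopy equivalent, so by the standard invariance of endomorphism DG-algebras under homotopy equivalence of h-projective modules their endomorphism DG-algebras are linked by a zig-zag of quasi-isomorphisms. Combining, $S\cong\RHom_S(N,N)\cong R$ in the homotopy category of DG-algebras, which is the assertion.

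The step I expect to be the main obstacle is the second one: promoting the bare hypothesis ``$H^0(M)$ is fully faithful'' to ``$\rho$ is an honest quasi-isomorphism of complexes''. This genuinely uses that the ambient DG-categories are enhancements of triangulated categories (it fails for arbitrary DG-categories), and it is the only place where more than the defining bimodule data of $M$ enters. Once $\rho$ is known to be a quasi-isomorphism, the remaining DG-Morita manipulation --- the choice of bimodule resolution and the identification of endomorphism DG-algebras up to a zig-zag of quasi-isomorphisms --- is routine bookkeeping, for which one may also simply invoke \cite{Keller1}.
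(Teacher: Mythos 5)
Your proof is correct and rests on exactly the same two facts as the paper's --- namely that $\lambda\colon R=\mathfrak{a}(A,A)\to N=M(B,A)$, $f\mapsto fx_B$, and $\rho\colon S=\mathfrak{b}(B,B)\to N$, $g\mapsto x_Bg$, are quasi-isomorphisms --- but the way you conclude from them is genuinely different. The paper forms the upper-triangular DG-algebra
\[
\Lambda=\begin{pmatrix} R & N[-1]\\ 0 & S\end{pmatrix}
\]
with differential twisted by $x_B$; the kernels of the two projections $\Lambda\to R$ and $\Lambda\to S$ are the shifted cones of $\rho$ and of $\lambda$ respectively, so $R\leftarrow\Lambda\rightarrow S$ is already the required zig-zag, and no bimodule resolution or Morita theory is needed. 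Your route through $\RHom_S(N,N)$ proves the same statement with more machinery (a cofibrant bimodule replacement, invariance of endomorphism DG-algebras under homotopy equivalence of h-projective modules), but it has the virtue of exhibiting the lemma as an instance of derived Morita theory and, more importantly, of isolating the one genuinely nontrivial input. Your remark that ``$H^0(M)$ fully faithful'' only gives bijectivity of $H^0(\rho)$, and that upgrading this to a quasi-isomorphism in every degree requires the ambient DG-categories to be closed under shifts (take $\mathfrak{a}(A,A)=k$, $\mathfrak{b}(B,B)=k[\epsilon]/(\epsilon^2)$ with $\deg\epsilon<0$, and $N=k$ to see it fail for bare DG-categories), applies verbatim to the paper's proof, which passes over this point with ``it is easy to see that both maps are quasi-isomorphisms''; in the intended applications the Hom-complexes live in enhancements with shifts and the co-quasi-functors commute with them, so the upgrade is legitimate in both arguments.
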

\begin{proof} Put $A=M(B)$ and consider the DG-algebra
\[
\Lambda=\begin{pmatrix}
\mathfrak{a}(A,A) & M(B,A)[-1]\\
0 & \mathfrak{b}(B,B)
\end{pmatrix}
\]
with the differential
\[
\begin{pmatrix}
f&s^{-1}m\\
0&g
\end{pmatrix}
\mapsto
\begin{pmatrix}
df & s^{-1}(x_B g-fx_B-dm)\\
0&dg
\end{pmatrix}.
\]
Then we see that $\Lambda$ projects to $\mathfrak{a}(A,A)$ and $\mathfrak{b}(B,B)$,
and it is easy to see that both maps are quasi-isomorphisms.
\end{proof}
\begin{rem} The notions  introduced in this section have obvious $k$-linear versions in the case when $k$ is a commutative
base ring. We will use this without further comment.
\end{rem}

\subsection{Passing to a finite field}\label{passing-finite-field-sec}

For a scheme $Y$ of finite type over $\CC$ and a noetherian
ring $R$ of finite global dimension we temporarily write
$D^b_c(Y(\CC),R)_{\text{lit}}$ for the category of bounded complexes 
of sheaves of $R$-modules on $Y(\CC)$ with constructible homology, in order to avoid confusion with the
$l$-adic derived category which will be introduced later.

\subsubsection{The Bernstein-Lunts derived category}
Let $X$ be a scheme of finite type over $\CC$
and let $G$ be a linear algebraic group acting on $X$. The (topological) derived
category $D^b_c([X(\CC)/G(\CC)],\CC)_{\text{lit}}$ of $G(\CC)$-equivariant $\CC$-linear sheaves on $X(\CC)$
was defined by Bernstein and Lunts in \cite{BerLun}. 

Let $(X(\CC)/G(\CC))_\bullet$ be the standard simplicial space
associated to the $G(\CC)$-action on $X(\CC)$.  We view
$(X(\CC)/G(\CC))_\bullet$ as a topos indexed by $\Delta^{\circ}$, where
$\Delta=\{[n]\mid n\in \NN\}$ is the standard simplicial category.
Let $R$ be a noetherian ring of finite global dimension.  The model of
the $G(\CC)$-equivariant derived category we will use is
\begin{equation}
\label{level}
D^b_c([X(\CC)/G(\CC)],R)_{\text{lit}}\overset{\text{def}}{=}D^b_{c,\text{cart}}\bigl((X(\CC)/G(\CC))_\bullet,R\bigr)_{\text{lit}}
\end{equation}
The right-hand side of \eqref{level} is the full subcategory
of $D\bigl((X(\CC)/G(\CC))_{\bullet},R\bigr)_{\text{lit}}$ consisting of bounded complexes with Cartesian, levelwise constructible
homology.

We let $Y$ be either $X_{\text{et}}$ (the small \'etale site of $X$) or else $(X/G)_{\bullet,\text{et}}$, the
simplicial scheme of a linear algebraic group $G$ acting on $X$. We write $Y(\CC)$ for the
corresponding topological topoi $X(\CC)$ or $(X(\CC)/G(\CC))_\bullet$.
In the course of the proof the
ground field will change. 

\subsubsection{Changing the coefficient ring}
\label{secliteral}
For use below we make the following definition. Let $R\r S$ be a morphism between commutative rings and let $\Ascr$
be an $R$-linear category.  Then the category $\Ascr_S$
has the same objects as $\Ascr$ but its $\Hom$-spaces are defined by
\[
\Hom_{\Ascr_S}(\Fscr,\Gscr)=S\otimes_R \Hom_{\Ascr}(\Fscr,\Gscr)
\]
Note that if $\Ascr$ is abelian or triangulated then this will usually not be the case
for~$\Ascr_S$. 

We recall the following fact.
\begin{prop}
\label{changecoeff}
\begin{enumerate}
\item Let $S$ be commutative noetherian ring of finite global dimension which is a flat ring extension of $R$. Then the coefficient extension functor
\[
D_c^b(Y(\CC),R)_{\text{lit}}\r D_c^b(Y(\CC),S)_{\text{lit}}:\Fscr\mapsto S\otimes_R \Fscr
\]
induces a fully faithful functor
\[
D_c^b(Y(\CC),R)_{\text{lit},S}\r D_c^b(Y(\CC),S)_{\text{lit}}.
\]
\item We have
\[
D_c^b(Y(\CC),\CC)_{\text{lit}}=\bigcup_S D_c^b(Y(\CC),S)_{\text{lit},\CC},
\]
where $S$ runs through subrings of $\CC$ which are finitely generated over $\ZZ$ and of
finite global dimension.
\item We have
\[
D_c^b(Y(\CC),\CC)_{\text{lit}}=\bigcup_S D_c^b(Y(\CC),\Lambda)_{\text{lit},\CC},
\]
where $\Lambda$ runs through complete discrete valuation rings in $\CC$ with finite residue field.
\end{enumerate}
\end{prop}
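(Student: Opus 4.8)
The plan is to prove the three parts of Proposition~\ref{changecoeff} as follows.

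\textbf{Part (1).} I would first treat the non-equivariant case $Y = X_{\text{et}}$, i.e.\ the honest topological space $X(\CC)$, and then deduce the simplicial/equivariant case levelwise. Full faithfulness of the induced functor $D_c^b(Y(\CC),R)_{\text{lit},S}\to D_c^b(Y(\CC),S)_{\text{lit}}$ amounts to the assertion that for $\Fscr,\Gscr\in D_c^b(Y(\CC),R)_{\text{lit}}$ the natural map
\[
S\otimes_R \Hom_{D_c^b(Y(\CC),R)}(\Fscr,\Gscr)\to \Hom_{D_c^b(Y(\CC),S)}(S\otimes_R^{\dL}\Fscr, S\otimes_R^{\dL}\Gscr)
\]
is an isomorphism, where the flatness of $S/R$ makes $S\otimes_R^{\dL}$ the same as $S\otimes_R$. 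Using that $\Hom$ in the derived category is computed by $H^0 R\Ga(Y(\CC), R\uHom(\Fscr,\Gscr))$, I reduce to the projection (base-change) formula
\[
S\otimes_R R\uHom_R(\Fscr,\Gscr)\simeq R\uHom_S(S\otimes_R\Fscr, S\otimes_R\Gscr),
\]
valid because $\Fscr$ is constructible (hence locally built from finitely many perfect $R$-complexes placed in constructible sheaves, so $R\uHom$ commutes with the flat base change $S\otimes_R-$), together with the fact that for a complex of $R$-modules $M$ with finitely generated cohomology on a compact space one has $S\otimes_R R\Ga(Y(\CC),M)\simeq R\Ga(Y(\CC), S\otimes_R M)$ — here finite generation comes from constructibility and $Y(\CC)$ being a finite CW-type space, and flatness of $S/R$ removes the higher $\Tor$ terms. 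For the simplicial case one applies this objectwise on each $(X(\CC)/G(\CC))_n$ and uses the spectral sequence of the simplicial space, which is compatible with the flat base change $S\otimes_R-$; I expect this bookkeeping to be the only genuinely tedious point.

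\textbf{Parts (2) and (3).} These are statements of the form ``every object and every morphism over $\CC$ is already defined over some finitely generated (resp.\ complete DVR) subring $S\subset\CC$''. Concretely, an object $\Fscr\in D_c^b(Y(\CC),\CC)_{\text{lit}}$ is represented by a bounded complex of constructible sheaves, i.e.\ a choice of algebraic stratification of $X$ together with finite-dimensional $\CC$-vector spaces and linear maps (stalks and generization maps, plus the finitely many structure constants of the complex and its differentials). All of this involves only finitely many elements of $\CC$, hence lives over a finitely generated $\ZZ$-subalgebra; enlarging $S$ if necessary (inverting finitely many elements, which preserves finite generation) we arrange that $S$ has finite global dimension, giving~(2). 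For~(3), localize $S$ at a maximal ideal and complete: the resulting complete local ring is a complete DVR with finite residue field once we also ensure (again by inverting finitely many elements and choosing the prime appropriately, using that a finitely generated $\ZZ$-algebra has infinitely many maximal ideals with finite residue field and one-dimensional regular local rings at height-one primes) that the relevant localization is a discrete valuation ring; then $S\hookrightarrow \Lambda\subset\CC$ after re-embedding, and any morphism over $\CC$ likewise descends after further enlarging. The equality of categories is then immediate: the right-hand side is visibly contained in the left, and the descent argument shows every object and morphism of the left-hand side lies in the right.

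\textbf{Main obstacle.} The substantive point is not the ``spreading out'' in (2)--(3), which is routine algebraic geometry once phrased correctly, but the base-change isomorphism underlying (1) in the \emph{simplicial/equivariant} setting: one must know that forming $R\uHom$ of constructible complexes and then pushing to a point commutes with the flat coefficient extension $S\otimes_R-$ uniformly across the simplicial diagram, and that the descent equivalence~\eqref{level} between the stacky derived category and the simplicial one is itself compatible with coefficient extension. I would handle this by working entirely on the simplicial space (where, as emphasized in the text preceding the proposition, inverse images are computed levelwise and are exact), so that the base-change formula on each level glues without difficulty, and the only remaining input is the boundedness needed for convergence of the descent spectral sequence — which holds because we restrict to $D^b_c$.
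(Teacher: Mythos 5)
Your treatments of parts (1) and (2) are sound and broadly in the spirit of the paper, although the paper takes a shortcut you don't: for the scheme case it invokes Hironaka's semi-algebraic triangulation theorem to reduce constructible sheaves on $X(\CC)$ to constructible sheaves on a \emph{finite} simplicial complex, whence (1) and (2) become purely combinatorial finiteness statements. Your projection-formula / base-change argument gets you to the same place, at the cost of having to justify that $R\uHom$ of constructible complexes commutes with flat coefficient extension; both routes work. For the equivariant case the paper proves (1) via its spectral sequence for $D$-topoi (Proposition \ref{ffcor}) and proves (2) by observing that a Cartesian constructible sheaf carries only a finite amount of descent data; your ``levelwise plus descent spectral sequence'' plan is essentially the same idea.

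Part (3), however, has a genuine gap. You propose to localize $S$ at a suitable prime and complete, then ``re-embed'' $\Lambda$ into $\CC$ ``after further enlarging.'' Two things go wrong. First, there is an internal tension in the recipe: for the localization $S_{\fp}$ to be a DVR you want $\fp$ of height one (after making $S$ regular), but for $\Lambda$ to have \emph{finite} residue field you want $\fp$ maximal; these coincide only when $\dim S = 1$, which fails as soon as the data over $\CC$ involves a single transcendental. Second, and more fundamentally, the completion $\widehat{S_{\fp}}$ is \emph{not} a subring of $\CC$ — it lives in a $p$-adic world — so the step ``$S\hookrightarrow\Lambda\subset\CC$ after re-embedding'' is precisely the nontrivial content of the proposition, not a routine normalization. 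The paper handles this by a field-theoretic argument: embed $S_0=\ZZ[X_1,\ldots,X_N]\subset S$ into $\ZZ_l$ using that $\ZZ_l/\ZZ$ has infinite transcendence degree, extend the valuation to $\operatorname{Frac}(S)$ and complete to get a complete DVR $\Lambda'$ finite over $\ZZ_l$ with $S\subset\Lambda'\subset\overline{\QQ}_l$; then, since the fraction field $K$ of $S$ is countable, $\CC/K$ and $\overline{\QQ}_l/K$ have equal (uncountable) transcendence degree, so by a Steinitz-type argument there is an isomorphism $\tau:\overline{\QQ}_l\xrightarrow{\sim}\CC$ restricting to $\id_K$, and one takes $\Lambda=\tau(\Lambda')$. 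Without this abstract isomorphism there is no way to realize the completion inside $\CC$, so your proposal as written does not close.
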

\begin{proof}
Assume first that $Y$ is a scheme. 
  By Hironaka's semi-algebraic triangulation theorem \cite{Hironaka},
  the study of constructible sheaves on $X(\CC)$ can be reduced to the
  study of constructible sheaves on \emph{finite} simplicial complexes,
  which is a purely combinatorial problem \cite[\S8.1]{KSI}. From this
  (1) and (2) follow easily.

To deduce (3) from (2) one needs a bit of field theory. Let $S\subset \CC$ be finitely generated over $\ZZ$. We claim  
that there is a complete discrete valuation ring with finite residue field $\Lambda$ such that $S\subset \Lambda\subset \CC$. This clearly implies (3).

First, note that from the fact that $\ZZ_l/\ZZ$ has infinite
transcendence degree one easily constructs an injective map $S\r
\Lambda'$, where $\Lambda'$ is a complete discrete valuation ring with
finite residue field. Indeed, write $S$ as an integral extension of
some $S_0=\ZZ[X_1,\ldots,X_N]$. Then we may embed $S_0\subset \ZZ_l$
as $\ZZ$-algebra. Extend the induced valuation on the fraction field
of $S_0$ to the fraction field of $S$. This gives an injective map
$S\r \Lambda_0$, where $\Lambda_0$ is a discrete valuation ring with the
finite residue field. Now replace $\Lambda_0$ by its completion
$\Lambda'$.

Thus, in particular $\Lambda'$ is a finite
extension of some $\ZZ_l$. Thus, the algebraic closure of the quotient
field of $\Lambda'$ is isomorphic to $\overline{\QQ}_l$.

Let $K$ be the quotient field of $S$. We now have the following diagram
\[
\xymatrix{
K\ar[r]\ar[d] & \CC\\
\overline{\QQ}_l
}
\]
Now $\CC$ and $\overline{\QQ}_l$ are both algebraically closed fields.
Furthermore, since $K$ is countable, $\CC/K$ and $\overline{\QQ}_l/K$ have the same transcendence
degree. It follows by \cite[Theorem 1.12]{Hungerford} that there is an isomorphism
$\tau:\overline{\QQ}_l\xrightarrow{\cong} \CC$ which is the identity on~$K$. It now
suffices to take $\Lambda=\tau(\Lambda')$.

\medskip

Now consider the case $Y=[X/G]$. (1) follows immediately from  Proposition \ref{ffcor} and the scheme case.

For (2)  we note that a Cartesian constructible sheaf
on $X(\CC)$ is the same as a constructible sheaf with a finite amount of descent data.  Hence,
this descent data can be descended to a finite extension of~$\ZZ$. A similar argument applies
to the extension data which build a complex from its cohomology.
The proof of (3) does not have to be adapted.
\end{proof}

\subsubsection{Passing to the topological $m$-adic  derived category}
\label{sectopm}
Now assume that we are in the setup of \S\ref{sectopoi}, so
$(\Lambda,m)$ is a complete
discrete valuation ring. We assume in addition that its residue field is finite. 
Recall that we have a morphism of topoi $\pi:X^{\NN}\r X$, where $X^{\NN}$ is ringed
by $\Lambda^\bullet=(\Lambda^n)_n$ with $\Lambda^n=\Lambda/m^n$.

\begin{thm}
The functors $L\pi^\ast$ and $R\pi_\ast$ define inverse equivalences between
$D^b_c(Y(\CC),\Lambda)_{\text{lit}}$ and $D^b_c(Y(\CC),\Lambda)$.
\end{thm}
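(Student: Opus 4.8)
The plan is to reduce the statement to the case where $Y$ is a scheme (or rather where the site $Y$ has enough points with finite cohomological dimension), and then invoke the general comparison machinery of \cite{LO1,LO2} together with the topological analogue that is already available for finite simplicial complexes. First I would recall that $D^b_c(Y(\CC),\Lambda)_{\text{lit}}$ was defined using the literal category of complexes of $\Lambda$-module sheaves with constructible bounded cohomology, while $D^b_c(Y(\CC),\Lambda)$ is the bounded $m$-adic category sitting inside $D(Y(\CC)^{\NN},\Lambda^\bullet)^{\text{norm}}$ as in \S\ref{sectopoi}. The functor $L\pi^\ast$ sends a literal complex $\Fscr$ to the constant inverse system $(\Fscr/m^n\Fscr)_n$ (using a flat resolution of $\Fscr$ first, so that the reductions make sense derived), and $R\pi_\ast$ sends a normalized system $(\Fscr^n)_n$ to $R\varprojlim_n \Fscr^n$. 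The content is that these are mutually inverse on the constructible bounded subcategories.

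The key steps, in order: (1) Check that $L\pi^\ast$ lands in $D^b_c(Y(\CC),\Lambda)$, i.e.\ that the constant system $(\Fscr/m^n\Fscr)_n$ is normalized with AR-$m$-adic constructible cohomology — normalization is immediate from the definition of the canonical maps \eqref{canmaps} since the system is literally constant modulo $m^n$, and constructibility is inherited levelwise. (2) Check that $R\pi_\ast$ lands in $D^b_c(Y(\CC),\Lambda)_{\text{lit}}$: here one uses that $\Lambda$ is a complete DVR and that, for a normalized AR-$m$-adic system with bounded constructible cohomology, $R\varprojlim$ is computed by the naive inverse limit up to bounded degree shift, and on a finite simplicial complex (after Hironaka triangulation, \cite{Hironaka}) the resulting pro-system is essentially constant so its limit is again a bounded constructible complex of $\Lambda$-modules — this is the classical statement that over a complete DVR an AR-$m$-adic system ``is'' a $\Lambda$-sheaf. (3) Produce the unit and counit: the natural transformation $\Fscr\to R\pi_\ast L\pi^\ast\Fscr$ is an isomorphism because on a finite simplicial complex $R\varprojlim_n (\Fscr/m^n\Fscr)\simeq \Fscr$ by completeness and finiteness of the cohomology; and $L\pi^\ast R\pi_\ast \Gscr\to\Gscr$ is an isomorphism precisely because $\Gscr$ is normalized. (4) For the equivariant/simplicial case $Y=(X/G)_{\bullet,\text{et}}$, observe that normalization and $R\pi_\ast$ are computed levelwise (as recalled in \S\ref{sectopoi}, via \cite[Prop.\ 10.2]{LO2}), so all of (1)–(3) reduce termwise to the scheme case, with Cartesianness preserved since $L\pi^\ast$ and $R\pi_\ast$ commute with the transition pullbacks.

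The main obstacle is step (2)–(3), i.e.\ controlling $R\varprojlim$: one must know that for a bounded complex with constructible cohomology the pro-system $(\Fscr/m^n\Fscr)_n$ has vanishing $\varprojlim^{>0}$ (so that $R\varprojlim$ is just $\varprojlim$ in each degree) and that the resulting limit has cohomology which is finitely generated over $\Lambda$ and constructible. After a Hironaka triangulation this becomes the purely combinatorial statement about finitely generated $\Lambda$-modules on a finite poset, where the Mittag--Leffler condition holds automatically because the system $M/m^nM$ has surjective transition maps; this is exactly the input isolated in \cite[\S8.1]{KSI} and the boundedness hypotheses of \cite[Assumption 2.1.2]{LO2} that were already flagged as satisfied in our setting. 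Once this is in place the two composites are identified with the identity by the naturality diagrams above, proving that $L\pi^\ast$ and $R\pi_\ast$ are inverse equivalences. I would write the scheme case in detail and then dispatch the simplicial case by the levelwise principle.
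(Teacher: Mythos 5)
Your proposal reaches the same conclusion by the same broad strategy (reduce the simplicial case to the scheme case levelwise), but for the scheme case itself you take a genuinely different route from the paper. The paper's proof of this theorem dispatches the non-equivariant case $Y=X_{\text{et}}$ in one line by citing Ekedahl's adic formalism \cite{Ek}, whose framework covers both \'etale topoi and the topological topoi $X(\CC)$ once the local boundedness of cohomological dimension is checked. You instead propose to reprove that case directly: Hironaka triangulation of $X(\CC)$, reduction to constructible data on a finite simplicial complex, then elementary commutative algebra over the complete DVR $\Lambda$ (Mittag--Leffler for the systems $\Fscr/m^n\Fscr$ with surjective transition maps, vanishing of $\varprojlim^{>0}$, derived completeness of bounded complexes with $\Lambda$-finite cohomology). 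Both routes work; the paper's is more economical, yours is more self-contained and reuses the Hironaka/\cite[\S8.1]{KSI} machinery the paper already deploys for Proposition \ref{changecoeff}. For the equivariant case both arguments then reduce levelwise via \cite[Prop.\ 10.2]{LO2}. Two small points you should make explicit to match the paper's care: first, the commutation $f^\ast R\pi_\ast = R\pi_\ast f^\ast$ with the simplicial transition maps $f^\ast$ is not automatic the way $f^\ast L\pi^\ast=L\pi^\ast f^\ast$ is --- the paper derives it from the already-established inverse equivalence, and you should flag that dependence rather than treat both commutations as equally obvious; second, beyond levelwise constructibility you need the AR-$m$-adic condition on cohomology to be preserved, which, as the paper notes, also reduces to the bottom level $\Fscr_1$ by Cartesianness, and then to the scheme case.
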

\begin{proof}  For schemes this is proved in \cite{Ek}, so let us consider the case $Y=(X/G)_\bullet$. 

\medskip

Note that an object in $D^b((X(\CC)/G(\CC))_{\bullet}^{\NN},\Lambda^\bullet)^{\text{norm}}$ is first and foremost a system
of complexes $(\Fscr^n_m)_{n,m}$ of sheaves of $\Lambda^n$-modules on $G(\CC)^{m}\times X(\CC)$ equipped with various maps. The fact that
$(\Fscr^n_m)_{n,m}$ is normalized implies that the morphism
\[
\Fscr^n_\bullet \Lotimes_{\Lambda^n}\Lambda^{n'}\r \Fscr^{n'}_\bullet
\]
is an isomorphism (in the derived category). It is easy to see that this is equivalent to
\[
\Fscr^n_{m} \Lotimes_{\Lambda^n}\Lambda^{n'}\r \Fscr^{n'}_m
\]
being an isomorphism for each $m$. In other words, this is equivalent to $\Fscr^\bullet_m$ being normalized as a complex of sheaves on the ringed topos
 $((G(\CC)^{m}\times X(\CC))^{\NN},\Lambda^\bullet)$ for each $m$. 

Now let $X_1=G^{m_1}\times X$, $X_2=G^{m_2}\times X$, and let $f:[m_2]\r [m_1]$ be a morphism in $\Delta$.
It corresponds to a morphism of schemes $X_1\r X_2$.
The object $(\Fscr^n_m)_{m,n}$ is Cartesian  if and only if for each
such $f$ the map $f^{\ast} \Fscr_{m_2}^\bullet\r \Fscr^\bullet_{m_1}$ is an isomorphism for all $n$ in 
$D^b(X_1(\CC)^\NN,\Lambda^\bullet)^{\text{norm}}$.

We now claim that $R\pi_\ast$ and $L\pi^\ast$ define inverse equivalences
\begin{equation}\label{X/G-norm-equivalence}
D^b_{c}((X(\CC)/G(\CC))^{\NN}_\bullet,\Lambda^\bullet)^{\text{norm}}
\longleftrightarrow D^b_{c}((X(\CC)/G(\CC))_\bullet,\Lambda)_{\text{lit}}.
\end{equation}
So we have to show that the unit and counit  morphisms $\Id\r R\pi_\ast L\pi^\ast$, $L\pi^\ast R\pi_\ast\r \Id$ are isomorphisms.
Since $R\pi_\ast$ and $L\pi^\ast$ can be computed levelwise, we are reduced to the scheme case which is known.

If $f$ is as above then we have on the nose $f^\ast
L\pi^\ast=L\pi^\ast f^\ast$. Hence, if $\Fscr\in
D^b_{c}((X(\CC)/G(\CC))_\bullet,\Lambda)_{\text{lit}}$ has Cartesian
cohomology then so does $L\pi^\ast \Fscr$.  Since $R\pi_\ast$ and
$L\pi^\ast$ are inverses, we also have $f^\ast R\pi_\ast =R\pi_\ast
f^\ast$, from which it follows that if $\Gscr\in
D^b_{c}((X(\CC)/G(\CC))^{\NN}_\bullet,\Lambda^\bullet)^{\text{norm}}$
has Cartesian cohomology then so does $R\pi_\ast\Gscr$.

Thus, to show that \eqref{X/G-norm-equivalence} induces an
equivalence of the subcategories
$$
D^b_{c,\text{cart}}((X(\CC)/G(\CC))^{\NN}_\bullet,\Lambda^\bullet)^{\text{norm}}
\simeq D^b_{c,\text{cart}}((X(\CC)/G(\CC))_\bullet,\Lambda)_{\text{lit}},
$$
it remains to verify the compatibility with the condition that the cohomology of
objects in \break
$D^b_{c,\text{cart}}((X(\CC)/G(\CC))_\bullet,\Lambda)_{\text{lit}}$ are
  Cartesian locally AR-$m$-adic objects.
 However, it is clear that if $\Fscr$ is Cartesian then it
  is locally AR-$m$-adic if and only this is the case for
  $\Fscr_1$. So we are again reduced to the scheme case which is known.
 \qed

\def\qed{}
\end{proof}
\subsubsection{Passing to the \'etale $m$-adic  derived category}
In \cite[Cor.\ 3.17]{LO2} it is shown (for algebraic stacks) that 
 $D_c^b(Y,\Lambda)$ coincides with the $m$-adic derived
category as defined by Deligne in \cite{deligneweil2} (see also \cite{BBD}).
In other words,
for
$\Fscr,\Gscr\in D_c^b(Y,\Lambda)$ the obvious map
\begin{equation}
\label{deligne}
\Hom_{\Lambda}(\Fscr,\Gscr)\r \underset{n}{\varprojlim}
\Hom_{\Lambda^n}(\Fscr^n,\Gscr^n)
\end{equation}
is an isomorphism.\footnote{The term 
$\underset{n}{\varprojlim}^1
\Ext^{-1}(\Fscr^n,\Gscr^n)$
 in \cite[Cor.\ 3.17]{LO2} is zero since the groups $\Ext^{-1}(\Fscr^n,\Gscr^n)$ are finite in the current setting.}

Now there is a morphism of topoi 
\[
\epsilon:Y(\CC)\r Y_{\text{et}}.
\]
In the scheme case this is \cite[p149]{BBD}.  In the $G$-equivariant case we define it levelwise.

It is easy to see that $\epsilon^\ast$ preserves normalized complexes with AR-$m$-adic cohomology and thus defines a functor
\[
\epsilon^\ast:D_c^b(Y,\Lambda)\r D_c^b(Y(\CC),\Lambda).
\]
\begin{prop} \cite[p149]{BBD}
The functor $\epsilon^\ast$ defines an equivalence between 
$D^b_c(Y,\Lambda)$ and $D^b_c(Y(\CC),\Lambda)$.
\end{prop}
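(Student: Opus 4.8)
The plan is to reduce the equivariant case to the scheme case, which is the content of the cited page in \cite{BBD}. The functor $\epsilon^\ast$ between the $l$-adic derived categories is defined levelwise, so it makes sense to try to prove that it is fully faithful and essentially surjective levelwise. However, one must be slightly careful: full faithfulness of a functor between derived categories of cartesian normalized complexes on a simplicial topos does not follow formally from full faithfulness levelwise, because $\Hom$'s on the simplicial topos are computed not levelwise but via a spectral sequence (Proposition \ref{ffcor}). So the first step is to set up, for $\Fscr,\Gscr\in D^b_c(Y,\Lambda)$ with $Y=(X/G)_{\bullet,\text{et}}$, the convergent spectral sequence
\[
E^{pq}_2=R^p\varprojlim_u \Ext^q_{\Lambda,u}(\Fscr_u,\Gscr_u)\Rightarrow \Ext^{p+q}_\Lambda(\Fscr,\Gscr),
\]
and the analogous one on $Y(\CC)=(X(\CC)/G(\CC))_\bullet$. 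This uses Proposition \ref{ffcor} applied to the ringed $\Delta^\circ$-topos, together with the identification \eqref{deligne} (resp.\ its topological analogue, which holds since the relevant $\Ext$-groups are finite on each level).

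Next I would observe that $\epsilon^\ast$ is compatible with these two spectral sequences: on each simplicial level $u$, $\epsilon^\ast$ is the comparison functor for the scheme $G^u\times X$ (or rather its \'etale versus classical topos), and the transition maps $\alpha^\ast$ for $\alpha:v\to u$ in $\Delta$ commute with $\epsilon^\ast$ on the nose since $\epsilon^\ast$ is defined levelwise and inverse images commute. Hence $\epsilon^\ast$ induces a morphism of the two spectral sequences, which on $E_2$-terms is $R^p\varprojlim$ applied to the levelwise comparison maps $\Ext^q_{\Lambda,u}(\Fscr_u,\Gscr_u)\to \Ext^q_{\Lambda,u}(\epsilon^\ast\Fscr_u,\epsilon^\ast\Gscr_u)$. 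By the scheme case (i.e.\ \cite[p.\ 149]{BBD} applied to $G^u\times X$, which is a scheme of finite type over $\CC$ once one fixes the $\Lambda$-coefficients — note $G^u\times X$ need not be affine but is still a finite-type $\CC$-scheme, so the cited result applies), these maps are isomorphisms for every $u$ and every $q$. Therefore the morphism of spectral sequences is an isomorphism on $E_2$, hence on the abutment, and $\epsilon^\ast$ is fully faithful.

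For essential surjectivity I would argue by induction on the cohomological amplitude, reducing to the heart, i.e.\ to cartesian AR-$m$-adic sheaves. Here a cartesian object on $(X(\CC)/G(\CC))_\bullet$ is by the cartesian condition determined by its restriction $\Fscr_0$ to level $0$ (a constructible $\Lambda$-sheaf on $X(\CC)$) together with descent data, i.e.\ an isomorphism $d_0^\ast\Fscr_0\cong d_1^\ast\Fscr_0$ on level $1$ satisfying the cocycle condition on level $2$; by the scheme case, $\Fscr_0$ together with this finite package of descent data descends to the \'etale side, and the cocycle identity descends since $\epsilon^\ast$ is fully faithful on each level. Assembling these gives an object of $D^b_c(Y,\Lambda)$ mapping to the given one. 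To pass from the heart to bounded complexes one mounts the truncation triangles and uses full faithfulness to descend the connecting maps, exactly as in the scheme case. The only genuine subtlety — and the step I expect to need the most care — is the bookkeeping that $\epsilon^\ast$ preserves \emph{cartesian} normalized AR-$m$-adic objects in both directions; but as noted in \S\ref{sectopm}, an object is cartesian iff its level-$0$ restriction is, and it is locally AR-$m$-adic iff $\Fscr_1$ is, so these conditions are detected levelwise and the scheme case handles them. \qed
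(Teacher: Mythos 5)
Your proposal is correct and follows essentially the same path as the paper's proof: full faithfulness via \eqref{deligne} together with the spectral sequence of Proposition~\ref{ffcor} applied to the $\Delta^\circ$-topos (reducing to the levelwise comparison isomorphisms), and essential surjectivity by reducing to the heart of the $t$-structure and lifting constructible AR-$m$-adic objects, which works because $\epsilon^\ast$ is an equivalence on constructible sheaves. The only presentational difference is that the paper routes the reduction to the heart through the quotient category $\mathsf{D}^b_c$ (where the truncation functors are defined on the nose) rather than mounting truncation triangles in $D^b_c$ directly, but these are equivalent by \eqref{inverse}.
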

\begin{proof}
  Using \eqref{deligne} and its analogue for $Y(\CC)$ together with
  Proposition \ref{ffcor}, it follows immediately that $\epsilon^\ast$
  is fully faithful. To prove that it is essentially surjective it
  will be convenient to use $\mathsf{D}^b_c(Y,\Lambda)$ (see \eqref{stackcategory}).  We have a
  commutative diagram
\[
\xymatrix{
{D}^b_c(Y_{\text{et}},\Lambda)\ar[r]^{\text{can}}_{\cong}\ar[d]_{\epsilon^\ast}& 
\mathsf{D}^b_c(Y_{\text{et}},\Lambda)\ar[d]^{\epsilon^\ast}\\
{D}^b_c(Y(\CC),\Lambda)\ar[r]_{\text{can}}^{\cong}& 
\mathsf{D}^b_c(Y(\CC),\Lambda)
}
\]
So it is sufficient to prove that the right vertical arrow $\epsilon^\ast$ is
essentially surjective. To this end it suffices to prove that if
$\Fscr$ is a constructible AR-$m$-adic object on $Y(\CC)$ then it can
be lifted under $\epsilon^\ast$. But this is clear since
$\epsilon^\ast$ is an equivalence on constructible sheaves.
\end{proof}

\subsubsection{Reduction to the algebraic closure of a finite field}
\label{secfinchar}
The following result is stated in \cite[p155]{BBD}.
\begin{lem}
\label{lemma1}
Suppose we have a finite diagram of schemes of finite type over a noetherian scheme $S$ and on each scheme
a finite number of constructible $\Lambda^n$-modules. 

If we perform a finite sequence of standard homological operations 
$R^\ast f_\ast$, $R^\ast f_!$, $f^\ast$, $R^\ast f^!$, $\uTor_p$, $\uExt^p$ on the given constructible sheaves,
then the result is constructible and compatible with the base change after replacing $S$ with
a suitable dense open subset. 
\end{lem}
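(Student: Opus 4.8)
The plan is to argue by induction on the length of the sequence of operations, which reduces the statement to the case of a \emph{single} operation applied to a finite collection of constructible $\Lambda^n$-sheaves distributed over the finitely many schemes of the diagram. The point is that a dense open subset of a dense open subset of a Noetherian scheme is again a dense open subset, so each application of the one-step statement may shrink $S$ to a smaller dense open and the composite of finitely many such shrinkings is harmless. The inductive hypothesis carried along the composite is that, over the current dense open $U\subseteq S$, every intermediate sheaf is constructible and its formation from the initial data commutes with every base change $T\to U$; unwinding this hypothesis for the base change $T\to U$ together with the one-step statement for the next operation yields the same for the next stage. Constructibility may always be checked after passing to a stratification, and I would freely stratify and shrink as convenient.

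Three of the operations require no shrinking at all. The functor $f^\ast$ commutes with arbitrary base change and sends constructible sheaves to constructible sheaves (pull back a defining stratification); the same is true of $-\Lotimes_{\Lambda^n}-$, hence of each $\uTor_p$ (constructibility: stratify so that both arguments are locally constant with finite stalks). For $Rf_!$, and hence each $R^if_!$, one writes $f=\bar f\circ j$ with $j$ an open immersion and $\bar f$ proper, so that $Rf_!=R\bar f_\ast j_!$ commutes with arbitrary base change by proper base change, and preserves constructibility by the finiteness theorem for the proper morphism $\bar f$. Since pullback functors are $t$-exact, these statements pass to the individual cohomology sheaves, and one takes $U'=U$.

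The only operation that genuinely needs a dense open is $Rf_\ast$: here I invoke Deligne's generic base change theorem (\cite[\S6]{BBD}, going back to SGA~$4\frac{1}{2}$), which provides a dense open $U'\subseteq U$ such that over $U'$ the complex $Rf_\ast K$ is constructible and its formation commutes with every base change $T\to U'$; $t$-exactness of pullback passes this to each $R^if_\ast$. The remaining operations $R^if^!$ and $\uExt^p$ I would reduce to those already handled by means of relative Verdier duality over $S$: writing $D_{X/S}=\uRHom(-,\omega_{X/S})$ with $\omega_{X/S}=a_X^!\Lambda^n$ for $a_X\colon X\to S$, one has $f^!=D_{X/S}\,f^\ast\,D_{Y/S}$ and $\uRHom(\Fscr,\Gscr)=D_{X/S}(\Fscr\Lotimes D_{X/S}\Gscr)$, together with biduality $D_{X/S}^2\simeq\id$ on constructible complexes; so everything comes down to the generic constructibility and base-change-compatibility of $\omega_{X/S}$, which one obtains by reducing to the generic fibre — where it is the usual dualizing complex of a scheme of finite type over a field, hence lies in $D^b_c$ — and spreading out, using generic flatness of $a_X$ to make the base change Tor-independent. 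Because $\Lambda^n$ is Artinian for $n\ge 2$, the intermediate objects $\Fscr\Lotimes D_{X/S}\Gscr$ and $\uRHom(\Fscr,\Gscr)$ are in general bounded only on one side, so this duality manipulation must be run in $D^-_c$ and $D^+_c$ rather than $D^b_c$; this is harmless, since each individual $\uExt^p$ remains constructible and each of the functors above respects the relevant boundedness.

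The substantive ingredient is Deligne's generic base change theorem for $Rf_\ast$, which I do not reprove — and from which, in the spirit of the proof of the finiteness theorem in SGA~$4\frac{1}{2}$, all the other cases follow by duality and the elementary stability properties of $f^\ast$, $Rf_!$ and $\Lotimes$. The genuine work is therefore the bookkeeping of the first paragraph (keeping the diagram and the collection of sheaves finite, and intersecting the successive dense opens), and the care needed in the third paragraph to order the treatment of $Rf_\ast$, $Rf_!$, $\Lotimes$, $\omega_{X/S}$, $f^!$ and $\uRHom$ so that no step appeals circularly to a later one; the main technical nuisance, as noted, is that $\Lambda^n$ is Artinian, so that $\uRHom$ need not be bounded and one must track boundedness on the correct side throughout.
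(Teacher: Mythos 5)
Your proposal is correct and follows essentially the same route as the paper, which simply cites \cite{BBD} and points to \cite[Th.\ finitude 1.9]{Finitude} as the crucial case ($R^\ast f_\ast$). You have filled in the standard bookkeeping — induction on the number of operations, intersecting dense opens, reducing $Rf_!$ to the proper case via compactification, and reducing $f^!$ and $\uExt^p$ to $f^\ast$, $\Lotimes$ and $Rf_\ast$ by relative Verdier duality — and correctly identified Deligne's generic base change theorem as the single substantive input.
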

\begin{proof} (see \cite{BBD})
The crucial case consists of $R^\ast f_\ast$
applied to a single constructible sheaf. This case is \cite[Th.\ finitude 1.9]{Finitude}.
\end{proof}
We recall a construction given in \cite[p156]{BBD}. Let $X/\CC$ be of finite type. Let $\Tscr$ be
a stratification of $X$ with smooth strata $T\in \Tscr$. For each $T\in \Tscr$ let $\Lscr(T)$
be a finite family of locally constant sheaves  (for the \'etale topology) of $\Lambda^1$-modules on $T$.

\begin{lem}
\label{lemma2}
There exists $A$, a subring $\CC$ of finite type over $\ZZ$, such that for $S=\Spec A$ 
there exist $(X_S,\Tscr_S,\Lscr_S)$ that give $(X,\Tscr,\Lscr)$  through base extension, 
such that in addition the following properties hold.
\begin{enumerate}
\item The strata $T\in \Tscr_S$ are smooth over $S$ with geometrically connected fibers.
\item For $F,G$ of the form $j_!L$ with $j:T\hookrightarrow X_S$ in $\Tscr_S$ and $L\in \Lscr(T)$, the sheaves $\uExt^q_{X_S}(F,G)$
are compatible with the base change in $S$.
\item Let $a_S:X_S\r S$ be the structure morphism. With $F,G$ as above, the sheaves
$R^p a_\ast \uExt^q_{X_S}(F,G)$ are locally constant and compatible
with the base change. 
\end{enumerate}
\end{lem}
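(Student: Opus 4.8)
The plan is to carry out the routine ``spreading out'' argument --- the recipe sketched in \cite[p156]{BBD} --- and then feed the resulting finite diagram into Lemma \ref{lemma1}. As a preliminary normalization, after refining $\Tscr$ we may assume each stratum is connected; being also smooth over the algebraically closed field $\CC$ it is then geometrically connected. Since $X$ is of finite type over $\CC$ and $(\Tscr,\Lscr)$ is finite data (finitely many strata $T$, immersions $j\colon T\hookrightarrow X$, and \'etale-locally constant sheaves $L\in\Lscr(T)$ of $\Lambda^1$-modules), standard limit arguments produce a subring $A_0\subset\CC$ of finite type over $\ZZ$, with $S_0=\Spec A_0$, and a diagram $(X_{S_0},\Tscr_{S_0},\Lscr_{S_0})$ over $S_0$ whose base change along $\Spec\CC\to S_0$ is $(X,\Tscr,\Lscr)$. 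Note that $A_0$ is a domain, so $S_0$ is irreducible; throughout, ``shrink $S_0$'' will mean ``replace $A_0$ by $A_0[1/f]$ for a suitable $0\neq f\in A_0$'', which keeps $A_0$ of finite type over $\ZZ$ and a subring of $\CC$.

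Next I would arrange (1). The non-smooth locus of $T_{S_0}\to S_0$ is closed and, because $T$ is smooth over $\CC$, misses the fibre over the generic point of $S_0$; hence after shrinking, $T_{S_0}\to S_0$ is smooth. For the fibres: the generic fibre $T_{S_0}\times_{S_0}\Spec K$ with $K=\operatorname{Frac}(A_0)$ has base change to $\CC$ equal to the connected scheme $T$, and the number of geometric connected components of a finite-type scheme over a field is unchanged under extension of algebraically closed fields, so the generic fibre is geometrically connected; the locus of $S_0$ over which a flat finitely presented morphism has geometrically connected fibres is constructible (and open around the generic point once the generic fibre is geometrically connected, e.g.\ via Stein factorization), so it contains a dense open. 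Doing this for each of the finitely many strata and intersecting yields (1).

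For (2) and the base-change clause of (3) I would apply Lemma \ref{lemma1} \emph{once}, to the finite diagram formed by the $T_{S_0}$, their immersions $j$ into $X_{S_0}$, and the structure morphism $a\colon X_{S_0}\to S_0$, together with the finite family of constructible $\Lambda^1$-sheaves $\{L\}$. Indeed $F=j_!L$ is obtained by the operation $j_!$; then $\uExt^q_{X_{S_0}}(F,G)$ by $\uExt^q$; then $R^p a_\ast\uExt^q_{X_{S_0}}(F,G)$ by $R^p a_\ast$. The indices $p,q$ range over a bounded set and there are finitely many pairs $(F,G)$, so Lemma \ref{lemma1} furnishes a single dense open of $S_0$ over which every $\uExt^q_{X_{S_0}}(F,G)$ and every $R^p a_\ast\uExt^q_{X_{S_0}}(F,G)$ is constructible with formation commuting with arbitrary base change $S'\to S_0$ --- in particular with the reductions to $\CC$ and to an algebraic closure of a finite residue field, for which the whole construction is intended. (If one wants the $m$-adic statement rather than the mod-$m$ one, run the same argument with all the $\Lambda^n$-reductions, or reduce to the mod-$m$ case by d\'evissage.) After shrinking $S_0$ to this open we have (2) and the compatibility part of (3).

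It remains to upgrade the $R^p a_\ast\uExt^q_{X_{S_0}}(F,G)$ from ``constructible'' to ``locally constant'', which is the last shrinking: a constructible sheaf on the irreducible noetherian scheme $S_0$ is locally constant on a dense open subset (the open stratum of any stratification trivialising it), so we intersect the finitely many such dense opens and pass to a basic affine open inside; by the previous paragraph the base-change compatibility is preserved. The resulting $A$ and $S=\Spec A$ satisfy (1)--(3) simultaneously. There is no deep obstacle; the step requiring the most care is organizational --- invoking Lemma \ref{lemma1} globally, for the entire finite diagram and sheaf collection at once, so that one dense open handles all of the operations $j_!$, $\uExt^q$, $R^p a_\ast$ entering (2)--(3), while checking that each of the finitely many shrinkings preserves ``of finite type over $\ZZ$'' and ``subring of $\CC$''.
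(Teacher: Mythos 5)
Your proof follows exactly the route the paper takes (and which \cite{BBD} leaves unwritten): spread out $(X,\Tscr,\Lscr)$ over a finitely generated $\ZZ$-subalgebra, shrink to secure smoothness and geometric connectedness of strata, apply Lemma \ref{lemma1} once to the whole finite diagram to get constructibility and base-change compatibility of the $\uExt^q$ and $R^pa_\ast\uExt^q$ sheaves, then shrink once more to make the latter locally constant. The extra detail you supply (e.g.\ passing to connected strata and the Stein-factorization argument for geometric connectedness of fibres) is welcome elaboration, not a different method.
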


\begin{proof} This is stated without proof in \cite{BBD}.  One may
  argue as follows. As the first step we may choose $S=\Spec A$, such that
  $(X_S,\Lscr_S,\Tscr_S)$ exist and the strata in $\Tscr_S$ are
  smooth over $S$ with geometrically connected fibers.  Then we apply
  Lemma \ref{lemma1} to obtain constructibility and compatibility with
 the base change of (the finite number of) the sheaves $R^p a_\ast
  \uExt^q_{X_S}(F,G)$.  Shrinking $S$ further we may assume that the
  $R^p a_\ast \uExt^q_{X_S}(F,G)$ are locally constant.
\end{proof}

Let $A$ be as in the previous lemma.  Pick $s\in S$. Then according to \cite{BBD}, there exists a strict Henselian discrete valuation ring
$A\subset V\subset \CC$, whose residue field is the algebraic closure of the residue field of $s$.  We denote the closed point of $\Spec V$ also by $s$.
Then we have a base extension diagram
\[
X\xrightarrow{u} X_V\xleftarrow{i} X_s
\]
Let $D^b_{\Tscr,\Lscr}(X,\Lambda^n)$ be the full subcategory of $D^b_c(X,\Lambda^n)$ consisting of complexes $K$, such
that $\Tor_j^{\Lambda}(\Lambda^1,H^i(K)|T)$, for $T\in \Tscr$, is an extension of objects in $\Lscr(T)$ for all $i$, $j$. We define
$D^b_{\Tscr_V,\Lscr_V}(X_V,\Lambda^n)$, $D^b_{\Tscr_s,\Lscr_s}(X_s,\Lambda^n)$ in a similar way.

\begin{lem} 
\label{lemma4}
There are equivalences of categories
\[
D^b_{\Tscr,\Lscr}(X,\Lambda^n)    \xleftarrow{u^\ast} D^b_{\Tscr_V,\Lscr_V}(X_V,\Lambda^n) \xrightarrow{i^\ast} D^b_{\Tscr_s,\Lscr_s}(X_s,\Lambda^n).
\]
\end{lem}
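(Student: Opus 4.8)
The plan is to establish the two equivalences as instances of a single principle: when a morphism of schemes over $\Spec V$ is either the generic-fiber inclusion $u$ or the special-fiber inclusion $i$ into a scheme $X_V$ smooth over the strictly Henselian trait $\Spec V$ (with a stratification smooth over $V$), then pullback of constructible sheaves satisfying the prescribed local-coefficient constraints is an equivalence. First I would reduce the statement from the full subcategories $D^b_{\Tscr,\Lscr}(-,\Lambda^n)$ to the level of the underlying abelian categories of constructible $\Lambda^1$-sheaves (i.e. $n=1$), then bootstrap back up to $\Lambda^n$ and to complexes: indeed, the conditions defining $D^b_{\Tscr,\Lscr}$ are conditions on the $\Lambda^1$-reductions of the cohomology sheaves restricted to strata, so once one knows that $u^\ast$ and $i^\ast$ induce equivalences on the relevant categories of lisse $\Lambda^1$-sheaves on each stratum (compatibly with the gluing data assembling a constructible sheaf from its restrictions to strata, and with the extension data assembling a complex from its cohomology), the passage to $\Lambda^n$ follows by dévissage along the filtration $m^j\Lambda^n$, and the passage to bounded complexes follows by induction on the length of the complex using the standard truncation triangles.

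The heart of the matter is therefore the stratum-wise statement: if $T_V\subset X_V$ is a stratum smooth over $\Spec V$ with geometrically connected fibers, with generic fiber $T$ and special fiber $T_s$, then restriction of lisse $\Lambda^1$-sheaves along $T\hookrightarrow T_V$ and along $T_s\hookrightarrow T_V$ is an equivalence onto the sheaves of the prescribed type. A lisse $\Lambda^1$-sheaf on $T_V$ is a representation of $\pi_1(T_V)$, and the key input is the specialization theory for the fundamental group over a Henselian (here strictly Henselian) trait: since $T_V\to \Spec V$ is smooth with geometrically connected fibers and $V$ is strictly Henselian, the maps $\pi_1(T_s)\to\pi_1(T_V)$ and $\pi_1(T)\to\pi_1(T_V)$ are (respectively an isomorphism, and at worst a surjection, but on the tame/finite quotients relevant to $\Lambda^1$-coefficients an isomorphism in characteristic zero of the residue field being what \cite{BBD} is really using here — one uses $\ell$ invertible and the specialization isomorphism of SGA1, Exp.\ XIII). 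Thus the categories of lisse $\Lambda^1$-sheaves on the three schemes are identified. One then checks that the identification is compatible with the gluing data along the closed strata (which by Lemma \ref{lemma2}(1) and the smoothness over $S$ persist over $V$ unchanged) and with the $\uExt$-groups controlling extensions, using Lemma \ref{lemma2}(2)--(3): the sheaves $R^p a_\ast\uExt^q_{X_S}(F,G)$ being locally constant and compatible with base change means precisely that the $\Ext$-groups computed on $X$, on $X_V$, and on $X_s$ agree, so the extension data defining an object of $D^b_{\Tscr,\Lscr}$ transports along $u^\ast$ and $i^\ast$.

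I expect the main obstacle to be the bookkeeping needed to promote the stratum-wise equivalence of $\Lambda^1$-coefficient sheaves to an honest equivalence of the triangulated categories $D^b_{\Tscr,\Lscr}(-,\Lambda^n)$ — one must simultaneously track the gluing (recollement) data across the stratification, the $m$-adic filtration, and the complex structure, and verify that $u^\ast,i^\ast$ are compatible with all recollement functors $j_!,j_\ast,i^\ast,i_\ast$ attached to the strata. This is exactly the point where the precise choice of $S=\Spec A$ in Lemma \ref{lemma2} is used: the three bulleted properties there are tailored so that every $\Ext$ and every nearby/restriction functor one needs is base-change compatible over a dense open of $S$, hence over $V$ after the strict Henselization. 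I would organize the argument as a dévissage that at each stage invokes Lemma \ref{lemma1} (for constructibility and base-change compatibility of the operations involved) together with the $\ell$-adic proper/smooth base change and specialization isomorphisms, and I would present the $\Lambda^n$-to-$\Lambda^1$ reduction first so that the reader sees the only genuinely geometric input — the specialization isomorphism on fundamental groups over the strict trait — isolated in one clean step.
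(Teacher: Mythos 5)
Your proposal and the paper's proof both reduce first to $\Lambda^1$-coefficients, but from there they diverge. The paper's argument is much shorter and never touches fundamental groups: for generators $F=j_{1\ast}L_1$, $G=j_{2\ast}L_2$ with $L_i\in\Lscr$, one writes $\RHom_{X_V}(F,G)=R\Gamma(\Spec V,\,Ra_\ast\uRHom_{X_V}(F,G))$, and Lemma~\ref{lemma2}(3) guarantees that $Ra_\ast\uRHom_{X_V}(F,G)$ has locally constant, base-change-compatible cohomology on $\Spec V$. Since $V$ is strictly Henselian, any locally constant sheaf of $\Lambda^1$-modules on $\Spec V$ is constant, so its cohomology over $\Spec V$, over the generic point $\Spec\CC$, and over $s$ agree. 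That is the whole computation; full faithfulness of $u^\ast,i^\ast$ on generators follows, and essential surjectivity is automatic since the three categories are generated by objects that manifestly correspond.

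Your route instead tries to establish a stratum-by-stratum equivalence of \emph{all} lisse $\Lambda^1$-sheaves via specialization of $\pi_1$, and then reassemble by recollement. This has a genuine gap. The specialization isomorphism for $\pi_1$ over a strictly Henselian trait is a theorem for \emph{proper} smooth schemes (SGA~1, Exp.~X); for a non-proper smooth stratum $T_V$ the map $\pi_1(T_s)\to\pi_1(T_V)$ is not an isomorphism, and restricting to the prime-to-$p$ part does not help, because $\GL_n(\Lambda^1)$ has order divisible by primes other than $\ell$, so lisse $\Lambda^1$-sheaves on a positive-characteristic special fiber can carry genuinely wild monodromy that neither lifts to $T_V$ nor descends to the characteristic-zero generic fiber. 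Your concluding claim that ``the categories of lisse $\Lambda^1$-sheaves on the three schemes are identified'' is therefore false in general; what is true, and all the lemma requires, is agreement on the fixed finite family $\Lscr$ together with the $\Ext$-groups between the resulting generators, and that is exactly what Lemma~\ref{lemma2} secures by shrinking $S$ to a dense open. You correctly recognize Lemma~\ref{lemma2} as the decisive input, but as structured your argument would still have to route the stratum-wise $\pi_1$ claim back through that lemma, at which point it is cleaner and safer to compare the global $\RHom$-complexes directly, as the paper does.
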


\begin{proof} (see \cite{BBD})
Let $F=j_{1\ast} L_1$, $G=j_{2\ast} L_2$, where $j_i:T_i\r X_V$ in $\Tscr_V$, for $i=1,2$, are the embeddings of strata, and
$L_i\in \Lscr(T_i)$ (these are sheaves of $\Lambda^1$-modules). We have to prove that  
$\Ext^p_{X_V,\Lambda^n}(F,G)$ does not change if we replace $V$ by its generic or special point. Since 
$F$ and $G$ are sheaves of $\Lambda^1$-modules, we have
\[
\Ext^p_{X_V,\Lambda^n}(F,G)=\Ext^p_{X_V,\Lambda^1}(F ,R\Hom_{\Lambda^n}(\Lambda^1,G)),
\]
and $H^i(R\Hom_{\Lambda^n}(\Lambda^1,G))=G$ for all $i\ge 0$. Hence, 
it suffices to consider
the case $n=1$, by invoking the appropriate spectral sequence. 

Now we have
\[
R\Hom_{X_V}(F,G)=R\Gamma(\Spec V, R a_\ast  \uRHom_{X_V}(F,G))
\]
Using Lemma \ref{lemma2}(3), we are reduced to checking
for a locally constant sheaf $L$ on $(\Spec V)_{\text{et}}$ one has
\[
H^i(\Spec \CC,u^\ast L)=H^i(\Spec V, L)=H^i(s,i^\ast L).
\]
But this is obvious, since $V$ is stricly Henselian, and so $L=(\underline{\Lambda}^1)^{\oplus n}$.
\end{proof}

Let $G$ be an algebraic group defined over $\ZZ$, acting on $X$. We keep the above setup, but we assume in addition
that the strata in
$\Tscr$ are $G$-invariant, and the elements of $\Lscr(T)$ are $G$-equivariant. We have the following result.

\begin{lem} 
\label{lemma5}
We can choose $V$ in such a way 
that there are equivalences of categories
\[
D^b_{\Tscr,\Lscr}(X/G,\Lambda)    \xleftarrow{u^\ast} D^b_{\Tscr_V,\Lscr_V}(X_V/G_V,\Lambda) \xrightarrow{i^\ast} D^b_{\Tscr_s,\Lscr_s}(X_s/G_s,\Lambda).
\]
\end{lem}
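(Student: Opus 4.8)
The plan is to reduce everything to the scheme statement of Lemma~\ref{lemma4} by working on the standard simplicial scheme $(X/G)_\bullet$, $(X/G)_m=G^m\times X$, which is the model for the equivariant category used in \S\ref{passing-finite-field-sec}. The $G$-invariant stratification $\Tscr$ induces on the $m$-th level the stratification $\Tscr_m$ with strata $G^m\times T$, and the $G$-equivariance of the sheaves in $\Lscr(T)$ makes them into a cartesian family of locally constant sheaves on $(X/G)_\bullet$, restricting on level $m$ to local systems $\Lscr_m(G^m\times T)$. Thus $D^b_{\Tscr,\Lscr}(X/G,\Lambda)$ --- and likewise its versions over $V$ and over $s$ --- is the category of cartesian objects of the simplicial system $m\mapsto D^b_{\Tscr_m,\Lscr_m}(G^m\times X,\Lambda)$. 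The functors $u^\ast$ and $i^\ast$ are inverse images, hence are computed levelwise, are exact, and carry cartesian complexes to cartesian complexes; moreover cartesianness of a complex over $V$ is detected on the two fibres by the scheme case (Lemma~\ref{lemma4} applied levelwise). So it is enough to prove the levelwise statements, uniformly in $m$, and then glue.

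The first real step is to choose $A$ and $V$ uniformly in the simplicial direction. Here the key point is a finiteness one: on level $m$ a cartesian sheaf is, after forgetting structure, of the form $\operatorname{pr}_X^\ast\Fscr_0$ with $\operatorname{pr}_X\colon G^m\times X\to X$ flat, so $\uExt^q_{G^m\times X}$ between the standard generators $j_!L$ equals $\operatorname{pr}_X^\ast$ of the corresponding sheaf on $X$, and $R^p a_\ast$ of it is, by the projection formula and the Künneth formula for the product $G^m\times X$, assembled from the finitely many sheaves $R^{p'}a_{X\ast}\uExt^{q'}_X$ between the standard generators on $X$ and from the bounded, finite-dimensional cohomology $H^\ast(G,\Lambda)$. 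Consequently only finitely many local-constancy and base-change conditions have to be imposed, and Lemma~\ref{lemma2}, applied to $X$ and to $G$, lets us enlarge $A$ and then pick the strictly Henselian $V$ with $A\subset V\subset\CC$ over a chosen $s\in S=\Spec A$ so that all of them hold simultaneously. (Alternatively one may first invoke \cite{BerLun} to replace $(X/G)_\bullet$ by a finite truncation $(X/G)_{\le N}$ adequate for bounded complexes of the given amplitude, reducing at once to a finite diagram of schemes.)

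With this choice, the argument of Lemma~\ref{lemma4} goes through verbatim on each level and shows that $u^\ast$ and $i^\ast$ induce equivalences $D^b_{\Tscr_m,\Lscr_m}(G^m\times X,\Lambda^n)\simeq D^b_{\Tscr_{m,V},\Lscr_{m,V}}(G^m_V\times X_V,\Lambda^n)\simeq D^b_{\Tscr_{m,s},\Lscr_{m,s}}(G^m_s\times X_s,\Lambda^n)$ for every $m$ and $n$. To deduce the equivalences for the cartesian (equivariant) categories I would use the spectral sequence of Proposition~\ref{ffcor}, $E_2^{pq}=R^p\varprojlim_u\Ext^q_{\Lambda_u}(\Fscr_u,\Gscr_u)\Rightarrow\Ext^{p+q}(\Fscr,\Gscr)$, over $\CC$ and over $s$: by the previous step each $E_2$-term agrees (the inner $\Ext^q$ are base-change invariant, and $R^p\varprojlim$ of the same $\Delta^\circ$-diagram of finite groups is the same), so the abutments agree and $u^\ast,i^\ast$ are fully faithful on the equivariant categories. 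Essential surjectivity follows as in \cite{BBD}: any object of $D^b_{\Tscr,\Lscr}(X/G,\Lambda)$ is, levelwise and stratum by stratum, an iterated extension in finitely many steps of shifts of pushforwards of sheaves in $\Lscr$, and each such building block, being a constructible (cartesian) sheaf, lifts under $u^\ast$ and $i^\ast$, which are equivalences on constructible sheaves.

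The main obstacle I anticipate is precisely this uniformity and the simplicial bookkeeping: one must produce a single $A$ and $V$ that trivialize all the monodromy and guarantee base-change compatibility on every relevant level of $(X/G)_\bullet$ at once, and one must control the descent/cartesian data --- which a priori involve all of $\Delta^\circ$ and their homotopy coherence --- under base change. The finiteness remarks above, together with the convergent spectral sequence of Proposition~\ref{ffcor} whose terms consist of finite groups, are exactly what make both of these manageable; the remaining verifications (smoothness and geometric connectedness of the strata $\Tscr_m$ over $S$, and the bookkeeping of the $\Tor$-conditions on $G^m\times T$) are routine extensions of the scheme case.
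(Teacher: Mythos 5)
Your proof is correct in outline, but it takes a genuinely different and considerably longer route than the paper's. The paper's proof is a one-step reduction: it re-runs the argument of Lemma~\ref{lemma4} with the scheme structure map $a\colon X_V\to\Spec V$ replaced by the equivariant one $[X_V/G_V]\to[\Spec V/G_V]$. Lemma~\ref{lemma2} is then invoked only for the underlying non-equivariant sheaves on $X$ itself (so $V$ is chosen exactly as before, with no new uniformity to arrange), and the only genuinely new step is checking that the equivariant cohomology of a $G$-equivariant locally constant $\Lambda^1$-sheaf $L$ on $\Spec V$ is insensitive to passing to $\Spec\CC$ or $s$; since $L$ is constant after forgetting the $G$-structure, Proposition~\ref{ffcor} reduces that single check to the scheme case. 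You instead apply Lemma~\ref{lemma4} literally on every level $G^m\times X$ of the simplicial scheme and glue via Proposition~\ref{ffcor}, which forces you to (a) verify the hypotheses of Lemma~\ref{lemma2} on each level, (b) choose $S$ and $V$ uniformly across all $m$, and (c) run the $\varprojlim$ spectral sequence for fully faithfulness. Your K\"unneth observation for (b) does work (and your parenthetical alternative via a finite truncation $(X/G)_{\le N}$ would be cleaner), but these are exactly the verifications the paper's stacky factorization never has to make; for instance, if $G$ is disconnected your strata $G^m\times T$ fail the geometric-connectedness condition in Lemma~\ref{lemma2}(1) and you need to refine by components of $G$ — a fix that is routine, but that the paper's route, which never leaves $X$, does not encounter. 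In short: same two tools (the scheme-level Lemma~\ref{lemma4} and Proposition~\ref{ffcor}), but the paper uses the equivariant direct image to push ffcor to the very last step, while you use ffcor to assemble levelwise applications of the full scheme lemma; the paper's factorization buys brevity, yours buys explicitness at the cost of simplicial bookkeeping.
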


\begin{proof} We follow the above proof but replace $a$ by the structure map $X\r \Spec \CC/G$. 
Mimicking the the non-equivariant case we have to show that for
a $G$-equivariant locally constant sheaf of $\Lambda^1$-modules $L$ on $\Spec V$ one has
\[
H^i(\Spec \CC/G,u^\ast L)=H^i(\Spec V/G_V, L)=H^i(s/G_s,i^\ast L).
\]
Note that if we forget the $G$-structure then $L$ is a constant sheaf. Thus, using Proposition \ref{ffcor} 
we reduce to the scheme case.
\qed
\def\qed{}
\end{proof}

\subsubsection{Passing to a finite field: summary}
\label{secff}
For the benefit of the reader let us summarize the functors that were introduced in \S\ref{secliteral}-\ref{secfinchar}.
\begin{multline}
\label{functors}
D^b_c(Y(\CC),\CC)_{\text{lit}}\xleftarrow{\CC\otimes_{\Lambda}-}
D^b_c(Y(\CC),\Lambda)_{\text{lit}}\xleftarrow[\cong]{R\pi_\ast} D^b_c(Y(\CC),\Lambda)
\xleftarrow[\cong]{\epsilon^\ast}\\ D^b_c(Y,\Lambda)\supset  D^b_{\Tscr,\Lscr}(Y,\Lambda) \xleftarrow[\cong]{u^\ast} D^b_{\Tscr_V,\Lscr_V}(Y_V,\Lambda)
\xrightarrow[\cong]{i^\ast} D^b_{\Tscr_s,\Lscr_s}(Y_s,\Lambda)
\end{multline}
After tensoring with $\CC$ we then obtain functors
\begin{multline*}
D^b_c(Y(\CC),\CC)_{\text{lit}}\xleftarrow[f.f.]{\CC\otimes_{\Lambda}-}
D^b_c(Y(\CC),\Lambda)_{\text{lit},\CC}\xleftarrow[\cong]{R\pi_\ast} D^b_c(Y(\CC),\Lambda)_{\CC}
\xleftarrow[\cong]{\epsilon^\ast}\\ D^b_c(Y,\Lambda)_{\CC}\supset  D^b_{\Tscr,\Lscr}(Y,\Lambda)_{\CC} \xleftarrow[\cong]{u^\ast} D^b_{\Tscr_V,\Lscr_V}(Y_V,\Lambda)_{\CC}
\xrightarrow[\cong]{i^\ast} D^b_{\Tscr_s,\Lscr_s}(Y_s,\Lambda)_{\CC}
\end{multline*}
Furthermore, any object $\Fscr$ in $D^b_c(Y(\CC),\CC)$ is obtained from $\Fscr_s$ in some $D^b_{\Tscr_s,\Lscr_s}(Y_s,\Lambda)$.

\medskip

Assume $s=\Spec \bar{\FF}_p$.  There is some 
$s_0=\Spec \FF_q$ such that $Y_{s}$
is obtained by the base extension from $Y_{0}/s_0$. 
Unfortunately one cannot draw a similar conclusion for $\Fscr_s$. E.g., $\Fscr_s$
might be a rank one $\Lambda$-local system on $\AA^1_{\bar{\FF}_p}-\{0\}$ with infinite monodromy.

For use below we make the following definition.
Let $a:Y_0\r s_0$, $p:s\r s_0$ be the structure
maps. Then we define $D^b_c(Y_0,\Lambda)_F$ as the category 
that has the same objects as $D^b_c(Y_0,\Lambda)$ but whose $\Hom$-spaces
are given by
\begin{equation}
\label{Homdef}
\Hom_{D^b_c(Y_0,\Lambda)_F}(\Fscr,\Gscr)=H^0R\Hom_Y(p^\ast \Fscr,p^\ast\Gscr).
\end{equation}
The right hand side 
of \eqref{Homdef} is equipped with a canonical Frobenius action, compatible with the $\Lambda$-module structure. 
 Thus, we can think of $D^b_c(Y_0,\Lambda)_F$ as being enriched
in $\Mod(\Lambda[F,F^{-1}])$ (the monoidal structure is given by tensoring over $\Lambda$), and more precisely, in the full subcategory
of $\Mod(\Lambda[F,F^{-1}])$ consisting of modules that are finitely generated over $\Lambda$.

We have a functor 
\[
D^b_c(Y_0,\Lambda)_F\r  D^b_c(Y,\Lambda):\Fscr\r p^\ast\Fscr
\]
which on the level of objects is given by the base extension and on the level of $\Hom$-spaces amounts to forgetting the $F$-action. 

\begin{rem} The category $D^b_c(Y_0,\Lambda)_F$ is not the same 
as $D^b_c(Y_0,\Lambda)$. For example, the former is not triangulated.
\end{rem} 

\subsection{End of proof of Theorem \ref{anr}}\label{DG-passing-finite-field-sec}

\subsubsection{Passing to the DG-context}
All the functors in \eqref{functors}
are induced from left derived functors between suitable
abelian categories (the inverse images are exact, and so they are both left and
right derived functors). Hence, they are represented by co-quasi-functors
on the DG-level (as in Example \ref{standardexample}).  We may then extend
the resulting bimodules to obtain co-quasi-functors
\def\dg{\operatorname{dg}}
\begin{multline*}
D^{b,\dg}_c(Y(\CC),\CC)_{\text{lit}}\l
D^{b,\dg}_c(Y(\CC),\Lambda)_{\text{lit},\CC}\l D^{b,\dg}_c(Y(\CC),\Lambda)_{\CC}
\l \\ D^{b,\dg}_c(Y,\Lambda)_\CC\supset  D^{b,\dg}_{\Tscr,\Lscr}(Y,\Lambda)_\CC \l D^{b,\dg}_{\Tscr_V,\Lscr_V}(Y_V,\Lambda)_{\CC}
\r D^{b,\dg}_{\Tscr_s,\Lscr_s}(Y_s,\Lambda)_{\CC},
\end{multline*}
where all the arrows induce equivalences or fully faithful embeddings on the $H^0$-level. By Lemma \ref{co-quasi}, 
it follows that the DG-algebras
$
\underline{\Hom}_{D^{b,\dg}_c(Y(\CC),\CC)}(\Fscr,\Fscr)
$
and
$
\underline{\Hom}_{D^{b,\dg}_c(Y_s,\Lambda)_{\CC}}(\Fscr_s,\Fscr_s)
$
are isomorphic in the homotopy category of DG-algebras over $\CC$.  

Thus, it remains to construct a DG-algebra, quasi-isomorphic
to $
\underline{\Hom}_{D^{b,\dg}_c(Y_s,\Lambda)_{\CC}}(\Fscr_s,\Fscr_s)
$, which carries a DG-endomorphism inducing the Frobenius on cohomology,
assuming of course that $\Fscr_s$ is indeed defined over a finite field. 
We do this in the next section.

\subsubsection{Lifting the Frobenius to the DG-level}
\def\Sh{\operatorname{Sh}} The technical problem we have to deal with
is that $p^\ast$ does not preserve injectives. So we have to follow
a more complicated procedure. 

For objects $\Fscr_0,\Gscr_0$ in $\Sh(Y_0^{\NN},\Lambda^\bullet)$ we set
\begin{equation}\label{new-hom-eq}
\Hom_{\Lambda}(\Fscr_0,\Gscr_0)_F=\pi_\ast p^{\NN,\ast} a^{\NN}_\ast \uHom_{\Sh(Y_0^{\NN},\Lambda^\bullet)}(\Fscr_0,\Gscr_0)\in \Mod(\Lambda[F,F^{-1}]),
\end{equation}
where as before $a:Y_0\r s_0$, $p:s\r s_0$ are the structure maps and $\pi$ is
the standard morphism of topoi $s^{\NN}\r s$.

Note that $p^{\NN,\ast} a^{\NN}_\ast \uHom_{\Sh(Y_0^{\NN},\Lambda^\bullet)}(\Fscr_0,\Gscr_0)$
is an object in $\Sh(s^{\NN},\Lambda^\bullet)$, i.e., an inverse system
of $\Lambda$-modules.
\begin{lem} 
\label{mllemma} If $\Gscr_0$ is injective then 
$p^{\NN,\ast} a^{\NN}_{\ast} \uHom_{\Sh(Y_0^{\NN},\Lambda^\bullet)}(\Fscr_0,\Gscr_0)$
is an inverse system with surjective structure maps. 
\end{lem}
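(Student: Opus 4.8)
The plan is to reduce the statement to the case of a ``cofree'' injective system and then to compute everything explicitly.

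First I would invoke the structure of injectives in the Grothendieck category $\Sh(Y_0^{\NN},\Lambda^\bullet)$. The level-$v$ restriction functors $i_v^{\ast}$, which send a system to its $v$-th term (a sheaf of $\Lambda^v$-modules on $Y_0$), are exact and jointly conservative, so their right adjoints $i_{v,\ast}$ preserve injectives and every object embeds into a product $\prod_v i_{v,\ast}I_v$ with each $I_v$ an injective sheaf of $\Lambda^v$-modules; if $\Gscr_0$ is injective this embedding splits, so $\Gscr_0$ is a retract of such a product. Since $\uHom_{\Sh(Y_0^{\NN},\Lambda^\bullet)}(\Fscr_0,-)$ and $a^{\NN}_{\ast}$ are right adjoints they commute with products, $p^{\NN,\ast}$ is exact and computed levelwise (and the product $\prod_v i_{v,\ast}(-)$ is finite in each degree), and the class of inverse systems of sheaves whose transition maps are epimorphisms is stable under retracts (a retract of an epimorphism in the arrow category is an epimorphism). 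Hence it suffices to prove the lemma when $\Gscr_0=i_{v,\ast}I$ for $I$ an injective sheaf of $\Lambda^v$-modules on $Y_0$.

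For such $\Gscr_0$ one can make everything explicit. Monoidality of $i_v^{\ast}$ together with the adjunction $i_v^{\ast}\dashv i_{v,\ast}$ and tensor--hom adjunction give $\uHom_{\Sh(Y_0^{\NN},\Lambda^\bullet)}(\Fscr_0,i_{v,\ast}I)\cong i_{v,\ast}\,\uHom_{\Sh(Y_0,\Lambda^v)}(i_v^{\ast}\Fscr_0,I)$, which is again a system of the form $i_{v,\ast}(-)$. Because the square relating $a:Y_0\to s_0$ to the level-$v$ inclusions commutes on the nose, functoriality of pushforward yields $a^{\NN}_{\ast}\,i_{v,\ast}=i_{v,\ast}\,a_{\ast}$, and since $i_{v,\ast}$ is a levelwise construction while $p^{\NN,\ast}$ acts levelwise one gets $p^{\NN,\ast}\,i_{v,\ast}=i_{v,\ast}\,p^{\ast}$. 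Therefore the object in the statement is isomorphic to $i_{v,\ast}\bigl(p^{\ast}a_{\ast}\uHom_{\Sh(Y_0,\Lambda^v)}(i_v^{\ast}\Fscr_0,I)\bigr)$. Now for any sheaf $M$ of $\Lambda$-modules on $s$, the system $i_{v,\ast}M$ is, degree by degree, the constant system $M$ with identity transition maps in degrees $n\ge v$ and $0$ in degrees $n<v$; hence every transition map is the identity, the surjection $M\to 0$, or $0\to 0$, and in particular all of them are surjective. Together with the reduction step (a retract of a system with surjective structure maps again has surjective structure maps) this proves the lemma.

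The argument involves no real difficulty; the delicate point is purely the topos-theoretic bookkeeping of \S\ref{sectopoi}. In particular one must pin down the precise shape of the cofree injectives $i_{v,\ast}I$ --- notably which end of the $\NN$-indexing carries the constant sheaf --- since it is exactly this shape that forces surjectivity (rather than, say, injectivity) of the transition maps; and one must verify the levelwise compatibilities $a^{\NN}_{\ast}i_{v,\ast}=i_{v,\ast}a_{\ast}$, $p^{\NN,\ast}i_{v,\ast}=i_{v,\ast}p^{\ast}$ together with the monoidality of $i_v^{\ast}$.
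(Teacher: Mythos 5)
Your proof is correct and takes a genuinely different route from the paper's. The paper first computes the $n$-th term of the resulting inverse system via the filtered colimit formula for $p^{\ast}$ (writing it as $\varinjlim_{s'}$ over finite covers $s\to s'\to s_0$), identifies it with $\Hom_{Y^{\le n}_{s'}}(\Fscr^{\le n},\Gscr^{\le n})$, and then deduces surjectivity of the transition maps from surjectivity of the restriction maps $\Hom_{Y^{\le n+1}}(\Fscr^{\le n+1},\Gscr^{\le n+1})\to \Hom_{Y^{\le n}}(\Fscr^{\le n},\Gscr^{\le n})$, which is where injectivity of $\Gscr$ enters (via a lifting along the counit for the inclusion of truncated systems). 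You instead spend the injectivity hypothesis at the outset, realizing $\Gscr_0$ as a retract of a product of cofree objects $i_{v,\ast}I_v$ and then pushing this through the whole chain of functors, which is legitimate because $\uHom(\Fscr_0,-)$ and $a^{\NN}_{\ast}$ are right adjoints, the resulting product is levelwise finite, and $p^{\NN,\ast}$ is levelwise exact; the end result is again of cofree shape $i_{v,\ast}(\cdot)$, whose transition maps are identities or projections to zero and hence surjective. This bypasses the $\varinjlim_{s'}$ computation and the truncated-topos bookkeeping entirely, at the cost of invoking the structure of injectives in the $D$-topos. In both arguments the substance is the direction of the $\NN$-indexing: your determination that $(i_{v,\ast}M)^u=M$ for $u\ge v$ and $0$ for $u<v$ (and not the other way round, which would give the left Kan extension $i_{v,!}$ and injective rather than surjective transitions) is exactly right and is what makes the conclusion come out correctly; the auxiliary commutations $a^{\NN}_{\ast}i_{v,\ast}=i_{v,\ast}a_{\ast}$ and $p^{\NN,\ast}i_{v,\ast}\cong i_{v,\ast}p^{\ast}$ that you flag as needing verification do hold (the first by functoriality, the second by inspecting levels), and since $s$ is a point the ambient category is $\Lambda$-modules, so retracts and products of surjections are again surjections.
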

\begin{proof} By the smooth base change and the compatibility of \'etale
  cohomology with inverse limits \cite[VII.5.8]{SGA4}, we have $p^\ast
  a_\ast=a_\ast p^\ast$. Unfortunately if $\Fscr$ is not constructible
  then $\uHom$ does not commute with the base change.  However, straight
  from the definitions we get
\[
( a^{\NN}_\ast p^{\NN,\ast} \uHom_{\Sh(Y_0^{\NN},\Lambda^\bullet)}(\Fscr_0,\Gscr_0))(s)^n=
\dirlim_{s'} \Hom_{\Sh(Y_{0,s'}^{\le n},\Lambda^\bullet)}(\Fscr_{0,s'}^{\le n},\Gscr_{0,s'}^{\le n}),
\]
where $s'$ runs through the factorizations $s\r s'\r s_0$ with $s'/s_0$ \'etale
(=finite), and $Y^{\le n}$ denotes the topos of inverse systems of length $n$. 

Now if $\Gscr$ is injective it is easy to see that $\Gscr_{s'}$ is injective
as well (since $Y_{s'}/Y$ is \'etale, there is an exact left adjoint, ``extension by zero''). 

So we have to prove that if $Y$ is a topos, $\Fscr$ and $\Gscr$
sheaves of $\Lambda^{\bullet}$-modules on $Y^{\NN}$, with $\Gscr$ injective,
then 
\[
\Hom_{\Sh(Y^{\le n+1},\Lambda^\bullet)}(\Fscr^{\le n+1},\Gscr^{\le n+1})
\r \Hom_{\Sh(Y^{\le n},\Lambda^\bullet)}(\Fscr^{\le n},\Gscr^{\le n})
\]
is surjective. This is an easy excercise.
\end{proof}

We define a DG-model  $D^{b,\dg}_c(Y_0,\Lambda)_F$ for $D^{b}_c(Y_0,\Lambda)_F$ (see \S\ref{secff})
as follows: $D^{b,\dg}_c(Y_0,\Lambda)_F$
has the same objects as $D^{b}_c(Y_0,\Lambda)_F$, and it is enriched in 
$C(\Lambda[F,F^{-1}])$ by setting
\[
\underline{\Hom}_{D^{b,\dg}_c(Y_0,\Lambda)_F}(\Fscr_0,\Gscr_0)=\underline{\Hom}_\Lambda(I_{\Fscr_0},I_{\Gscr_0})_F,
\]
where the right-hand side is defined by \eqref{new-hom-eq}, applied to injective resolutions.
By the fact that $\underline{\Hom}_\Lambda(I_{\Fscr_0},I_{\Gscr_0})$ is acyclic for $a^{\NN}_\ast$ and  
by Lemma \ref{mllemma}, we see that
$\underline{\Hom}_\Lambda(I_{\Fscr_0},I_{\Gscr_0})_F$ computes
\begin{align*}
R\pi_\ast p^{\NN,\ast} Ra^{\NN}_\ast \uRHom_{\Sh(Y_0^\NN,\Lambda^\bullet)}(\Fscr_0,\Gscr_0)
&=R\pi_\ast  Ra^{\NN}_\ast p^{\NN,\ast}\uRHom_{\Sh(Y_0^\NN,\Lambda^\bullet)}(\Fscr_0,\Gscr_0)\\
&=R\pi_\ast  Ra^{\NN}_\ast \uRHom_{\Sh(Y_s^\NN,\Lambda^\bullet)}(p^\ast \Fscr_0,p^\ast\Gscr_0)
\end{align*}
where we have used the fact that for constructible (cartesian in the
$G$-equivariant case) sheaves $\uRHom$ commutes with
the base change.  Since $s$ is a point topos,
$a_\ast$ on $Y_s$ is just taking global sections. Thus,
\begin{align*}
R\pi_\ast  Ra_\ast^{\NN} \uRHom_{\Sh(Y_s^\NN,\Lambda^\bullet)}(p^\ast \Fscr_0,p^\ast\Gscr_0)&=
R\underset{n}{\varprojlim}
R\Hom_{\Sh(Y_s^{\le n},\Lambda^\bullet)}(p^\ast \Fscr_0^{\le n},p^\ast\Gscr_0^{\le n})\\
&=R\Hom_{D^b_c(Y_s,\Lambda)}(p^\ast \Fscr_0,p^\ast \Gscr_0),
\end{align*}
so that $D^{b,\dg}_c(Y_0,\Lambda)_F$ is indeed a DG-model for $D^b(Y_0,\Lambda)_F$.

Let $D^{b,\dg}_c(Y_0,\Lambda)^0_F$, $D^{b}_c(Y_0,\Lambda)^0_F$ be obtained from $D^{b,\dg}_c(Y_0,\Lambda)_F$, 
$D^{b}_c(Y_0,\Lambda)_F$ by forgetting
the $F$-action. Then the fully faithful functor
\[
 D^{b}_c(Y_0,\Lambda)^0_F\r D^{b}_c(Y,\Lambda):\Fscr\mapsto p^\ast \Fscr
\]
can be lifted to a co-quasi-functor
\[
 D^{b,\dg}_c(Y_0,\Lambda)^0_F\r D^{b,\dg}_c(Y,\Lambda)
\]
as in Example \ref{standardexample}. Furthermore, by Lemma \ref{co-quasi}, we deduce that for 
$\Fscr_0\in D^b_c(Y_0,\Lambda)$,
\[
A\overset{\text{def}}{=}\underline{\Hom}_{ D^{b,\dg}_c(Y_0,\Lambda)^0_{F,\CC}}(\Fscr_0,\Fscr_0)
\]
and
\[
\underline{\Hom}_{ D^{b,\dg}_c(Y,\Lambda)_{\CC}}(p^\ast \Fscr_0,p^\st \Fscr_0)_{\CC}
\]
are isomorphic in the homotopy category of DG-algebras over $\CC$.
Since $A$ is a ring object in $C(\CC[F,F^{-1}])$,  we are done.


\section{Appendix II: Formality for some algebras over operads}\label{formality-sec}

\subsection{Formulation of the main result on formality}
\label{ref-1-0}
Throughout thi section $k$ is a field.
If $V=\bigoplus_n V_n$ is a $\ZZ$-graded $k$-vector space and $F$ is a graded $k$-automorphism of $V$
then we say that $F$ acts \emph{locally finitely} on $V$ if every $v\in V$ is
contained in a finite dimensional $F$-invariant graded subspace of $V$. 

Assume now that $F$ acts locally finitely on $V$ and assume that $k$
is algebraically closed. If $\alpha\in k$ then we denote by $V_\alpha$
the generalized eigenspace of $V$ corresponding to $\alpha$.  In other
words, $v\in V_\alpha$ iff there is some $n$ such that
$(F-\alpha)^nv=0$.  This yields an $F$-invariant direct sum
decomposition $V=\bigoplus_{\alpha\in k^\ast} V_\alpha$ of graded
vector spaces. We will think of this decomposition as a refinement of the
given $\ZZ$-grading to a $\ZZ\times k^\ast$ grading.

We now assume in addition that $k=\CC$. In that case we fix throughout some strictly positive 
real number $\xi$ and we say that a pair $(V,F)$, with $V$ a graded
$k$-vector space and $F$ a $k$-automorphism of $V$, is \emph{pure of weight} $m$ (with 
respect to $\xi$) 
  if $F$ acts locally finitely on $V$ and 
if $n\in \ZZ$, $\alpha\in k^\ast$ are such that $V_{n,\alpha}\neq 0$ 
then $|\alpha|=\xi^{n+m}$.

If $V$ is a complex and $F$ is an endomorphism of degree zero of $V$ (i.e., $V$ commutes with the
differential) then we say
that $(V,F)$ is pure of weight $m$ if $(F,H^\ast(V))$ is pure of weight $m$.

We now fix some $k$-DG-operad $\Oscr$. 
Below we will
prove the following result. 
\begin{thm} \label{ref-1.1-1} Assume that $k=\CC$. Let $F:B\r B$ be an $\Oscr$-algebra quasi-isomorphism
such that $(B,F)$ is pure of weight zero. Then $B$ is formal. In other words,
$B$ is isomorphic to $(H^\ast(B),d=0)$ in the \emph{homotopy
category} of $\Oscr$-algebras. The latter is by definition the category of $\Oscr$-algebras with 
quasi-isomorphisms inverted.

Assume now that $B$ itself has zero differential, and let $N$ be a module 
over~$B$ equipped
with a $k$-linear endomorphism $F:N\r N$ of complexes,
such that for any $l\in \NN$ and $\mu\in \Oscr(l)$ we have $F(\mu(b_1,\ldots,b_{l-1},n))=\mu(F(b_1),\ldots,F(b_{l-1}),Fn)$
 for $b_1,\ldots,b_{l-1}\in B$, $n\in N$. Assume in addition that $(N,F)$ is pure (of some weight). 
Then $N$ is formal. In other words, $N$ is isomorphic in $D(B)$ to
$(H^\ast(N),d=0)$.
\end{thm}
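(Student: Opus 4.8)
The whole argument rests on the eigenvalue refinement of the grading introduced just above the theorem. Since $F$ acts locally finitely on $H^\ast(B)$, resp.\ on $H^\ast(N)$, we have the generalized eigenspace decomposition, and I attach to a nonzero generalized $F$-eigenvector lying in cohomological degree $n$ with eigenvalue $\alpha\in k^\ast$ the \emph{$F$-weight} $\log_\xi|\alpha|$. Purity of weight $m$ is precisely the assertion that every such $F$-weight equals $n+m$; thus for $B$ (weight $0$) the $F$-weight of a cohomology class agrees with its cohomological degree, while for $N$ (weight $m$) it agrees with the cohomological degree shifted by $m$. The one computational fact used throughout is that \emph{any $\CC$-linear map between tensor powers of these cohomology groups that commutes with $F$ adds $F$-weights}: if $x_1,\dots,x_j$ are generalized eigenvectors of $F$-weights $w_1,\dots,w_j$, then $F$ acts on $x_1\otimes\cdots\otimes x_j$ with a single eigenvalue of modulus $\xi^{w_1}\cdots\xi^{w_j}$, so the image lands in the generalized eigenspace of $F$-weight $w_1+\cdots+w_j$. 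Only the modulus of the eigenvalue enters, so possible non-semisimplicity of $F$ is harmless: generalized eigenspaces are $F$-stable and the modulus is constant on each.

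For the algebra statement I would proceed as follows. The first step is to replace $B$ by a quasi-isomorphic $\Oscr$-algebra carrying a compatible automorphism that acts locally finitely at the chain level; this requires some care, since $F$ is assumed locally finite only on $H^\ast(B)$, and one builds such a model eigenspace-by-eigenspace over cohomology (e.g.\ from a cofibrant model of $B$ with $F$ lifted along the acyclic fibration). Then apply the homotopy transfer theorem for algebras over a cofibrant resolution $\Oscr_\infty$ of $\Oscr$ (for $\Oscr=\mathrm{Ass}$ this is the transfer of $A_\infty$-structures): a choice of $F$-equivariant deformation retract data between this model and $H:=H^\ast(B)$ endows $H$ with a minimal $\Oscr_\infty$-structure, $F$-equivariant and extending the induced strict $\Oscr$-structure on cohomology, together with an $\infty$-quasi-isomorphism to $B$. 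The components of this structure that do not come from the strict $\Oscr$-structure are $\CC$-linear operations $\mu\colon H^{\otimes\bullet}\to H$ that strictly lower homological degree, by an integer $c\ge 1$ (for $\Oscr=\mathrm{Ass}$, $\mu=m_k$ with $c=k-2$, $k\ge 3$). Feeding in generalized eigenvectors $x_i$ of $F$-weights $w_i=\deg x_i$ (weight $0$) and using $F$-equivariance, $\mu(x_1,\dots,x_j)$ has $F$-weight $\sum_i\deg x_i$; but it sits in cohomological degree $\sum_i\deg x_i-c$, so purity forces its $F$-weight to equal $\sum_i\deg x_i-c$. Hence $\xi^{\,c}=1$, impossible for $c\ge 1$ since $\xi\neq 1$. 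So all higher operations vanish, the transferred structure is the strict one $(H^\ast(B),d=0)$, and the $\infty$-quasi-isomorphism to $B$ (which can be replaced by a zig-zag of strict quasi-isomorphisms of $\Oscr$-algebras) exhibits $B$ as formal.

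The module statement is obtained by running the same machine for $\Oscr$-modules. Since $B$ has zero differential, one chooses $F$-equivariant deformation retract data between $N$ and $H_N:=H^\ast(N)$ and transfers the module structure: $H_N$ acquires an $\infty$-module structure over $B$, $F$-equivariant, whose non-strict components are $\CC$-linear maps $\mu(b_1,\dots,b_{l-1},-)\colon H_N\to H_N$ with $b_i\in B$, lowering homological degree by some $c\ge 1$. Equivariance and weight additivity give that $\mu(b_1,\dots,b_{l-1},n)$ has $F$-weight $\sum_i\deg b_i+(\deg n+m)$ (recall $B$ has weight $0$, $N$ weight $m$), whereas its cohomological degree is $\sum_i\deg b_i+\deg n-c$, so purity of $N$ forces the $F$-weight to equal $\sum_i\deg b_i+\deg n-c+m$; again $\xi^{\,c}=1$, a contradiction. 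Hence $N$ carries no higher operations, and the $\infty$-quasi-isomorphism identifies it with $(H^\ast(N),d=0)$ in $D(B)$.

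The conceptual core — the $F$-weight count that annihilates every higher operation — is a two-line computation; the real work, and where I expect the main obstacle, lies in setting up the operadic homotopy transfer carefully and justifying that it can be performed $F$-equivariantly. The delicate point is that $F$ is assumed to act locally finitely only on cohomology, not on $B$ itself, so the preliminary reduction to an equivariant chain-level model (and the corresponding equivariant retract data) must be handled with care; one must also pin down, for a general DG-operad $\Oscr$, the precise sense in which the non-strict components of a transferred $\Oscr_\infty$-structure lower homological degree, so that the exponent $c$ in $\xi^{\,c}=1$ is genuinely a positive integer. Everything else is bookkeeping.
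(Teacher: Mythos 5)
Your proof takes a genuinely different route from the paper's. Both arguments begin with the same preliminary step: replacing $B$ by a quasi-isomorphic $\Oscr$-algebra carrying an honest chain-level automorphism lifting $F$. You gesture at this with ``a cofibrant model with $F$ lifted along the acyclic fibration,'' but the paper proves it as a separate result, Theorem~\ref{ref-1.2-2}, via an explicit Sullivan-style ``adding variables to kill cocycles'' construction (\S\ref{ref-2.2-5}) with an inductive bookkeeping of homotopies. This step is not a triviality: $F$ is assumed locally finite only on cohomology, and producing a chain-level model $A=\varinjlim A_i$ with a locally finite automorphism $F'$ inducing $F$ on cohomology requires passing to the associated graded at each stage and showing local finiteness survives the coproduct $A_i\coprod_k T_{\Oscr}V_i$. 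Your phrase ``handled with care'' is doing roughly as much work as the paper's whole \S 4.2--4.4. After that the two proofs diverge. You transfer an $\Oscr_\infty$-structure to $H^*(B)$, $F$-equivariantly, and kill the non-strict components by the $F$-weight count. The paper instead introduces a $\ZZ^2$-grading on $A$ from the generalized $F'$-eigenspaces, observes that purity places $H^*(A)$ on the diagonal, and then invokes Schn\"urer's ``standard subalgebra'' $A'$ (with $A'_{ij}=A_{ij}$ for $i<j$, $\ker(d)$ for $i=j$, and $0$ for $i>j$), which is quasi-isomorphic both to $A$ and to its cohomology by inspection. No homotopy transfer, hence no $\Oscr_\infty$ anywhere.

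The substantive gap is precisely the second ``delicate point'' you flag. For a general $k$-DG-operad $\Oscr$ (the theorem imposes no reducedness, augmentation, or Koszulness), the existence of a transferred $\Oscr_\infty$-structure whose non-strict components strictly lower cohomological degree by a positive integer is not a black box: it must be extracted from a chosen cofibrant resolution (bar--cobar $\Omega B\Oscr$ for augmented $\Oscr$, Koszul resolution when available) by tracking that the component indexed by a tree with $j$ internal vertices carries $j-1$ insertions of the degree $-1$ homotopy $h$. Without pinning this down, the exponent $c$ in your $\xi^c=1$ is not established to be a positive integer, and the weight count does not close. The paper's formulation sidesteps this entirely: once the bigrading on $A$ is in place, the diagonal concentration is a purely linear-algebraic statement about $H^*(A)$, and the standard subalgebra argument needs no information about the operad beyond the fact that $A'$ is an $\Oscr$-subalgebra (which is immediate because each $A'_{ij}$ is cut out by conditions stable under $d$ and under any degree-preserving, bidegree-additive operations). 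Your approach is rescuable under an added hypothesis on $\Oscr$ (reduced, or Koszul, say), but as written it proves a weaker statement than the theorem claims, and you would also have to supply the full content of Theorem~\ref{ref-1.2-2} rather than cite it in one clause.
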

This theorem puts a number of partial results in the literature in the proper context---see e.g., 
\cite[Cor. 5.3.7]{deligneweil2}, \cite{Petersen}, and \cite[Thm 12.7]{Sullivan}. Among other
things these prior results assume that $B_n=0$ for $n<0$ and $B_0=k$, since they depend
on Sullivan's theory of minimal models.

Theorem \ref{ref-1.1-1} will be a consequence of the following result.
\begin{thm} 
\label{ref-1.2-2} Assume that either $k$ is a field of characteristic zero or that $\Oscr$ is obtained from
an asymmetric operad. Let $F:B\r B$ be an $\Oscr$-algebra quasi-iso\-morph\-ism
such $F$ acts locally finitely on $H^\ast(B)$. Then there is a quasi-isomorphism
of $\Oscr$-algebras $\alpha:A\r B$, where $A$ is in addition equipped with
a $k$-linear locally finite automorphism $F'$, such that the diagram
\[
\xymatrix{
A\ar[r]^\alpha\ar[d]_{F'} & B\ar[d]^F\\
A\ar[r]_\alpha & B
}
\]
is commutative in the homotopy category of $\Oscr$-algebras.

Similarly assume that $B$ itself has zero differential. For a module
$P$ over $B$ write ${}_F P$ for the module over $B$, which is the same as
 $P$ as a complex but with the twisted  $B$-action 
 $${}_F\mu(b_1,\ldots,b_{l-1},n)=\mu(F(b_1),\ldots,F(b_{l-1}),n),$$ 
 where $\mu \in \Oscr(l)$, $b_1,\ldots,b_{-1}\in B$, $n\in N$.

Let $N$ be
a module over $B$ equipped with a $k$-linear quasi-isomorphism $F:N\r {}_F N$ which acts
locally finitely on $H^\ast(N)$.
 Then
there is a quasi-isomorphism of $B$-modules $\alpha:M\r N$, where $M$ is in
addition equipped with a locally finite $k$-isomorphism $F':M\r {}_F M$, 
such that the diagram
\[
\xymatrix{
M\ar[r]^{\alpha}\ar[d]_{F'} & N\ar[d]^F\\
{}_FM\ar[r]_{\alpha} & {}_FN
}
\]
is commutative in $D(B)$. 
\end{thm}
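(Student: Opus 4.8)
The plan is to construct the cofibrant resolution $\alpha\colon A\to B$ of $\Oscr$-algebras together with the locally finite automorphism $F'$ \emph{directly}, by an inductive cell-attachment argument in which the cells are organised into $F$-invariant packets. One first invokes the model structure on $\Oscr$-algebras (available precisely under the stated hypotheses) in which weak equivalences are the quasi-isomorphisms, every object is fibrant, and cofibrant objects are retracts of cell objects $\operatorname{colim}_n A_n$ with $A_0$ free on a graded complex and each $A_{n+1}$ a pushout of $A_n$ along coproducts of generating cofibrations $\Oscr(S)\to\Oscr(\widehat S)$. The goal is to build a tower $A_0\hookrightarrow A_1\hookrightarrow\cdots$ of such cell objects together with: locally finite $\Oscr$-algebra automorphisms $F'_n\colon A_n\xrightarrow{\ \sim\ }A_n$ compatible along the tower; $\Oscr$-algebra maps $q_n\colon A_n\to B$, compatible along the tower, with $H^j(q_n)$ an isomorphism for $j\le n$ and a monomorphism for $j=n+1$; and homotopies of $\Oscr$-algebra maps $h_n\colon q_nF'_n\simeq Fq_n$ extending $h_{n-1}$. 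Passing to the colimit then finishes the proof: $\alpha=\operatorname{colim}_n q_n$ is a quasi-isomorphism, and $F'=\operatorname{colim}_n F'_n$ is an automorphism of $A$ (a filtered colimit of compatible isomorphisms) which is locally finite because each element of $A$ lies in some $A_n$, hence in a finite-dimensional $F'_n$-invariant, a fortiori $F'$-invariant, subspace; the colimit of the $h_n$ witnesses $\alpha F'\simeq F\alpha$.

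The point that makes the induction feasible is that local finiteness of $F$ is \emph{only used on cohomology}, while it is \emph{installed by hand} on the $A$-side. Here one uses two easy facts. First, if $C$ is an $\Oscr$-algebra with a locally finite automorphism $G$, then $G$ acts locally finitely on the cocycles $Z(C)$ and on $H^\ast(C)$, the surjection $Z(C)\twoheadrightarrow H^\ast(C)$ is $G$-equivariant, and — since this is a surjection of locally finite $k[t,t^{-1}]$-modules — every finite-dimensional $G$-invariant subspace of $H^\ast(C)$ is the image of a finite-dimensional $G$-invariant subspace of $Z(C)$. Second, the free $\Oscr$-algebra on a complex carrying a locally finite endomorphism again carries one. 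In the inductive step I would, in degree $n+1$: (i) kill $\coker H^{n+1}(q_n)$ by adjoining, for an $F_\ast$-invariant finite-dimensional packet $V\subset H^{n+1}(B)$ with basis $\beta_i$, free closed generators $x_i$ of degree $n+1$, with $q_{n+1}(x_i)$ a chosen cocycle representative of $\beta_i$ and $F'_{n+1}(x_i)=\sum_j c_{ji}x_j$, where $(c_{ji})$ is the matrix of $F_\ast|_V$, invertible because $F$ is a quasi-isomorphism; (ii) kill $\ker H^{n+1}(q_n)$ by attaching acyclic cells adjoining generators $y_i$ of degree $n$ with $dy_i=w_i$, where the $w_i$ span an $F'_n$-invariant subspace of $Z(A_n)$ lifting a packet of kernel classes — this makes the attaching map strictly $F'_n$-equivariant, so $F'_n$ extends over the new cells. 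In both cases $F'_{n+1}$ is again a locally finite automorphism of the cell algebra $A_{n+1}$.

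The remaining, and most delicate, task is to propagate the homotopy over the new cells compatibly with everything else. On $A_n$, $q_{n+1}F'_{n+1}$ and $Fq_{n+1}$ restrict to $q_nF'_n$ and $Fq_n$, joined by $h_n$. On a new closed generator $x_i$ they take values in $B$ that both represent $F_\ast\beta_i$, so a primitive of their difference serves as $h_{n+1}(x_i)$. On a new acyclic generator $y_i$ the obstruction to extending $h_n$ is a cocycle of $B$ whose class in $H^n(B)$ must be annihilated; here one exploits the freedom to alter $F'_{n+1}(y_i)$ by an arbitrary cocycle of $A_n$ (this does not conflict with $dF'_{n+1}(y_i)=F'_n(w_i)$), which, because $H^\ast(q_n)$ is already an isomorphism in degree $n$, modifies the obstruction class by an arbitrary element of $H^n(B)$ and hence can be made to vanish. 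I expect this interlocking — simultaneously improving the cohomology approximation, extending the automorphism, and extending the homotopy, packet by packet — to be the crux; the surrounding formalism (model structures, cylinder objects, lifting homotopies over cell extensions) is routine. Finally, the statement for $B$-modules is proved by the identical scheme carried out in the model category of DG-$B$-modules, with free $\Oscr$-algebras replaced by free $B$-modules and ``automorphism'' by ``isomorphism $M\to{}_FM$''; the generator-packets are chosen invariant under the $k$-linear locally finite operators induced by $F$ on the relevant subquotients of $H^\ast(N)$, and the semilinearity $F'(bm)=F(b)\,F'(m)$ is built into the definition of $F'$ on each packet.
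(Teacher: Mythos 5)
Your overall strategy matches the paper's: build $A\to B$ by inductive cell attachment (the paper's construction $A_i\langle V_i,pt\rangle$ of \S\ref{ref-2.2-5} is exactly a cell attachment in Hinich's sense), install the automorphism $F'$ by hand on each new packet of generators using finite-dimensionality of $F_\ast$-invariant subspaces, and propagate the homotopy $q_nF'_n\simeq Fq_n$ across each attachment. Your auxiliary observations — that a finite-dimensional $F'_n$-invariant packet of cohomology classes lifts to a finite-dimensional $F'_n$-invariant subspace of cocycles, and that free $\Oscr$-algebras carry a locally finite automorphism when the generating complex does — are correct and are used in the paper too, via the filtration giving $\gr A_{i+1}=A_i\coprod_k T_{\Oscr}V_i$. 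So the idea is shared; the differences are in how the induction is organized and how the homotopy is carried along.

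Where you depart from the paper is exactly where the gaps lie. You arrange the induction by cohomological degree, separating ``kill cokernel'' from ``kill kernel.'' But attaching degree-$n$ cells $y_i$ with $dy_i=w_i$ to kill kernel in degree $n+1$ creates new elements in degrees $\le n$ (products of the $y_i$ with low-degree elements of $A_n$), which can inject fresh classes into $H^n$ or even $H^{<n}$ and invalidate the hypothesis that $H^j(q_n)$ is already an isomorphism for $j\le n$; you also need to say what $A_0$ is if $H^\ast(B)$ is unbounded below. The paper avoids all of this by attaching, at every stage, cells indexed by $sH^\ast(\operatorname{cone}\alpha_i)$, with the only running invariants being that $\alpha_i$ is surjective on $H^\ast$ for $i>0$ and that $s^{-1}\operatorname{cone}\alpha_i\to A_i\to A_{i+1}$ is zero on cohomology; that bookkeeping is stable and degree-independent.

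The crux — propagating the homotopy across the attachment — is where your sketch is materially less complete than you suggest. In the paper, $F'$ and the homotopy are extended on the new cells by the closed formulas $\tilde F(v)=F_1(v)-h_1(s^{-1}v)$ and $\tilde h_t(v)=p(s^{-1}v)$, where $p$ is a $2$-homotopy produced from the cone together with the inductive data and satisfying a coherence identity; there is no residual obstruction to annihilate. You instead extend $F'$ naively on the new generators and claim the obstruction to extending $h$ lies in $H^n(B)$ and can be absorbed by shifting $F'_{n+1}(y_i)$ by a cocycle of $A_n$. The degree count and the diagram chase do support that claim, the shift does not disturb invertibility of $F'_{n+1}$ (by filtration), and the correcting cocycles can be enclosed in a finite-dimensional $F'_n$-invariant subspace, so the method is viable; but you still owe the reader the verification that the extended $h$ is a genuine $\phi_t$-derivation in the sense of \S\ref{ref-2.1-3} (or an $(\alpha,\beta)$-derivation in the asymmetric case), and that the obstructions for all the $y_i$ in one $F$-packet are annihilated simultaneously by a single coherent choice. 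These are exactly the checks that the paper's formulaic extension renders automatic; your version requires them to be spelled out before the inductive step is complete.
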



We will prove Theorem \ref{ref-1.2-2} first in the characteristic zero case. In Section \S\ref{ref-2.5-16} we will give
trivial modifications needed for the asymmetric operad case. Theorem \ref{ref-1.1-1} will be deduced from
Theorem  \ref{ref-1.2-2} in \S\ref{proof-of-formality-sec}.

So, until further notice, $k$ is a field of characteristic zero, $A,B,C,D$ are $\Oscr$-algebras, $V$, $W$ are complexes 
over $k$, and $v$, $w$, $a$, $b$, $c$, $d$ are
 elements of $V,W, A,B,C,D$, respectively.

Unspecified maps in diagrams below are assumed to be $k$-linear maps of complexes, unless
they are between $\Oscr$-algebras in which case they are assumed to
be $\Oscr$-algebra morphisms. Maps indexed by $t$ are homotopies between $\Oscr$-algebra
morphisms (see \S\ref{ref-2.1-3} below for details). We denote by $s$ the suspension functor on complexes.

Some maps between complexes serve as homotopies, or even as 2-homotopies, in which
case they have degree -1,-2 respectively---this will be clear from
the context.

\subsection{Homotopies between $\Oscr$-algebra morphisms}
\label{ref-2.1-3}
If $\alpha,\beta:A\r B$ are morphisms of $\Oscr$-algebras then a homotopy
between $\alpha$ and $\beta$
is a pair $(\phi_t,h_t)$ where $\phi_t:A\r B[t]$ is an $\Oscr$-algebra morphism such that
$\phi_0=\alpha$, $\phi_1=\beta$,
and $h_t:A\r B[t]$ is a $\phi_t$-derivation of degree $-1$ satisfying
\begin{equation}
\label{ref-2.1-4}
\frac{\partial \phi_t}{\partial t}=dh_t\overset{\text{def}}{=}d_B\circ h_t+h_t\circ d_A.
\end{equation}
Note that \eqref{ref-2.1-4} implies that $\phi_t$ is completely determined by
$h_t$. Hence, it is not necessary to specify $\phi_t$.

This definition of homotopy corresponds to a right homotopy in the sense of \cite{quillen22} associated to the path-object $B^I=B\otimes \Omega_{\AA^1}=B[t]\oplus B[t]dt$.
It is well-known and easy to see that if there exists a homotopy between $\alpha$ and $\beta$ 
then $\alpha$ is equal to $\beta$ in the homotopy category of $\Oscr$-algebras (see e.g., \cite[Prop.\ B.2]{Berest}).

\subsection{Adding variables to kill cocycles} 
\label{ref-2.2-5}
We recall a construction from \cite{hinichhomotopy}.
Let $i:s^{-1}V\r A$ be a map of complexes. Then as a graded $\Oscr$-algebra $A\langle V,i\rangle$ is 
the coproduct of $A$
and $T_{\Oscr}V$ over $k$. 
We equip $A\langle V,i\rangle$  with a  differential $\tilde{d}$ 
 given by
\begin{align*}
\tilde{d}a&=d_Aa,\\
\tilde{d}v&=d_V v+c_ii(s^{-1}v),
\end{align*}
where $c_i:A\r A\langle V,i\rangle$ is the canonical map. 
Note that $c_i i=dh_i$  where $h_i(s^{-1}v)=v$. In particular, the composition
\[
s^{-1}V\xrightarrow{i} A\xrightarrow{c_i} A\langle V,i\rangle
\]
is zero on cohomology. Whence the title of this section.

A diagram of the form
\[
\xymatrix{
s^{-1}V\ar@/^1.5em/@{.>}[rr]^{h_\alpha}\ar[r]_i& A\ar[r]_\alpha& B
}
\]
with $\alpha i=dh_{\alpha}$ can be transformed into a commutative diagram
of $\Oscr$-algebras
\[
\xymatrix{
A\ar[r]_-{c_i}\ar@/^1.5em/[rr]^{\alpha}& A\langle V,i\rangle \ar[r]_-{\tilde{\alpha}} & B
}
\]
by setting
$
\tilde{\alpha}(v)=h_\alpha(s^{-1}v)
$.

\medskip

Suppose now we have the following diagram
\begin{equation}
\label{ref-2.2-6}
\xymatrix{
s^{-1}V\ar@{-->}`l[d]`[dd]`[rr]_p[rrd]\ar@{.>}[dr]|{h_1}\ar[r]^i\ar[d]_{F_1}\ar@{.>}@/^2em/[rr]^{h_\alpha}& A\ar@{.>}[dr]|{h_{2,t}}\ar[r]^\alpha\ar[d]_{F_2}& B\ar[d]^{F_3}\\
s^{-1}W\ar[r]_j\ar@{.>}@/_2em/[rr]_{h_\gamma} & C\ar[r]_\gamma & D\\
&}
\end{equation}
such that besides the identities $dh_\alpha=\alpha i$, $dh_\gamma=\gamma j$
we have that
 $h_{2,t}$ is a homotopy of DG-algebras between $F_3\alpha$ and $\gamma F_2$, 
and 
\begin{align}
jF_1-iF_2&=dh_1,\\
h_\gamma F_1-F_3h_\alpha&=\gamma h_1+\left(\int_0^1h_{2,t}dt\right) i+dp. \label{ref-2.4-7}
\end{align}
We can transform this into a diagram of the form
\begin{equation}
\label{ref-2.5-8}
\xymatrix{
A \ar@{.>}`l[d]`[dd]`[rr]_{h_{2,t}}[rrd]\ar[r]^{c_i}\ar[d]_{F_2}\ar@/^2em/[rr]^{\alpha}& A\langle V,i\rangle\ar@{.>}[dr]|{\tilde{h}_{2,t}}\ar[r]^{\tilde{\alpha}}\ar[d]_{\tilde{F}_2}& B\ar[d]^{F_3}\\
C\ar[r]_{c_j}\ar@/_2em/[rr]_{\gamma} & C\langle W,j\rangle\ar[r]_{\tilde{\gamma}} & D\\
&
}
\end{equation}
by setting
\begin{align*}
\tilde{F}_2(a)&=F_2(a),\\
\tilde{F}_2(v)&=F_1(v)-h_1(s^{-1}v),\\
\tilde{h}_{2,t}(a)&=h_{2,t}(a),\\
\tilde{h}_{2,t}(v)&=p(s^{-1}v),
\end{align*}
where we have interpreted $F_1$ as a map $V\r W$ via
$F_1(v)=sF_1(s^{-1}v)$. 
\subsection{The proof for Theorem \ref{ref-1.2-2}  for $\Oscr$-algebras}
Theorem \ref{ref-1.2-2} is a consequence of the following slightly
more specific theorem.
\begin{thm} 
\label{ref-2.1-9} Let $B$ be an $\Oscr$-algebra and $F:B\r B$ an $\Oscr$-algebra quasi-isomorphism which acts locally finitely on $H^\ast(B)$.
Then there exists a diagram
\begin{align*}
\xymatrix{
A\ar[r]^\alpha \ar[d]_{F'}\ar@{.>}[dr]|{h_t}& B\ar[d]^F\\
A\ar[r]_\alpha & B
}
\end{align*}
where $\alpha$ is a quasi-isomorphism, $F':A\r A$ is a 
$k$-linear $\Oscr$-algebra automorphism which acts locally finitely,
and
$h_t$ is a   homotopy of $\Oscr$-algebras between $F\alpha$ and $\alpha F'$.
\end{thm}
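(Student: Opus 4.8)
The plan is to build $A$ as an increasing union of $\Oscr$-algebras $A_0\subset A_1\subset A_2\subset\cdots$, where each $A_{n+1}=A_n\langle V_n,i_n\rangle$ is obtained from $A_n$ by the ``adding variables to kill cocycles'' construction of \S\ref{ref-2.2-5}, and to carry along at each stage a triple $(\alpha_n,F'_n,h_{n,t})$: a morphism $\alpha_n:A_n\to B$, a locally finite $\Oscr$-algebra automorphism $F'_n:A_n\to A_n$, and a homotopy $h_{n,t}$ of $\Oscr$-algebras (in the sense of \S\ref{ref-2.1-3}) between $F\alpha_n$ and $\alpha_nF'_n$. The spaces $V_n$ are chosen so that $\alpha_n$ becomes closer to a quasi-isomorphism at each step in the standard way (adjoining new cocycles to hit classes of $H^\ast(B)$ missed by $H^\ast(\alpha_n)$, and adjoining variables whose differentials kill cocycles of $A_n$ that map to coboundaries of $B$), so that in the colimit $A=\bigcup_nA_n$ the map $\alpha=\varinjlim\alpha_n$ is a quasi-isomorphism. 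Simultaneously $F'=\varinjlim F'_n$ will be a locally finite $\Oscr$-algebra automorphism and $h_t=\varinjlim h_{n,t}$ the desired homotopy between $F\alpha$ and $\alpha F'$. The base step is trivial: take $A_0$ to be the initial $\Oscr$-algebra, $F'_0=\id$ and $h_{0,t}$ constant, so that $F\alpha_0=\alpha_0=\alpha_0F'_0$ strictly, $\alpha_0$ being the unique morphism out of $A_0$.

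The engine for the inductive step is precisely the diagram transformation of \S\ref{ref-2.2-5}: specializing diagram \eqref{ref-2.2-6} to $A=C=A_n$, $B=D=B$, $F_2=F'_n$, $F_3=F$, $\alpha=\gamma=\alpha_n$, $h_{2,t}=h_{n,t}$, $W=V_n$, $j=i_n$, together with an automorphism $F_1$ of $s^{-1}V_n$, the passage to \eqref{ref-2.5-8} produces $A_{n+1}=A_n\langle V_n,i_n\rangle$ with a morphism $\tilde\alpha_n$, an extended endomorphism $\tilde F'_n$ (given on new generators by $\tilde F'_n(v)=F_1(v)-h_1(s^{-1}v)$) and an extended homotopy $\tilde h_{n,t}$. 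So at each stage I must supply, besides $V_n$, $i_n$ and $F_1$, the coherence data $h_\alpha$ (with $dh_\alpha=\alpha_ni_n$), $h_1$ (with $dh_1=i_nF_1-F'_ni_n$), and a degree $-2$ map $p$ satisfying \eqref{ref-2.4-7}. Here local finiteness enters: because $F\alpha_n\simeq\alpha_nF'_n$, the subspace $\im H^\ast(\alpha_n)\subseteq H^\ast(B)$ is $F$-invariant and $\ker H^\ast(\alpha_n)\subseteq H^\ast(A_n)$ is $F'_n$-invariant, so the relevant (sub)quotients carry locally finite operators (from the hypothesis on $F$ and, inductively, from $F'_n$); hence $V_n$ can be taken to be a union of finite-dimensional invariant graded subspaces and $F_1$ the corresponding locally finite operator, which is invertible because $H^\ast(F)$ is an isomorphism.

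For the coherence data, $h_\alpha$ and $h_1$ exist at once since $\alpha_ni_n$ and $i_nF_1-F'_ni_n$ are maps into coboundaries by construction. The genuine obstacle is $p$. Using \eqref{ref-2.1-4}, $dh_\alpha=\alpha_ni_n$ and $dh_1=i_nF_1-F'_ni_n$ one checks that $\Psi:=h_\alpha F_1-Fh_\alpha-\alpha_nh_1-\bigl(\int_0^1h_{n,t}\,dt\bigr)i_n$ is a cocycle in the Hom-complex $\underline{\Hom}_k(s^{-1}V_n,B)$, while \eqref{ref-2.4-7} requires $\Psi=dp$, which need not hold on the nose. The way out is that $h_1$ is determined only up to addition of a cocycle $c:s^{-1}V_n\to A_n$, and such a change replaces $\Psi$ by $\Psi-\alpha_nc$; since (with the usual degree bookkeeping of the cell-attachment) $s^{-1}V_n$ lies in a range where $H^\ast(\alpha_n)$ is already an isomorphism, one can choose $c$ with $[\alpha_nc]=[\Psi]$, and then $\Psi-\alpha_nc$ is a coboundary, yielding $p$. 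I expect this coupling of the $1$-homotopy freedom to the $2$-homotopy obstruction (together with the need to keep everything $F_1$-stable) to be the crux. Finally, $\tilde F'_n$ remains a locally finite automorphism: with respect to the filtration of $A_{n+1}$ in which $A_n$ lies below the new generators, $\tilde F'_n$ is block upper triangular with invertible diagonal blocks ($F_1$ on the generators, $F'_n$ on $A_n$), hence invertible; and the $\tilde F'_n$-orbit of a generator spans a finite-dimensional graded subspace, its leading part staying in a finite-dimensional invariant subspace of $V_n$ and its lower-order corrections inside a finitely generated $F'_n$-invariant subalgebra of $A_n$. Passing to the colimit finishes the proof, from which Theorem \ref{ref-1.2-2} for $\Oscr$-algebras is immediate; the module case is handled by the same scheme with $B$-modules in place of $\Oscr$-algebras.
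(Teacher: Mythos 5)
Your overall plan matches the paper's: build $A$ as a colimit $A=\varinjlim A_n$ by iterated cell attachments $A_{n+1}=A_n\langle V_n,i_n\rangle$, carrying along at each stage $(\alpha_n,F'_n,h_{n,t})$ and transporting them via the diagram transformation \eqref{ref-2.2-6}$\to$\eqref{ref-2.5-8}. You also correctly isolate the crux: supplying the $2$-homotopy $p$ that makes relation \eqref{ref-2.4-7} hold, and you correctly note that after fixing $h_\alpha$ and $h_1$ the quantity
$\Psi=h_\alpha F_1-Fh_\alpha-\alpha_n h_1-\bigl(\int_0^1 h_{n,t}\,dt\bigr)i_n$
is a cocycle that must be shown to be a coboundary. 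The local-finiteness and invertibility arguments for $\tilde F'_n$ via the filtration $|A_n|=0$, $|V_n|=1$ also agree with the paper.

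The gap is in your proposed way of killing $[\Psi]$. You adjust $h_1$ by a cocycle $c:s^{-1}V_n\to A_n$ and argue that $[\alpha_n c]$ can be made to equal $[\Psi]$ ``since (with the usual degree bookkeeping of the cell-attachment) $s^{-1}V_n$ lies in a range where $H^\ast(\alpha_n)$ is already an isomorphism.'' That degree bookkeeping is exactly what the theorem cannot assume: the point of Theorem \ref{ref-1.1-1}/\ref{ref-1.2-2}, stated explicitly in the paper, is to drop the Sullivan-type hypotheses $B_{<0}=0$, $B_0=k$. In the general unbounded setting there is no degree filtration in which cells are attached one degree at a time with $H^\ast(\alpha_n)$ already an isomorphism below, and in fact the paper attaches cells in all degrees at once (each $V_i$ is all of $sH^\ast(C_i)$ with $C_i=s^{-1}\operatorname{cone}\alpha_i$). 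So there is no a priori reason $[\Psi]$ lies in the image of $H^\ast(\alpha_n)$, and the argument does not close.

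The paper sidesteps the obstruction entirely by making the choices of $i_n$, $F_1$, $h_\alpha$, $h_1$, and $p$ \emph{coherently} out of the cone. Writing $C_i$ as column vectors $\bigl(\begin{smallmatrix}A_i\\ s^{-1}B\end{smallmatrix}\bigr)$ with projections $p,q$, one gets $\alpha_i p=dq$ for free, and the matrix $G_i=\bigl(\begin{smallmatrix}F_i&0\\ \int_0^1 h_{i,t}\,dt&F\end{smallmatrix}\bigr)$ is a chain endomorphism of $C_i$ with $qG_i-Fq=(\int_0^1 h_{i,t}\,dt)\,p$. Setting $V_i=sH^\ast(C_i)$, $F_1=G'_i=H^\ast(G_i)$, and picking any quasi-isomorphism $t:s^{-1}V_i\to C_i$, the homotopy $h'_i$ with $G'_it-tG_i=dh'_i$ exists because $t$ is a quasi-isomorphism and both sides induce the same map on cohomology; no degree hypothesis is needed. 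Then $i_n=pt$, $h_\alpha=qt$, $h_1=ph'_i$, and the sought $2$-homotopy is $p=-qh'_i$; the relation \eqref{ref-2.4-7} holds on the nose because all of these are read off from the single homotopy $h'_i$ via the matrix structure of the cone. In short: instead of choosing $h_\alpha$ and $h_1$ independently and then trying to cancel the resulting obstruction class, the paper derives them, together with the needed $p$, from one homotopy at the level of $C_i$. You should replace your obstruction-theoretic fix by this cone-level construction.
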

\begin{proof}
We will construct a sequence of $\Oscr$-algebra morphisms
\[
\xymatrix{
k=A_0\ar[r]_{\beta_0}\ar@/^3em/[rrrr]|{\alpha_0} &A_1\ar[r]_{\beta_1}\ar@/^2em/[rrr]|{\alpha_1}& A_2\ar[r]_{\beta_2}\ar@/^1em/[rr]|{\alpha_2}&\cdots& B
}
\]
where 
\begin{enumerate}
\item $\alpha_0:k\r B$, $\beta_0:k\r A$ are the unit maps;
\item  $\alpha_{i+1}\beta_i=\alpha_i$;
\item $\alpha_i$ for $i>0$ is surjective on the level of cohomology;
\item the composition 
\begin{equation}
\label{ref-2.6-10}
s^{-1}(\operatorname{cone} \alpha_i)\r A_i\xrightarrow{\beta_i} A_{i+1}
\end{equation}
is zero on cohomology.
\end{enumerate}
From this it will easily follow that the induced map
\[
A\overset{\text{def}}{=} \varinjlim_i A_i\xrightarrow{\alpha\overset{\text{def}}{=}\varinjlim\alpha_i} B
\]
is a quasi-isomorphism.

 In addition, we will construct
diagrams of the form
\begin{equation}
\label{ref-2.7-11}
\xymatrix{
A_i\ar@{.>}[dr]|{h_{i,t}} \ar[r]^{\alpha_i}\ar[d]_{F_i} & B\ar[d]^{F}\\
A_i \ar[r]_{\alpha_i} & B
}
\end{equation}
such that $F_i$ is a locally finite $\Oscr$-algebra automorphism of $A_i$, 
and  $h_{i,t}$ is a  homotopy between $F\alpha_i$ and $\alpha_i F_i$ such that
\[
h_{i+1,t}\alpha_i=h_{i,t}.
\]
We proceed inductively. The induction starts with the given $\alpha_0,\beta_0$ and $h_{0,t}=0$.
Next, assume we have a diagram like \eqref{ref-2.7-11}. Set 
$C_i=s^{-1}(\operatorname{cone} \alpha_i)$. If we identify
$C_i$ with the column vectors
\[
\begin{pmatrix}
A_i\\
s^{-1}B
\end{pmatrix}
\]
then the differential on $C_i$ is given by
\[
\begin{pmatrix}
d_{A_i} & 0\\
\alpha & d_{s^{-1}B}
\end{pmatrix}.
\]
We may now extend
\eqref{ref-2.7-11} to a diagram\footnote{This is of course just the TR3 axiom but we need the
precise nature of the maps.}
\begin{equation}
\label{ref-2.8-12}
\xymatrix{
C_i\ar[r]^p\ar[d]_{G_i}\ar@{.>}@/^1.5em/[rr]^q& A_i\ar@{.>}[dr]|{h_{i,t}} \ar[r]^{\alpha_i}\ar[d]_{F_i} & B\ar[d]^{F}\\
C_i\ar[r]_p\ar@{.>}@/_1.5em/[rr]_q& A_i \ar[r]_{\alpha_i} & B
}
\end{equation}
where $p,q$ are the projection maps and
\[
G_i=\begin{pmatrix}
F_i & 0\\
\int^1_0 h_{i,t}dt & F
\end{pmatrix}.
\]
We have
\begin{align*}
\alpha_i p&=dq,\\
qG_i-Fq&=\left(\int_0^1h_{i,t}dt\right) p.
\end{align*}
Set $V_i=s H^\ast(C_i)$, $G'_i=H^\ast(G_i)$.
Next, we construct a diagram of complexes
\begin{equation}
\label{ref-2.9-13}
\xymatrix{
s^{-1}V_i\ar@{.>}[rd]|{h'_i}\ar[r]^t\ar[d]_{G'_i} & C_i\ar[d]^{G_i}\\
s^{-1}V_i\ar[r]_t & C_i
}
\end{equation}
where $t$ is an arbitrary quasi-isomorphism $s^{-1}V_i\r C_i$, and $G'_it-tG_i=dh'_i$. We
may combine \eqref{ref-2.8-12} and \eqref{ref-2.9-13} to obtain
\[
\xymatrix{
s^{-1}V_i\ar@{-->}`l[d]`[dd]`[rr]_{-qh_i'}[rrd]\ar@{.>}[dr]|{ph_i'}\ar[r]^{pt}\ar[d]_{G'_i}\ar@{.>}@/^1.5em/[rr]^{qt}& A_i\ar@{.>}[dr]|{h_{i,t}} \ar[r]^{\alpha_i}\ar[d]_{F_i} & B\ar[d]^{F}\\
s^{-1}V_i\ar[r]_{pt}\ar@{.>}@/_1.5em/[rr]_{qt}& A_i \ar[r]_{\alpha_i} & B\\
&
}
\]
with the same relations as in \eqref{ref-2.2-6}. 
Hence, we may transform this diagram into
a diagram of the form \eqref{ref-2.5-8}:
\[
\xymatrix{
A_i \ar@{.>}`l[d]`[dd]`[rr]_{h_{i,t}}[rrd]\ar[r]^{c_{pt}}\ar[d]_{F_i}\ar@/^2em/[rr]^{\alpha_i}& A_i\langle V_i,pt\rangle\ar@{.>}[dr]|{\tilde{h}_{i,t}}\ar[r]^{\tilde{\alpha}_i}\ar[d]_{\tilde{F}_i}& B\ar[d]^{F}\\
A_i\ar[r]_{c_{pt}}\ar@/_2em/[rr]_{\alpha} & A_i\langle V_i,pt\rangle\ar[r]_{\tilde{\alpha_i}} & B\\
&
}
\]
Finally, we set
\begin{align*}
A_{i+1}&=A_i\langle  V_i,pt\rangle,\\
F_{i+1}&=\tilde{F}_i,\\
h_{i+1}&=\tilde{h}_i,\\
\beta_i&=c_{pt}.
\end{align*}
It remains to check several things.
\begin{enumerate}
\item 
$F_i$ acts locally finitely on $A_i$ (and hence so does $F'$ on $A$). We prove this by induction. The
case $i=0$ is clear. Now assume that $F_{i}'$ acts locally finitely on $A_{i}$
for some $i\ge 0$. 
Then by the long exact sequence for cohomology it follows that $G'_{i}$
acts locally finitely on $H^\ast(C_i)=s^{-1}V_i$.  Now
if we equip $A_{i+1}=A_i\langle V_i,pt\rangle$  with the ascending filtration
obtained from the grading on $A_i\langle V_i,pt\rangle$ defined by $|A_i|=0$, $|V_i|=1$, then we obtain
$\gr A_{i+1}=A_i\coprod_k T_{\Oscr} V_i$, and the induced action
of $\gr F_{i+1}=\gr \tilde{F}_i$ is given by $F_i\coprod G'_i$.
Since both $F_i$ and $G'_i$ act locally finitely, the same holds for $F_i\coprod G'_i$
and hence for $F_{i+1}$.
\item $F_i$ is an automorphism of $A_i$ (and hence $F'$ is an automorphism of $A$). 
This is again proved by induction. It is clearly true for $A_0$. Assume it
is true for some $A_i$. By the same argument as above we find that $\gr F_{i+1}$
is an automorphism of $\gr A_{i+1}$. It follows easily that $F_{i+1}$ is an automorphism of $A_{i+1}$.
\item $\alpha_i$ for $i>0$ is surjective on the level of cohomology. It suffices to prove this
for $A_1$, which is a simple verification.
\item The composition \eqref{ref-2.6-10} is zero on cohomology. To
see this note that it is sufficient to prove that the longer composition
\begin{equation}
\label{ref-2.10-14}
s^{-1}V_i\r s^{-1}(\operatorname{cone} \alpha_i)\r A_i\xrightarrow{\beta_i} A_{i+1}
\end{equation}
is zero on cohomology since the first map is a quasi-isomorphism. The fact
that \eqref{ref-2.10-14} is zero on cohomology is clear since it is just the
composition
\[
s^{-1}V_i\xrightarrow{pt}  A_i\r A_i\langle V,pt\rangle
\]
which was discussed in \S\ref{ref-2.2-5}.\qed
\end{enumerate}
\def\qed{}\end{proof}

\subsection{Theorem \ref{ref-1.2-2}  for modules}
The proof for modules is entirely analogous to the one for $\Oscr$-algebras. So
we will provide very few details. 

The key ingredient is an analogue of $A\langle V,i\rangle$ (see \S\ref{ref-2.2-5}). Let
$M$ be a module over $A$ and let $i:s^{-1}V\r M$ be a map of complexes. Then as graded $A$-module $M\langle V,i\rangle$ is 
equal to $M\bigoplus A\otimes_k V$.
We equip $M\langle V,i\rangle$  with the differential $\tilde{d}$ 
 given by
\begin{align*}
\tilde{d}m&=d_Mm,\\
\tilde{d}v&=d_V v+c_ii(s^{-1}v),
\end{align*}
for $m\in M$, $v\in V$, where $c_i:M\r M\langle V,i\rangle$ is the canonical map.  With this definition one may imitate
the above proof for the case of $\Oscr$-algebras. The only slight variation is that the diagram \eqref{ref-2.7-11} now takes the
form
\begin{equation}
\label{ref-2.11-15}
\xymatrix{
M_i\ar@{.>}[dr]|{h_i} \ar[r]^{\alpha_i}\ar[d]_{F_i} & N\ar[d]^{F}\\
{}_FM_i \ar[r]_{\alpha_i} & {}_FN
}
\end{equation}
where $h_i$ is a homotopy between $F\alpha_i$ and $\alpha_i F_i$.

\subsection{The asymmetric operad case}
\label{ref-2.5-16}
Assume now that $k$ is arbitrary but that $\Oscr$ is obtained from an asymmetric operad which we denote by $\Oscr$ as well. 
In this case the following different version of homotopy is more convenient.

If $\alpha,\beta:A\r B$ are morphisms of $\Oscr$-algebras then a homotopy
between $\alpha$ and $\beta$ will be a $(\alpha,\beta)$-derivation $h:A\r B$ of degree $-1$
such that $\beta-\alpha=dh$.
This notion
 corresponds
to a right homotopy associated the path object of upper triangular $2\times2$-matrices (see \cite[\S3.1]{kellerexact}). The latter is  characteristic free
but can only be defined for asymmetric operads.

We may now adapt the proof of Theorem \ref{ref-1.2-2} to this new notion of homotopy. Since the homotopies
are now constant, there is no $t$-parameter and consequently no integrals in the formulas. Everything else is the same.

\subsection{Proof of Theorem \ref{ref-1.1-1}}\label{proof-of-formality-sec}
Let $(B,F)$ be as in the statement of Theorem \ref{ref-1.1-1}. Let $(A,F')$ be as in Theorem \ref{ref-1.1-1}.
By the construction of $A$, we see that the generalized eigenvalues $\mu$ of $F'$ acting  on $A$ have the property 
$|\mu|=\zeta^{n}$ for some $n\in \ZZ$. Set
\[
A_{n,m}=\bigoplus_{|\mu|=\zeta^m} A_{n,\mu}
\]
It is easy to see that this defines a $\ZZ^2$-grading on $A$ with the differential sending $A_{n,m}$ to $A_{n+1,m}$.
The purity hypothesis implies that the cohomology of $A$ is supported on the diagonal $\{(m,m)\mid m\in\ZZ\}$. 
To conclude we may now use the standard subalgebra of $A$, quasi-isomorphic both to $A$ and to its cohomology 
(see \cite[Prop.\ 4]{schnurer}). Recall that this subalgebra $A'$ is given by
$$A'_{ij}=\begin{cases} A_{ij} & i<j,\\ \ker(d:A_{ij}\to A_{i+1,j}), & i=j,\\ 0 & i>j.\end{cases}$$

The argument for modules is similar.

\def\cprime{$'$} \def\cprime{$'$} \def\cprime{$'$}
\providecommand{\bysame}{\leavevmode\hbox to3em{\hrulefill}\thinspace}
\providecommand{\MR}{\relax\ifhmode\unskip\space\fi MR }
\providecommand{\MRhref}[2]{%
  \href{http://www.ams.org/mathscinet-getitem?mr=#1}{#2}
}
\providecommand{\href}[2]{#2}


\begin{thebibliography}{99}
\bibitem{Achar} P.~N.~Achar,
{\it Kostka systems and exotic t-structures for reflection groups}, Recent developments in algebraic and combinatorial aspects of representation theory, 1Ð21, Contemp. Math., 602, Amer. Math. Soc., Providence, RI, 2013.

\bibitem{SGA4}
M.~Artin, A.~Grothendieck, J.-L.~Verdier, \emph{Theorie des topos et
  cohomologie \'etale des sch\'emas, {SGA4}, {T}ome 3}, Lecture Notes in
  Mathematics, vol. 305, Springer Verlag, 1973.

\bibitem{Atiyah} M.~Atiyah,
{\it On the Krull-Schmidt theorem with application to sheaves}, Bull. Soc. Math. France 84 (1956), 307--317.

\bibitem{AS} M.~F.~Atiyah, G.~B.~Segal, {\it Equivariant $K$-theory and completion},
J.~Diff. Geom. 3 (1969), 1--18.

\bibitem{Bar} V.~Baranovsky, {\it Orbifold cohomology as periodic cyclic homology},
Internat. J. Math. 14 (2003), no. 8, 791--812.

\bibitem{Behrend1}
K.~Behrend, \emph{Derived {$l$}-adic categories for algebraic stacks}, Mem.
  Amer. Math. Soc. \textbf{163} (2003), no.~774, viii+93.

\bibitem{BBD}
A.~Beilinson, J.~Bernstein, P.~Deligne, \emph{Faisceaux pervers},
  Ast{\'e}risque, vol. 100, Soc. Math. France, 1983.

\bibitem{Berest}
Y.~Berest, G.~Khachatryan, A.~Ramadoss, \emph{Derived representation schemes
  and cyclic homology}, 
Adv. Math. 245 (2013), 625--689.

\bibitem{BGLL} D.~Bergh, S.~Gorchinskiy, M.~Larsen and V.~Lunts,
{\it Categorical measures for finite group actions}, in preparation.

\bibitem{BerLun}
J.~Bernstein, V.~Lunts, \emph{Equivariant sheaves and functors},
  Lecture Notes in Mathematics, vol. 1578, Springer-Verlag, Berlin, 1994.
  
\bibitem{BLL} A.~Bondal, M.~Larsen, V.~Lunts, {\it
Grothendieck ring of pretriangulated categories}, Int. Math. Res. Not. 29 (2004), 1461--1495.
  
\bibitem{BVdB} A.~Bondal, M.~Van den Bergh, {\it Generators and representability of functors in commutative and noncommutative geometry}, Moscow Math. J. 3  (2003),  1--36.

\bibitem{Brion} M.~Brion, {\it Differential forms on quotients by reductive group actions},
Proc. Amer. Math. Soc. 126 (1998), no. 9, 2535--2539.

\bibitem{Broue} M.~Brou\'e, {\it Introduction to complex reflection groups and their braid groups}, LNM1988, Springer-Verlag, Berlin, 2010.

\bibitem{Carrell} J.~Carrell, {\it
Orbits of the Weyl group and a theorem of De Concini and Procesi}, Compos. Math. 60 (1986), 45--52.

\bibitem{Carter-conj} R.~W.~Carter, {\it Conjugacy classes in the Weyl group}, 
Compos. Math. 25, no. 1 (1972), 1--59.

\bibitem{Carter} R.~W.~Carter, {\it Finite groups of Lie type},
John Wiley \& Sons, Inc., New York, 1985.

\bibitem{CM} D.~H.~Collingwood, W.~M.~McGovern, {\it Nilpotent Orbits in Semisimple Lie Algebras}
Van Nostrand Reinhold Co., New York, 1993.

\bibitem{CP} J.~Collins, A.~Polishchuk,
{\it Gluing stability conditions},  Adv. Theor. Math. Phys. 14 (2010), 563--608.

\bibitem{DeCP} C.~De Concini and C.~Procesi, {\it Symmetric functions, conjugacy classes, and the flag variety},
Invent. Math. 64 (1981), no. 2, 203--219.

 \bibitem{De-HIII}
P.~Deligne, \emph{Theorie de Hodge III}, Publ. Math. IHES 44 (1974), 5--78. 

\bibitem{Finitude}
P.~Deligne, \emph{Th\'eor\`emes de finitude en cohomologie $l$-adic}, SGA
  4-1/2, Lecture Notes in Mathematics, vol. 569, Springer Verlag, New York,
  1977, 233--261.

\bibitem{deligneweil2}
P.~Deligne, \emph{La conjecture de {W}eil. {II}}, Publ. Math. IHES 52 (1980), 137--252.  

\bibitem{Eis}
D.~Eisenbud, {\it Commutative Algebra 
with a View toward Algebraic Geometry}, Springer-Verlag, Berlin, 1995.

\bibitem{Ek}
T.~Ekedahl, \emph{On the adic formalism}, The Grothendieck Festschrift,
  vol.~2, Birkh\"auser, 1990, 197--218.

\bibitem{EGL} P.~Etingof, E.~Gorsky, I.~Losev, {\it
Representations of rational Cherednik algebras with minimal support and torus knots}, Adv. Math. 277 (2015), 124--180.
 
\bibitem{GP} M.~Geck, G.~Pfeiffer, {\it Characters of Finite Coxeter Groups and Iwahori-Hecke Algebras},
The Clarendon Press, Oxford University Press, New York, 2000.

\bibitem{Gerst} M.~Gerstenhaber, {\it On dominance and varieties of commuting matrices}, Ann.\ Math.\ 73 (1961),
324--348.

\bibitem{GKM} M.~Goresky, R.~Kottwitz, R.~MacPherson,
{\it Equivariant cohomology, Koszul duality, and the
localization theorem}, Invent. Math. 131(1998), no. 1, 25--83.

\bibitem{GM} M.~Goresky, R.~MacPherson, {\it On the spectrum of the equivariant cohomology ring},
Canad. J. Math. 62 (2010), no. 2, 262--283.

\bibitem{hinichhomotopy}
V.~Hinich, \emph{Homological algebra of homotopy algebras}, Comm. Algebra
  \textbf{25} (1997), no.~10, 3291--3323.

  \bibitem{Hironaka}
H.~Hironaka, \emph{Triangulations of algebraic sets}, Algebraic geometry
  ({P}roc. {S}ympos. {P}ure {M}ath., {V}ol. 29, {H}umboldt {S}tate {U}niv.,
  {A}rcata, {C}alif., 1974), Amer. Math. Soc., Providence, R.I., 1975,
  pp.~165--185. 
  
\bibitem{Howlett} R.~B.~Howlett, {\it Normalizers of parabolic subgroups of reflection groups},
  J. LMS 21 (1980), 62--80.
  
\bibitem{Hotta} R.~Hotta,
{\it On Springer's representations},
J. Fac. Sci. Univ. Tokyo Sect. IA Math. 28 (1981), no. 3, 863--876 (1982).

\bibitem{HS} R.~Hotta, T.~A.~Springer,
{\it A specialization theorem for certain Weyl group representations and an application to the Green polynomials of unitary groups}, Invent. Math. 41 (1977), no. 2, 113--127. 
  
\bibitem{Hungerford}
T.~W. Hungerford, \emph{Algebra}, Holt, Rinehart and Winston, Inc., New York,
 1974.

\bibitem{Jantzen} J.~C.~Jantzen,
{\it Nilpotent orbits in representation theory}, in {\it Lie theory}, 1--211, 
BirkhŠuser Boston, Boston, MA, 2004. 

\bibitem{Jouanolou}
J.~P. Jouanolou, \emph{{Syst\'emes projectives $J$-adic}}, in {\it Cohomologie
  $l$-adique et fonctions $L$ (SGA 5)}, Lecture Notes in Mathematics, vol. 589, Springer
  Verlag, Berlin, 1977.

\bibitem{JMW} D.~Juteau, C.~Mautner, and G.~Williamson, {\it Parity sheaves},
 J. Amer. Math. Soc. 27 (2014), no. 4, 1169--1212.
 
\bibitem{KSI}
M.~Kashiwara, P.~Schapira, \emph{Sheaves on manifolds}, Springer Verlag, 1994.

\bibitem{Kato} S.~Kato, {\it A homological study of Green polynomials},  
Ann. Sci. \'Ec. Norm. Sup\'er. (4) 48 (2015), no. 5, 1035--1074.

\bibitem{Keller1}
B.~Keller, \emph{Deriving {DG}-categories}, Ann. Sci. {\'E}cole Norm. Sup. (4)
  \textbf{27} (1994), 63--102.

\bibitem{kellerexact}
\bysame, \emph{On the cyclic homology of exact categories}, J. Pure Appl.
  Algebra \textbf{136} (1999), no.~1, 1--56.

\bibitem{Keller6}
\bysame, \emph{On triangulated orbit categories}, Doc. Math. \textbf{10}
  (2005), 551--581.

\bibitem{Krug} A.~Krug, {$\P$-functor versions of the Nakajima operators}, arXiv:1411.0824.  

\bibitem{KP} S.~Kumar, C.~Procesi,
{\it An algebro-geometric realization of equivariant cohomology of some Springer fibers}, J. Algebra 368 (2012), 70--74.

\bibitem{KT} S.~Kumar, J.~F.~Thomsen,
{\it A new realization of the cohomology of Springer fibers}, in  {\it Algebraic groups and arithmetic}, 119--125, Tata Inst. Fund. Res., Mumbai, 2004.

\bibitem{Kuznetsov} A.~Kuznetsov {\it Hochschild homology and semiorthogonal decompositions},
arXiv:0904.4330.

\bibitem{LO1}
Y.~Laszlo and M.~Olsson, \emph{The six operations for sheaves on {A}rtin
  stacks. {I}. {F}inite coefficients}, Publ. Math. Inst. Hautes \'Etudes Sci.
  (2008), no.~107, 109--168.

\bibitem{LO2} 
\bysame, \emph{The six operations for sheaves on {A}rtin stacks. {II}. {A}dic
  coefficients}, Publ. Math. Inst. Hautes \'Etudes Sci. (2008), no.~107,
  169--210.

\bibitem{LS} M.~W.~Liebeck, G.~M.~Seitz {\it Unipotent and Nilpotent Classes in Simple Algebraic Groups and Lie Algebras},
 AMS, Providence, RI, 2012.

\bibitem{Lusztig-GP} G.~Lusztig, {\it Green Polynomials and Singularities of Unipotent Classes}, Advances Math. 42 (1981),
169--178.

\bibitem{Lusztig-int-coh} \bysame, {\it Intersection cohomology complexes on a reductive group},
Invent. Math. 75 (1984), 205--272.

\bibitem{Lusztig-char-shV} \bysame, {\it Character sheaves V},
Adv. Math. 61 (1986), 103--155.

\bibitem{Lusztig-HAII} \bysame, {\it Cuspidal local systems and graded Hecke algebras. II}, in
{\it Representations of groups (Banff, AB, 1994)}, 217--275, AMS, 1995.

\bibitem{Mark} N.~Markarian,
{\it The Atiyah class, Hochschild cohomology and the Riemann-Roch theorem},
J. Lond. Math. Soc. (2) 79 (2009), no. 1, 129--143.

\bibitem{Ollson}
M.~Olsson, \emph{Sheaves on {A}rtin stacks}, J. Reine Angew. Math. \textbf{603}
  (2007), 55--112.

\bibitem{Orlov-dim} D.~Orlov, {\it Remarks on generators and dimensions of triangulated categories},
Moscow Math. J. 9 (2009), 153--159.

\bibitem{Petersen}
D.~Petersen, \emph{Minimal models, {GT}-action and formality of the little disk
  operad}, 
   Selecta Math. 20 (2014), no. 3, 817--822.

\bibitem{P-orbifold} A.~Polishchuk,
 {\it Holomorphic bundles on 2-dimensional noncommutative toric
orbifolds}, in {\it Noncommutative Geometry and Number Theory} (C.~Consani, M.~Marcolli, eds.), 341--360, Vieweg Verlag, Wiesbaden, 2006.

\bibitem{Popov} V.~L.~Popov,
{\it Groups, generators, syzygies and orbit in invariant theory}, Transl. 
Math. Monographs, vol. 100, Amer. Math. Soc, Providence, RI, 1993. 

\bibitem{quillen22}
D.~G. Quillen, \emph{Homotopical algebra}, Lecture Notes in Mathematics, No.
  43, Springer-Verlag, Berlin, 1967.

\bibitem{Rider} L.~Rider, {\it Formality for the nilpotent cone and a derived Springer correspondence},
Adv. Math. 235 (2013), 208--236.

\bibitem{schnurer}
O.~M. Schn{\"u}rer, \emph{Equivariant sheaves on flag varieties}, Math. Z.
  \textbf{267} (2011), no.~1-2, 27--80.
  

\bibitem{Spalt}  
N.~Spaltenstein, {\it The fixed point set of a unipotent transformation on the flag manifold}, Nederl. Akad. Wetensch. Proc.
Ser. A 79 (1976), 452--456.
  

\bibitem{Springer} T.~A.~Springer, {\it A purity result for fixed point varieties in flag manifolds},
J. Fac. Sci. Univ. Tokyo Sect. IA Math. 31 (1984), no. 2, 271--282.

\bibitem{Sullivan}
D.~Sullivan, \emph{Infinitesimal computations in topology}, Inst. Hautes
  \'Etudes Sci. Publ. Math. 47 (1977), 269--331.

\bibitem{Tabuada} G.~Tabuada and M.~Van den Bergh, {\it Additive invariants of orbifolds},
	arXiv:1612.03162 [math.AG], 2016.

\bibitem{Toen} B.~Toen, {\it On motives for Deligne-Mumford stacks},
Internat. Math. Res. Notices 17 (2000), 909--928.

\bibitem{VdB} M.~Van den Bergh, {\it Modules of Covariants},
Proc. ICM (Z\"urich, 1994), 352--362, Birkh\"auser, Basel, 1995.

\bibitem{Vistoli} A.~Vistoli, {\it Higher equivariant K-theory for finite group actions},
Duke Math. J. 63 (1991), 399--419.

\end{thebibliography}
\end{document}